\numberwithin{equation}{section}
\begin{document}

\author[Kim]{Kyeongbae Kim}
\address{Department of Mathematical Sciences, Seoul National University, Seoul 08826, Korea}
\email{kkba6611@snu.ac.kr}
\author[Lee]{Ho-Sik Lee}
\address{Fakult\"at f\"ur Mathematik, Universit\"at Bielefeld, Postfach 100131, D-33501 Bielefeld, Germany}
\email{ho-sik.lee@uni-bielefeld.de}
\author[Nowak]{Simon Nowak}
\address{Fakult\"at f\"ur Mathematik, Universit\"at Bielefeld, Postfach 100131, D-33501 Bielefeld, Germany}
\email{simon.nowak@uni-bielefeld.de}

\makeatletter
\@namedef{subjclassname@2020}{\textup{2020} Mathematics Subject Classification}
\makeatother

\subjclass[2020]{Primary: 
35K70, 
35Q84; 
Secondary: 
35D30, 
35B65, 
31C45
}

\keywords{nonlinear kinetic equations, Fokker-Planck equations, ultraparabolic equations, Schauder estimates, potential estimates}
\thanks{Kyeongbae Kim thanks for funding of the National Research Foundation of Korea (NRF) through IRTG 2235/NRF-2016K2A9A2A13003815 at Seoul National University. Ho-Sik Lee thanks for funding of the Deutsche Forschungsgemeinschaft through GRK 2235/2 2021 - 282638148. Simon Nowak gratefully acknowledges funding by the Deutsche Forschungsgemeinschaft (DFG, German Research Foundation) - SFB 1283/2 2021 - 317210226.
}

\title{Gradient estimates for nonlinear kinetic Fokker-Planck equations}

\begin{abstract}
	In this work, we provide a comprehensive gradient regularity theory for a broad class of nonlinear kinetic Fokker-Planck equations. We achieve this by establishing precise pointwise estimates in terms of the data in the spirit of nonlinear potential theory, leading to fine gradient regularity results under borderline assumptions on the data. Notably, our gradient estimates are novel already in the absence of forcing terms and even for linear kinetic Fokker-Planck equations in divergence form.
\end{abstract}

\maketitle



\section{Introduction}
\subsection{Nonlinear kinetic Fokker-Planck equations}
This aim of this paper is to undertake a thorough investigation of the gradient regularity of weak solutions $f=f(t,x,v):W \times V \subset \bbR^{n+1}\times \bbR^n\to\bbR$ $(n\geq 1)$ to equations of the type
\begin{equation}\label{eq.main}
	\partial_t f+v\cdot \nabla_x f-\divergence_v(a(t,x,v,\nabla_v f))=\mu -\divergence_v G \quad\text{in }W\times V\subset \bbR^{n+1}\times \bbR^n.
\end{equation}
Here $W$ and $V$ are open sets, $\mu=\mu(t,x,v):\bbR^{n+1}\times \bbR^n\to\bbR$ is a given function and $G=G(t,x,v):\bbR^{n+1}\times \bbR^n\to\bbR^n$ is a given vector field. Moreover, throughout the paper we at least assume that the vector field $a=a(t,x,v,\xi)$ satisfies the following.

\begin{assumption} \label{assump}
	We assume that $a(t,x,v,\xi)$ is measurable in $(t,x,v) \in \bbR \times \bbR^{n}\times \bbR^n$ and $C^1$-regular in $\xi\in \bbR^n$ such that for all $t \in \mathbb{R}$ and all $x,v,\xi,\zeta \in \mathbb{R}^n$, we have
	\begin{align}\label{ass.coef}
		\begin{cases}
			|a(t,x,v,\xi)|&\leq \Lambda|\xi|,\\
			|\nabla_\xi a(t,x,v,\xi)|&\leq\Lambda,\\
			\nabla_\xi a(t,x,v,\xi)\zeta\cdot \zeta&\geq\Lambda^{-1}|\zeta|^2
		\end{cases}
	\end{align}
	for some $\Lambda \geq 1$.
\end{assumption} 
In the case when $a(t,x,v,\xi)=\xi$, that is, when $\divergence_v(a(t,x,v,\nabla_v f))$ reduces to the standard Laplacian with respect to the velocity variable $v$, equations of the type \eqref{eq.main} were first studied by Kolmogorov in \cite{Kolmogorov}, who in this particular case provided an explicit fundamental solution. Since the additional transport term leads to a lack of ellipticity with respect to the spatial variable $x$ in comparison to the corresponding parabolic setting, already in the case when the diffusion is modeled by the Laplacian the equation \eqref{eq.main} rather exhibits hypoelliptic features in the sense of H\"ormander (see \cite{Hormander}).
In addition, linear kinetic Fokker-Planck equations are closely linked to the theory of stochastic processes, leading to applications in physics (see e.g.\ \cite{Chandra,LN22,AlbArmMouNov24}). Moreover, they are related to the Landau equation from kinetic theory (see e.g.\ \cite{Villani,HenSne20,Sil23}).

In the case of elliptic and parabolic equations, it is common to model diffusion also by nonlinear operators as the one appearing in \eqref{eq.main}, since nonlinear divergence form operators arise naturally through variational principles. For this reason, in \cite{GarNys23} the authors initiated a systematic study of \emph{nonlinear} kinetic Fokker-Planck equations of the type \eqref{eq.main}, proving in particular H\"older estimates for the solution itself. Although most of our results are new already in the linear setting, the primary goal of the present paper is to establish a fine gradient regularity theory for nonlinear kinetic equations via tools from \emph{nonlinear potential theory}. In particular, this allows us to extend both the zero-order regularity theory for nonlinear kinetic equations from \cite{GarNys23} to the gradient level and the first-order nonlinear potential theory developed in the elliptic and parabolic setting (see \cite{Min11,DuzMin11a,KuuMin12,BCDKS}) to the realm of kinetic equations.

\subsection{Schauder-type velocity gradient regularity for homogeneous kinetic Fokker-Planck equations} \label{hom}

While a main focus of the present work consists of obtaining fine regularity results for the velocity gradient of weak solutions to \eqref{eq.main} in terms of the data $\mu$ and $G$, we want to begin by emphasizing that our results are already completely new in the homogeneous case when $\mu \equiv 0$ and $ G \equiv 0$. Indeed, in this case we obtain the following gradient regularity result that serves as a model case and will follow from more refined results that will be stated below.

\begin{theorem}[Gradient regularity for nonlinear homogeneous kinetic equations] \label{thm.holmodel} 
	Let $f$ be a weak solution of
	\begin{equation}\label{eq.mainhom}
		\partial_t f+v\cdot \nabla_x f-\divergence_v(a(t,x,v,\nabla_v f))=0 \quad\text{in }W\times V\subset \bbR^{n+1}\times \bbR^n,
	\end{equation}
	where $a$ satisfies Assumption \ref{assump}. 
	Then there exists some $\alpha=\alpha(n,\Lambda)\in(0,1)$ such that if $a$ is H\"older continuous with exponent $\beta \in (0,\alpha)$ in $W \times V$ in the sense of Definition \ref{def:DH}, then $\nabla_v f\in C^{\beta}_{\textnormal{kin}}(W \times V)$. 
\end{theorem}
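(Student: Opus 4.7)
The plan is to prove Theorem \ref{thm.holmodel} via a perturbative Campanato-type scheme adapted to the kinetic geometry, in the spirit of the classical Schauder approach for parabolic equations in divergence form. The starting ingredient is a frozen-coefficient gradient Hölder estimate: weak solutions of the homogeneous equation whose nonlinearity $a_0(\xi) := a(t_0,x_0,v_0,\xi)$ depends only on $\xi$ already enjoy interior $C^{1,\alpha}_{\textnormal{kin}}$ regularity for some universal exponent $\alpha = \alpha(n,\Lambda) \in (0,1)$. This $\alpha$ is precisely the exponent appearing in the theorem statement, and it should be available from the preceding sections of the paper by differentiating the equation in $v$ and running a De Giorgi--Nash--Moser argument on the resulting linearised problem.

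The heart of the proof is a comparison step. Given a centre point $z_0 = (t_0,x_0,v_0)$ and a small kinetic cylinder $Q_r(z_0) \Subset W \times V$, I would introduce $h$ solving the frozen Dirichlet problem
\begin{equation*}
\partial_t h + v \cdot \nabla_x h - \divergence_v\!\big(a(t_0,x_0,v_0,\nabla_v h)\big) = 0 \quad \text{in } Q_r(z_0), \qquad h = f \text{ on } \partial_{\textnormal{kin}} Q_r(z_0).
\end{equation*}
Writing $w = f - h$ and adding and subtracting $a(t_0,x_0,v_0,\nabla_v f)$ shows that $w$ weakly solves a linear kinetic equation $\partial_t w + v \cdot \nabla_x w - \divergence_v(A\, \nabla_v w) = \divergence_v F$ with coefficient matrix $A = \int_0^1 \nabla_\xi a(t_0,x_0,v_0,\nabla_v h + s\nabla_v w)\,ds$ that is uniformly elliptic by Assumption \ref{assump}, and with source
\begin{equation*}
F(t,x,v) = a(t,x,v,\nabla_v f) - a(t_0,x_0,v_0,\nabla_v f).
\end{equation*}
The Hölder assumption on $a$ yields the pointwise bound $\abs{F} \lesssim r^{\beta}\abs{\nabla_v f}$ on $Q_r(z_0)$, so the standard energy identity for $w$, which vanishes on the lateral and past portions of $\partial_{\textnormal{kin}} Q_r(z_0)$, delivers the comparison estimate $\fint_{Q_r(z_0)} \abs{\nabla_v w}^2 \lesssim r^{2\beta} \fint_{Q_r(z_0)} \abs{\nabla_v f}^2$.

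Combining this with the Campanato decay $\fint_{Q_\rho}\abs{\nabla_v h - (\nabla_v h)_{Q_\rho}}^2 \lesssim (\rho/r)^{2\alpha} \fint_{Q_r}\abs{\nabla_v h - (\nabla_v h)_{Q_r}}^2$ for the frozen solution, the triangle inequality produces, for every $\rho \in (0,r/2)$, an excess inequality for $\nabla_v f$ to which Giaquinta's iteration lemma applies; since $\beta < \alpha$, the balancing of scales gives the decay $\fint_{Q_\rho(z_0)} \abs{\nabla_v f - (\nabla_v f)_{Q_\rho(z_0)}}^2 \lesssim \rho^{2\beta}$, uniformly in $z_0 \in W \times V$. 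The kinetic Campanato characterisation of $C^{\beta}_{\textnormal{kin}}$ then upgrades this Morrey-type decay to the desired pointwise Hölder regularity of $\nabla_v f$.

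The principal obstacles lie upstream of this scheme rather than in the Campanato iteration itself. The frozen-coefficient $C^{1,\alpha}_{\textnormal{kin}}$ estimate cannot be obtained by slicing into parabolic problems, because differentiating in $v$ couples with the transport term and so the difference quotient $D_v^h f$ solves only a modified linearised equation whose regularity has to be treated directly in the anisotropic kinetic geometry with its scaling $(t,x,v)\mapsto(r^2 t, r^3 x, r v)$. Likewise, the Dirichlet problem for the frozen equation on kinetic cylinders and the scale-invariant energy estimates underlying the comparison step are non-standard, since the transport term is characteristic in the $x$-direction and the relevant boundary differs from the parabolic one. These preparatory pieces, supplied by the earlier sections of the paper, carry the true technical weight behind the Schauder estimate.
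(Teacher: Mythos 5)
Your overall scheme---frozen-coefficient $C^{1,\alpha}_{\textnormal{kin}}$ regularity, a perturbation/comparison estimate, and a Campanato excess-decay iteration---is the same as the paper's. The paper derives Theorem~\ref{thm.holmodel} by setting $\mu\equiv 0$ in Theorem~\ref{thm.holn}, whose proof runs exactly this scheme through Lemmas~\ref{lem.decay} and~\ref{lem.pointh}, with the constant-coefficient estimate supplied by Theorem~\ref{thm.hol} and the comparison estimate by Lemma~\ref{lem.comp3}. However, there is a concrete gap in your comparison step that would cause the argument to fail as written.

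You freeze the nonlinearity at $a_0(\xi):=a(t_0,x_0,v_0,\xi)$, so that the source in the equation for $w=f-h$ is $F(t,x,v)=a(t,x,v,\nabla_v f)-a(t_0,x_0,v_0,\nabla_v f)$, and you claim $\abs{F}\lesssim r^\beta\abs{\nabla_v f}$ from the H\"older assumption on $a$. But Definition~\ref{def:DH} only controls the oscillation of $a$ in $(x,v)$ \emph{at fixed time}: the modulus~\eqref{defn.osc} compares $a(t,y,w,\xi)$ with $a(t,x,v,\xi)$ for the same $t$, and gives no control whatsoever on $a(t,x_0,v_0,\xi)-a(t_0,x_0,v_0,\xi)$. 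So the bound on $F$ does not follow from the stated hypotheses, and the comparison estimate $\fint_{Q_r}\abs{\nabla_v w}^2\lesssim r^{2\beta}\fint_{Q_r}\abs{\nabla_v f}^2$ is unjustified. The fix is precisely what the paper does: freeze only $(x_0,v_0)$ and retain the $t$-dependence, i.e.\ compare with a solution of $\partial_t g+v\cdot\nabla_x g-\divergence_v(a(t,x_0,v_0,\nabla_v g))=0$ as in~\eqref{eq.comp3}. This in turn requires the constant-coefficient gradient H\"older estimate to hold for $t$-dependent nonlinearities $a=a(t,\xi)$, which is exactly what the paper's Theorem~\ref{thm.hol} provides; your proposal explicitly assumes a nonlinearity depending only on $\xi$, so you would also need to check that your frozen-coefficient ingredient covers the $a(t,\xi)$ case.

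Two further points, less serious but worth flagging. First, you invoke a ``standard energy identity'' on $Q_r(z_0)$, but kinetic cylinders are not smooth and the well-posedness of the frozen Dirichlet problem plus the required energy estimate needs the regularized cylinders $\mathcal{R}Q_r(z_0)$ of Section~\ref{sec4} and Appendix~\ref{appen}; this is a genuine technical obstruction specific to the kinetic geometry, not a routine step. Second, your description of how to obtain the frozen-coefficient $C^{1,\alpha}_{\textnormal{kin}}$ estimate (``differentiating in $v$ and running De Giorgi--Nash--Moser'') glosses over the fact that $v$-differentiation produces a term $\nabla_x f$ from the transport operator; the paper handles this by \emph{first} proving H\"older regularity of $\nabla_x f$ via an iteration of fractional difference quotients in $x$ (Lemma~\ref{lem.grax}), and only then treating $\nabla_v f$ (Theorem~\ref{thm.hol}). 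You note the coupling as an obstacle, but a complete proof would need to spell out the $x$-regularity bootstrap before the $v$-differentiation argument can close.
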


For the precise definition of weak solutions and of the kinetic H\"older spaces $C^{\beta}_{\textnormal{kin}}(W \times V)$ that we use in Theorem \ref{thm.holmodel} and our other results, we refer to Section \ref{setup} below.
\begin{remark}[Autonomous case] \normalfont
	Let us highlight that Theorem \ref{thm.holmodel} is new already in the autonomous case when $a$ does not depend on the coefficient variables $t,x,v$. In this case, Theorem \ref{thm.holmodel} partially answers an open question raised in \cite[Section 6, Problem 3]{GarNys23}. \end{remark} 

Next, we consider the linear case when $a(t,x,v,\xi)=A(t,x,v)\xi$ for some coefficient matrix $A:\mathbb{R} \times \bbR^{n}\times \bbR^n \to \mathbb{R}^{n \times n}$ satisfying
\begin{equation} \label{assumplin}
	|A(t,x,v)| \leq \Lambda, \quad A(t,x,v) \zeta \cdot \zeta \geq \Lambda^{-1} |\zeta|^2 \quad \forall t \in \mathbb{R}, \, x,v,\zeta \in \mathbb{R}^n
\end{equation}
for some $\Lambda \geq 1$, which corresponds to the assumptions \eqref{ass.coef} in the nonlinear case.
In this linear setting, we are able to improve the H\"older exponent from Theorem \ref{thm.holmodel}.

\begin{theorem}[Gradient regularity for linear homogeneous kinetic equations] \label{thm.holmodellin} 
	Assume that $A:\mathbb{R} \times \bbR^n \times \bbR^n \to \mathbb{R}^{n \times n}$ satisfies \eqref{assumplin} and let $f$ be a weak solution to \begin{equation}\label{eq.mainhomlin}
		\partial_t f+v\cdot \nabla_x f-\divergence_v(A(t,x,v) \nabla_v f)=0 \quad\text{in }W\times V\subset \bbR^{n+1}\times \bbR^n.
	\end{equation} If $A$ is H\"older continuous with exponent $\beta \in (0,1)$ in $W \times V$ in the sense of Definition \ref{def:DH}, then $\nabla_v f\in C^{\beta}_{\textnormal{kin}}(W\times V)$. 
\end{theorem}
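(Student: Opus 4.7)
The plan is to carry out a Campanato-type Schauder perturbation argument adapted to the kinetic scaling: freeze the matrix $A$ at a base point, compare $f$ with the resulting constant-coefficient solution, and iterate an excess-decay estimate on kinetic cylinders to upgrade $\nabla_v f$ from merely H\"older continuous with some universal exponent to H\"older continuous with the prescribed $\beta$.

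The first ingredient is a gradient decay estimate for constant-coefficient solutions. Fix a constant matrix $A_0$ satisfying \eqref{assumplin} and let $g$ solve
\[
\partial_t g + v\cdot\nabla_x g - \divergence_v(A_0\nabla_v g)=0.
\]
Since $A_0$ is constant, the operator commutes with $\partial_{x_i}$ and $\partial_t$, so $\partial_{x_i}g$ and $\partial_t g$ solve the same equation, while $\partial_{v_i}g$ solves it modulo the lower-order transport term $\partial_{x_i}g$. Iterating Theorem \ref{thm.holmodel} on these derivatives (feeding in the universal exponent $\alpha(n,\Lambda)$ and absorbing the commutator on the right) promotes $g$ to interior smoothness in the kinetic sense with quantitative bounds in terms of local $L^2$ norms. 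This produces the excess-decay estimate
\begin{equation*}
\frac{1}{|Q_{\rho r}|}\int_{Q_{\rho r}(z_0)} |\nabla_v g - (\nabla_v g)_{Q_{\rho r}(z_0)}|^2 \, dz \leq C\, \rho^{2\gamma} \frac{1}{|Q_r|}\int_{Q_r(z_0)} |\nabla_v g - (\nabla_v g)_{Q_r(z_0)}|^2 \, dz
\end{equation*}
for every $\gamma\in(0,1)$ and every $\rho\in(0,1/2]$, with $C=C(n,\Lambda,\gamma)$.

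Next, given $z_0 \in W\times V$ and a small kinetic radius $r > 0$, set $A_0 := A(z_0)$ and let $g$ be the weak solution of the frozen-coefficient equation on $Q_r(z_0)$ with the same kinetic boundary data as $f$. Testing the equation satisfied by $f - g$ against $f - g$ (with a suitable cutoff so that the transport term $v\cdot\nabla_x(f-g)$ contributes nothing) and using $\|A - A_0\|_{L^\infty(Q_r(z_0))} \leq [A]_{C^\beta_{\textnormal{kin}}} r^\beta$ yields the comparison estimate
\begin{equation*}
\int_{Q_r(z_0)} |\nabla_v(f-g)|^2 \, dz \leq C\, r^{2\beta} \int_{Q_r(z_0)} |\nabla_v f|^2 \, dz.
\end{equation*}
Combining this with the decay for $g$ and the $L^2$-minimality of the mean, fixing $\gamma\in(\beta,1)$ and then $\rho$ small enough so that $C\rho^{2\gamma}\leq \tfrac12\rho^{2\beta}$, a standard iteration lemma on kinetic cylinders gives
\begin{equation*}
\frac{1}{|Q_r(z_0)|}\int_{Q_r(z_0)} |\nabla_v f - (\nabla_v f)_{Q_r(z_0)}|^2 \, dz \leq C\, r^{2\beta}
\end{equation*}
uniformly in $z_0$ in compact subdomains, and the kinetic Campanato characterization of $C^\beta_{\textnormal{kin}}$ then delivers $\nabla_v f \in C^\beta_{\textnormal{kin}}(W\times V)$.

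The main obstacle I expect is Step 1: obtaining the constant-coefficient gradient excess decay for every $\gamma<1$. Theorem \ref{thm.holmodel} by itself only supplies the De~Giorgi-type exponent $\alpha(n,\Lambda)$, which is insufficient to reach the full range $\beta\in(0,1)$. Breaking past $\alpha$ forces a hypoelliptic bootstrap in \emph{all} coordinate directions; differentiating in $v_i$ introduces $x_i$-derivative source terms through the commutator $[\partial_t + v\cdot\nabla_x,\partial_{v_i}]=\partial_{x_i}$, and these must be controlled inductively by higher-order kinetic interior estimates with the correct Galilean scaling. A secondary technical point in Step 2 is that the cutoff in the energy estimate has to be compatible with the kinetic vector fields so as not to destroy the cancellation of the transport term or the anisotropic matching between $\|A-A_0\|_{L^\infty}$ and the scale $r^\beta$.
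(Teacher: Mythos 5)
Your overall scheme --- a Campanato-type Schauder argument with a coefficient-frozen comparison solution and an excess-decay iteration --- is indeed the approach the paper takes, and your Step 2 (the comparison estimate via energy methods, using $\|A-A_0\|_{L^\infty(Q_r)}\lesssim r^\beta$) is correct and matches Lemma~\ref{lem.comp3}. Your Step 3 is standard. The decisive gap, as you yourself suspect, is Step 1, and the mechanism you propose there does not close it.

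The difficulty is precisely this: ``iterating Theorem~\ref{thm.holmodel}'' only gives decay of $\nabla_v g$ with the universal exponent $\alpha(n,\Lambda)$. Differentiating in $x_i$ keeps the equation homogeneous and indeed gives $\nabla_v\partial_{x_i}g\in C^\alpha_{\textnormal{kin}}$, but differentiating in $v_i$ (which is what you actually need) produces the source term $\partial_{x_i}g$, so $\partial_{v_i}g$ solves an equation with a nonzero right-hand side. Feeding that into the nondivergence-data theorem (Theorem~\ref{thm.holn}) again caps the exponent below $\alpha$. Nowhere in your bootstrap do you actually overtake $\alpha$; smoothness of $g$ alone does not automatically furnish the excess-decay exponent $\gamma$ for \emph{every} $\gamma<1$ with the right quantitative dependence on $L^2$ norms. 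The missing idea is the Morrey-type upgrade the paper isolates in Lemmas~\ref{lem.bdddiv} and~\ref{lem.higsolhol}: the $\alpha$-excess decay for $\nabla_v$ yields a $\mathrm{BMO}$ bound on $\nabla_v$ of the comparison solution, and a $\mathrm{BMO}$ gradient gives the solution \emph{itself} any H\"older exponent $\gamma<1$ (near-Lipschitz via Poincar\'e). The paper then applies this $C^\gamma$ estimate with $L^\infty$ data to the difference quotient $\delta_h^v g/|h|$, controlling the commutator $h\cdot\nabla_x g_h^v$ by the quantitative $\|\nabla_x g\|_{L^\infty}$ bound from Lemma~\ref{lem.grax}, and lets $h\to0$; this is \eqref{res.lin} and it is what breaks past $\alpha$.

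A secondary issue: you propose to freeze $A$ at the full base point $z_0=(t_0,x_0,v_0)$. Under Definition~\ref{def:DH} the H\"older assumption only controls the oscillation of $a$ in $(x,v)$ at \emph{fixed} $t$, so $|A(t,x,v)-A(t_0,x_0,v_0)|$ is not bounded by $r^\beta$ for $t\neq t_0$. The comparison solution should instead satisfy $\partial_t g+v\cdot\nabla_x g=\divergence_v(A(t,x_0,v_0)\nabla_v g)$, i.e.\ the coefficient retains its $t$-dependence (as in the paper's Lemma~\ref{lem.comp3} and Theorem~\ref{thm.hol}). Your energy comparison then still works verbatim, but Step 1 must be carried out for a $t$-dependent, $(x,v)$-constant coefficient rather than a truly constant matrix.
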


We note that in \cite{Loh23}, Loher obtained corresponding $C^{k,\beta}$ regularity results for $k \geq 2$. More precisely, in \cite{Loh23} it was in particular proved that in the setting of Theorem \ref{thm.holmodellin}, for any integer $k \geq 2$ and any $\beta \in (0,1)$, $u$ satisfies local $C^{k,\beta}$ estimates whenever the coefficient matrix $A$ satisfies a $C^{k-1,\beta}$ assumption.

Since in Theorem \ref{thm.holmodellin} we obtain a similar result also for $k=1$ at least with respect to the velocity variable, Theorem \ref{thm.holmodellin} therefore complements the higher-order Schauder theory for linear kinetic Fokker-Planck equations in divergence form developed in \cite{Loh23}. 

\subsection{Setting} \label{setup}

Before being able to state our further main results involving general forcing terms, we need to introduce our setting more rigorously. We begin by recalling several function spaces. Indeed, we consider the kinetic Sobolev space
\begin{equation*}
	H^1_{\mathrm{kin}}(W\times V)=\{f\in L^2(W;H^1(V))\,:\,\partial_t f+v\cdot\nabla_x f\in L^2(W;H^{-1}(V))\}
\end{equation*}
and also the subspace $\mathcal{W}(W\times V)$ that is defined as the closure of $C^\infty(\overline{W\times V})$ in the norm of $H^1_{\mathrm{kin}}(W \times V)$,
which are introduced in \cite{AlbArmMouNov24,LitNys21}.
For any $C^{1,1}$-domain $W\subset \bbR^{n+1}$ and any open set $V \subset \mathbb{R}^n$, we denote the Kolmogorov boundary of $W\times V$ by
\begin{equation}\label{defn.kbdry}
	\partial_{\mathcal{K}}(W\times V)\coloneqq ({W}\times \partial V)\cup \{(t,x,v)\in \partial W\times \overline{V}\,:\,(1,v)\cdot N_{t,x}<0\},
\end{equation}
where $N_{t,x}$ is the outer normal unit vector to $W$ at $(t,x)$.
For $r>0$ and $z_0\coloneqq (t_0,x_0,v_0) \in \mathbb{R} \times \mathbb{R}^n \times \mathbb{R}^n$, we consider the \emph{kinetic cylinder} $Q_r(z_0)$ defined by
\begin{equation*}
	Q_r(z_0)\coloneqq\{(t,x,v)\in\bbR\times\bbR^n\times\bbR^n\,:\,t\in I_r(t_0),\,v\in B_r(v_0),\, |x-x_0-(t-t_0)v_0|<r^3\},
\end{equation*}
where
$$ I_r(t_0):=(t_0-r^2,t_0].$$
{If $z_0=0$, we write $Q_r=Q_r(0)$ for simplicity.}
Throughout the paper, we work with the following weak formulation of \eqref{eq.main}.
\begin{definition}[Weak solutions]
	Let $\mu\in L^2(W\times V)$ and $G \in L^2(W \times V,\mathbb{R}^n)$. We say that $f\in H^1_{\mathrm{kin}}(W\times V)$ is a weak solution to \eqref{eq.main}, if for any
	$\oldphi\in L^2(W;H_0^1(V))$,
	\begin{align*}
		&\int_{W}\left<(\partial_t+v\cdot\nabla_x )f,\oldphi\right>\,dx\,dt+\int_{W}\int_{V}a(t,x,v,\nabla_v f)\cdot \nabla_v \oldphi\,dz\\
		&\quad=\int_{W}\int_{V}\mu\oldphi\,dz + \int_{W}\int_{V}G \cdot \nabla_v \oldphi\,dz,
	\end{align*}
	where $\skp{\cdot}{\cdot}\coloneqq\skp{\cdot}{\cdot}_{H^{-1},H^{1}}$.
\end{definition}
\begin{remark}\label{rmk.weak} \normalfont
	We note that for any weak solution $f\in H^1_{\mathrm{kin}}(W\times V)$ to \eqref{eq.main} with $\mu\in L^2(W\times V)$, we actually have that $f\in L^\infty(I;L^2(U_x\times U_v))$ for any $I\times U_x\times U_v\Subset W\times V$, where $I \subset \mathbb{R}$ and $U_x,U_v\subset \bbR^n$, which we prove in Appendix \ref{appen}.
\end{remark}

\begin{definition}[Kinetic H\"older spaces]
	Let $\beta \in (0,1)$. For any kinetic cylinder $Q \subset \bbR\times\bbR^n\times\bbR^n$ and for any function $f:Q \to \mathbb{R}$, we define
	\begin{equation}\label{defn.hol}
		[f]_{C_\textnormal{kin}^\beta(Q)}\coloneqq\sup\limits_{(t,x,v),(s,y,w)\in Q}\frac{|f(t,x,v)-f(s,y,w)|}{\left(|t-s|^{\frac12}+|y-x-(s-t)v|^{\frac13}+|v-w|\right)^\beta}.
	\end{equation}
	Given open sets $W \subset \bbR\times\bbR^n$ and $V \subset \mathbb{R}^n$, the kinetic H\"older space $C_\textnormal{kin}^\beta(W \times V)$ is then defined as the set of all functions $f \in L^\infty_{\loc}(W \times V)$ with $[f]_{C_\textnormal{kin}^\beta(Q)}<\infty$ for any kinetic cylinder $Q \Subset W \times V$.
\end{definition}



Next, we define various notions of continuity of the nonlinearity $a(t,x,v,\xi)$.
\begin{definition}[Dini and H\"older coefficients] \label{def:DH}
	Given a kinetic cylinder $Q \subset \bbR\times\bbR^n\times\bbR^n$, let ${\pmb{\omega}}:\bbR^+\to\bbR^+$ be a non-decreasing function with ${\pmb{\omega}}(0)=0$ such that
	\begin{equation}\label{defn.osc}
		\sup_{\substack{(t,y,w),(t,x,v)\in Q}}|a(t,y,w,\xi)-a(t,x,v,\xi)|\leq {\pmb{\omega}}\left(\max\left\{|y-x|^{\frac13},|w-v|\right\}\right)|\xi|.
	\end{equation}
	\begin{itemize}
		\item We say that $a$ is Dini continuous in $Q$, if  
		\begin{equation}\label{cond.dini}
			\int_{0}^{1}\frac{{\pmb{\omega}}(\rho)}\rho\,d\rho<\infty.
		\end{equation} 
		\item We say that $a$ is H\"older continuous with exponent $\beta \in (0,1)$ in $Q$, if ${\pmb{\omega}}(\rho)\leq c\rho^\beta$ for some $c>0$.
		\item Given open sets $W \subset \bbR^{n+1}$ and $V \subset \mathbb{R}^n$, we say that $a$ is Dini continuous in $W \times V$, if $a$ is Dini continuous in every kinetic cylinder $Q \Subset W \times V$.
		\item We say that $a$ is H\"older continuous with exponent $\beta \in (0,1)$ in $W \times V$, if $a$ is H\"older continuous with exponent $\beta$ in every kinetic cylinder $Q \Subset W \times V$.
		\item Given $A:\mathbb{R} \times \bbR^{n}\times \bbR^n \to \mathbb{R}^{n \times n}$, we say that $A$ satisfies any of the above conditions, if that condition is satisfied by $a(t,x,v,\xi)=A(t,x,v)\xi$.
	\end{itemize}
\end{definition}
In order to formulate our main results involving general nondivergence-type data $\mu$, we next define a truncated kinetic Riesz-type potential of $\mu$ by
\begin{equation}\label{defn.riesz}
	I_{\alpha}^{|\mu|}(z_0,R)\coloneqq \int_{0}^{R}\frac{|\mu|(Q_r(z_0))}{r^{4n+2-\alpha}}\frac{\,dr}{r},
\end{equation}
where $\alpha\in (0,4n+2)$ and 
$
	|\mu|(Q_r(z_0))\coloneqq \norm{\mu}_{L^1(Q_r(z_0))}.
$

\subsection{Fine gradient regularity}
We prove the following pointwise estimates of the gradient of solutions via potentials, extending the known gradient potential estimates for nonlinear parabolic equations due to Duzaar and Mingione (see \cite{DuzMin11a}) to the kinetic setting.
\begin{theorem}[Kinetic gradient potential estimates]\label{thm.dini}
	Let $f\in H^1_{\mathrm{kin}}(W\times V)$ be a weak solution to \eqref{eq.main} for $G \equiv 0$, where $\mu \in L^2(W \times V)$ and $a$ satisfies Assumption \ref{assump}. If $a$ is Dini continuous in $W \times V$, then for any $R>0$ and almost every $z_0=(t_0,x_0,v_0) \in W \times V$ with $Q_{2R}(z_0)\subset W\times V$, we have
	\begin{equation*}
		|\nabla_v f(z_0)|\leq c\left(\dashint_{Q_R(z_0)}|\nabla_v f|\,dz+I_1^{|\mu|}(z_0,R)\right),
	\end{equation*}
	where $c=c(n,\Lambda,{\pmb{\omega}})$.
\end{theorem}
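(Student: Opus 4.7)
The plan is to follow the comparison-excess-decay scheme developed by Duzaar and Mingione in the parabolic framework, now adapted to the anisotropic kinetic geometry encoded by the cylinders $Q_r(z_0)$. Concretely, I fix a Lebesgue point $z_0$ of $\nabla_v f$ with $Q_{2R}(z_0) \Subset W \times V$, and aim to produce a Campanato-type excess decay
\[
	E(Q_{\sigma r}(z_0)) \leq C \sigma^{\beta} E(Q_r(z_0)) + C\,\pmb{\omega}(r) \Xint-_{Q_r(z_0)}\abs{\nabla_v f}\,dz + C\,\frac{\abs{\mu}(Q_r(z_0))}{r^{4n+1}},
\]
where $E(Q_r(z_0)) \coloneqq \Xint-_{Q_r(z_0)} \abs{\nabla_v f - (\nabla_v f)_{Q_r(z_0)}}\,dz$ and $\sigma \in (0,1/2)$ is small but fixed. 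Iterating this inequality on the geometric sequence $r_j = \sigma^j R$, summing the telescoping averages of $\nabla_v f$, recognising $\sum_j \pmb{\omega}(r_j)$ as controlled via the Dini condition \eqref{cond.dini} and $\sum_j r_j^{-(4n+1)}\abs{\mu}(Q_{r_j}(z_0))$ as a Riemann sum for $I_1^{\abs{\mu}}(z_0,R)$ (up to a constant depending on $\sigma$), will give the claimed pointwise bound at every Lebesgue point of $\nabla_v f$.

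The two ingredients feeding the excess decay are: (i) a \emph{comparison estimate} and (ii) the \emph{homogeneous Schauder bound} of Theorem \ref{thm.holmodel}. For (i), given $Q_r=Q_r(z_0)$ I would solve, on $Q_r$, the frozen homogeneous problem
\[
	\partial_t g + v \cdot \nabla_x g - \divergence_v\!\bigl(\tilde a(t,\nabla_v g)\bigr)=0,\qquad g = f \text{ on } \partial_{\mathcal{K}}Q_r,
\]
where $\tilde a$ is obtained by averaging the coefficients of $a$ in $(x,v)$ over the cylinder. Existence in $\mathcal{W}(Q_r)$ together with testing with $f-g \in L^2(I_r;H^1_0(\cdot))$ produces, using the strong monotonicity in \eqref{ass.coef} and Assumption \ref{assump}, an estimate of the form
\[
	\Xint-_{Q_r} \abs{\nabla_v f - \nabla_v g}\,dz \leq C\,\pmb{\omega}(r)\Xint-_{Q_r}\abs{\nabla_v f}\,dz + C\,\frac{\abs{\mu}(Q_r)}{r^{4n+1}},
\]
the factor $r^{4n+1}=\abs{Q_r}/r$ reflecting the one-dimensional Sobolev gain in the velocity variable against the natural kinetic scaling. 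For (ii), Theorem \ref{thm.holmodel} applied to $g$ yields $[\nabla_v g]_{C^\beta_{\mathrm{kin}}(Q_{r/2})} \leq C r^{-\beta}\Xint-_{Q_r}\abs{\nabla_v g}\,dz$ for some $\beta \in (0,\alpha)$, giving the Campanato-type decay $E_g(Q_{\sigma r}) \leq C \sigma^\beta E_g(Q_r)$ for the excess of $\nabla_v g$. Combining with the comparison estimate via the triangle inequality in $L^1$ delivers the excess decay displayed above.

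The iteration is then standard: writing $(\nabla_v f)_{Q_{r_{j+1}}} - (\nabla_v f)_{Q_{r_j}}$ in terms of $E(Q_{r_j})$, summing in $j$, and using that by Dini continuity the tail $\sum_{j}\pmb{\omega}(r_j)$ is finite (with the $\pmb{\omega}$-weighted term in the iteration absorbed because $\pmb{\omega}(r_j) \to 0$), one proves that $\{(\nabla_v f)_{Q_{r_j}}\}_j$ is a Cauchy sequence whose limit at any Lebesgue point equals $\nabla_v f(z_0)$, and the quantitative bound of the theorem follows.

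I expect the main technical obstacle to lie in the comparison step: unlike the parabolic setting, the cylinders $Q_r(z_0)$ are adapted to the Galilean-type kinetic group, and solving the homogeneous Dirichlet problem on $Q_r$ with prescribed Kolmogorov boundary data from $f$ requires the functional framework of $\mathcal{W}(Q_r)$ together with the trace and extension machinery for $H^1_{\mathrm{kin}}$; ensuring that $f - g$ is a legitimate test function and that the energy identity closes up to produce the correct $r^{-(4n+1)}\abs{\mu}(Q_r)$ right-hand side is the step where the anisotropy really bites. Everything else — Caccioppoli on $Q_r$, the $\pmb{\omega}$-freezing of coefficients, and the iteration — is morally parallel to the parabolic Duzaar-Mingione argument once this comparison estimate is in place.
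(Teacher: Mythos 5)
Your overall strategy — comparison with a frozen homogeneous solution, excess decay via the $C^{1,\beta}_{\mathrm{kin}}$ estimate for the homogeneous problem (Theorem \ref{thm.holmodel}), and Dini/Riesz iteration — is correct and is exactly the skeleton the paper follows. However, there is a genuine gap, and it is precisely at the place you flag as the obstacle but then dismiss as a functional-analytic technicality.

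Your asserted comparison estimate
\[
	\dashint_{Q_r}\abs{\nabla_v f-\nabla_v g}\,dz \leq C\,{\pmb{\omega}}(r)\dashint_{Q_r}\abs{\nabla_v f}\,dz + C\,\frac{\abs{\mu}(Q_r)}{r^{4n+1}}
\]
does \emph{not} follow from "testing with $f-g$ and using strong monotonicity." A plain energy test produces an $L^2$-$L^2$ duality pairing and hence controls $\|\nabla_v(f-g)\|_{L^2}$ in terms of $\|\mu\|_{L^2}$, not the $L^1$ mass $\abs{\mu}(Q_r)$. Getting the $L^1$-type right-hand side is the entire point of the potential-theoretic framework and is the hardest part of the argument. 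In the parabolic case (Duzaar--Mingione) one passes from $L^2$ to $L^1$ by testing with truncations $\Phi_j(f-w)$, deriving a bound of the form $\int \abs{\nabla_v(f-w)}^2(1+\abs{f-w})^{-\xi}\,dz\leq c$, and then closing an interpolation using the $L^\infty_t L^1_{x,v}$ estimate from the $\Phi_j$-testing together with the parabolic Sobolev embedding. In the kinetic setting that interpolation fails as stated: the Sobolev gain through the $v$-variable and the time variable covers only the $(t,v)$-dimensions of the $(4n+2)$-dimensional cylinder, and the $x$-variable, which carries $3n$ effective dimensions in the kinetic scaling, is untouched. This is not an issue of traces or test-function admissibility; even for smooth data and smooth solutions the interpolation does not close.

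The paper's resolution — and its main technical novelty, explicitly highlighted in the introduction — is to first prove a fractional differentiability result for $f$ in the spatial variable $x$ alone, of the form $[f]_{L^1_t W^{\gamma,1}_x L^1_v}\lesssim \|f\|_{L^1}+\abs{\mu}(Q)$ (Lemma \ref{lem.xreg}), via a nonlinear atomic decomposition: one takes difference quotients $\delta^x_h f$, covers $Q_1$ by kinetic cylinders of radius comparable to $|h|^{1/2}$, applies the zero-order $L^1$ comparison estimate (Lemma \ref{lem.comp1}, itself obtained via the truncations $\Phi_j$) on each small cylinder, and exploits the $C^{\alpha_0}_{\mathrm{kin}}$ regularity of the comparison functions. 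Only after feeding this $x$-regularity into the interpolation between the $L^\infty_t L^1_{x,v}$ bound, the $L^1_t W^{1,1}_v$ gradient information, and the fractional $x$-Sobolev embedding does the gradient comparison estimate (Lemma \ref{lem.comp2}) close with the correct $\abs{\mu}(Q_r)/r^{4n+1}$ right-hand side. Your proposal omits this ingredient entirely, so the crucial first-order comparison estimate is unjustified, and with it the whole excess-decay machine. Everything downstream in your outline (the one-step freezing, the Dini summation, the Lebesgue-point argument) is fine, and in fact close to the paper's Lemmas \ref{lem.decay} and \ref{lem.point}, but it hangs on a comparison estimate you have not actually proved.
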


\begin{remark}[Consistency with parabolic case]\label{rmk.dini} \normalfont
	Consider the spatially homogeneous case when $u=u(t,v)$ is a weak solution to the parabolic equation
	\begin{equation*}
		\partial_t u-\divergence_v(a(t,v,\nabla_v u))=\mu,
	\end{equation*}
	where $a(t,v,\xi)$ is Dini continuous and $\mu=\mu(t,v)$.
	Then $u$ is also a weak solution to 
	\begin{equation*}
		\partial_t u+v\cdot \nabla_x u-\divergence_v(a(t,v,\nabla_v u))=\mu.
	\end{equation*}
	In this case we recover from Theorem \ref{thm.dini} the parabolic gradient potential estimates from \cite[Theorem 1.3]{DuzMin11a}, since
	\begin{equation*}
		\int_{0}^{R}\frac{|\mu|(Q_r(z_0))}{r^{4n+1}}\frac{\,dr}{r}\lesssim\int_{0}^{R}\frac{|\mu|(I_r(t_0)\times B_r(v_0))}{r^{n+1}}\frac{\,dr}{r}.
	\end{equation*}
\end{remark}

In view of a slight variation of Theorem \ref{thm.dini} and the mapping properties of the kinetic Riesz potentials, we have the following criteria for gradient continuity.
\begin{corollary}[Gradient continuity via potentials]\label{cor.dini}
	Let $f\in H^1_{\mathrm{kin}}(W\times V)$ be a weak solution to \eqref{eq.main} with $G \equiv 0$, where $\mu \in L^2(W \times V)$, while $a$ satisfies Assumption \ref{assump} and is Dini continuous in $Q_{2R}(z_0)\subset W\times V$. If 
	\begin{equation}\label{ass.cor.dini}
		\lim_{\rho\to0}\sup_{z_1\in Q_{R}(z_0)}I^{|\mu|}_1(z_1,\rho)=0,
	\end{equation} then $\nabla_v f$ is continuous in $Q_{R/2}(z_0)$.
	
	In particular, if $\mu\in L^{4n+2,1}(W \times V)$, then $\nabla_v f$ is continuous in $W \times V$.
\end{corollary}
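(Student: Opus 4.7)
The goal is to promote the pointwise bound of Theorem \ref{thm.dini} to continuity of $\nabla_v f$. I would proceed in two stages: first, obtain a Cauchy-type control on the averages of $\nabla_v f$ over shrinking kinetic cylinders that is uniform in the base point; second, verify that the $L^{4n+2,1}$ assumption implies hypothesis \eqref{ass.cor.dini}.

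For the first stage, the ``slight variation of Theorem \ref{thm.dini}'' I have in mind is a quantitative estimate of the form
\begin{equation*}
	\bigl|(\nabla_v f)_{Q_\sigma(z_1)} - (\nabla_v f)_{Q_\rho(z_1)}\bigr| \le c\,\bigl[\eta(\rho) + I_1^{|\mu|}(z_1,\rho)\bigr]
\end{equation*}
valid for every $z_1 \in Q_{R/2}(z_0)$ and all $0 < \sigma \le \rho \le R/2$, where $\eta(\rho)\to 0$ as $\rho \to 0$ and depends only on $n,\Lambda,{\pmb{\omega}}$ and on the excess of $\nabla_v f$ over $Q_R(z_0)$. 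Such a bound is built into the very same dyadic comparison scheme that proves Theorem \ref{thm.dini}: on each scale one compares $f$ with the solution of the homogeneous equation having coefficients frozen at $z_1$, exploits the $C^{1,\alpha}$-type decay of averages for the comparison function (ultimately rooted in Theorem \ref{thm.holmodel}), and sums the resulting errors into a telescoping series whose tail is controlled by $I_1^{|\mu|}(z_1,\rho)$ together with the Dini modulus ${\pmb{\omega}}$ of $a$. Under hypothesis \eqref{ass.cor.dini}, the right-hand side tends to zero as $\rho\to 0$ uniformly in $z_1 \in Q_{R/2}(z_0)$, so the averages form a uniform Cauchy family; their uniform limit is continuous on $Q_{R/2}(z_0)$ and, by Lebesgue differentiation, coincides almost everywhere with $\nabla_v f$, producing the continuous representative.

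For the second stage, since $|Q_r| \asymp r^{4n+2}$, the Lorentz--H\"older inequality applied to the conjugate pair $(L^{4n+2,1},L^{(4n+2)/(4n+1),\infty})$ together with $\|1_{Q_r}\|_{L^{(4n+2)/(4n+1),\infty}} \asymp r^{4n+1}$ yields
\begin{equation*}
	|\mu|(Q_r(z_1)) \le c\,\|\mu\|_{L^{4n+2,1}(Q_r(z_1))}\,r^{4n+1},
\end{equation*}
and hence
\begin{equation*}
	I_1^{|\mu|}(z_1,\rho) \le c\int_0^\rho \|\mu\|_{L^{4n+2,1}(Q_r(z_1))}\,\frac{dr}{r}.
\end{equation*}
A Fubini-type rewriting of the right-hand side via the decreasing rearrangement of $|\mu|$ identifies it, up to scaling-invariant constants, with a Riesz-type convolution of $|\mu|$ against a kernel in $L^{(4n+2)/(4n+1),\infty}$; the absolute continuity of the Lorentz norm with respect to the underlying set of integration then forces this quantity to tend to zero as $\rho \to 0$, uniformly in $z_1$ on compact subsets of $W\times V$. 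This is precisely \eqref{ass.cor.dini}, so the first part of the corollary delivers continuity of $\nabla_v f$ throughout $W\times V$.

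The main obstacle, as I see it, is establishing the Cauchy-on-averages variant of the first stage with genuine uniformity in the base point $z_1$ and with a Dini modulus $\eta$ independent of $z_1$. This amounts to carefully tracking constants in the dyadic iteration behind Theorem \ref{thm.dini} and running the frozen-coefficient comparison at each $z_1 \in Q_{R/2}(z_0)$ on a common enclosing cylinder contained in $Q_{2R}(z_0)$, rather than only at the single center $z_0$.
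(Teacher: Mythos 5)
The overall strategy you sketch — a telescoping comparison giving a uniform Cauchy estimate on averages, plus a mapping property of the potential in Lorentz spaces — is broadly aligned with what the paper does, but both stages as written have genuine gaps.

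For the first stage, the crux is exactly the issue you flag at the end, and it is more than a matter of tracking constants. The telescoping estimate produced by the dyadic comparison scheme is of the form
\begin{equation*}
|\nabla_v f(z_1)-(\nabla_v f)_{Q_r(z_1)}| \le c\,E(\nabla_v f;Q_r(z_1)) + c\Bigl(\int_0^r \tfrac{{\pmb{\omega}}(\rho)}{\rho}\,d\rho\Bigr)\|\nabla_v f\|_{L^\infty} + c\,I_1^{|\mu|}(z_1,r),
\end{equation*}
and the first term, the excess of $\nabla_v f$ at scale $r$, does \emph{not} vanish as $r\to 0$ just because the summed excesses over dyadic scales are uniformly bounded. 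The paper handles this by first proving a VMO property (this is the content of the proof of Corollary~\ref{cor.vmo}): one uses the $L^\infty$ bound on $\nabla_v f$ from Theorem~\ref{thm.dini} together with the single-step decay Lemma~\ref{lem.decay}, then a two-scale choice — first $\rho$ small so that $c\rho^\alpha\|\nabla_v f\|_{L^\infty}$ is small, then $r$ small so that the $\pmb{\omega}(r)$ and $|\mu|(Q_r)/r^{4n+1}$ contributions are small — to force $\sup_{z_1}E(\nabla_v f;Q_r(z_1))\to 0$. That is a separate argument, not a bookkeeping refinement of the Theorem~\ref{thm.dini} iteration, and it is where hypotheses \eqref{cond.dini} and \eqref{ass.cor.dini} are actually exploited beyond giving boundedness.

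For the second stage, your intermediate inequality
\begin{equation*}
I_1^{|\mu|}(z_1,\rho)\le c\int_0^\rho \|\mu\|_{L^{4n+2,1}(Q_r(z_1))}\,\frac{dr}{r}
\end{equation*}
is valid but lossy, and the quantity on its right can be \emph{infinite} for $\mu\in L^{4n+2,1}$: writing everything in terms of the decreasing rearrangement and applying Fubini produces an extra logarithmic factor compared to $\|\mu\|_{L^{4n+2,1}(Q_\rho)}$, so the estimate requires a strictly smaller Lorentz--Zygmund class. The correct route (which is what \eqref{ineq2.riesz} of Lemma~\ref{lem.riesz} records, citing the nonlocal analogue) is to rewrite the \emph{potential itself}, not this intermediate bound: Fubini identifies $I_1^{|\mu|}(z_1,\rho)$ with $\int_{Q_\rho(z_1)}|\mu(z)|\,d(z,z_1)^{-(4n+1)}\,dz$ up to constants, the kernel lies in $L^{(4n+2)/(4n+1),\infty}$, and Lorentz--H\"older then gives $I_1^{|\mu|}(z_1,\rho)\lesssim\|\mu\|_{L^{4n+2,1}(Q_\rho(z_1))}$ directly, after which absolute continuity of the $L^{p,1}$ quasinorm finishes the argument. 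Your closing sentence gestures at exactly this convolution, but it is applied to the wrong side of your intermediate inequality.
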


For the precise definition of the Lorentz space $L^{4n+2,1}(W \times V)$, we refer to Definition \ref{def.Lorentz} below. We note that since the Dini assumption on $a$ and the Lorentz assumption on $\mu$ in Corollary \ref{cor.dini} go beyond the standard H\"older and $L^p$ scales, Corollary \ref{cor.dini} can be thought of as a regularity result of borderline flavour. Indeed, being able to detect such fine scales is one of the key strengths of the nonlinear potential-theoretic methods we use in comparison to more traditional approaches.

We also have the following criterion for VMO regularity of the gradient, which yields slightly weaker control on the oscillations of $\nabla_v u$ than continuity under slightly weaker assumptions on the data than in Corollary \ref{cor.dini}.

\begin{corollary}[VMO criterion]\label{cor.vmo}
	Let $f\in H^1_{\mathrm{kin}}(W\times V)$ be a weak solution to \eqref{eq.main} with $G \equiv 0$, where $\mu \in L^2(W \times V)$, while $a$ satisfies Assumption \ref{assump} and is Dini continuous in $Q_{2R}(z_0)\subset W\times V$. If 
	\begin{equation}\label{ass.cor.vmo}
		\sup_{z_1\in Q_{R}(z_0)}I^{|\mu|}_1(z_1,\rho)<\infty\quad\text{and}\quad \lim_{\rho\to0}\sup_{z_1\in Q_{R}(z_0)}\frac{|\mu|(Q_\rho(z_1))}{\rho^{4n+1}}=0,
	\end{equation} then $\nabla_v f\in \mathrm{VMO}(Q_{R/2}(z_0))$.
\end{corollary}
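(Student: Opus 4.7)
The plan is to show that dyadic averages of $\nabla_v f$ have vanishing excess in a uniform-in-centre fashion, by iterating an excess-decay inequality that is already implicit in the proof of Theorem \ref{thm.dini}. First I would note that the hypothesis $\sup_{z_1\in Q_R(z_0)} I_1^{|\mu|}(z_1,R) < \infty$ lets me invoke Theorem \ref{thm.dini} at every Lebesgue point of $\nabla_v f$ inside $Q_{R/2}(z_0)$, producing a uniform pointwise bound $\|\nabla_v f\|_{L^\infty(Q_{R/2}(z_0))} \le M$. This reduces the corollary to showing
\[
\lim_{\rho\to 0}\ \sup_{z_1\in Q_{R/2}(z_0)}\ E(\rho, z_1) = 0,\qquad E(\rho,z_1) := \dashint_{Q_\rho(z_1)} \bigl| \nabla_v f - (\nabla_v f)_{Q_\rho(z_1)}\bigr|\,dz.
\]

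The technical input I would lift from the proof of Theorem \ref{thm.dini} is an excess-decay of the following type: freeze the coefficients on $Q_\rho(z_1)$, compare $f$ with a weak solution $h$ of the associated homogeneous frozen-coefficient equation, apply the Campanato-type decay of $\nabla_v h$ furnished by Theorem \ref{thm.holmodel}, and bound $\nabla_v(f-h)$ via a Caccioppoli-type estimate in terms of ${\pmb{\omega}}(\rho)\,|\nabla_v f|$ and $\mu$. This yields, for some universal $\sigma=\sigma(n,\Lambda)\in(0,1/2)$ and every $Q_{2\rho}(z_1)\subset Q_R(z_0)$,
\[
E(\sigma\rho, z_1) \le \tfrac12\, E(\rho, z_1) + c\,{\pmb{\omega}}(\rho)\,M + c\,\frac{|\mu|(Q_\rho(z_1))}{\rho^{4n+1}}.
\]

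Iterating at scales $\rho_j := \sigma^j R$ and writing $a_j(z_1):= |\mu|(Q_{\rho_j}(z_1))/\rho_j^{4n+1}$ produces
\[
E(\rho_k, z_1) \le 2^{-k}\, E(R, z_1) + c \sum_{j=0}^{k-1} 2^{-(k-1-j)} \bigl( {\pmb{\omega}}(\rho_j)\,M + a_j(z_1) \bigr).
\]
Since $E(R,z_1)\le 2M$, the first term vanishes as $k\to\infty$. For the ${\pmb{\omega}}$-convolution, Dini continuity of $a$ gives ${\pmb{\omega}}(\rho_j)\to 0$, and a discrete Young splitting (bound the head by the rate, the tail by the $\ell^1$-norm of the geometric weights) makes the sum vanish as $k\to\infty$. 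For the $\mu$-convolution, the second condition in \eqref{ass.cor.vmo} furnishes $\sup_{z_1} a_j(z_1)\to 0$ as $j\to\infty$, while a dyadic discretization of $I_1^{|\mu|}$ (using that $Q_{\rho_{j+1}}(z_1)\subset Q_r(z_1)\subset Q_{\rho_j}(z_1)$ for $r\in[\rho_{j+1},\rho_j]$) converts the first condition into $\sup_{z_1}\sum_j a_j(z_1) \le c(\sigma)\,\sup_{z_1} I_1^{|\mu|}(z_1,R) < \infty$; the same discrete Young splitting then makes the $\mu$-convolution tend to $0$ uniformly in $z_1$. Letting $k\to\infty$ yields $\sup_{z_1\in Q_{R/2}(z_0)} E(\rho,z_1)\to 0$, i.e.\ $\nabla_v f\in \mathrm{VMO}(Q_{R/2}(z_0))$.

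The main obstacle I foresee is precisely the uniform-in-$z_1$ handling of the Riesz-type convolution: neither hypothesis in \eqref{ass.cor.vmo} is sufficient in isolation—pointwise vanishing without summability on the one hand, uniform summability without a rate on the other—so one must combine them through the discrete Young splitting, which is the structural reason both conditions are imposed. All remaining steps are adaptations of the comparison and iteration machinery already developed for Theorem \ref{thm.dini}.
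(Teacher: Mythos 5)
Your proof is correct and reaches the right conclusion, but it takes a noticeably different route from the paper's. The paper's argument is a \emph{single-scale} argument: after the $L^\infty$ bound $\|\nabla_v f\|_{L^\infty(Q_{3R/4}(z_0))}\le M$ has been secured from Theorem \ref{thm.dini}, one applies Lemma \ref{lem.decay} once to write
\[
E(\nabla_v f;Q_{\rho r}(z_1)) \le c\Bigl(\bigl[\rho^\alpha + \rho^{-(4n+2)}{\pmb{\omega}}(r)\bigr]M + \rho^{-(4n+2)}\frac{|\mu|(Q_r(z_1))}{r^{4n+1}}\Bigr),
\]
then first fixes $\rho$ so that $c\rho^\alpha M \le \epsilon/2$, and subsequently shrinks $r$ so that the remaining two terms are $\le \epsilon/2$ (using $\pmb\omega(r)\to 0$ from Dini continuity and the second assumption in \eqref{ass.cor.vmo}); since this holds for every $r$ below a threshold, the excess is $\le\epsilon$ at \emph{all} sufficiently small scales $\rho r$. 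There is no iteration and no convolution bookkeeping. You instead iterate the excess-decay geometrically and then control the resulting geometric convolution by a discrete Young splitting; this is a sound argument, and your structural observation about how the two hypotheses interact is insightful, but it is more machinery than the paper needs once the $L^\infty$ bound is in hand (which lets one replace the full iteration by the crude bound $E(\nabla_v f;Q_r(z_1))\le 2M$). Two small points to tighten: (i) the iteration should start at a scale like $R/1000$ rather than $R$, so that all shrunken cylinders around $z_1\in Q_{R/2}(z_0)$ remain inside $Q_{2R}(z_0)$ (cf.\ Lemma \ref{lem.cov1}); (ii) in your convolution estimate for the $\mu$-part, the head $\sum_{j<J} 2^{-(k-1-j)}a_j(z_1)$ can be killed by the trivial bound $a_j(z_1)\le |\mu|(Q_{2R}(z_0))/\rho_j^{4n+1}$ (finitely many terms), so the first condition in \eqref{ass.cor.vmo} is not really needed there — its real role is to guarantee the $L^\infty$ bound on $\nabla_v f$ via Theorem \ref{thm.dini}.
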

Here $\mathrm{VMO}(Q_{R/2}(z_0))$ denotes the standard space of (vector-valued) functions with vanishing mean oscillation in $Q_{R/2}(z_0)$, see e.g. \cite{Sarason,MPS00}.

Our gradient potential estimates also imply the following $L^p$ estimates.
\begin{corollary}[Calder\'on-Zygmund estimates]\label{cor.cal}
	Let $f\in H^1_{\mathrm{kin}}(W\times V)$ be a weak solution to \eqref{eq.main} with $G \equiv 0$, where $a$ satisfies Assumption \ref{assump} and is Dini continuous in $Q_{2R}(z_0)\subset W\times V$. Then for any $q\in[2,4n+2)$, we have the estimate
	\begin{align}\label{ineq1.cal}
		\left(\dashint_{Q_R(z_0)}|\nabla_v f|^{\frac{q(4n+2)}{4n+2-q}}\,dz\right)^{\frac{4n+2-q}{q(4n+2)}}\leq c\left(\dashint_{Q_{2R}(z_0)}|\nabla_v f|\,dz+R \left(\dashint_{Q_{2R}(z_0)}|\mu|^q\,dz\right)^{\frac1q}\right)
	\end{align}
	for some constant $c=c(n,\Lambda,q,{\pmb{\omega}})$. 
\end{corollary}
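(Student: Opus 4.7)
The plan is to promote the pointwise kinetic gradient potential estimate of Theorem \ref{thm.dini} to a Calder\'on-Zygmund $L^{q^*}$-bound with $q^* = q(4n+2)/(4n+2-q)$, by combining it with the $L^q \to L^{q^*}$ mapping of the truncated kinetic Riesz potential $I_1^{|\mu|}$. This is the kinetic counterpart of the Duzaar-Mingione scheme carried out in the parabolic setting in \cite{DuzMin11a}.

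First, for each $z_1 \in Q_R(z_0)$ I would pick a radius $r_1 \simeq R$ (with a universal constant determined by the kinetic quasi-metric structure) small enough that $Q_{2r_1}(z_1) \subset Q_{2R}(z_0) \subset W \times V$, and apply Theorem \ref{thm.dini} at $z_1$ with this radius to obtain
\begin{equation*}
|\nabla_v f(z_1)| \leq c\,\dashint_{Q_{r_1}(z_1)} |\nabla_v f|\,dz + c\,I_1^{|\mu|}(z_1,r_1).
\end{equation*}
Since $Q_{r_1}(z_1) \subset Q_{2R}(z_0)$ with $|Q_{r_1}(z_1)| \simeq |Q_{2R}(z_0)|$, the first term is uniformly dominated by $c\,\dashint_{Q_{2R}(z_0)} |\nabla_v f|\,dz$, a quantity independent of $z_1$.

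Next, after extending $\mu$ by zero outside $Q_{2R}(z_0)$, a Fubini computation bounds $I_1^{|\mu|}(z_1,r_1)$ by the full kinetic Riesz potential with kernel $d(z_1,z')^{-(4n+1)}$, where $d$ is the standard kinetic quasi-distance under which the cylinders $Q_r$ are comparable to metric balls. Since $(\mathbb{R}^{2n+1},d,dz)$ forms a doubling quasi-metric space of homogeneous dimension $Q = 4n+2$, classical Riesz-potential boundedness on spaces of homogeneous type gives
\begin{equation*}
\|I_1^{|\mu|}(\cdot,r_1)\|_{L^{q^*}(Q_R(z_0))} \leq c\,\|\mu\|_{L^q(Q_{2R}(z_0))}
\end{equation*}
for any $1 < q < 4n+2$. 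Taking the $L^{q^*}$-norm on $Q_R(z_0)$ of the pointwise bound above, combining with the constant-term estimate, and dividing by $|Q_R(z_0)|^{1/q^*}\simeq R^{(4n+2)/q^*}$ then yields \eqref{ineq1.cal}; the explicit factor $R$ on the $\mu$-term is exactly the Sobolev scaling identity $(4n+2)/q - (4n+2)/q^* = 1$.

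The hard part is the $L^q \to L^{q^*}$ boundedness of $I_1^{|\mu|}$ in the kinetic geometry, since the kinetic cylinders carry the drift $v_0(t-t_0)$ and do not satisfy a symmetric triangle inequality. One must work with a Galilean-invariant quasi-distance, verify the doubling property and identify the homogeneous dimension $Q = 4n+2$, and then invoke the classical Riesz potential mapping on such spaces (or a direct dyadic Hedberg-Muckenhoupt-Wheeden argument). Once this structural fact is in place, the rest of the argument is essentially bookkeeping in the kinetic scaling.
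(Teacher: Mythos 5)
Your proposal follows the paper's own argument: apply Theorem \ref{thm.dini} locally on $Q_R(z_0)$ and promote the resulting pointwise bound to $L^{q^*}$ with $q^*=q(4n+2)/(4n+2-q)$ via the $L^q\to L^{q^*}$ mapping property of the truncated kinetic Riesz potential, the step you correctly identify as the crux. The paper supplies that mapping property as Lemma \ref{lem.riesz} and proves it by exactly the near/far Hedberg-type split against the fractional maximal function $M_{R/4}(|\mu|)$ that you offer as a fallback route (rather than passing through abstract spaces of homogeneous type), then finishes with a covering argument instead of one global $L^{q^*}$-norm over $Q_R(z_0)$; this last point is only a bookkeeping difference.
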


For $\beta \in [0,1]$ and $p \in [1,\infty)$, we consider various fractional maximal functions, namely
\begin{equation}\label{defn.smaxi}
	M^{\#,p}_{\beta,R}(F)(z_0)\coloneqq \sup_{0<r<R} r^{-\beta} \left (\dashint_{Q_r(z_0)}|F-(F)_{Q_r(z_0)}|^p\,dz \right )^{\frac1p},
\end{equation}
\begin{equation}\label{defn.maxi}
	M_{\beta,R}(g)(z_0)\coloneqq \sup_{0<r<R}r^{\beta} \dashint_{Q_r(z_0)}{|g|}\,dz,
\end{equation}
for all measurable function $F:Q_R(z_0) \to \mathbb{R}^n$, $g:Q_R(z_0) \to \mathbb{R}$. We also write $M^{\#}_{\beta,R}(F):=M^{\#,1}_{\beta,R}(F)$, $M_R(g):=M_{0,R}(g)$. Armed with these notions, we are also able to obtain stronger control of the oscillations of the gradient by assuming H\"older continuity of $a(t,\cdot,\cdot,\xi)$.
\begin{theorem}[Pointwise maximal function estimates - nondivergence data] \label{thm.holn}
	Let $f\in H^1_{\mathrm{kin}}(W\times V)$ be a weak solution to \eqref{eq.main} with $G \equiv 0$, let $\mu \in L^2(W \times V)$ and assume that $a$ satisfies Assumption \ref{assump}. Then there exists some $\alpha=\alpha(n,\Lambda) \in (0,1)$ such that if $a$ is H\"older continuous with exponent $\beta \in (0,\alpha)$ in $W\times V$,
	then for any $R>0$ and almost every $z_0=(t_0,x_0,v_0) \in W \times V$ with $Q_{2R}(z_0)\subset W\times V$, we have
	\begin{equation}\label{ineq0.holn}
		M^{\#}_{\beta,R}(\nabla_vf)(z_0) \leq c\left(\dashint_{Q_R(z_0)}|\nabla_v f|\,dz+I^{|\mu|}_1(z_0,R)+M_{1-\beta,R}(\mu)(z_0)\right),
	\end{equation}
	where $c=c(n,\Lambda,\beta,{\pmb{\omega}})$.
	
	In particular, we have the implication
	\begin{equation}\label{ineq1.holn}
		\mu\in L^{\frac{4n+2}{1-\beta},\infty}(W \times V)\Longrightarrow \nabla_v f\in C_{\textnormal{kin}}^{\beta}(W \times V).
	\end{equation}
\end{theorem}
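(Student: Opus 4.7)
The plan is to adapt the nonlinear potential-theoretic scheme of Duzaar--Mingione \cite{DuzMin11a} to the kinetic setting, using the Schauder-type estimate from Theorem \ref{thm.holmodel} and the pointwise Dini bound from Theorem \ref{thm.dini} as essentially black-box inputs. First I would carry out a comparison argument: for each admissible $r \in (0, R]$, let $g$ solve the homogeneous Dirichlet problem
\begin{equation*}
	\partial_t g + v \cdot \nabla_x g - \divergence_v a(t, x, v, \nabla_v g) = 0 \quad \text{in } Q_r(z_0), \qquad g = f \text{ on } \partial_{\mathcal{K}} Q_r(z_0).
\end{equation*}
Testing the difference equation with $f - g$ and handling the transport term via the kinetic integration-by-parts formula, followed by a truncation/duality step to upgrade the natural $L^2$ estimate to an $L^1$ one, should yield
\begin{equation*}
	\dashint_{Q_r(z_0)} |\nabla_v (f - g)|\,dz \leq C\, \frac{|\mu|(Q_r(z_0))}{r^{4n+1}}.
\end{equation*}
Since $a$ is H\"older continuous with exponent $\beta < \alpha(n, \Lambda)$ in $W \times V$, Theorem \ref{thm.holmodel} applied to $g$ delivers the Campanato-type decay
\begin{equation*}
	\dashint_{Q_\rho(z_0)} \bigl|\nabla_v g - (\nabla_v g)_{Q_\rho(z_0)}\bigr|\,dz \leq C \left(\frac{\rho}{r}\right)^\beta \dashint_{Q_r(z_0)} |\nabla_v g|\,dz, \qquad 0 < \rho \leq r/2.
\end{equation*}

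Combining the two via triangle inequalities with $\rho = \sigma r$ and $\sigma \in (0, 1/4)$, and writing $E(r) := \dashint_{Q_r(z_0)} |\nabla_v f - (\nabla_v f)_{Q_r(z_0)}|\,dz$ for the centered excess, I expect the recursive inequality
\begin{equation*}
	E(\sigma r) \leq C \sigma^\beta \dashint_{Q_r(z_0)} |\nabla_v f|\,dz + C \sigma^{-(4n+2)} \frac{|\mu|(Q_r(z_0))}{r^{4n+1}}.
\end{equation*}
Dividing by $(\sigma r)^\beta$ and recognizing $|\mu|(Q_r(z_0))/r^{4n+1+\beta} \leq M_{1-\beta, R}(\mu)(z_0)$ from the definition \eqref{defn.maxi}, the pointwise $M^{\#}_{\beta, R}$ bound reduces to an $r$-uniform control of $\dashint_{Q_r(z_0)} |\nabla_v f|\,dz$. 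Since H\"older continuity of $a$ implies Dini continuity, I would invoke Theorem \ref{thm.dini} at $z_0$ and at nearby centers, using the extra room $Q_{2R}(z_0) \subset W \times V$, to bound $\dashint_{Q_r(z_0)} |\nabla_v f|\,dz$ by $C\bigl(\dashint_{Q_R(z_0)} |\nabla_v f|\,dz + I_1^{|\mu|}(z_0, R)\bigr)$ for every $r \in (0, R]$; taking the supremum over such $r$ then produces \eqref{ineq0.holn}.

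For the implication \eqref{ineq1.holn}, I would use that $\mu \in L^{(4n+2)/(1-\beta), \infty}(W \times V)$ combined with the weak-$L^{q}$--strong-$L^{1}$ H\"older inequality on each kinetic cylinder yields $|\mu|(Q_r) \leq C \|\mu\|_{L^{(4n+2)/(1-\beta), \infty}}\, r^{4n+1+\beta}$, since $|Q_r| \sim r^{4n+2}$. Hence $M_{1-\beta, R}(\mu)(z_0)$ is bounded uniformly in $z_0$ and $I_1^{|\mu|}(z_0, R) \leq C \|\mu\|_{L^{(4n+2)/(1-\beta), \infty}} \int_0^R r^{\beta - 1}\,dr < \infty$; together with \eqref{ineq0.holn} this gives a locally uniform bound on $M^{\#}_{\beta, R}(\nabla_v f)$, which by the Campanato characterization of $C^\beta_{\textnormal{kin}}$ (with the distance in \eqref{defn.hol}) implies $\nabla_v f \in C^\beta_{\textnormal{kin}}(W \times V)$.

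The principal obstacle I anticipate is the sharp $L^1$ comparison estimate: in the kinetic framework, testing with $f - g$ only yields an $L^2$-type bound directly and produces boundary terms on the Kolmogorov boundary $\partial_{\mathcal{K}} Q_r$, so the passage to the $L^1$-norm while respecting the anisotropy of the cylinders and the transport term requires a delicate argument with no counterpart in the parabolic setting. A secondary technicality is the propagation of Theorem \ref{thm.dini} into the $r$-uniform average bound, which requires applying the Dini estimate at nearby centers and reabsorbing the resulting supremum of $I_1^{|\mu|}$ into the centered quantity $I_1^{|\mu|}(z_0, R)$ using the quasi-metric structure of the kinetic cylinders.
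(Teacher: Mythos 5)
The high-level scheme you sketch — comparison with a homogeneous solution, Campanato decay for the latter, iteration of the excess inequality — matches the paper's strategy, but the proposal as written has three genuine gaps.

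First, the citation of Theorem~\ref{thm.holmodel} for the decay of $\nabla_v g$ is circular: in the paper, Theorem~\ref{thm.holmodel} is \emph{deduced from} Theorem~\ref{thm.holn}, so it cannot serve as an ingredient. What the paper actually uses is Theorem~\ref{thm.hol}, which proves the Campanato decay only for nonlinearities $a(t,\xi)$ depending on $t,\xi$ alone, and the passage from the general coefficient-dependent equation to this model situation requires an extra coefficient-freezing comparison (Lemma~\ref{lem.comp3}), producing the ${\pmb{\omega}}(r)\dashint|\nabla_v w|$ error term in the excess decay Lemma~\ref{lem.decay}. Your $g$ solves the equation with full $a(t,x,v,\xi)$-dependence, for which no Schauder result that is logically prior to Theorem~\ref{thm.holn} is available, so this step as stated does not close.

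Second, you correctly flag the $L^1$ gradient comparison estimate as the key difficulty, but the idea needed to resolve it is absent. ``Truncation/duality'' in the style of Duzaar--Mingione gives control of $\nabla_v(f-w)$ only in weighted $L^2$; to pass to $L^1$ one interpolates against the zero-order $L^1$-comparison (Lemma~\ref{lem.comp1}) and against a Gagliardo--Nirenberg-type inequality, but in the kinetic setting the interpolation also requires fractional Sobolev regularity of $f-w$ in the $x$-variable, which is not automatic. The paper obtains this via Lemma~\ref{lem.xreg}, a nonlinear atomic decomposition in $|h|^{1/2}$-scale cylinders that differentiates the equation in $x$ using only $|\mu|(Q_r)$ as input; without this, the $L^1$ comparison $\dashint_{Q_r}|\nabla_v(f-w)|\lesssim |\mu|(Q_r)/r^{4n+1}$ is not achievable by the parabolic argument alone, and the subsequent excess iteration has no starting point.

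Third, the route to the $r$-uniform bound on $\dashint_{Q_r(z_0)}|\nabla_v f|\,dz$ is backwards. Theorem~\ref{thm.dini} gives a pointwise bound $|\nabla_v f(z)|\lesssim \dashint_{Q_R(z)}|\nabla_v f|+I_1^{|\mu|}(z,R)$; averaging this over $z\in Q_r(z_0)$ does not cleanly reduce to the centered quantities $\dashint_{Q_R(z_0)}|\nabla_v f|$ and $I_1^{|\mu|}(z_0,R)$ without additional covering and Fubini arguments that are nowhere made explicit, and in any case Theorem~\ref{thm.dini} itself is downstream of the excess iteration. The paper establishes the required uniform bound \eqref{est.point} directly in the course of proving Lemma~\ref{lem.point}, by telescoping the excess decay and absorbing the Dini sum $\sum_k{\pmb{\omega}}(2^{-km}r)$; that estimate is exactly what feeds Lemma~\ref{lem.pointh} and hence \eqref{ineq0.holn}. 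Your concluding argument for \eqref{ineq1.holn} via the weak-$L^q$ bound $|\mu|(Q_r)\lesssim r^{4n+1+\beta}$ and the Campanato characterization (Corollary~\ref{cor.ptmax}) is essentially the same as the paper's and is correct.
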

For the precise definition of the Marcinkiewicz space $L^{\frac{4n+2}{1-\beta},\infty}(W \times V)$, we refer to Definition \ref{def.Lorentz} below. Moreover, in view of \eqref{ineq1.holn}, Theorem \ref{thm.holn} with $\mu \equiv 0$ clearly implies Theorem \ref{thm.holmodel} above.
Next, as indicated in Section \ref{hom}, in the case of linear kinetic equations we are able to improve the range of the exponent $\beta$ in Theorem \ref{thm.holn}.
\begin{theorem}[Linear case - nondivergence data]\label{cor.lin}
	Let $f\in H^1_{\mathrm{kin}}(W\times V)$ be a weak solution to \eqref{eq.main} with $G \equiv 0$, $\mu \in L^2(W \times V)$ and $a(t,x,v,\xi)=A(t,x,v)\xi$ for some $A:\bbR \times \bbR^{n}\times \bbR^n \to \mathbb{R}^{n \times n}$ that satisfies \eqref{assumplin}. If $a$ is H\"older-continuous with exponent $\beta\in(0,1)$ in $W\times V$, then $f$ satisfies the estimate \eqref{ineq0.holn} and the implication \eqref{ineq1.holn} from Theorem \ref{thm.holn} with respect to $\beta$.
\end{theorem}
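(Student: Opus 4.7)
The plan is to track the proof of Theorem \ref{thm.holn} and locate the single step at which the restriction $\beta<\alpha$ enters, then replace the input used there with a stronger one that is available precisely in the linear setting.

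Recall that \eqref{ineq0.holn} in the nonlinear case is proved by comparing $f$, on each dyadic kinetic cylinder $Q_r(z_0)$, with a weak solution $g$ of the homogeneous frozen-coefficient equation
\begin{equation*}
\partial_t g+v\cdot\nabla_x g-\divergence_v(a(t_0,x_0,v_0,\nabla_v g))=0\quad\text{in }Q_r(z_0),
\end{equation*}
with the same Kolmogorov boundary values as $f$. The decisive analytic input is a $C^{1,\beta}_{\textnormal{kin}}$-type excess decay for $g$: for every $\theta\in(0,1)$ and some $c$ independent of $\theta$,
\begin{equation*}
\dashint_{Q_{\theta r}(z_0)}|\nabla_v g-(\nabla_v g)_{Q_{\theta r}(z_0)}|\,dz\leq c\,\theta^{\beta}\dashint_{Q_{r}(z_0)}|\nabla_v g|\,dz.
\end{equation*}
In the nonlinear setting of Theorem \ref{thm.holn} this decay is available only for $\beta<\alpha(n,\Lambda)$, because the best interior regularity currently known for homogeneous nonlinear kinetic equations is $C^{1,\alpha}$. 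This is exactly why the nonlinear statement is restricted to $\beta\in(0,\alpha)$.

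In the present linear setting the frozen equation reduces to a constant-coefficient linear kinetic equation with coefficient matrix $A(t_0,x_0,v_0)$. Theorem \ref{thm.holmodellin}, combined with a standard scaling argument (and the higher-order linear theory from \cite{Loh23} if desired), yields the above excess decay for every $\beta\in(0,1)$; indeed the comparison solution $g$ is smooth in $v$ in the interior. With this upgraded input in hand, the remainder of the proof of Theorem \ref{thm.holn} transfers verbatim: the Campanato-type iteration of the excess decay, the absorption of the perturbation arising from the coefficient oscillation ${\pmb{\omega}}(r)\lesssim r^{\beta}$, and the energy comparison with the data which produces the contributions $I_1^{|\mu|}(z_0,R)$ and $M_{1-\beta,R}(\mu)(z_0)$. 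Summing the resulting dyadic estimates yields \eqref{ineq0.holn}, and \eqref{ineq1.holn} follows from the same Marcinkiewicz-type mapping properties of $I_1^{|\mu|}$ that are used in Theorem \ref{thm.holn}.

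The only conceptual point requiring genuine attention, and the sole place where linearity is truly used, is the verification that each estimate in the nonlinear proof which previously invoked an exponent below $\alpha$ can be repeated with an arbitrary exponent below $1$. Since at each such instance the structural ingredient is precisely the excess decay of the comparison solution, this is automatic once Theorem \ref{thm.holmodellin} is invoked; there is no serious obstacle beyond careful bookkeeping to ensure that the constants produced by the linear Schauder input depend on $\beta\in(0,1)$ in an acceptable way (in particular that the smallness needed to absorb the coefficient perturbation remains achievable by choosing the scale $r$ sufficiently small).
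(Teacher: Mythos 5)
Your high-level strategy --- identify the single place where $\beta<\alpha$ enters (the excess decay of the homogeneous comparison solution), upgrade that decay to arbitrary $\beta<1$ using linearity, and then rerun the Campanato iteration ``with $\alpha$ replaced by $1$'' --- is precisely what the paper does. However, the mechanism you propose for the upgrade has two problems.

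First, citing Theorem \ref{thm.holmodellin} is circular: in the paper's logical structure that theorem is \emph{derived} from Theorem \ref{cor.lin} (via \eqref{ineq1.holn} with $\mu\equiv 0$), so it cannot be used as an input to the proof of Theorem \ref{cor.lin}. Second, you describe the frozen comparison equation as having ``coefficient matrix $A(t_0,x_0,v_0)$'', i.e.\ fully constant coefficients, but in \eqref{eq.comp3} only $x,v$ are frozen: the comparison operator is $\divergence_v(A(t,x_0,v_0)\nabla_v\cdot)$ and the coefficient remains a merely \emph{measurable} function of $t$ (Assumption \ref{assump} imposes no regularity in $t$). Consequently the comparison solution is not smooth in the interior in any naive sense, and one cannot directly invoke the higher-order Schauder theory of \cite{Loh23}, which requires corresponding regularity of the coefficient in all variables. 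Your parenthetical fallback therefore does not rescue the argument either.

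The paper establishes what you need (estimate \eqref{res.lin}: arbitrary-$\beta$ Campanato decay of $\nabla_v f$ for the $a(t)$-coefficient homogeneous equation) by a route that sidesteps both issues. It first proves Lemma \ref{lem.higsolhol}, a $C^\beta_{\textnormal{kin}}$ estimate for \emph{any} $\beta\in(0,1)$ for solutions of $\partial_t f + v\cdot\nabla_x f = \divergence_v(a(t)\nabla_v f)+\mu$ with bounded $\mu$ and coefficient merely measurable in $t$ (this in turn uses Lemma \ref{lem.bdddiv}, so the nonlinear $C^{1,\alpha}$ theory of Theorem \ref{thm.hol} already goes into it). It then applies this with a single difference quotient in the velocity variable: $\delta^v_h f/|h|$ solves the same type of equation with the lower-order forcing $-h\cdot\nabla_x f_h^v/|h|$, which is controlled in $L^\infty$ by the spatial gradient bound of Lemma \ref{lem.grax}. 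Sending $h\to 0$ along coordinate directions gives \eqref{res.lin}, after which the conclusion is indeed ``apply Theorem \ref{thm.holn} with $\alpha$ replaced by $1$'', exactly as you anticipate. So the final assembly step in your outline is sound; the gap is only in how the arbitrary-$\beta$ decay for the frozen problem is justified, where linearity must be exploited through a difference-quotient bootstrap rather than through an appeal to an already-derived Schauder theorem or to external higher-order results that don't cover measurable $t$-dependence.
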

We note that Theorem \ref{cor.lin} with $\mu \equiv 0$ clearly implies Theorem \ref{thm.holmodellin} above.

Let us denote by $\mathrm{BMO}(W \times V)$ a space of functions with bounded mean oscillation of vector-valued functions in the following sense. Indeed, we say that a measurable function $F:W \times V \to \mathbb{R}^n$ belongs to $\mathrm{BMO}(W \times V)$, if
\begin{align}\label{defn.bmo}
	[F]_{\mathrm{BMO}(W \times V)}\coloneqq \sup_{Q_{R}(z_0)\Subset W \times V}\left(\dashint_{Q_R(z_0)}|F-(F)_{Q_R(z_0)}|^2\,dz\right)^{\frac12}<\infty.
\end{align}

We then also have a corresponding result to Theorem \ref{thm.holn} when the right-hand side is given by divergence-type data.

\begin{theorem}[Pointwise maximal function estimates - divergence data]\label{thm.hold}
	Let $f\in H^1_{\mathrm{kin}}(W\times V)$ be a weak solution to \eqref{eq.main} with $\mu \equiv 0$, let $G \in L^2(W \times V,\mathbb{R}^n)$ and assume that $a$ satisfies Assumption \ref{assump}. Then there exists some $\alpha=\alpha(n,\Lambda) \in (0,1)$ such that if $a$ is H\"older continuous with exponent $\beta \in (0,\alpha)$ in $W\times V$,
	then for any $R>0$ and almost every $z_0=(t_0,x_0,v_0) \in W \times V$ with $Q_{2R}(z_0)\subset W\times V$, we have
	\begin{equation}\label{ineq1.hold}
		M^{\#}_{\beta,R}(\nabla_vf)(z_0) \leq c\left(M_{R}(|\nabla_vf|)(z_0)+M^{\#,2}_{{\beta},R}(G)(z_0)\right),
	\end{equation}
	where $c=c(n,\Lambda,\widetilde{\beta},{\pmb{\omega}})$.
	Moreover, we have the following two implications
	\begin{align}\label{ineq2.hold}
		G\in \textnormal{BMO}(W \times V)\Longrightarrow \nabla_vf\in \textnormal{BMO}(W \times V)
	\end{align}
	and
	\begin{equation}\label{ineq3.hold}
		G\in C^{{\beta}}_{\textnormal{kin}}(W \times V)\Longrightarrow \nabla_v f\in C^{{\beta}}_{\textnormal{kin}}(W \times V).
	\end{equation}
\end{theorem}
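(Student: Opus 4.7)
The plan is to establish, around every point $z_0$, an excess-decay estimate for $\nabla_v f$ that combines a two-step comparison with the homogeneous Schauder regularity from Theorem~\ref{thm.holmodel}. The pointwise maximal function bound \eqref{ineq1.hold} then follows by taking suprema over scales, and the two qualitative implications \eqref{ineq2.hold} and \eqref{ineq3.hold} follow by iterating the same excess decay with the appropriate control on the oscillation of $G$.

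To set up the excess decay, fix $z_0 \in W \times V$ and a kinetic cylinder $Q_r = Q_r(z_0) \subset Q_{2R}(z_0)$, and freeze the nonlinearity by setting $\bar a(\xi) := a(t_0, x_0, v_0, \xi)$. First, I would compare $f$ with the weak solution $w$ on $Q_r$ of the same equation with $G \equiv 0$ and Kolmogorov boundary data $f$. Since subtracting the constant vector $(G)_{Q_r}$ does not alter $\divergence_v G$, testing the difference equation against $f - w$ and using the monotonicity in \eqref{ass.coef} should give
\begin{equation*}
	\dashint_{Q_r} |\nabla_v (f-w)|^2 \, dz \leq c \dashint_{Q_r} |G - (G)_{Q_r}|^2 \, dz \leq c\, r^{2\beta} \bigl[M^{\#,2}_{\beta,R}(G)(z_0)\bigr]^2.
\end{equation*}
Second, compare $w$ with the solution $h$ of the frozen homogeneous problem $\partial_t h + v \cdot \nabla_x h - \divergence_v (\bar a(\nabla_v h)) = 0$ in $Q_r$ with $h = w$ on $\partial_{\mathcal{K}}(Q_r)$; invoking the H\"older continuity of $a$ in $(t,x,v)$ from Definition~\ref{def:DH} in a parallel energy estimate should yield
\begin{equation*}
	\dashint_{Q_r} |\nabla_v (w-h)|^2 \, dz \leq c\, r^{2\beta} \dashint_{Q_r} |\nabla_v w|^2 \, dz.
\end{equation*}

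Since $\bar a$ is autonomous, Theorem~\ref{thm.holmodel} applies to $h$ for every $\beta \in (0, \alpha)$ with $\alpha = \alpha(n, \Lambda)$, yielding the Campanato-type decay
\begin{equation*}
	\dashint_{Q_\rho} |\nabla_v h - (\nabla_v h)_{Q_\rho}|^2 \, dz \leq c (\rho/r)^{2\beta} \dashint_{Q_r} |\nabla_v h - (\nabla_v h)_{Q_r}|^2 \, dz
\end{equation*}
for $0 < \rho \leq r$. Combining this with the two comparison estimates through the triangle inequality and absorbing $\dashint_{Q_r}|\nabla_v w|^2$ into $\dashint_{Q_r}|\nabla_v f|^2 + \dashint_{Q_r}|G - (G)_{Q_r}|^2$ should produce an excess inequality for $\nabla_v f$ of the form
\begin{equation*}
	\Phi(\rho) \leq c_1 (\rho/r)^\beta \Phi(r) + c_2\, r^\beta \bigl[M^{\#,2}_{\beta,R}(G)(z_0) + M_R(|\nabla_v f|)(z_0)\bigr],
\end{equation*}
where $\Phi(\rho) := \bigl(\dashint_{Q_\rho(z_0)} |\nabla_v f - (\nabla_v f)_{Q_\rho(z_0)}|^2 \, dz\bigr)^{1/2}$. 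Dividing by $\rho^\beta$ and taking the supremum over $\rho$ then gives \eqref{ineq1.hold}, since the $L^2$ excess controls the $L^1$ excess in $M^{\#}_{\beta,R}$.

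For the qualitative implications, I would iterate this excess inequality directly. If $G \in \mathrm{BMO}(W \times V)$, then $\dashint_{Q_r} |G - (G)_{Q_r}|^2$ is uniformly bounded in $r$, so iteration with a suitable small $\beta > 0$ yields a uniform bound on $\Phi(\rho)$, giving \eqref{ineq2.hold}. If $G \in C^\beta_{\textnormal{kin}}(W \times V)$, then this excess decays like $r^{2\beta}$, and iteration gives $\Phi(\rho) \leq c \rho^\beta$, which is \eqref{ineq3.hold} via the Campanato characterization on kinetic cylinders. The main obstacle I anticipate is the second comparison step: obtaining the sharp rate $r^{\beta}$ for $\nabla_v(w - h)$ requires a careful energy argument that simultaneously exploits the monotonicity of $a$ in $\xi$ and its H\"older continuity in the coefficient variables, while correctly handling the transport term $v \cdot \nabla_x$ that prevents any direct reduction to the parabolic setting.
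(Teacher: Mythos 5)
Your overall architecture matches the paper's: a two-step comparison (solution of \eqref{eq.main} $\to$ homogeneous solution with variable coefficients $\to$ homogeneous solution with frozen coefficients), followed by feeding the Campanato decay of the frozen-coefficient solution from Section~\ref{sec3} into an iteration, and then reading off \eqref{ineq2.hold}--\eqref{ineq3.hold} from the resulting excess estimate (the paper does this via Lemmas \ref{lem.decay2}, \ref{lem.pointh2}, \ref{lem.bdddiv}). However, there is one genuine gap in your plan.

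You freeze the nonlinearity at $\bar a(\xi) := a(t_0,x_0,v_0,\xi)$ --- that is, you freeze the time variable too --- and then appeal to ``H\"older continuity of $a$ in $(t,x,v)$'' to bound $\nabla_v(w-h)$. But the H\"older/Dini condition in Definition~\ref{def:DH} is taken at \emph{fixed} $t$: the modulus ${\pmb{\omega}}$ in \eqref{defn.osc} controls only $|a(t,y,w,\xi)-a(t,x,v,\xi)|$, i.e., the oscillation in the $(x,v)$ slots, and gives no quantitative information on $|a(t,\cdot,\cdot,\xi)-a(t_0,\cdot,\cdot,\xi)|$ (the coefficients may be merely measurable in $t$). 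So the energy comparison you propose between $w$ and the fully frozen $h$ has no way to produce the $r^{2\beta}$ factor --- the term $\dashint_{Q_r}|a(t,x,v,\nabla_v w)-\bar a(\nabla_v w)|^2$ simply cannot be estimated by $r^{2\beta}\dashint_{Q_r}|\nabla_v w|^2$ under Assumption \ref{assump} and Definition \ref{def:DH}. The fix, which is what the paper does in Lemma~\ref{lem.comp3} and the equation \eqref{eq.comp3}, is to freeze only $(x_0,v_0)$ and keep the $t$-dependence, so that $g$ solves $\partial_t g + v\cdot\nabla_x g - \divergence_v(a(t,x_0,v_0,\nabla_v g)) = 0$; the relevant Campanato decay for this $t$-dependent but $(x,v)$-autonomous problem is supplied by Theorem~\ref{thm.hol}, not Theorem~\ref{thm.holmodel}.

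Two smaller points you should be aware of: (i) in your final excess inequality the error terms should carry negative powers of $\rho$ (coming from enlarging the domain of integration from $Q_{\rho r}$ to $Q_r$, cf.\ the $\rho^{-(2n+1)}$ and $\rho^{-(4n+2)}$ factors in \eqref{ineq0.decay2}); these are absorbed only after fixing $\rho$ to a small constant, so the inequality as you wrote it is stated too optimistically even though the iteration still closes. (ii) To control $\dashint|\nabla_v w|^2$ by an $L^1$-type quantity --- which is needed to produce $M_R(|\nabla_v f|)$ rather than its $L^2$ analogue in \eqref{ineq1.hold} --- you implicitly rely on the reverse H\"older inequality for homogeneous solutions, which is Lemma~\ref{lem.self}.
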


Again, if we restrict ourselves to the linear case, then we can improve the exponent $\beta$. Moreover, in the case of divergence-type data we are also able to obtain a significant amount of H\"older regularity with respect to the spatial variable $x$.

\begin{theorem}[Linear case - divergence data]\label{cor.lind}
	Let $f\in H^1_{\mathrm{kin}}(W\times V)$ be a weak solution to \eqref{eq.main} with $\mu \equiv 0$, let $G \in L^2(W \times V,\mathbb{R}^n)$ and $a(t,x,v,\xi)=A(t,x,v)\xi$ for some $A:\bbR \times \bbR^{n}\times \bbR^n \to \mathbb{R}^{n \times n}$ that satisfies \eqref{assumplin}. If $a$ is H\"older-continuous with exponent $\beta\in(0,1)$ in $W\times V$, then $f$ satisfies the estimate \eqref{ineq1.hold} and the implications \eqref{ineq2.hold}-\eqref{ineq3.hold} from Theorem \ref{thm.holn} with respect $\beta$.
	
	In addition, if $G\in C^{{\beta}}_{\textnormal{kin}}(W \times V)$ and $Q_{2R}(z_0)\Subset W\times V$, then for any $\epsilon>0$ we have
	\begin{align}\label{ineq.lind}
		\sup_{(t,v)\in I_R(t_0)\times B_R(v_0)}\sup_{x_1,x_2\in B_{R^3}(x_0+(t-t_0)v_0)}\frac{|f(t,x_1,v)-f(t,x_2,v)|}{|x_1-x_2|^{\frac{1+\beta-\epsilon}{3}}}<\infty.
	\end{align}
\end{theorem}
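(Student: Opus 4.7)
The proof proceeds in two steps. For the first assertion, the restriction $\beta \in (0,\alpha)$ in Theorem \ref{thm.hold} stems solely from the fact that in the nonlinear setting, homogeneous solutions to the frozen-coefficient comparison problem are only known to satisfy $C^{1,\alpha}_\textnormal{kin}$ estimates, with $\alpha = \alpha(n,\Lambda)$. In the linear setting, the higher-order Schauder theory of Loher \cite{Loh23} provides local $C^{k,\gamma}_\textnormal{kin}$ estimates for solutions of the frozen-coefficient homogeneous equation for every integer $k \geq 2$ and every $\gamma \in (0,1)$. Substituting this improved decay of the comparison solution into the Campanato-type iteration used in the proof of Theorem \ref{thm.hold} removes the threshold $\alpha$ and yields \eqref{ineq1.hold} together with \eqref{ineq2.hold}-\eqref{ineq3.hold} for the entire range $\beta \in (0,1)$, in complete analogy with how Theorem \ref{cor.lin} upgrades Theorem \ref{thm.holn}.

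For the $x$-Hölder estimate \eqref{ineq.lind}, the plan is to transfer the velocity regularity just established into spatial regularity by exploiting the hypoelliptic structure of the equation. Given $z_1 = (t_1, x_1, v_1) \in Q_R(z_0)$, approximate $f$ at $z_1$ by the affine-in-$v$ kinetic polynomial $P_{z_1}(t,x,v) := f(z_1) + \nabla_v f(z_1) \cdot (v - v_1)$, and aim to prove that for every $r \in (0, R)$ and every $\epsilon > 0$,
\begin{equation*}
	\sup_{z \in Q_r(z_1)} |f(z) - P_{z_1}(z)| \le c\, r^{1+\beta-\epsilon},
\end{equation*}
with $c$ depending on the $C^\beta_\textnormal{kin}$-norm of $G$, on $A$, on $R$ and on $\epsilon$. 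Specializing to $z_1 = (t,x_1,v)$ and $z = (t,x_2,v)$, the polynomial $P_{z_1}$ takes the same value at both points since the $v$-increment vanishes, and the kinetic distance between $z$ and $z_1$ reduces to $|x_1 - x_2|^{1/3}$; hence \eqref{ineq.lind} follows directly.

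The pointwise approximation estimate is obtained through a Campanato-type iteration adapted to the affine-in-$v$ kinetic polynomial class. At each dyadic scale $r$ we compare $f$ with a solution of the frozen-coefficient homogeneous equation (with $G$ replaced by its average) and invoke the $C^{2,\gamma}_\textnormal{kin}$ estimate from \cite{Loh23} to replace it by a quadratic kinetic Taylor polynomial with remainder $r^{2+\gamma}$; the perturbation induced by oscillations of $A$ is controlled by its $C^\beta$-norm, while the contribution of $G$ is absorbed via the $C^\beta_\textnormal{kin}$-assumption. Iterating these one-step improvements in a scale-invariant fashion yields the desired approximation, the $\epsilon$-loss being introduced to convert an $L^2$-type excess into the uniform excess via standard interpolation and a kinetic Campanato embedding. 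The main technical obstacle will be to keep the iteration consistent with the \emph{affine-in-$v$} kinetic polynomial class throughout: at each scale the natural Loher approximation of the comparison solution is quadratic in $v$, and one must track explicitly the second-order correction needed to reduce back to an affine polynomial while preserving the improved decay rate across scales.
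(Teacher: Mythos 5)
Your first assertion is handled in a way that is morally equivalent to the paper, but with a different source for the improved decay of the comparison solution. The paper does not invoke Loher's higher-order Schauder theory \cite{Loh23}. Instead it proves the needed excess decay \eqref{res.lin} for all $\beta\in(0,1)$ directly, by first establishing the $L^\infty$ bound \eqref{ineq1.lin} on $\nabla_x f$ via Lemma \ref{lem.grax} and Lemma \ref{lem.spi}, then differentiating the linear constant-coefficient equation in $v$ to view $\nabla_v f$ as a solution with a bounded nondivergence source $-h\cdot\nabla_x f^v_h$, and finally applying Lemma \ref{lem.higsolhol}. Your appeal to Loher's $C^{k,\gamma}$ estimates would of course also supply the required decay; the paper's route is simply self-contained within the machinery developed for the nonlinear case. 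For this part, both approaches are valid.

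For \eqref{ineq.lind}, your plan diverges substantially from the paper's and contains the gap you yourself flag. The paper does \emph{not} run a Campanato iteration on affine-in-$v$ kinetic polynomials. It instead takes the finite difference $\delta_h^x f$ directly, observes that
\begin{align*}
  (\partial_t+v\cdot\nabla_x)\delta_h^x f - \divergence_v\bigl(A(t,x,v)\nabla_v\delta_h^x f\bigr)
  &= \divergence_v\bigl((A(t,x+h,v)-A(t,x,v))\nabla_v f^x_h\bigr) \\
  &\quad - \divergence_v(\delta_h^x G),
\end{align*}
sets $\overline{f}:=\delta_h^x f/|h|^{\beta/3}$, and notes that the combined right-hand side, once divided by $|h|^{\beta/3}$, is uniformly bounded because the first part of the theorem has already delivered $\nabla_v f\in L^\infty$ and both $A$ and $G$ are $C^\beta_{\mathrm{kin}}$. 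Applying Lemma \ref{lem.bdddiv} then gives $\overline{f}\in C^{1-\epsilon}_{\mathrm{kin}}(Q_1)$ uniformly in $h$, and \cite[Lemma A.1.2]{FerRos24} converts the uniform $|h|$-increment estimate into the $x$-H\"older bound with exponent $\frac{1+\beta-\epsilon}{3}$. This is both shorter and avoids exactly the technical obstacle your proposal leaves open, namely how to keep a Campanato iteration consistent with affine-in-$v$ polynomials when the comparison solution is naturally approximated by its quadratic kinetic Taylor polynomial. Absent a concrete mechanism for tracking the second-order-in-$v$ correction across scales (and for converting the $L^2$ excess comparison from \eqref{ineq1.decay2} into the $L^\infty$ estimate you need), your argument for \eqref{ineq.lind} remains a plausible sketch rather than a proof. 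The paper's difference-quotient route sidesteps both issues because $\delta_h^x f$ solves a linear divergence-data equation to which the already-established Lemma \ref{lem.bdddiv} applies verbatim.
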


\subsection{Related previous results}
In recent years, there has been a lot of interest in studying the regularity of solutions to kinetic Fokker-Planck equations.
In the nonlinear case, H\"older regularity for the solution itself was studied in \cite{GarNys23}. On the other hand, to the best of our knowledge no gradient regularity results have been obtained for nonlinear kinetic Fokker-Planck equations of the type \eqref{eq.main} in the previous literature. 

In the case of linear kinetic Fokker-Planck equations in divergence form, local boundedness and H\"older regularity in the spirit of De Giorgi-Nash-Moser was studied for instance in \cite{CinPasPol08,WanZha09,WanZha11,GolImbMouVas19,Zhu21,GueMou22,AncReb22,Sil22,CMS22,DHirsch22,Zhu24,Hou24}. Concerning higher regularity, Schauder-type estimates of order greater or equal to two have been obtained in \cite{Loh23}. Concerning gradient estimates, in \cite{ManPol98,DonYas24} first-order $L^p$ estimates were proved for linear kinetic equations in divergence form with divergence-type data and coefficients of VMO-type. On the other hand, we are not aware of any previous pointwise gradient estimates similar to the ones we obtain in the linear divergence form case.

In contrast, there is a large existing literature concerned with the higher regularity for linear kinetic equations in nondivergence form. Schauder-type estimates were for example obtained in 
\cite{MM97,BraBra07,HenSne20,ImbMou21,PolRebStr22,DonYas22s,DonGouYas22,Loh23,HenWan24}, while Sobolev regularity was for instance studied in \cite{BraCerMan96,NieZac22,DonYas22,DonYas24}. 

Moreover, various results concerning the existence of various notions of solutions to kinetic equations were e.g.\ proved in \cite{LitNys21,GarNys23,AlbArmMouNov24,PasCyrLuk24,AveHouNys24,AveHou24}.

Furthermore, while the present paper seems to be the first one that studies pointwise estimates in terms of Riesz-type potentials and fractional maximal functions for nonlinear kinetic equations, corresponding pointwise estimates are very well-studied for elliptic and parabolic equations. Indeed, following the pioneering zero-order potential estimates for solutions to nonlinear elliptic equations due to Kilpeläinen and Maly (see \cite{KM94}), Mingione in \cite{Min11} proved that pointwise estimates in terms of Riesz potentials remain valid also for the gradient of solutions to nonlinear elliptic PDE. Soon after, Duzaar and Mingione in \cite{DuzMin11a} proved that similar gradient potential estimates also hold for nonlinear parabolic equations. Moreover, pointwise estimates that provide control on the oscillations of the gradient were proved in the elliptic setting by Kuusi and Mingione in \cite{KuuMin12}. In addition, in the case of divergence-type data, pointwise estimates in terms of sharp maximal functions were established by Breit, Cianchi, Diening, Kuusi and Schwarzacher in \cite{BCDKS}. Most of these results were then generalized to more general elliptic and parabolic problems, see for instance \cite{TWAJM,DM1,KuuMin13,KuuMin14,CMJEMS,Baroni,KuuMin18,BYP,BDGP,DFJMPA,NNARMA,CKW23,DZ24} for a non-exhaustive list of further contributions in this direction. 

\subsection{Technical approach and novelties}

Let us summarize our approach in a heuristic manner with a particular focus on the main difficulties that we encounter and the technical novelties we incorporate to surmount them. Indeed, the main difficulties we face in contrast to the previous literature arise primarily from the following two sources:
\begin{itemize}
	\item The nonlinearity of the equation: Rules out many tools commonly used in the linear kinetic setting such as explicit fundamental solutions and Fourier methods.
	\item The kinetic nature of the equation: The additional transport term in contrast to the parabolic case leads to a lack of ellipticity in the spatial variable $x$.
\end{itemize} 

Both of these difficulties are already present in the homogeneous constant coefficient case when $\mu \equiv 0$, $G \equiv 0$ and $a$ does not depend on $x,v$, which is the starting point of our proof. We surmount these difficulties by applying an iteration of De Giorgi-Nash-Moser-type H\"older estimates on \emph{fractional difference quotients} inspired by recent developments in the realm of nonlocal equations (see e.g.\ \cite{BLS18,DKLN1,DieKimLeeNow24p}). This allows us to first prove H\"older continuity of the spatial gradient $\nabla_x f$, which leads to the transport term $v \cdot \nabla_x f$ being essentially of lower order. This observation can then be exploited to prove also H\"older regularity for the velocity gradient $\nabla_v f$.

The next goal is then to use these strong gradient decay estimates to prove our fine pointwise gradient estimates that involve a dependence of $a$ also on $x,v$ and general forcing terms $\mu$ and $\divergence_v G$. The general strategy is similar to the parabolic case with nondivergence data treated in \cite{DuzMin11a} and the elliptic case with divergence data addressed in \cite{BCDKS}. Indeed, we compare our solution of \eqref{eq.main} with a corresponding solution of a homogeneous kinetic equation of the type that we treated already in the previous step of the proof. The goal is then to estimate the error between the velocity gradients of the two solutions in a sufficiently sharp way that allows us to transform the strong decay we obtained in the homogeneous case into sharp pointwise control of the velocity gradient of solutions of \eqref{eq.main} in terms of potentials and fractional maximal functions of the data.

However, obtaining suitable comparison estimates at the gradient level is considerably more involved in our kinetic setting due to the additional presence of the spatial variable $x$. Indeed, in contrast to the parabolic setting from \cite{DuzMin11a} we first prove a higher differentiability result of the solution with respect to the spatial variable $x$ in terms of the $L^1$-norm of $\mu$. We accomplish this by invoking a \emph{nonlinear atomic decomposition} that is often applied to differentiate equations with nondifferentiable ingredients in the elliptic and parabolic setting, see for instance \cite{KM1,KM2,Min07,AKM18,DFM1,DFM2,DKLN1,DieKimLeeNow24p}. Roughly speaking, the idea is to consider difference quotients of the solution with increment $|h|$ in the $x$ direction and apply zero-order comparison estimates on kinetic cylinders whose size depends on $|h|$ itself. In view of interpolation arguments, this additional differentiability with respect to the $x$-variable finally leads to suitable comparison estimates in terms of the velocity gradients that seem to be new already for linear kinetic equations and might be of independent interest. Together with the strong estimates we obtained in the homogeneous case, following the parabolic approach from \cite{DuzMin11a} then leads to suitable excess decay estimates involving general data, which yields our pointwise gradient estimates and their consequences in a rather standard way.

\subsection{Outline}
The paper is structured as follows. In Section \ref{sec2}, we gather some basic notation as well as some fundamental properties of the various kinetic notions we use. We then conclude Section \ref{sec2} by discussing various estimates for solutions to kinetic equations that were essentially known prior to the present work.
In Section \ref{sec3}, we then turn to prove gradient H\"older estimates for nonlinear kinetic equations without forcing terms and with constant coefficients.
In Section \ref{sec4}, we then lay the foundation for obtaining fine estimates for solutions to general nonlinear kinetic equations of the type \eqref{eq.main} by establishing various comparison estimates.
Finally, in Section \ref{sec5} we then combine the results obtained in the previous sections in order to establish our main results concerning pointwise gradient estimates of solutions to \eqref{eq.main} and their applications to gradient regularity.

\section{Preliminaries} \label{sec2}
First of all, throughout this paper by $c$ we denote general positive
constants which could vary line by line. In addition, we use a parentheses to highlight
relevant dependencies on parameters, i.e., $c = c(n,\Lambda)$ indicates that the constant $c$ depends only on $n$ and $\Lambda$.

For any function $g\in L^1(U)$ and any open set $U\subset\setR^m$ $(m \geq 1)$ with positive and finite $m$-dimensional Lebesgue measure $|U|$, we define the integral average of $g$ in $U$ as
	\begin{align*}
		\dashint_{U}g\,dx:=(g)_{U}:=\dfrac{1}{\abs{U}}\int_{U}g\,dx.
	\end{align*}

Throughout the paper, we write the variables in the form
	$z\coloneqq (t,x,v) \in \mathbb{R} \times \mathbb{R}^n \times \mathbb{R}^n$, where $t\in\setR$ represents a time variable, $x\in\setR^n$ represents a spatial variable, and $v\in\setR^n$ represents a velocity variable. We use subscript of the variables $z,t,x$, and $v$ do indicate dependences of various objects if necessary.

We usually denote by $W$ a subset of $\bbR^{n+1}$ and by $V$ a subset of $\bbR^n$, respectively.  In addition, for $v_0,x_0 \in \mathbb{R}^n$ we write 
\begin{align}\label{defn.xball}
    B_r(v_0)\coloneqq\{v\in\bbR^n\,:|v-v_0|<r\}, \quad B^x_r(x_0)\coloneqq\{x\in\bbR^n\,:|x-x_0|<r^3\}.
\end{align}

We now introduce several lemmas which will be used in the remaining sections. First of all, we observe a straightforward scaling-invariance property of \eqref{eq.main}.
\begin{lemma}[Scaling invariance]\label{lem.scale}
    Let $f\in H^1_{\mathrm{kin}}(Q_r(z_0))$ be a weak solution to 
    \begin{equation*}
        \partial_t f+v\cdot\nabla_x f-\divergence_v(a(t,x,v,\nabla_v f))=\mu - \divergence_v G \quad\text{in }Q_r(z_0).
    \end{equation*}
    Then {for $r,M>0$, and $z_0\in\setR^{2n+1}$,}
    \begin{equation*}
        f_{r,z_0}(t,x,v)=f(t_0+r^2t,x_0+r^3x+r^2v_0t,v_0+rv)/(rM)
    \end{equation*}
    is a weak solution to 
    \begin{equation*}
        \partial_t f_{r,z_0}+v\cdot \nabla_x f_{r,z_0}-\divergence_v(a_{r,z_0}(t,x,v,\nabla_v f_{r,z_0}))=\mu_{r,z_0} - \divergence_v G_{r,z_0} \quad\text{in }Q_1,
    \end{equation*}
    where 
    \begin{align*}
        a_{r,z_0}(t,x,v,\xi)=a(t_0+r^2t,x_0+r^3x+r^2v_0t,v_0+rv,M\xi)/M,
    \end{align*}
    \begin{align*}
        \mu_{r,z_0}(t,x,v)=r\mu(t_0+r^2t,x_0+r^3x+r^2v_0t,v_0+rv)/M
    \end{align*}
	and
	\begin{align*}
		G_{r,z_0}(t,x,v)=G(t_0+r^2t,x_0+r^3x+r^2v_0t,v_0+rv)/M.
	\end{align*}
    In addition, $a_{r,z_0}$ satisfies \eqref{ass.coef} and 
    \begin{equation*}
       \sup_{(t,y,w),(t,x,v)\in Q_1}| a_{r,z_0}(t,y,w,\xi)-a_{r,z_0}(t,x,v,\xi)|\leq {\pmb{\omega}}_{r}(\max\{|x-y|^{\frac13},|v-w|\})|\xi|,
    \end{equation*}
    where 
    $
        {\pmb{\omega}}_{r}(\rho)\coloneqq{\pmb{\omega}}(r\rho).
    $
\end{lemma}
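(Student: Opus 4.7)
The statement is a routine change-of-variables identity, so the plan is to compose $f$ with the affine map
\[
    \Phi(t,x,v) = (t_0+r^2t,\, x_0+r^3 x + r^2 v_0 t,\, v_0+rv)
\]
and verify, first, that $\Phi$ is a diffeomorphism from $Q_1$ onto $Q_r(z_0)$, and second, that $f_{r,z_0} = (f\circ\Phi)/(rM)$ satisfies the claimed scaled weak formulation. Bijectivity follows directly from the definitions: the Galilean-type shift $\tilde{x} - x_0 - (\tilde{t}-t_0)v_0$ appearing in $Q_r(z_0)$ is precisely engineered so that, after substituting $\tilde{t} = t_0 + r^2 t$ and $\tilde{x} = x_0 + r^3 x + r^2 v_0 t$, the two contributions $r^2 v_0 t$ cancel, and the constraint reduces to $|r^3 x| < r^3$, i.e.\ $|x|<1$.

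Next, I would carry out the chain-rule calculation. A direct computation gives $\nabla_v f_{r,z_0}(z) = \tfrac{1}{M}(\nabla_v f)(\Phi(z))$, while
\[
    (\partial_t + v\cdot\nabla_x) f_{r,z_0}(z) = \frac{r}{M}\bigl[(\partial_t + \tilde{v}\cdot\nabla_x) f\bigr](\Phi(z)),
\]
where the crucial cancellation is that the contribution $r^2 v_0 \cdot (\nabla_x f)(\Phi(z))$ arising from $\partial_t \tilde{x}$ combines with $v\cdot \nabla_x f_{r,z_0}$ to reconstruct exactly the kinetic transport with the full speed $\tilde{v} = v_0 + rv$. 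This is the only (mildly) delicate point and is precisely what the Galilean-shifted definition of $Q_r(z_0)$ is designed to accommodate. Inserting the test function $\tilde{\varphi}(\tilde{z}) := \varphi(\Phi^{-1}(\tilde{z}))$ with $\varphi \in L^2(I_1; H^1_0(B^x_1(0)\times B_1(0)))$ essentially arbitrary into the weak formulation for $f$ on $Q_r(z_0)$, then performing the change of variables $\tilde{z}\mapsto z$ with Jacobian $|\det D\Phi| = r^2 \cdot r^{3n} \cdot r^n = r^{4n+2}$ and using $\nabla_{\tilde{v}}\tilde{\varphi}(\tilde{z}) = r^{-1}(\nabla_v \varphi)(z)$, one finds that each of the four terms (transport duality pairing, nonlinear divergence, $\mu$-source, $G$-flux) picks up a common prefactor $r^{4n+1}M$. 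Dividing it out yields precisely the scaled weak formulation with $a_{r,z_0}$, $\mu_{r,z_0}$, $G_{r,z_0}$ as declared.

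Finally, the structural properties of $a_{r,z_0}$ are verified by direct substitution. The growth and ellipticity bounds in \eqref{ass.coef} transfer with the same constant $\Lambda$, since the factor $1/M$ in the definition of $a_{r,z_0}$ is tuned so that $\nabla_\xi a_{r,z_0}(\cdot,\xi) = \nabla_\xi a(\cdot, M\xi)$, whose pointwise norm and ellipticity are inherited from those of $\nabla_\xi a$. For the oscillation estimate \eqref{defn.osc}, applying the hypothesis on $a$ with spatial points $x_0+r^3 x+r^2 v_0 t$ versus $x_0+r^3 y+r^2 v_0 t$ and velocity points $v_0+rv$ versus $v_0+rw$, the difference in the spatial argument is $r^3(y-x)$, whose cube root is $r|y-x|^{1/3}$, and the difference in the velocity argument is $r(w-v)$. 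Taking the maximum and recalling that ${\pmb{\omega}}_r(\rho) := {\pmb{\omega}}(r\rho)$ closes the estimate. I do not anticipate any genuine obstacle: the only place where care is needed is the transport-term cancellation in the second step, and this is purely a bookkeeping observation.
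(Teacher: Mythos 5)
The paper states Lemma~\ref{lem.scale} without any proof, merely calling it ``a straightforward scaling-invariance property.'' Your proposal supplies exactly the routine change-of-variables verification the authors had in mind, and it is correct: the affine map $\Phi$ is a bijection of $Q_1$ onto $Q_r(z_0)$ because the Galilean shift in the definition of the kinetic cylinder cancels the $r^2 v_0 t$ contribution; the chain-rule bookkeeping correctly reconstructs the full transport speed $\tilde v = v_0 + rv$; the Jacobian $r^{4n+2}$ combined with the $1/r$ from $\nabla_{\tilde v}\tilde\varphi$ and the $r/M$ (resp.\ $M$) from the kinetic derivative (resp.\ vector field $a$) indeed produces the common prefactor $Mr^{4n+1}$ across all four terms of the weak formulation; and the structure conditions for $a_{r,z_0}$ follow by direct substitution, with $\nabla_\xi a_{r,z_0}(z,\xi)=\nabla_\xi a(\Phi(z),M\xi)$ preserving $\Lambda$-ellipticity. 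No gaps.
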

\subsection{Covering-type lemmas}
We prove a few elementary properties of kinetic cylinders.
\begin{lemma}\label{lem.cov1}
    Let $Q_r(t_1,x_1,v_1)\cap Q_r(t_2,x_2,v_2)\neq \emptyset$ with $t_1\geq t_2$. Then we have 
    \begin{equation*}
        Q_{r}(t_2,x_2,v_2)\subset Q_{4r}(t_1,x_1,v_1).
    \end{equation*}
\end{lemma}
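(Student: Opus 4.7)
The plan is to unpack the definition of the kinetic cylinder and keep careful track of the characteristic corrections $(t-t_0)v_0$. Pick any point $z^* = (t^*, x^*, v^*) \in Q_r(t_1,x_1,v_1) \cap Q_r(t_2,x_2,v_2)$. From the defining inequalities for each cylinder, I would first extract three control estimates on the centers themselves: combining $t^* \le t_2 \le t_1$ with $t^* > t_1 - r^2$ gives $0 \le t_1 - t_2 < r^2$; the triangle inequality on $|v^* - v_1|, |v^* - v_2| < r$ yields $|v_1 - v_2| < 2r$.

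The delicate point, and in my view the main technical obstacle, is controlling the twisted spatial quantity $|x_2 - x_1 - (t_2-t_1)v_1|$. Here I would subtract the two spatial constraints at $z^*$ and simplify: the cross terms $(t^*-t_1)v_1$ and $(t^*-t_2)v_2$ rearrange into $(t_2 - t^*)(v_2 - v_1)$ after adding and subtracting $(t_2 - t_1)v_1$, so that
\begin{equation*}
  x_2 - x_1 - (t_2 - t_1)v_1 = \bigl(x^* - x_1 - (t^* - t_1)v_1\bigr) - \bigl(x^* - x_2 - (t^* - t_2)v_2\bigr) + (t_2 - t^*)(v_2 - v_1).
\end{equation*}
Each of the first two terms is bounded by $r^3$, and the last by $r^2 \cdot 2r = 2r^3$, giving $|x_2 - x_1 - (t_2 - t_1)v_1| < 4r^3$.

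Next, for an arbitrary $(t,x,v) \in Q_r(t_2,x_2,v_2)$ I would verify the three conditions defining $Q_{4r}(t_1,x_1,v_1)$. The time condition $t \in (t_1 - 16r^2, t_1]$ follows from $t_1 \ge t_2 \ge t > t_2 - r^2 \ge t_1 - 2r^2$. The velocity condition $|v - v_1| < 4r$ follows from $|v - v_2| < r$ and $|v_1 - v_2| < 2r$. For the spatial condition, the same kind of algebraic rearrangement as above yields
\begin{equation*}
  x - x_1 - (t - t_1)v_1 = \bigl(x - x_2 - (t - t_2)v_2\bigr) + \bigl(x_2 - x_1 - (t_2 - t_1)v_1\bigr) + (t - t_2)(v_2 - v_1),
\end{equation*}
and the three pieces are bounded by $r^3$, $4r^3$, and $2r^3$ respectively, so the total is $< 7r^3 < (4r)^3$.

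In short, the whole lemma reduces to one algebraic identity and elementary estimates; the only subtle step is recognizing that the difference of two twisted centering expressions collapses to a clean $(t_a - t_b)(v_a - v_b)$ cross term, which lets us absorb every error in a single shared factor of $r^3$. No hidden analytic difficulty — the structure of $Q_r$ does all the work once it is written out.
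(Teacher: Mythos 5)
Your proof is correct and uses essentially the same approach as the paper: telescope the twisted spatial expression through an intersection point and observe that the mismatch collapses to a cross term of the form $(t_a - t_b)(v_a - v_b)$, which is $O(r^3)$. The only cosmetic difference is that you first isolate the center-to-center bound $|x_2 - x_1 - (t_2 - t_1)v_1| < 4r^3$ as an intermediate step, whereas the paper does the telescoping directly for a general point of $Q_r(t_2,x_2,v_2)$; both yield the same $7r^3 < (4r)^3$ bound.
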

\begin{proof}
    Let $z_0\in Q_r(t_1,x_1,v_1)\cap Q_r(t_2,x_2,v_2)$. Then we have 
    \begin{equation}\label{ineq1.cov1}
       t_0\in (t_1-r^{2},t_1],\quad v_0\in B_r(v_1),\quad |x_0-x_1-v_1(t_0-t_1)|<r^3
    \end{equation}
    and
    \begin{equation}\label{ineq2.cov1}
        t_0\in (t_2-r^{2},t_2],\quad v_0\in B_r(v_2),\quad |x_0-x_2-v_2(t_0-t_2)|<r^3.
    \end{equation}
    We now assume $z\in Q_r(t_2,x_2,v_2)$, which implies
    \begin{equation}\label{ineq3.cov1}
         t\in (t_2-r^{2},t_2],\quad v\in B_r(v_2),\quad|x-x_2-v_2(t-t_2)|<r^3.
    \end{equation}
    Thus 
    \begin{equation*}
        t\in (t_1-(4r)^{2},t_1]\quad\text{and}\quad v\in B_{4r}(v_1).
    \end{equation*}
    Next, we observe that
    \begin{align*}
        |x-x_1-v_1(t-t_1)|&\leq |x-x_2-v_2(t-t_2)|+|x_2-x_0+v_2(t_0-t_2)|\\
        &\quad+|x_0-x_1-v_1(t_0-t_1)|+|(v_2-v_1)(t-t_0)|\leq 7r^3.
    \end{align*}
    Therefore, we have $z\in Q_{4r}(t_1,x_1,v_1)$, which completes the proof.
\end{proof}

\begin{lemma}\label{lem.cov2}
    Let $1/2\leq \rho<R\leq1$ and $r\in\left(0,\frac{R-\rho}{1000\sqrt{n}}\right]$. Then there are a finite index set  $\mathcal{K}$ and a sequence $\{z_k\}_{k\in\mathcal{K}}$ such that 
    \begin{equation}\label{est3.cov}
        Q_\rho\subset \bigcup_{k\in \mathcal{K}}Q_r(z_k)\subset \bigcup_{k\in \mathcal{K}}Q_{4r}(z_k)\subset Q_{R}
    \end{equation}
    and
    \begin{equation}\label{est4.cov}
        \sup_{z\in\bbR^{2n+1}}\sum_{k\in \mathcal{K}}\mbox{\Large$\chi$}_{Q_{4r}(z_k)}(z)\leq c
    \end{equation}
for some constant $c=c(n)$.
\end{lemma}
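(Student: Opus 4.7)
My plan is to construct an explicit finite lattice of centers adapted to the Galilean drift structure of the kinetic cylinders, and then verify both the covering property \eqref{est3.cov} and the bounded overlap property \eqref{est4.cov} by direct computation. The essential feature is a drift correction in the $x$-coordinate that compensates for the tilting of $Q_r(z)$ by the velocity component; without it an axis-aligned lattice would not respect the kinetic seminorm $|x_1-x_2-(t_1-t_2)v|^{1/3}$.

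Since $Q_\rho = Q_\rho(0)$ is centered at the origin, I would fix a constant $C = C(n) \geq \sqrt{n}$ and define candidate centers $z_{ijl} := (t_i, x_{ijl}, v_j)$ by
\begin{equation*}
t_i := -i(r/C)^2, \qquad v_j := (r/C)\, j, \qquad x_{ijl} := t_i v_j + (r/C)^3 l,
\end{equation*}
with $i$ a non-negative integer and $j, l \in \mathbb{Z}^n$. I would then let $\mathcal{K}$ be the collection of triples $(i,j,l)$ such that $Q_r(z_{ijl}) \cap Q_\rho \neq \emptyset$; this set is finite by a crude volume count, since all such centers are forced to lie in a bounded region.

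For the covering step, given $(t,x,v) \in Q_\rho$, I would pick $i$ minimal with $t_i \geq t$, then $j$ such that $v_j$ is the nearest lattice point to $v$, and finally $l$ such that $(r/C)^3 l$ is nearest to $x - t v_j$. The identity
\begin{equation*}
x - x_{ijl} - (t - t_i) v_j = (x - t v_j) - (r/C)^3 l
\end{equation*}
reduces the $x$-component of the condition $(t,x,v) \in Q_r(z_{ijl})$ to a one-step nearest-point estimate in $\mathbb{Z}^n$, yielding the required bound once $C$ is chosen sufficiently large in terms of $n$. The containment $Q_{4r}(z_{ijl}) \subset Q_R$ then follows from routine triangle-type bookkeeping: since $\rho \geq 1/2$ one obtains $R^3 - \rho^3 \geq \tfrac{3}{4}(R-\rho)$, and the hypothesis $r \leq (R-\rho)/(1000\sqrt{n})$ is exactly what absorbs all error terms of order $r$, $r^2$ and $r^3$, including mixed terms of size $\rho r^2$. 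For \eqref{est4.cov}, I would fix a point $z$ and count directly: the condition $z \in Q_{4r}(z_{ijl})$ constrains $t_i$, $v_j$, and the residual for $l$ each to a set of cardinality $O(C^{c(n)})$, which yields an overlap bound depending only on $n$.

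The main technical subtlety lies in the triangle-type bookkeeping for $Q_{4r}(z_{ijl}) \subset Q_R$: the $x$-coordinate of a point in $Q_{4r}(z_{ijl})$ is shifted by both $(t' - t_i) v_j$ (from the $4r$-enlargement) and $t_i v_j$ (built into $x_{ijl}$), and controlling $|v_j|$ up to $\rho + O(r)$ produces mixed error terms of size $\rho r^2$. It is precisely these mixed terms that force the factor $1000 \sqrt{n}$ in the hypothesis on $r$; once they are handled, the overlap count reduces to a routine exercise. Lemma \ref{lem.cov1} is not strictly needed for the explicit lattice construction, though it could alternatively serve as the basis of a Vitali-type selection argument.
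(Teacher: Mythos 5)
Your construction of the centers is the same in spirit as the paper's: a drift-corrected lattice, with the drift written into the $x$-coordinate of your center ($x_{ijl} = t_i v_j + (r/C)^3 l$), whereas the paper writes it into the definition of the tilted cube $K_r(t_i,x_j,v_k)$ via the condition $x \in K_{r^3}(x_j + v_k(t-t_i))$; these are interchangeable. The covering step and the containment $\bigcup Q_{4r}(z_k) \subset Q_R$ are handled the same way in both arguments (nearest-lattice-point estimate, and triangle-type bookkeeping against $R^3-\rho^3 \geq \tfrac34(R-\rho)$). Where you genuinely diverge is the bounded-overlap estimate \eqref{est4.cov}: you count directly, observing that for a fixed $z$ the membership $z \in Q_{4r}(z_{ijl})$ confines each of $t_i$, $v_j$, and the residual lattice value for $l$ to $O(C^{c(n)})$ possibilities, and multiplying these bounds gives the overlap constant. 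The paper instead invokes Lemma \ref{lem.cov1} together with a volume argument: if the overlap at some $z_0$ were too large, all the $Q_{4r}(z_k)$ containing $z_0$ would be trapped inside a single $Q_{16r}(z_{k_0})$, and the pairwise-disjoint tilted cubes $K_{r/\sqrt{n}}(z_k)$ sitting inside them would then overfill $Q_{16r}(z_{k_0})$ by measure. Your count is more elementary and bypasses Lemma \ref{lem.cov1} entirely, as you note; the paper's volume argument is somewhat more robust in that it does not rely on the centers lying on an explicit lattice, only on the existence of comparable disjoint pieces beneath the cylinders. Both arguments are correct.
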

\begin{proof}
Let us define a cube in $\bbR^n$ by
\begin{equation*}
    K_r(w)=\{v\in\bbR^n\,:v_i\in(w_i-r/2,w_i+r/2]\text{ for any }i\in[1,n]\},
\end{equation*}
where we write $w=(w_i),v=(v_i)\in\bbR^n$.
    Note that there are collections of disjoint cubes $\{I_{r/\sqrt{n}}(t_i)\}_{i\in\mathsf{I}}$ and $\{ K_{r/\sqrt{n}}(v_k)\}_{k\in\mathsf{K}}$ such that 
    \begin{equation*}
        I_\rho\subset \bigcup_{i\in\mathsf{I}}I_{r/\sqrt{n}}(t_i) \subset \bigcup_{i\in\mathsf{I}}I_{4r}(t_i)\subset I_{\rho+4r}, \quad B_\rho\subset \bigcup_{k\in\mathsf{K}}K_{r/\sqrt{n}}(v_k)\subset\bigcup_{k\in\mathsf{K}}B_{4r}(v_k)\subset B_{\rho+5r},
    \end{equation*}
    where $v_k\in B_{\rho+r}$ and $t_i\in I_\rho$.
     We next note that there is a collection of disjoint cubes $\{K_{(r/\sqrt{n})^3}(x_j)\}_{j\in\mathsf{J}}$ satisfying 
    \begin{equation*}
        B_{\rho(\rho^2+2r^2)}\subset \bigcup_{j\in\mathsf{J}}K_{(r/\sqrt{n})^3}(x_j)\subset B_{\rho(\rho^2+3r^2)}.
    \end{equation*}
    For any $r>0$, we now define
    \begin{equation}\label{defn.cov2}
        {K}_{r}(t_i,x_j,v_k)\coloneqq\{(t,x,v)\,:\, t\in I_{r}(t_i),\,v\in K_{r}(v_k),\, x\in K_{r^3}(x_j+v_k(t-t_i))\}.   
        \end{equation}
    Then we observe that $\{{K}_{r/\sqrt{{n}}}(t_i,x_j,v_k)\}_{i,j,k}$ is a collection of disjoint sets.

    We now prove the following:
    \begin{equation}\label{inc.cov2}
        Q_\rho\subset\bigcup_{i,j,k}Q_{r}(t_i,x_j,v_k)\subset\bigcup_{i,j,k}Q_{4r}(t_i,x_j,v_k)\subset Q_{R}.
    \end{equation}
    For any $z\in Q_\rho$, there are points $t_i$ and $v_k$ such that $t\in I_{r/\sqrt{n}}(t_i)$ and $v\in K_{r/\sqrt{n}}(v_k)$.
    This implies that $x-v_k(t-t_i)\in B_{\rho(\rho^2+2r^2)}$ and the equivalence
    \begin{equation*}
        x-v_k(t-t_i)\in K_{r/\sqrt{n}}(x_j)\Longleftrightarrow x\in K_{r/\sqrt{n}}(x_j+v_k(t-t_i))
    \end{equation*}
    for some $j\in\mathsf{J}$. Therefore, we have $ z\in K_{r/\sqrt{{n}}}(t_i,x_j,v_k)$,
    which implies
    \begin{align*}
        Q_\rho\subset \bigcup_{i,j,k}Q_{r}(t_i,x_j,v_k).
    \end{align*}
    In addition, for any $z\in Q_{4r}(t_i,x_j,v_k)$, we have $|x-x_j-v_k(t-t_i)|\leq (4r)^3$,
    leading to
    \begin{equation*}
        |x|\leq \rho(\rho^2+3r^2)+(\rho+r)(4r)^2+(4r)^3<R^3,
    \end{equation*}
    where we used that $r\leq(R-\rho)/(1000\sqrt{n})$.
    Therefore, $z\in Q_{R}$ and we have proved \eqref{inc.cov2}.
    
    We are now going to prove \eqref{est4.cov}. We may write $\mathcal{K}=\mathsf{I}\times \mathsf{J}\times \mathsf{K}$
    and suppose
    \begin{equation*}
        \sum_{k\in \mathcal{K}}\mbox{\Large$\chi$}_{Q_{4r}(z_k)}(z_0)> (16\sqrt{{n}})^{4n+2}|S_{{n}}|^2,
    \end{equation*}
    where we denote by $|S_{{n}}|$ the area of the unit sphere in ${n}$ dimensions. For
    \begin{equation*}
        \mathcal{K}_0=\{k\in\mathcal{K}\,:\,\mbox{\Large$\chi$}_{Q_{4r}(z_k)}(z_0)=1\},
    \end{equation*}
    there exists $k_0\in\mathcal{K}_0$ such that $t_{k_0}\geq t_k$ for any $k\in \mathcal{K}_0$.
    Then by Lemma \ref{lem.cov1}, we get
    \begin{equation}\label{cov2.ineq1}
        \bigcup_{k\in\mathcal{K}_0}Q_{4r}(z_k)\subset Q_{16r}(z_{k_0}).
    \end{equation}
    Thus, from the fact that $\{K_{r\sqrt{n}}(z_k)\}$ is a disjoint set defined in \eqref{defn.cov2} and \eqref{cov2.ineq1}, we obtain
    \begin{align*}
        \left((16\sqrt{{n}})^{4n+2}|S_{{n}}|^2+1\right)|Q_{4r}|&\leq \sum_{k\in\mathcal{K}_0}|Q_{4r}(z_k)|\\
        &= (4\sqrt{{n}})^{4n+2}\sum_{k\in\mathcal{K}_0}|K_{r/\sqrt{{n}}}(z_k)||S_{{n}}|^2\\
        &\leq (4\sqrt{{n}})^{4n+2}|Q_{16r}(z_{k_0})||S_{{n}}|^2\\
        &\leq (16\sqrt{{n}})^{4n+2}|S_{{n}}|^2|Q_{4r}|,
    \end{align*}
    which gives a contradiction. Therefore, we have 
    \begin{equation*}
        \sup_{z\in\bbR^{2n+1}}\sum_{k\in \mathcal{K}}\mbox{\Large$\chi$}_{Q_{4r}(z_k)}(z)\leq (16\sqrt{{n}})^{4n+2}|S_{{n}}|^2.
    \end{equation*}
    This completes the proof.
\end{proof}

\subsection{Embeddings via fractional maximal functions and Riesz potentials}
Let us gather some useful estimates involving the various maximal functions we use throughout the paper. The following pointwise estimate is the kinetic version of \cite[Proposition 1]{KuMiG}, see also \cite{DeVoreSharpley}.
\begin{lemma}\label{lem.ptmax}
    Let $g\in L^1(Q_{2R}(z_0))$ and $\beta\in(0,1)$. Then we have 
    \begin{align*}
        \frac{|g(z_1)-g(z_2)|}{d(z_1,z_2)^\beta}\leq c\left(M^{\#}_{\beta,R/2}(g)(z_1)+M^{\#}_{\beta,R/2}(g)(z_2)\right)
    \end{align*}
    for a.e.\ $z_1=(t_1,x_1,z_2),z_2=(t_2,x_2,v_2)\in Q_{R/16}(z_0)$, where $c=c(n,\beta)$ and 
    \begin{align*}
        d(z_1,z_2)\coloneqq|t_1-t_2|^{\frac12}+|x_1-(x_2+v_2(t_1-t_2))|^{\frac13}+|v_1-v_2|.
    \end{align*}
\end{lemma}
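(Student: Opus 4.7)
The proof adapts the classical DeVore-Sharpley pointwise estimate to the quasi-metric geometry of the kinetic cylinders $Q_r(\cdot)$. By the symmetry of the right-hand side in $z_1, z_2$, I may assume without loss of generality that $t_1 \geq t_2$, and it suffices to argue for common Lebesgue points of $g$ with respect to the family $\{Q_r(\cdot)\}_{r>0}$, which forms a set of full measure. Setting $\delta := d(z_1, z_2)$, the first step is the geometric observation that $z_2 \in Q_{2\delta}(z_1)$: indeed the three component bounds in the definition of $d$ directly force $t_2 \in (t_1 - \delta^2, t_1]$, $|v_1 - v_2| \leq \delta$, and
\[
|x_2 - x_1 - v_1(t_2 - t_1)| \leq |x_2 - x_1 - v_2(t_2 - t_1)| + |v_1 - v_2|\,|t_1 - t_2| \leq \delta^3 + \delta^3 < (2\delta)^3.
\]
In particular $Q_{2\delta}(z_1) \cap Q_{2\delta}(z_2) \neq \emptyset$, and Lemma \ref{lem.cov1} then gives the asymmetric inclusion $Q_{2\delta}(z_2) \subset Q_{8\delta}(z_1)$.

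Next I would carry out a standard dyadic telescoping at each Lebesgue point. For $i \in \{1,2\}$ I write
\[
g(z_i) - (g)_{Q_{2\delta}(z_i)} = \sum_{k=0}^{\infty} \bigl[(g)_{Q_{2^{-(k+1)} \cdot 2\delta}(z_i)} - (g)_{Q_{2^{-k} \cdot 2\delta}(z_i)}\bigr],
\]
and control each summand by Jensen's inequality combined with the scaling $|Q_r| = c(n)r^{4n+2}$ and the definition of the sharp maximal function:
\[
\bigl|(g)_{Q_{r/2}(z_i)} - (g)_{Q_r(z_i)}\bigr| \leq 2^{4n+2} \dashint_{Q_r(z_i)} \bigl|g - (g)_{Q_r(z_i)}\bigr|\,dz \leq c\, r^\beta M^{\#}_{\beta, R/2}(g)(z_i),
\]
valid whenever $r < R/2$. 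Summing the geometric series $\sum_k 2^{-k\beta}$ (which converges thanks to $\beta > 0$) yields
\[
\bigl|g(z_i) - (g)_{Q_{2\delta}(z_i)}\bigr| \leq c\, \delta^\beta M^{\#}_{\beta, R/2}(g)(z_i), \qquad i = 1, 2.
\]

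To couple the two endpoints, I would interpose the common enclosing cylinder $Q_{8\delta}(z_1)$. Since both $Q_{2\delta}(z_1) \subset Q_{8\delta}(z_1)$ trivially and $Q_{2\delta}(z_2) \subset Q_{8\delta}(z_1)$ by Step 1, Jensen gives
\[
\bigl|(g)_{Q_{2\delta}(z_i)} - (g)_{Q_{8\delta}(z_1)}\bigr| \leq \frac{|Q_{8\delta}(z_1)|}{|Q_{2\delta}(z_i)|} \dashint_{Q_{8\delta}(z_1)}\bigl|g - (g)_{Q_{8\delta}(z_1)}\bigr|\,dz \leq c\, \delta^\beta M^{\#}_{\beta, R/2}(g)(z_1)
\]
for $i = 1, 2$, where the volume ratio equals $4^{4n+2}$. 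Chaining the four estimates through $g(z_1) \to (g)_{Q_{2\delta}(z_1)} \to (g)_{Q_{8\delta}(z_1)} \to (g)_{Q_{2\delta}(z_2)} \to g(z_2)$ then produces the claim. To legitimize the use of $M^{\#}_{\beta, R/2}$ at the radii appearing above, I would verify that $z_1, z_2 \in Q_{R/16}(z_0)$ forces $\delta \leq c(n)\,R/16$ by bounding each of the three components of $d(z_1,z_2)$ directly via the defining inequalities of $Q_{R/16}(z_0)$ (in particular, the $x$-component is handled by a triangle inequality through the intermediate velocity $v_0$), so that every cylinder involved has radius strictly below $R/2$ and lies inside $Q_{2R}(z_0)$. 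The only genuinely non-routine point is geometric rather than analytic: the kinetic cylinders are one-sided in time and anisotropic in $(x,v)$, so the usual midpoint construction of an enclosing ball is unavailable, and the asymmetric inclusion $z_2 \in Q_{2\delta}(z_1)$ together with Lemma \ref{lem.cov1} is precisely what takes its place.
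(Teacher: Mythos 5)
Your strategy coincides with the paper's: establish a common enclosing cylinder via Lemma \ref{lem.cov1}, telescope dyadically, and bridge the two endpoint chains. The overall architecture is right. There is, however, a genuine gap in the book-keeping of the radii, and it is not cosmetic.

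You set $\delta := d(z_1,z_2)$, which is the \emph{sum} of the three component quantities, and you end up needing $Q_{8\delta}(z_1)$ (and even $Q_{2\delta}(z_i)$ at the start of the telescoping) to have radius below $R/2$. But for $z_1,z_2\in Q_{R/16}(z_0)$ the three components are bounded roughly by $R/16$, $3^{1/3}R/16$ and $R/8$ respectively, so $\delta$ can be as large as about $0.28R$. Then $2\delta$ may already exceed $R/2$, and $8\delta$ certainly can (it can exceed $2R$, so the cylinder $Q_{8\delta}(z_1)$ need not even lie inside $Q_{2R}(z_0)$). Your closing sentence that ``$\delta\le c(n)R/16$'' is true, but the constant $c(n)$ you would obtain is larger than one, which is exactly what breaks the argument: $8\delta < R/2$ requires $\delta < R/16$, not $\delta \le c(n)R/16$ with $c(n)>1$.

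The paper circumvents this by working with the \emph{maximum} $\rho := \max\{|t_1-t_2|^{1/2},\,|x_1 - x_2 - v_1(t_1-t_2)|^{1/3},\,|v_1-v_2|\}$ rather than the sum, and, crucially, by measuring the $x$-displacement with the velocity $v_1$ of the \emph{later} time point $z_1$. With that choice one has $z_2\in Q_\rho(z_1)$ exactly (not merely $Q_{2\rho}(z_1)$), so Lemma \ref{lem.cov1} yields $Q_\rho(z_2)\subset Q_{4\rho}(z_1)$, and the bound $\rho < R/8$ — which one checks directly — gives $4\rho < R/2$ as required. The conclusion in terms of $d(z_1,z_2)$ then follows because $\rho\lesssim d(z_1,z_2)$: the two $x$-components differ only by a term bounded by $|v_1-v_2|\,|t_1-t_2|$, which is controlled by the $v$- and $t$-components of $d$. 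So your proof becomes correct once you replace the sum $\delta$ by the max $\rho$ anchored at $v_1$, run the telescoping and the bridge with $Q_\rho$ and $Q_{4\rho}$, and only at the very end compare $\rho^\beta$ to $d(z_1,z_2)^\beta$.
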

\begin{proof}
    Let us fix $z_1,z_2\in Q_{R/16}(z_0)$ and write 
    \begin{align}\label{ineq0.ptmax}
        \rho\coloneqq \max\left\{|t_1-t_2|^{\frac12},|x_1-(x_2+v_1(t_2-t_1))|^{\frac13},|v_1-v_2|\right\}<R/8.
    \end{align}
    We may assume $t_1>t_2$. We first observe that
    \begin{equation}\label{ineq1.ptmax}
    \begin{aligned}
        \sum_{i=0}^\infty\left|(g)_{Q_{2^{-i}\rho}(z_1)}-(g)_{Q_{2^{-i-1}\rho}(z_1)}\right|&\leq c\rho^\beta\sum_{i=0}^\infty 2^{-i\beta}\dashint_{Q_{2^{-i}\rho}(z_1)}\frac{|g-(g)_{Q_{2^{-i}\rho}(z_1)}|}{(2^{-i}\rho)^\beta}\,dz\\
        &\leq c\rho^\beta M^{\#}_{\beta,R/2}(g)(z_1)
    \end{aligned}
    \end{equation}
    for some constant $c=c(n,\beta)$. This implies that the following limit exists:
    \begin{align*}
        \lim_{i\to\infty}(g)_{Q_{2^{-i}\rho}(z_1)}.
    \end{align*}
    From this, it is easy to see that also $\lim_{r \to0}(g)_{Q_{r}(z_1)}$ is well-defined. Using this and taking into account \cite[Theorem 10.3]{ImbSil20} with $s=1$, we define the pointwise value $g(z_1):=\lim_{r \to0}(g)_{Q_{r}(z_1)}$.
    We next observe 
    \begin{align}\label{ineq2.ptmax}
        g(z_1)=\left[\sum_{i=0}^{\infty}\left((g)_{Q_{2^{-i}\rho}(z_1)}-(g)_{Q_{2^{-i-1}\rho}(z_1)}\right)\right]+(g)_{Q_\rho(z_1)}.
    \end{align}
    Thus, we get 
    \begin{align*}
        |g(z_1)-g(z_2)|&\leq \sum_{i=0}^\infty\left|(g)_{Q_{2^{-i}\rho}(z_1)}-(g)_{Q_{2^{-i-1}\rho}(z_1)}\right|\\
        &\quad+\sum_{i=0}^\infty\left|(g)_{Q_{2^{-i}\rho}(z_2)}-(g)_{Q_{2^{-i-1}\rho}(z_2)}\right|\\
        &\quad+|(g)_{Q_{\rho}(z_1)}-(g)_{Q_{\rho}(z_2)}|\coloneqq \sum_{k=1}^3J_k.
    \end{align*}
    By \eqref{ineq1.ptmax}, we estimate
    \begin{align*}
        J_1+J_2\leq c\rho^\beta \left[M^{\#}_{\beta,R/2}(g)(z_1)+M^{\#}_{\beta,R/2}(g)(z_2)\right],
    \end{align*}
    where $c=c(n,\beta)$. In light of Lemma \ref{lem.cov1}, we next estimate $J_3$ as 
    \begin{align*}
        J_3\leq c\dashint_{Q_{4\rho}(z_1)}|g-(g)_{Q_{4\rho}(z_1)}|\,dz\leq c\rho^\beta M^{\#}_{\beta,R/2}(g)(z_1).
    \end{align*}
    Combining all the estimates $J_1$, $J_2$ and $J_3$ together with \eqref{ineq0.ptmax} yields the desired estimate.
\end{proof}
\begin{corollary}\label{cor.ptmax}
    Let $g\in L^1({Q_{2R}(z_0)})$ and $\beta\in(0,1)$. Then we have 
    \begin{align}\label{ineq1.ptmaxc}
        \|g\|_{L^\infty(Q_{R/16}(z_0))}\leq c\left(R^\beta\|M^{\#}_{\beta,R/2}(g)\|_{L^\infty(Q_{R/2}(z_0))}+\dashint_{Q_{2R}(z_0)}|g|\,dz\right)
    \end{align}
    and
    \begin{align}\label{ineq2.ptmaxc}
        [g]_{C_{\mathrm{kin}}^\beta(Q_{R/16}(z_0))}\leq c\|M^{\#}_{\beta,R/2}(g)\|_{L^\infty(Q_{R/2}(z_0))}
    \end{align}
    for some constant $c=c(n,\beta)$.
\end{corollary}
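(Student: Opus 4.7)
Both estimates are direct consequences of Lemma \ref{lem.ptmax} together with the dyadic representation \eqref{ineq2.ptmax} and the dyadic control \eqref{ineq1.ptmax} established in its proof.

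For the seminorm bound \eqref{ineq2.ptmaxc}, the plan is simply to take the supremum over $z_1,z_2 \in Q_{R/16}(z_0)$ in the pointwise inequality supplied by Lemma \ref{lem.ptmax}. The only care needed is that the asymmetric quantity $d(z_1,z_2)$ from that lemma has to be compared with the kinetic quasi-distance appearing in the definition \eqref{defn.hol} of $[\cdot]_{C^\beta_{\mathrm{kin}}}$. Writing $x_1 - x_2 - v_2(t_1-t_2) = -(x_2-x_1+v_1(t_1-t_2)) + (v_1-v_2)(t_1-t_2)$, the two middle terms agree modulo a contribution of $|v_1-v_2|^{1/3}|t_1-t_2|^{1/3}$, which by Young's inequality with exponents $(3,3/2)$ is absorbed into $|v_1-v_2|+|t_1-t_2|^{1/2}$. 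This gives comparability of the two quasi-distances with a dimensional constant, after which \eqref{ineq2.ptmaxc} is immediate.

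For the $L^\infty$ bound \eqref{ineq1.ptmaxc}, I would fix $\rho = R/16$ and, for any Lebesgue point $z_1 \in Q_{R/16}(z_0)$, expand $g(z_1)$ via the dyadic identity \eqref{ineq2.ptmax}:
\[
g(z_1) = \sum_{i=0}^\infty\bigl((g)_{Q_{2^{-i}\rho}(z_1)} - (g)_{Q_{2^{-i-1}\rho}(z_1)}\bigr) + (g)_{Q_\rho(z_1)}.
\]
The telescoping sum is bounded through \eqref{ineq1.ptmax} by $c\rho^\beta M^{\#}_{\beta,R/2}(g)(z_1) \leq cR^\beta\|M^{\#}_{\beta,R/2}(g)\|_{L^\infty(Q_{R/2}(z_0))}$, using that $2^{-i}\rho < R/2$. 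For the residual mean $(g)_{Q_\rho(z_1)}$, I apply Lemma \ref{lem.cov1} to the pair $Q_\rho(z_0), Q_\rho(z_1)$: they intersect at $z_1$ and satisfy $t_0\geq t_1$, so $Q_\rho(z_1) \subset Q_{4\rho}(z_0) = Q_{R/4}(z_0) \subset Q_{2R}(z_0)$. Since the density ratio $|Q_{2R}(z_0)|/|Q_\rho(z_1)|$ equals the dimensional constant $32^{4n+2}$, we get $|(g)_{Q_\rho(z_1)}| \leq c\dashint_{Q_{2R}(z_0)}|g|\,dz$. Combining and passing to the essential supremum over $z_1$ yields \eqref{ineq1.ptmaxc}.

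The whole argument is really a repackaging of ingredients already established in Lemma \ref{lem.ptmax}; the only mildly delicate step is the quasi-distance comparison in the first estimate, which is nonetheless routine and poses no real obstacle.
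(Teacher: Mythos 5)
Your proof is correct and follows the same route the paper has in mind: the $L^\infty$ estimate is obtained from the dyadic identity \eqref{ineq2.ptmax} and the telescoping control \eqref{ineq1.ptmax} from the proof of Lemma \ref{lem.ptmax}, while the seminorm estimate follows from Lemma \ref{lem.ptmax} after matching the quantity $d(z_1,z_2)$ with the quasi-distance in \eqref{defn.hol}. The only point I'd flag as slightly superfluous is the Young's-inequality comparison in the seminorm step: since $d(z_1,z_2)$ with its $v_2$-dependent middle term is exactly the quantity in \eqref{defn.hol} for the ordered pair $((t_2,x_2,v_2),(t_1,x_1,v_1))$, and the right-hand side of Lemma \ref{lem.ptmax} is symmetric in $z_1,z_2$, one can simply apply the lemma twice (once per ordering) and obtain the seminorm bound directly without estimating the cross term $|v_1-v_2|^{1/3}|t_1-t_2|^{1/3}$; that said, your comparison argument is also valid and establishes the same comparability.
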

\begin{proof}
    Using \eqref{ineq1.ptmax} and \eqref{ineq2.ptmax}, we have \eqref{ineq1.ptmaxc}. In addition, by recalling \eqref{defn.hol} and using Lemma \ref{lem.ptmax}, we get \eqref{ineq2.ptmaxc}.
\end{proof}
Before moving on, let us recall the definition of Lorentz spaces.
\begin{definition} \label{def.Lorentz}
	Given open sets $W \subset \mathbb{R}^{n+1}$, $V \subset \mathbb{R}^n$, for any $p \in [1,\infty)$ and any $q \in (0,\infty]$ we define the Lorentz space $L^{p,q}(W \times V)$ as the space of measurable functions $\mu: W\times V \to \mathbb{R}^n$ such that the quasinorm
	$$ ||\mu||_{L^{p,q}(W \times V)}:=
	\begin{cases} \normalfont
		p^\frac{1}{q} \left (\int_0^\infty \lambda^q |\{x \in W\times V : |\mu(x)| \geq \lambda \}|^\frac{q}{p} \frac{d\lambda}{\lambda} \right )^\frac{1}{q}  & \text{ if } q<\infty\\
		\sup_{\lambda>0} \lambda \hspace{0.2mm} | \{x \in W\times V : |\mu(x)| \geq \lambda \} |^\frac{1}{p} & \text{ if } q=\infty
	\end{cases} $$
	is finite.
\end{definition}

In particular, for any $p \in [1,\infty)$ by Cavalieri's principle we have $L^{p,p}(W \times V)=L^p(W \times V)$, so that the Lorentz spaces refine the scale of $L^p$ spaces. Moreover, it is easy to see that $L^{p,q}(W \times V) \hookrightarrow L^{p,\sigma}(W \times V)$ for all $p \in [1,\infty)$ and all $q,\sigma \in (0,\infty]$ with $q<\sigma$.

We now provide an embedding result via fractional maximal functions. We follow the proof of the corresponding elliptic version given in \cite[Proposition 2.5]{DieNow23}.
\begin{lemma}\label{lem.fmax}
    Let $\beta\in(0,1)$ and $\mu\in L^{\frac{4n+2}{\beta},\infty}(Q_{2R}(z_0))$. Then there is a constant $c=c(n,\beta)$ such that 
    \begin{align*}
        \sup_{z_1\in Q_{R/2}(z_0)}M_{\beta,R/2}(\mu)(z_1)\leq c\|\mu\|_{L^{\frac{4n+2}{\beta},\infty}(Q_{2R}(z_0))}.
    \end{align*}
\end{lemma}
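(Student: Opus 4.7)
\medskip

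\noindent\textbf{Proof plan.} The strategy is a direct scaling computation: bound $r^{\beta}\dashint_{Q_{r}(z_{1})}|\mu|\,dz$ uniformly in $z_{1}\in Q_{R/2}(z_{0})$ and $r\in(0,R/2)$ by the Marcinkiewicz quasinorm on $Q_{2R}(z_{0})$, using the kinetic homogeneity $|Q_{r}|\sim r^{4n+2}$ together with the standard weak-$L^{p}$ truncation inequality.

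\medskip

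\noindent\textbf{Step 1 (Inclusion).} Fix $z_{1}=(t_{1},x_{1},v_{1})\in Q_{R/2}(z_{0})$ and $r\in(0,R/2)$. Since $z_{1}\in Q_{R/2}(z_{0})$ forces $t_{0}\geq t_{1}$, and since $z_{1}\in Q_{R/2}(z_{0})\cap Q_{R/2}(z_{1})$, Lemma~\ref{lem.cov1} (applied with the common radius $R/2$) yields $Q_{R/2}(z_{1})\subset Q_{2R}(z_{0})$, hence
\[
Q_{r}(z_{1})\subset Q_{2R}(z_{0}).
\]
This guarantees that the Lorentz quasinorm controls $\mu$ on $Q_{r}(z_{1})$.

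\medskip

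\noindent\textbf{Step 2 (Weak-$L^{p}$ truncation).} Set $p\coloneqq (4n+2)/\beta>1$ and write $E\coloneqq Q_{r}(z_{1})$. Using the layer-cake formula and splitting the resulting integral at the threshold $\lambda_{0}\coloneqq|E|^{-1/p}\|\mu\|_{L^{p,\infty}(Q_{2R}(z_{0}))}$, one obtains the well-known bound
\[
\int_{E}|\mu|\,dz
\;\leq\; \lambda_{0}|E|+\int_{\lambda_{0}}^{\infty}\bigl|\{z\in E:|\mu(z)|>\lambda\}\bigr|\,d\lambda
\;\leq\; c(p)\,|E|^{1-\frac{1}{p}}\|\mu\|_{L^{p,\infty}(Q_{2R}(z_{0}))}.
\]

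\medskip

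\noindent\textbf{Step 3 (Scaling).} Recalling that $|Q_{r}(z_{1})|=c(n)r^{4n+2}$ and that $p=(4n+2)/\beta$, one has $|E|^{-1/p}\cdot|E|=|E|^{1-1/p}=c(n)r^{(4n+2)-\beta}$, so that
\[
r^{\beta}\dashint_{Q_{r}(z_{1})}|\mu|\,dz
\;=\;\frac{r^{\beta}}{c(n)r^{4n+2}}\int_{E}|\mu|\,dz
\;\leq\; c(n,\beta)\,\frac{r^{\beta}\cdot r^{(4n+2)-\beta}}{r^{4n+2}}\,\|\mu\|_{L^{p,\infty}(Q_{2R}(z_{0}))}.
\]
The powers of $r$ cancel exactly, yielding the claimed bound after taking the supremum in $r\in(0,R/2)$ and $z_{1}\in Q_{R/2}(z_{0})$.

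\medskip

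\noindent\textbf{Remarks on difficulty.} The argument is essentially routine once one has identified the correct homogeneous dimension $4n+2$ of the kinetic cylinder, which is precisely what forces the choice of exponent $(4n+2)/\beta$ in the Lorentz space. The only non-automatic point is the inclusion in Step~1, which must be checked with the (asymmetric) definition of $Q_{r}$ and is handled cleanly by Lemma~\ref{lem.cov1}; everything else is a one-line consequence of the Kolmogorov-type weak-type bound, with no kinetic-specific subtlety.
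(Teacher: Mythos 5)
Your proof is correct and follows essentially the same approach as the paper's: both split the layer-cake integral $\int_{Q_r(z_1)}|\mu|\,d\lambda$ at a threshold of order $|Q_r|^{-\beta/(4n+2)}$ (the paper normalizes $\overline\mu=\mu/\|\mu\|_{L^{p,\infty}}$ first and splits at $r^{-\beta}$, which is the same choice up to a dimensional constant) and then use $|Q_r|\sim r^{4n+2}$ so that the powers of $r$ cancel. Your Step 1, making the inclusion $Q_r(z_1)\subset Q_{2R}(z_0)$ explicit via Lemma~\ref{lem.cov1}, is a small extra bit of care the paper leaves implicit; otherwise the two arguments coincide.
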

\begin{proof}
    We assume $\|\mu\|_{L^{\frac{4n+2}{\beta},\infty}(Q_{2R}(z_0))}>0$, otherwise the desired estimates follows trivially. Let us define $\overline{\mu}\coloneqq\mu/\|\mu\|_{L^{\frac{4n+2}{\beta},\infty}(Q_{2R}(z_0))}$, so that $\|\overline{\mu}\|_{L^{\frac{4n+2}{\beta},\infty}(Q_{2R}(z_0))}=1$. For any $z_1\in Q_{R/2}(z_0)$ and $r\in(0,R/2]$, we have 
    \begin{align*}
        r^{\beta}\dashint_{Q_r(z_1)}|\overline{\mu}|\,dz&\leq r^\beta\int_{0}^{r^{-\beta}}\,d\lambda+cr^{\beta-(4n+2)}\int^{\infty}_{r^{-\beta}}|\{z\in Q_r(z_1)\,:\,|\overline{\mu}(z)|>\lambda\}|\,d\lambda\\
        &\leq c+c\|\overline{\mu}\|^{\frac{4n+2}{\beta}}_{L^{\frac{4n+2}{\beta},\infty}(Q_{2R}(z_0))}r^{\beta-(4n+2)}\int^{\infty}_{r^{-\beta}}\lambda^{-\frac{4n+2}{\beta}}\,d\lambda\leq c
    \end{align*}
    for some constant $c=c(n,\beta)$, which in view of rescaling completes the proof.
\end{proof}

Next, we extend the mapping properties from classical Riesz potentials (see e.g.\ \cite{Cianchi}) to their kinetic counterparts defined in \eqref{defn.riesz}.
\begin{lemma}\label{lem.riesz}
    Let us assume $\mu\in L^q(Q_R(z_0))$ with $1<q<4n+2$. Then we have
    \begin{equation}\label{ineq1.riesz}
        \left\|I^{|\mu|}_{1}(z,R/4)\right\|_{ L^{\frac{q(4n+2)}{4n+2-q}}(Q_{R/4}(z_0))}\leq c\|\mu\|_{L^q(Q_R(z_0))}
    \end{equation}
    for some constant $c=c(n,q)$. In addition, if $\mu\in L^{4n+2,1}(Q_R(z_0))$, then 
    \begin{equation}\label{ineq2.riesz}
        \lim_{\rho\to0}\left\|I^{|\mu|}_{1}(z,\rho)\right\|_{L^\infty(Q_{R/4}(z_0))}=0.
    \end{equation}
\end{lemma}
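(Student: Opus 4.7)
The plan is to adapt Hedberg's classical trick for Riesz potentials to the kinetic setting, where the kinetic cylinders $Q_r(z)$ of homogeneous dimension $4n+2$ play the role of balls.

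For the strong-type estimate \eqref{ineq1.riesz}, I first aim to prove the pointwise bound
\begin{equation*}
    I_1^{|\mu|}(z,R/4)\leq c\,M_{R/4}(|\mu|)(z)^{1-\frac{q}{4n+2}}\|\mu\|_{L^q(Q_R(z_0))}^{\frac{q}{4n+2}}
\end{equation*}
by splitting the integral defining the potential at a free radius $\rho\in(0,R/4)$. On the small-scale piece $r<\rho$, the elementary inequality $|\mu|(Q_r(z))/r^{4n+2}\leq M_{R/4}(|\mu|)(z)$ coming from \eqref{defn.maxi} yields a contribution of $c\rho\,M_{R/4}(|\mu|)(z)$. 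On the large-scale piece, Hölder's inequality gives $|\mu|(Q_r(z))\leq c\|\mu\|_{L^q(Q_R(z_0))}r^{(4n+2)/q'}$, which after integrating in $r\in(\rho,R/4)$ contributes $c\|\mu\|_{L^q(Q_R(z_0))}\rho^{1-(4n+2)/q}$; the integral converges precisely because $q<4n+2$. Optimizing over $\rho$ balances the two terms. Raising the resulting pointwise bound to the power $p:=q(4n+2)/(4n+2-q)$ is designed so that $p(1-q/(4n+2))=q$, hence integrating over $Q_{R/4}(z_0)$ and invoking the strong-type $(q,q)$ bound $\|M_{R/4}(|\mu|)\|_{L^q(Q_{R/4}(z_0))}\leq c\|\mu\|_{L^q(Q_R(z_0))}$ for the kinetic maximal function yields \eqref{ineq1.riesz}.

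For the Lorentz borderline estimate \eqref{ineq2.riesz}, I would first apply Fubini to rewrite
\begin{equation*}
    I_1^{|\mu|}(z,\rho)=\int_{Q_\rho(z)}|\mu(y)|\int_{\tau_z(y)}^{\rho}\frac{dr}{r^{4n+2}}\,dy\leq c\int_{Q_\rho(z)}\frac{|\mu(y)|}{\tau_z(y)^{4n+1}}\,dy,
\end{equation*}
where $\tau_z(y):=\inf\{r>0:y\in Q_r(z)\}$ is the natural kinetic quasi-distance. A direct computation shows that the kernel $K_z(y):=\tau_z(y)^{-(4n+1)}$ satisfies $|\{y\in Q_\rho(z):K_z(y)>\lambda\}|=|Q_{\lambda^{-1/(4n+1)}}(z)|\simeq\lambda^{-(4n+2)/(4n+1)}$ for large $\lambda$, so that $\|K_z\|_{L^{(4n+2)/(4n+1),\infty}(Q_\rho(z))}\leq c$ uniformly in $z$ and $\rho$. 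Hölder's inequality in Lorentz spaces then gives $I_1^{|\mu|}(z,\rho)\leq c\|\mu\|_{L^{4n+2,1}(Q_\rho(z))}$, and since $|Q_\rho(z)|\simeq\rho^{4n+2}$ uniformly in $z$, absolute continuity of the Lorentz norm delivers the desired uniform vanishing as $\rho\to0$.

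The main obstacle is justifying the strong-type bound for the kinetic maximal function used in the first part. The cylinders $Q_r(z)$ are past-oriented and anisotropic (time of scale $r^2$, spatial of scale $r^3$, velocity of scale $r$), and consequently do not form a standard ball family; however, Lemma \ref{lem.cov1} supplies exactly the Vitali-type covering property one needs, namely that two cylinders of the same radius whose intersection is nonempty are one contained in the $4$-enlargement of the other, which after a standard covering/interpolation argument yields weak-type $(1,1)$ and hence strong-type $(q,q)$ for $q>1$, in perfect analogy with the Euclidean proof.
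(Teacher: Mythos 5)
Your argument for the strong estimate \eqref{ineq1.riesz} is essentially identical to the paper's: both split $I_1^{|\mu|}(z,R/4)$ at a free radius $\delta$, bound the near piece by $\delta\,M_{R/4}(|\mu|)(z)$, bound the far piece via H\"older (the exponent $4n+2-q>0$ making the tail integrable), optimize over $\delta$, and then raise the resulting interpolation inequality $I_1^{|\mu|}\leq c\,M_{R/4}(|\mu|)^{(4n+2-q)/(4n+2)}\|\mu\|_{L^q}^{q/(4n+2)}$ to the power $q(4n+2)/(4n+2-q)$ and invoke the strong $(q,q)$ bound for the kinetic maximal function. Your power bookkeeping is correct, and your observation that the strong maximal bound requires a Vitali covering property is well taken; the paper simply asserts the ``standard strong $p$-$p$ estimates'' and your reference to Lemma \ref{lem.cov1} is exactly the right ingredient to make that rigorous.

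Where you genuinely diverge is in the borderline Lorentz estimate \eqref{ineq2.riesz}. The paper outsources this step entirely, citing the proof of a lemma from another paper (DieKimLeeNow24p) transcribed to the kinetic cylinders. You instead give a self-contained Hedberg-style argument: Fubini converts the truncated potential into an integral against the kernel $K_z(y)=\tau_z(y)^{-(4n+1)}$, the distribution function of $K_z$ is computed exactly using $|Q_r(z)|\simeq r^{4n+2}$ (giving $\|K_z\|_{L^{(4n+2)/(4n+1),\infty}(Q_\rho(z))}\leq c$ uniformly), O'Neil's inequality then yields $I_1^{|\mu|}(z,\rho)\leq c\|\mu\|_{L^{4n+2,1}(Q_\rho(z))}$, and absolute continuity of the $L^{4n+2,1}$ quasinorm (valid since the second index is finite) gives uniform vanishing. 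This computation is correct, more transparent than the paper's citation, and arguably preferable for a reader who does not want to chase the nonlocal reference. One small formalization you should include in a final write-up: for $z\in Q_{R/4}(z_0)$ one has $t\leq t_0$, so Lemma \ref{lem.cov1} gives $Q_\rho(z)\subset Q_{R/4}(z)\subset Q_R(z_0)$ for $\rho\leq R/4$, which is needed to apply the $L^{4n+2,1}(Q_R(z_0))$ hypothesis on the shrinking cylinders $Q_\rho(z)$ uniformly in $z$.
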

\begin{proof} 
    Let us fix $\delta\in(0,\frac{R}{4})$ and $z_1\in Q_{R/4}(z_0)$.
    We first note that
    \begin{align*}
        I_1^{|\mu|}(z_1,R/4)=c\int_{0}^{\delta}\dashint_{Q_r(z_1)}|\mu|(z)\,dz{\,dr}+c\int_{\delta}^{R/4}\dashint_{Q_r(z_1)}{|\mu|(z)}\,dz{\,dr}\coloneqq J_1+J_2.
    \end{align*}
    for some constant $c=c(n)$. First, we observe that
    \begin{align*}
        J_1\leq c\delta M_{R/4}(|\mu|)(z_1),
    \end{align*}
where $c=c(n)$. We next note from H\"older's inequality that
    \begin{align*}
        J_2\leq c\int^{R/4}_{\delta}\left(\int_{Q_r(z_1)}|\mu|^q\,dz\right)^{\frac1q}r^{-\frac{4n+2}{q}}{\,dr}\leq c\delta^{\frac{q-(4n+2)}{q}}\|\mu\|_{L^q(Q_{R/4}(z_1))},
    \end{align*}
    where $c=c(n,q)$. Combining the estimates $J_1$ and $J_2$ yields
    \begin{align*}
        I_1^{|\mu|}(z_1,R/4)\leq \delta M_{R/4}(|\mu|)(z_1)+c\delta^{\frac{q-(4n+2)}{q}}\|\mu\|_{L^q(Q_R(z_0))}.
    \end{align*}
    We now choose $\delta=\left(\frac{\|\mu\|_{L^q(Q_R(z_0))}}{M_{R/4}(|\mu|)(z_1)}\right)^{\frac{q}{4n+2}}$ and observe that
    \begin{align*}
        I_1^{|\mu|}(z_1,R/4)\leq c {\|\mu\|_{L^q(Q_R(z_0))}}^{\frac{q}{4n+2}}M_{R/4}(|\mu|)^{\frac{4n+2-q}{4n+2}}(z_1)
    \end{align*}
    holds for some $c=c(n,q)$ and any $z_1\in Q_{R/4}(z_0)$.
    Therefore, we get 
    \begin{align*}
        \left\|I_1^{|\mu|}(z,R/4)\right\|_{L^{\frac{q(4n+2)}{4n+2-q}}(Q_{R/4}(z_0))}&\leq  c{\|\mu\|_{L^q(Q_R(z_0))}}^{\frac{q}{4n+2}}\|M_{R/4}(|\mu|)\|_{L^q(Q_{R/4}(z_0))}^{\frac{4n+2-q}{4n+2}}\\
        &\leq c\|\mu\|_{L^q(Q_R(z_0))}
    \end{align*}
    for some constant $c=c(n,q)$, where we have used the standard strong $p$-$p$ estimates for the maximal function. Thus, we have proved \eqref{ineq1.riesz}. 
    
    On the other hand, by following the same lines as in the proof of \cite[Lemma 2.11]{DieKimLeeNow24p} with the cylinder $Q_{2R}^{s}$ replaced by the kinetic cylinder $Q_{R}(z_0)$, we obtain (2.8) in \cite[Lemma 2.11]{DieKimLeeNow24p} with $s=1$, $g=\mu$ and $R=\rho$. Therefore, we deduce \eqref{ineq2.riesz}, which completes the proof.
\end{proof}
\subsection{Technical lemmas}
Next, we mention two technical lemmas which will be useful in the remainder of this paper (see \cite[Lemma 6.1]{Giu03} and \cite[Lemma 3.4]{HanLin11}).
\begin{lemma}\label{lem.tech}
    Let $\varphi:[r_1,r_2]\to\bbR_+$ be a bounded function. Assume that for all $r_1\leq \rho<R\leq r_2$, we have
    \begin{equation*}
        \varphi(\rho)\leq \varphi(R)/2+M(R-\rho)^{-N},
    \end{equation*}
    where $M,N>0$. Then we have 
    \begin{equation*}
        \varphi(r_1)\leq cM(r_2-r_1)^{-N}
    \end{equation*}
    for some constant $c=c(N)$.
\end{lemma}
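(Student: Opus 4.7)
The plan is to run the classical Giaquinta-style iteration. I will define an increasing sequence $\{\rho_k\}_{k \geq 0}\subset [r_1,r_2]$ with $\rho_0=r_1$ and $\rho_k \to r_2$, whose consecutive gaps decay geometrically, and then apply the hypothesis to the pair $(\rho_k,\rho_{k+1})$ for every $k$.

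Concretely, fix a parameter $\tau\in(0,1)$ (to be chosen) and set
\begin{equation*}
\rho_{k+1}-\rho_k=(1-\tau)\tau^k(r_2-r_1),\qquad \rho_0=r_1,
\end{equation*}
so that $\rho_k \nearrow r_2$. The assumption applied at $(\rho_k,\rho_{k+1})$ gives
\begin{equation*}
\varphi(\rho_k)\leq \tfrac12\varphi(\rho_{k+1})+M(1-\tau)^{-N}\tau^{-kN}(r_2-r_1)^{-N}.
\end{equation*}
Iterating this inequality $k$ times starting from $k=0$ yields
\begin{equation*}
\varphi(r_1)\leq 2^{-k}\varphi(\rho_k)+M(1-\tau)^{-N}(r_2-r_1)^{-N}\sum_{j=0}^{k-1}\bigl(2\tau^{N}\bigr)^{-j}.
\end{equation*}

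The only real choice is to pick $\tau$ so that $2\tau^N>1$, for instance $\tau=2^{-1/(2N)}$, which forces the geometric series to converge to a finite constant depending only on $N$. Once this is done, the boundedness of $\varphi$ on $[r_1,r_2]$ ensures $2^{-k}\varphi(\rho_k)\to 0$ as $k\to\infty$ (this is the sole place where boundedness enters), and passing to the limit delivers
\begin{equation*}
\varphi(r_1)\leq cM(r_2-r_1)^{-N}
\end{equation*}
with $c=c(N)$. The main (and essentially only) technical point is balancing the factor $\tfrac12$ against the geometric blow-up $\tau^{-N}$ in the gap penalties, which is resolved by the choice of $\tau$ above.
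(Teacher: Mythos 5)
Your proof is correct and it is the standard argument for this classical iteration lemma. The paper does not prove the statement itself but simply cites \cite[Lemma 6.1]{Giu03}, and your construction (geometric gap sequence $\rho_{k+1}-\rho_k=(1-\tau)\tau^k(r_2-r_1)$, iterating the hypothesis, choosing $\tau$ with $2\tau^N>1$ so the series $\sum_j(2\tau^N)^{-j}$ converges, and killing $2^{-k}\varphi(\rho_k)$ via boundedness) is precisely the argument in that reference.
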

\begin{lemma}\label{lem.tech2}
    Let $\phi(\rho):[0,R]\to\bbR_+$ be a bounded and non-decreasing function. Assume that for all $0<\rho\leq r\leq R$, we have
    \begin{align*}
        \phi(\rho)\leq M\left[(\rho/r)^\sigma+\epsilon\right]\phi(r)+Nr^{\overline{\sigma}},
    \end{align*}
    where $M,N,\sigma,\overline{\sigma}>0$ with $\sigma>\overline{\sigma}$. Then for any $\gamma\in(\overline{\sigma},\sigma)$, there is a constant $\epsilon_0=\epsilon_0(M,\sigma,\overline{\sigma},\gamma)$ such that if $\epsilon<\epsilon_0$, then for any $0<\rho\leq r\leq R$,
    \begin{align*}
         \phi(\rho)\leq c\left((\rho/r)^\gamma\phi(r)+N\rho^{\overline{\sigma}}\right)
    \end{align*}
    holds, where $c=c(M,\sigma,\overline{\sigma},\gamma)$.
\end{lemma}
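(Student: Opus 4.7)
The plan is to run a geometric iteration on radii of the form $\tau^k r$ for a carefully chosen ratio $\tau \in (0,1)$, and then pass from this discrete family of radii to arbitrary $\rho \in (0,r]$ via the monotonicity of $\phi$. This is the standard Campanato/Giaquinta-type iteration, with the twist that the factor $\epsilon$ must be absorbed into the contraction constant.

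First, I would fix $\gamma \in (\overline{\sigma},\sigma)$ and choose $\tau = \tau(M,\sigma,\gamma) \in (0,1)$ small enough that $M\tau^\sigma \leq \tfrac12 \tau^\gamma$; this is possible precisely because $\sigma > \gamma$, so $\tau^{\sigma-\gamma} \leq 1/(2M)$ holds for all small $\tau$. I would then set $\epsilon_0 := \tau^\gamma/(2M)$, so that for any $\epsilon < \epsilon_0$ the hypothesis applied with the pair $(\tau r, r)$ yields
\begin{equation*}
\phi(\tau r) \leq M\bigl[\tau^\sigma + \epsilon\bigr]\phi(r) + N r^{\overline{\sigma}} \leq \tau^\gamma \phi(r) + N r^{\overline{\sigma}}.
\end{equation*}

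Next, I would iterate this one-step contraction. Applying it at radii $\tau^k r$ and inducting on $k \geq 0$, one obtains
\begin{equation*}
\phi(\tau^{k} r) \leq \tau^{k\gamma}\phi(r) + N r^{\overline{\sigma}} \sum_{j=0}^{k-1} \tau^{(k-1-j)\gamma}\tau^{j\overline{\sigma}}.
\end{equation*}
Here I would use $\gamma > \overline{\sigma}$ to sum the geometric series, factoring out $\tau^{(k-1)\overline{\sigma}}$ from each term and bounding $\sum_{j\geq 0} \tau^{j(\gamma-\overline{\sigma})} \leq (1-\tau^{\gamma-\overline{\sigma}})^{-1}$. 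This gives $\phi(\tau^k r) \leq \tau^{k\gamma}\phi(r) + c N (\tau^k r)^{\overline{\sigma}}$ for a constant $c = c(M,\sigma,\overline{\sigma},\gamma)$.

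Finally, for an arbitrary $\rho \in (0,r]$, I would choose the unique integer $k \geq 0$ with $\tau^{k+1} r < \rho \leq \tau^{k} r$, and use that $\phi$ is non-decreasing to write $\phi(\rho) \leq \phi(\tau^k r)$; substituting $\tau^k \leq \tau^{-1}(\rho/r)$ into the displayed estimate yields the claimed bound, with a new constant depending on $M,\sigma,\overline{\sigma},\gamma$. The only subtle step is the calibration of $\tau$ and $\epsilon_0$ in the first paragraph; everything else is routine summation and monotonicity.
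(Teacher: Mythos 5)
Your proof is correct and is exactly the standard Campanato-type iteration used in the reference the paper cites for this lemma (Han--Lin, \emph{Elliptic Partial Differential Equations}, Lemma 3.4); the paper itself does not reprove it. The calibration of $\tau$ and $\epsilon_0$, the induction, the geometric-series bound using $\gamma>\overline{\sigma}$, and the final interpolation via monotonicity of $\phi$ all check out.
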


\subsection{Zero-order regularity results for solutions to kinetic equations}
We provide a H\"older regularity result for solutions to kinetic Fokker Planck equations that is a rather straightforward consequence of \cite[Theorem 4]{GolImbMouVas19}.

\begin{lemma}\label{lem.dnm}
    Let $f\in H^{1}_{\mathrm{kin}}(W\times V)$ be a weak solution to \eqref{eq.main} with $\mu\in L^\infty$ and $G \equiv 0$. If $a$ satisfies Assumption \ref{assump}, then there is a constant $\alpha_0=\alpha_0(n,\Lambda)\in(0,1)$ such that
    \begin{equation*}
        r^{\alpha_0}[f]_{C_{\mathrm{kin}}^{\alpha_0}(Q_{r/2}(z_0))}+\|f\|_{L^\infty(Q_{r/2}(z_0))}\leq c\left(\dashint_{Q_r(z_0)}|f|\,dz+r^{2} \|\mu\|_{L^\infty(Q_r(z_0))}\right)
    \end{equation*}
    for some constant $c=c(n,\Lambda)$, whenever $Q_{r}(z_0)\subset W\times V$.
\end{lemma}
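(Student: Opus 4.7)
The plan is to reduce the estimate to the unit scale via the scaling invariance recorded in Lemma \ref{lem.scale}, and then invoke the local boundedness together with the De Giorgi--Nash--Moser-type H\"older estimate for kinetic Fokker--Planck equations from \cite[Theorem 4]{GolImbMouVas19}, which produces an exponent $\alpha_0 = \alpha_0(n,\Lambda) \in (0,1)$ depending only on the ellipticity data.

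First I would set
\[
M := \dashint_{Q_r(z_0)}|f|\,dz + r^{2}\|\mu\|_{L^\infty(Q_r(z_0))}
\]
(adding $\varepsilon$ and sending $\varepsilon\to0$ if $M=0$) and apply Lemma \ref{lem.scale} with this choice. The rescaled function $f_{r,z_0}$ is then a weak solution on $Q_1$ of a kinetic equation whose nonlinearity $a_{r,z_0}$ still satisfies Assumption \ref{assump} with the same constant $\Lambda$, and whose right-hand side $\mu_{r,z_0}$ obeys $\|\mu_{r,z_0}\|_{L^\infty(Q_1)} = r\|\mu\|_{L^\infty(Q_r(z_0))}/M \leq 1/r \cdot r^2\|\mu\|_{L^\infty(Q_r(z_0))}/M \leq 1/r$; more importantly, by the definition of $M$ we have $\dashint_{Q_1}|f_{r,z_0}|\,dz \leq 1$ and $\|\mu_{r,z_0}\|_{L^\infty(Q_1)}\cdot r \leq 1$, so all quantities on the unit cylinder are bounded by universal constants.

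At this point I would invoke \cite[Theorem 4]{GolImbMouVas19}, applied to $f_{r,z_0}$ on $Q_1$, which yields the existence of $\alpha_0 = \alpha_0(n,\Lambda) \in (0,1)$ and a constant $c = c(n,\Lambda)$ such that
\[
\|f_{r,z_0}\|_{L^\infty(Q_{1/2})} + [f_{r,z_0}]_{C_{\mathrm{kin}}^{\alpha_0}(Q_{1/2})} \leq c\left(\dashint_{Q_1}|f_{r,z_0}|\,dz + \|\mu_{r,z_0}\|_{L^\infty(Q_1)}\right).
\]
Note that the result in \cite{GolImbMouVas19} is stated in a slightly different geometric setting, but a standard covering argument on the unit cylinder $Q_1$ together with the transitivity of kinetic cylinders (Lemma \ref{lem.cov1}) reduces the two formulations to each other.

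Finally I would undo the scaling. Tracking how the kinetic H\"older seminorm transforms under the change of variables $z \mapsto (t_0 + r^2t,\, x_0+r^3x+r^2v_0 t,\, v_0+rv)$, one finds that kinetic distances scale by a factor $r$, so $[f_{r,z_0}]_{C_{\mathrm{kin}}^{\alpha_0}(Q_{1/2})} = r^{\alpha_0-1}M^{-1}[f]_{C_{\mathrm{kin}}^{\alpha_0}(Q_{r/2}(z_0))}$, while $\|f_{r,z_0}\|_{L^\infty(Q_{1/2})} = (rM)^{-1}\|f\|_{L^\infty(Q_{r/2}(z_0))}$. Multiplying the unit-scale estimate through by $rM$ and substituting the above expressions for $\dashint_{Q_1}|f_{r,z_0}|\,dz$ and $\|\mu_{r,z_0}\|_{L^\infty(Q_1)}$ in terms of the unscaled quantities gives exactly the desired inequality. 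No single step is a serious obstacle here, as promised by the author's description of the lemma as a rather straightforward consequence of \cite{GolImbMouVas19}; the only point requiring care is ensuring that the scaling factors on the H\"older seminorm match up correctly, which follows directly from the structure of \eqref{defn.hol}.
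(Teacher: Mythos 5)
There is a real gap in the step where you ``invoke \cite[Theorem 4]{GolImbMouVas19}'' as if it directly delivered an estimate with $\dashint_{Q_1}|f_{r,z_0}|\,dz$ (an $L^1$ average) on the right-hand side. That theorem, being a De Giorgi--Nash--Moser local boundedness and H\"older estimate, produces $\|f\|_{L^\infty}$ controlled by the $L^2$ norm of $f$, not the $L^1$ norm, and this is precisely what the paper's proof has to work around. The paper covers $Q_\rho$ by small cylinders via Lemma \ref{lem.cov2}, applies the $L^2$-based GIMV estimate on each piece, interpolates $\|f\|_{L^2}\le(\sup|f|)^{1/2}\|f\|_{L^1}^{1/2}$ together with Young's inequality, and then absorbs the resulting $\tfrac12\sup_{Q_R}|f|$ term on varying radii through the iteration Lemma \ref{lem.tech}. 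Dismissing this as ``a standard covering argument'' that merely reconciles two geometric conventions omits the interpolation-and-absorption step, which is the only genuinely nontrivial part of the proof.

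Two further points. First, you never mention that $f_{r,z_0}$ solves a \emph{nonlinear} equation, whereas \cite[Theorem 4]{GolImbMouVas19} concerns linear kinetic Fokker--Planck equations; one must first observe that $a(t,x,v,0)=0$ allows writing $\divergence_v(a(t,x,v,\nabla_v f))=\divergence_v(A(t,x,v)\nabla_v f)$ with $A=\int_0^1\nabla_\xi a(\cdot,s\nabla_v f)\,ds$ uniformly elliptic, so the linear theory applies. Second, with your normalization $M=\dashint_{Q_r(z_0)}|f|\,dz+r^2\|\mu\|_{L^\infty}$ the rescaled quantities are \emph{not} bounded by universal constants: one computes $\dashint_{Q_1}|f_{r,z_0}|\,dz=(rM)^{-1}\dashint_{Q_r(z_0)}|f|\,dz\le 1/r$ and $\|\mu_{r,z_0}\|_{L^\infty(Q_1)}\le 1/r$, so the factor of $r$ does not cancel as claimed. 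This is only a bookkeeping slip (it can be repaired by choosing $M$ with an extra $1/r$ factor, or simply by proving the statement for $r=1$ and rescaling the final inequality), but it should be fixed.
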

\begin{proof}
In view of Lemma \ref{lem.scale}, it suffices to show the assertion for $Q_r(z_0)=Q_1$.
We first observe that $f$ is a weak solution to the following linear Fokker-Planck equation
\begin{equation}\label{eq.dnm}
    \partial_t f+v\cdot\nabla_xf-\divergence(A(t,x,v)\nabla_v f)=\mu\quad\text{in }Q_1,
\end{equation}
where 
\begin{equation}\label{eq2.dnm}
    A(t,x,v)\coloneqq \int_{0}^1\nabla_\xi a(t,x,v,s\nabla_v f)\,ds
\end{equation}
satisfies
\begin{equation}\label{unifell}
    \Lambda^{-1}|\zeta|^2\leq A(t,x,v)\zeta\cdot\zeta\leq \Lambda |\zeta|^2\quad\text{for any }\zeta\in\bbR^n.
\end{equation}
This follows from the fact that $a(t,x,v,0)=0$, \eqref{ass.coef} and 
\begin{equation*}
    -\divergence(a(t,x,v,\nabla_vf))=-\divergence(a(t,x,v,\nabla_vf)-a(t,x,v,0))=-\divergence(A(t,x,v)\nabla_v f).
\end{equation*}

Let us fix $3/4\leq \rho<R\leq 1$. Then by Lemma \ref{lem.cov2}, there is a collection $\{Q_r(z_k)\}$ with $r=(R-\rho)/(1000\sqrt{n})$ such that 
    \begin{equation}\label{inc.dnm}
        Q_\rho\subset\bigcup_{k}Q_r(z_k)\subset \bigcup_{k}Q_{4r}(z_k)\subset Q_R.
    \end{equation}
    By \cite[Theorem 4]{GolImbMouVas19}, we have 
    \begin{align*}
        \|f\|_{L^\infty(Q_r(z_k))}\leq c\left[ (R-\rho)^{-(2n+1)}\|f\|_{L^2(Q_{2r}(z_k))}+\|\mu\|_{L^\infty(Q_{2r}(z_k))}\right].
    \end{align*}
    This implies 
    \begin{equation*}
        \sup_{Q_\rho}|f| \leq c\left[(R-\rho)^{-(2n+1)}\|f\|_{L^2(Q_{R})}+\|\mu\|_{L^\infty(Q_{1})}\right]
    \end{equation*}
    for some constant $c=c(n,\Lambda)$.
    In light of Young's inequality, we have 
    \begin{align*}
        \sup_{Q_\rho}|f| &\leq c\left[(R-\rho)^{-(2n+1)}\sup_{Q_R}|f|^{\frac12}\|f\|_{L^1(Q_R)}^{\frac12}+\|\mu\|_{L^\infty(Q_{1})}\right]\\
        &\leq \frac{1}{2}\sup_{Q_R}|f|+c\left[(R-\rho)^{-(4n+2)}\|f\|_{L^1(Q_{1})}+\|\mu\|_{L^\infty(Q_{1})}\right].
    \end{align*}
    By Lemma \ref{lem.tech}, we get
    \begin{equation}\label{dnm.ineq1}
        \sup_{Q_{3/4}}|f|\leq c\left(\|f\|_{L^1(Q_{1})}+\|\mu\|_{L^\infty(Q_{1})}\right)
    \end{equation}
    for some constant $c=c(n,\Lambda)$. In addition, by \cite[Theorem 4]{GolImbMouVas19} we get
    \begin{equation}\label{dnm.ineq2}
[f]_{C_{\mathrm{kin}}^{\alpha_0}(Q_{1/2})}+\|f\|_{L^\infty(Q_{1/2})}\leq c\left(\|f\|_{L^2(Q_{3/4})}+\|\mu\|_{L^\infty(Q_{3/4})}\right),
    \end{equation}
    where $\alpha_0=\alpha_0(n,\Lambda)\in(0,1)$ and $c=c(n,\Lambda)$. Combining \eqref{dnm.ineq1} and \eqref{dnm.ineq2} yields the desired estimate.
\end{proof}
Using this, we prove the following lemma.
\begin{lemma}\label{lem.dnmaff}
    Let $\mu=0$, $G \equiv 0$ and $f\in H^1_{\mathrm{kin}}(W\times V)$ be a weak solution to \eqref{eq.main}, where $a$ satisfies Assumption \ref{assump} and does not depend on $x,v$, that is, $a(t,x,v,\nabla_v f)=a(t,\nabla_v f)$. Then there is a constant $\alpha_0=\alpha_0(n,\Lambda)\in(0,1)$ such that for any $Q_r(z_0)\subset W\times V$, we have
    \begin{equation*}
        \|f-l\|_{L^\infty(Q_{r/2}(z_0))}+r^{\alpha_0}[f-l]_{C_{\mathrm{kin}}^{\alpha_0}(Q_{r/2}(z_0))}\leq c\dashint_{Q_r(z_0)}|f-l|\,dz,
    \end{equation*}
    where $l(v)=\mathcal{A}\cdot (v-v_0)+b$ for some $\mathcal{A}\in\bbR^n$, $b\in\bbR$, and any $v,v_0\in\setR^n$.
\end{lemma}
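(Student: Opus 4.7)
The plan is to reduce the estimate to Lemma \ref{lem.dnm} by observing that $g := f - l$ satisfies a homogeneous kinetic Fokker-Planck equation of the same structural form as $f$. Since $l(v) = \mathcal{A}\cdot(v-v_0)+b$ depends only on the velocity variable, we have $\partial_t l \equiv 0$, $v\cdot\nabla_x l \equiv 0$, and $\nabla_v l \equiv \mathcal{A}$. Consequently, $g$ is a weak solution to
\begin{equation*}
\partial_t g + v\cdot\nabla_x g - \divergence_v\bigl(a(t,\nabla_v g + \mathcal{A})\bigr) = 0 \quad \text{in } W\times V.
\end{equation*}

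Next I would introduce the shifted nonlinearity $\tilde{a}(t,\xi) := a(t,\xi+\mathcal{A}) - a(t,\mathcal{A})$. Since $a(t,\mathcal{A})$ is independent of $x$ and $v$, its distributional $v$-divergence vanishes, and hence $g$ solves
\begin{equation*}
\partial_t g + v\cdot\nabla_x g - \divergence_v\bigl(\tilde{a}(t,\nabla_v g)\bigr) = 0 \quad \text{in } W\times V.
\end{equation*}
A direct check based on \eqref{ass.coef} shows that $\tilde{a}$ still satisfies Assumption \ref{assump} with the \emph{same} constant $\Lambda$: the bounds on $\nabla_\xi \tilde{a}(t,\xi) = \nabla_\xi a(t,\xi+\mathcal{A})$ are inherited verbatim from those on $\nabla_\xi a$, while the growth bound $|\tilde{a}(t,\xi)|\leq \Lambda|\xi|$ follows from the fundamental theorem of calculus via $\tilde{a}(t,\xi)=\int_0^1 \nabla_\xi a(t,\mathcal{A}+s\xi)\,\xi\,ds$. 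In particular, $\tilde{a}$ is independent of $x$ and $v$, so the setting of Lemma \ref{lem.dnm} is applicable.

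Applying Lemma \ref{lem.dnm} with $\mu\equiv 0$ and coefficient $\tilde{a}$ to $g$ on $Q_r(z_0)$ then directly produces the claimed H\"older and $L^\infty$ bounds for $f-l$. The argument is essentially algebraic and does not involve any new estimates beyond Lemma \ref{lem.dnm}; the only subtlety, which is mild, is to verify that translation of the gradient argument by the constant vector $\mathcal{A}$ preserves Assumption \ref{assump}, so that the constants in the conclusion depend only on $n$ and $\Lambda$, independently of $\mathcal{A}$ and $b$.
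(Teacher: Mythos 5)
Your proof is correct and follows essentially the same approach as the paper: reduce to Lemma \ref{lem.dnm} by observing that the affine function $l$ is a solution of the homogeneous equation (in the sense that $\divergence_v(a(t,\nabla_v l))\equiv 0$), so $f-l$ satisfies an equation of the same structure. The only cosmetic difference is in the reduction step: you translate the nonlinearity to $\tilde{a}(t,\xi)=a(t,\xi+\mathcal{A})-a(t,\mathcal{A})$ and verify that $\tilde{a}$ still satisfies Assumption \ref{assump} with the same $\Lambda$, whereas the paper instead linearizes directly, writing $f-l$ as a solution of the linear equation with coefficient matrix $A_l(t,x,v)=\int_0^1\nabla_\xi a(t,s\nabla_v f+(1-s)\nabla_v l)\,ds$, which satisfies \eqref{unifell}. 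Both routes feed into Lemma \ref{lem.dnm} with $\mu\equiv 0$ and yield constants depending only on $n,\Lambda$; your check that the shift by $\mathcal{A}$ does not degrade the structural constants is exactly the mild subtlety one needs to verify, and you handle it correctly.
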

\begin{proof}
By Lemma \ref{lem.scale}, we may assume $Q_r(z_0)=Q_1$. Next, using the fact that $a(t,\nabla_v l)$ is constant with respect to the $v$ variable, we observe that
    \begin{equation*}
        \partial_t l+v\cdot \nabla_x l-\divergence_v(a(t,\nabla_v l))=0.
    \end{equation*}
    Therefore, we have 
    \begin{equation*}
        \partial_t (f-l)+v\cdot \nabla_x (f-l)-\divergence_v(A_l\nabla_v(f-l) )=0\quad\text{in }Q_1,
    \end{equation*}
    where 
    \begin{equation*}
        A_l(t,x,v)\coloneqq \int_{0}^1 \nabla_\xi a(t,s\nabla_v f+(1-s)\nabla_v l)\,ds
    \end{equation*} 
satisfies \eqref{unifell} with $A$ replaced by $A_l$. Thus, by Lemma \ref{lem.dnm}, we have the desired estimate.
\end{proof}
We now recall a Poincar\'e-type inequality observed in \cite{LitNys21,JesCyrCle24,AlbArmMouNov24}.
\begin{lemma}\label{lem.spi}
    Let $f\in H^1_{\mathrm{kin}}(W\times V)$ be a weak solution to \eqref{eq.main} with $\mu=0$ and $G \equiv 0$. 
    If $a$ satsifies Assumption \ref{assump}, then for any $q\in(1,2]$,
    \begin{equation*}
        \|f-(f)_{Q_r(z_0)}\|_{L^q(Q_r(z_0))}\leq cr\|\nabla_vf\|_{L^q(Q_{r}(z_0))}
    \end{equation*}
    for some constant $c=c(n,\Lambda,q)$, whenever $Q_{2r}(z_0)\subset W\times V$.
\end{lemma}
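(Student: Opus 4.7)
The plan is to reduce to the unit cylinder via Lemma \ref{lem.scale} and then invoke a kinetic Poincaré inequality from the cited literature. First, applying Lemma \ref{lem.scale} with $M=1$, the rescaled function $f_{r,z_0}$ still solves a homogeneous equation $\partial_t f_{r,z_0}+v\cdot\nabla_x f_{r,z_0}-\divergence_v(\widetilde a(t,x,v,\nabla_v f_{r,z_0}))=0$ in $Q_1$, where $\widetilde a$ satisfies the structural bounds \eqref{ass.coef} with the same $\Lambda$. Since both sides of the claimed inequality scale by the same power of $r$, it suffices to prove
\[
\|f-(f)_{Q_1}\|_{L^q(Q_1)} \leq c\,\|\nabla_v f\|_{L^q(Q_1)}
\]
under the assumption that $Q_2 \subset W\times V$.

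Second, on the unit scale the plan is to apply a kinetic Poincaré inequality of the type established in \cite{LitNys21,JesCyrCle24,AlbArmMouNov24}, which asserts that any $g \in H^1_{\mathrm{kin}}(Q_2)$ satisfying $(\partial_t + v\cdot\nabla_x)g = \divergence_v H$ in the weak sense in $Q_2$ with $H \in L^q$ obeys an estimate of the form
\[
\|g-(g)_{Q_1}\|_{L^q(Q_1)} \leq c\bigl(\|\nabla_v g\|_{L^q(Q_1)}+\|H\|_{L^q(Q_1)}\bigr).
\]
Applying this to $g=f$ and $H=a(\cdot,\nabla_v f)$, the first bound in \eqref{ass.coef} yields $\|H\|_{L^q(Q_1)} \leq \Lambda\,\|\nabla_v f\|_{L^q(Q_1)}$, which is absorbed into the right-hand side and yields the desired conclusion. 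Undoing the scaling then produces the factor $r$ in the final estimate, with a constant depending only on $n,\Lambda,q$.

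The main obstacle, were one to reprove the kinetic Poincaré inequality from scratch rather than cite it, is the mismatch between the standard Euclidean Poincaré scaling and the kinetic scaling of $Q_r$ in the spatial variable $x$, whose radius is $r^3$ rather than $r$. A direct application of the classical Poincaré inequality on $Q_r$ regarded as a subset of $\bbR^{2n+1}$ would produce a factor of $r^3$ from the $x$-direction, which is far from the desired $r$. The correct scaling can only be recovered by exploiting the transport identity $(\partial_t+v\cdot\nabla_x)f = \divergence_v(a(\cdot,\nabla_v f))$ and integrating along kinetic characteristics, essentially via a Bouchut-type averaging argument. This is precisely the content of the kinetic Poincaré inequalities proved in the cited references, and no further ingredient beyond the scaling reduction and the structural bound on $|a|$ is required for the proof.
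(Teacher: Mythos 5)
Your approach is essentially the one the paper uses: both reduce to the kinetic Poincaré inequality from \cite{JesCyrCle24} applied to the transport identity $(\partial_t+v\cdot\nabla_x)f=\divergence_v(a(\cdot,\nabla_v f))$, with the growth bound $|a(\cdot,\xi)|\leq\Lambda|\xi|$ controlling the divergence-form term. The only cosmetic difference is that the paper first linearizes to $\divergence_v(A\nabla_v f)$ with $A(t,x,v)=\int_0^1\nabla_\xi a(t,x,v,s\nabla_v f)\,ds$ before invoking \cite[Corollary 21]{JesCyrCle24}, whereas you apply the bound on $a$ directly without this intermediate step; both are correct and interchangeable.
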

\begin{proof}
    As in the proof of Lemma \ref{lem.dnm}, we have that $f$ is a weak solution to 
    \begin{equation*}
    \partial_t f+v\cdot\nabla_xf-\divergence(A(t,x,v)\nabla_v f)=0\quad\text{in }Q_{r}(z_0),
\end{equation*}
where $A$ is determined as in \eqref{eq2.dnm}. By \cite[Corollary 21]{JesCyrCle24}, we have the desired estimate.
\end{proof}

We end this section with the following reverse H\"older inequality.
\begin{lemma}\label{lem.self}
    Let $f$ be a weak solution to \eqref{eq.main} with $\mu=0$ and $ G\equiv 0$. If $a$ satsifies Assumption \ref{assump}, then there is constant $c=c(n,\Lambda)\geq1$ such that 
    \begin{align}\label{ineq.self}
        \left(\dashint_{Q_{r/2}(z_0)}|\nabla_vf|^{2}\,dz\right)^{\frac1{2}}\leq c\dashint_{Q_{r}(z_0)}|\nabla_vf|\,dz,
    \end{align}
    whenever $Q_{r}(z_0)\subset W\times V$.
\end{lemma}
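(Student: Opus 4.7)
The plan is to combine a Caccioppoli-type energy estimate for $\nabla_v f$ with the Poincar\'e inequality of Lemma \ref{lem.spi} and the $L^\infty$ bound of Lemma \ref{lem.dnm} to establish a local reverse H\"older inequality for $\nabla_v f$ at a sub-quadratic exponent, and then to upgrade it to the desired $L^2$-versus-$L^1$ form by a standard interpolation and absorption argument.

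First, I view $f$ as a weak solution of the linear divergence-form kinetic equation $\partial_t f + v \cdot \nabla_x f - \divergence_v(A \nabla_v f) = 0$, where $A$ is defined as in \eqref{eq2.dnm} and satisfies the uniform ellipticity \eqref{unifell}. Testing against $\psi^2(f - (f)_{Q_R(z_1)})$ with a smooth cutoff $\psi$ adapted to the kinetic cylinder structure so that $|\nabla_v \psi| \leq c(R-\rho)^{-1}$ and $|(\partial_t + v \cdot \nabla_x)\psi| \leq c(R-\rho)^{-2}$, and exploiting the antisymmetry of the transport operator on $\psi^2(f - (f)_{Q_R})^2$, produces the standard Caccioppoli inequality
\[
\dashint_{Q_\rho(z_1)} |\nabla_v f|^2 \, dz \leq \frac{c}{(R-\rho)^2} \dashint_{Q_R(z_1)} |f - (f)_{Q_R(z_1)}|^2 \, dz.
\]
Fixing some $q_0 \in (1,2)$, I would estimate the right-hand side via the H\"older split $|f - (f)_{Q_R}|^2 \leq \|f - (f)_{Q_R}\|_{L^\infty(Q_R)}^{2-q_0}\, |f - (f)_{Q_R}|^{q_0}$, controlling the $L^\infty$ factor by Lemma \ref{lem.dnm} applied to a suitably shifted homogeneous solution on a twice-larger sub-cylinder, and the $L^{q_0}$ and $L^1$ averages of $f - (f)$ by Lemma \ref{lem.spi}. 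With a careful choice of radii ensuring that every invocation of these two lemmas has enough slack inside $Q_r(z_0)$, this chain of estimates yields a local reverse H\"older inequality of the form
\[
\left(\dashint_{Q_{\rho}(z_1)} |\nabla_v f|^2\right)^{1/2} \leq c \left(\dashint_{Q_{R}(z_1)} |\nabla_v f|^{q_0}\right)^{1/q_0}
\]
on sub-cylinders with $Q_{CR}(z_1) \subset Q_r(z_0)$ for some fixed $C>1$.

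From this reverse H\"older inequality, I would use the H\"older interpolation $(\dashint |\nabla_v f|^{q_0})^{1/q_0} \leq (\dashint |\nabla_v f|)^{1-\theta}(\dashint |\nabla_v f|^2)^{\theta/2}$ with $\theta = 2(q_0-1)/q_0 \in (0,1)$ together with Young's inequality to split the right-hand side into an absorbable $L^2$ term and a controllable $L^1$ term; iterating via Lemma \ref{lem.tech} over nested cylinders then gives $(\dashint_{Q_{\rho}(z_1)} |\nabla_v f|^2)^{1/2} \leq c\, \dashint_{Q_{CR}(z_1)} |\nabla_v f|$ on these local scales. Combined with a covering argument based on Lemma \ref{lem.cov2}, together with Gehring-type self-improvement of the reverse H\"older inequality if necessary, this lifts the local estimate to the full-scale statement $(\dashint_{Q_{r/2}(z_0)} |\nabla_v f|^2)^{1/2} \leq c\, \dashint_{Q_{r}(z_0)} |\nabla_v f|$. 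The main obstacle is that Lemma \ref{lem.spi} is only an $L^{q_0} \to L^{q_0}$ Poincar\'e inequality with no Sobolev-type gain, so the integrability improvement needed to run the reverse H\"older argument must be supplied by the hypoelliptic $L^\infty$ bound of Lemma \ref{lem.dnm}; threading this hypoelliptic gain through the Caccioppoli inequality while tracking the fixed radius loss incurred by each application of Lemmas \ref{lem.dnm} and \ref{lem.spi}—so that the final estimate holds on $Q_{r/2}(z_0)$ under the mere assumption $Q_r(z_0) \subset W \times V$—is the delicate bookkeeping part of the argument.
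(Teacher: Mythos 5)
Your strategy is sound and the key ingredients — a kinetic Caccioppoli inequality, the hypoelliptic $L^\infty$ bound of Lemma \ref{lem.dnm}, the Poincar\'e inequality of Lemma \ref{lem.spi}, interpolation and absorption via Lemma \ref{lem.tech} — do combine to yield the stated reverse H\"older estimate. The paper's own proof is essentially a pointer: it invokes a zero-order reverse H\"older inequality for $f$ from \cite[Theorem 6]{GolImbMouVas19} and then says that, together with Lemma \ref{lem.spi}, one can follow \cite[Theorem 2]{JesCyrCle24} with $F=0$. What you are doing is re-deriving the content of those two external references from the lemmas already established in the paper. The substantive deviation is that you use the full $L^\infty$ bound of Lemma \ref{lem.dnm} in place of the zero-order reverse H\"older inequality of \cite{GolImbMouVas19}; this is a strictly stronger ingredient, which is perfectly admissible and makes the argument more self-contained at the price of a slightly heavier tool. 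The Caccioppoli inequality you want is indeed standard for linear kinetic Fokker--Planck equations in divergence form (it is the same estimate the paper cites as \cite[Lemma 2.6]{GolImbMouVas19} inside Lemma \ref{lem.higsolhol}), though note that your stated bound $|(\partial_t+v\cdot\nabla_x)\psi|\le c(R-\rho)^{-2}$ should really be $c(R-\rho)^{-1}$ after the rescaling $r=1$ — this does not affect the argument, since Lemma \ref{lem.tech} tolerates any fixed power.

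The one point you rightly flag — and which deserves emphasis — is the radius bookkeeping forced by the hypotheses of the internal lemmas. Lemma \ref{lem.spi} requires $Q_{2\rho}(z_1)\subset W\times V$ in order to conclude the Poincar\'e inequality on $Q_\rho(z_1)$, while Lemma \ref{lem.self} only assumes $Q_r(z_0)\subset W\times V$ and targets the average over $Q_{r/2}(z_0)$. So the Poincar\'e inequality is applicable precisely up to radius $r/2$ around $z_0$, leaving essentially no slack for the iteration range $[\rho,R]$. One has to either cover $Q_{r/2}(z_0)$ with smaller cylinders $Q_s(z_i)$ for which $Q_{2s}(z_i)\subset Q_r(z_0)$ and sum via Lemma \ref{lem.cov2}, or recenter the iteration so that the final absorption lands on a smaller cylinder and then patch. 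This is the kind of detail that \cite[Theorem 2]{JesCyrCle24} presumably handles, and it is worth spelling out if you want your proof to replace the citation entirely; absent that, your write-up shares exactly the same tension that the paper's citation sweeps under the rug.
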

\begin{proof}
We first note $f$ is a weak solution to \eqref{eq.dnm} with \eqref{unifell}. Therefore, we have a reverse H\"older type inequality of $f$ as in \cite[Theorem 6]{GolImbMouVas19}. In light of this estimate and Lemma \ref{lem.spi}, 
we are now able to follow the same lines as in the proof of \cite[Theorem 2]{JesCyrCle24} with $F=0$. Thus, \eqref{ineq.self} directly follows.
\end{proof}

\section{Gradient H\"older regularity for homogeneous nonlinear kinetic equations with constant coefficients} \label{sec3}
In this section, we are going to prove the gradient H\"older regularity of solutions to \eqref{eq.main} in the case when $a(t,x,v,\xi)=a(t,\xi)$ and $\mu \equiv 0$, $G \equiv 0$.

For $h\in\setR^n$, we introduce difference quotient operators.
\begin{equation*}
    f^x_h(t,x,v)\coloneqq f(t,x+h,v)\quad\text{and}\quad f^v_h(t,x,v)\coloneqq f(t,x,v+h)
\end{equation*}
\begin{equation*}
    \delta_h^xf\coloneqq f^x_h-f\quad\text{and}\quad \delta_h^vf\coloneqq f^v_h-f.
\end{equation*}
We first prove the H\"older regularity of $\nabla_x f$.
\begin{lemma}\label{lem.grax}
    Let $f\in H^1_{\mathrm{kin}}(W\times V)$ be a weak solution to \eqref{eq.main} with $\mu \equiv 0$, $G \equiv 0$, where $a$ satisfies Assumption \ref{assump} and does not depend on $x,v$, that is, $a(t,x,v,\nabla_v f)=a(t,\nabla_v f)$. Then there is a constant $\alpha_0=\alpha_0(n,\Lambda)\in(0,1)$ such that $\nabla_x f\in C^{\alpha_0}_{\mathrm{kin}}(W \times V)$ and
    \begin{equation*}
        r^{\alpha_0}[\nabla_x f]_{C_{\mathrm{kin}}^{\alpha_0}(Q_{r/2}(z_0))}+\|\nabla_xf\|_{L^\infty(Q_{r/2}(z_0))}\leq cr^{-1}\dashint_{Q_r(z_0)}|f-l|\,dz
    \end{equation*}
     whenever $Q_r(z_0)\subset W\times V$, where $c=c(n,\Lambda)$ and $l(v)=\mathcal{A}\cdot (v-v_0)+b$ for some $\mathcal{A}\in\bbR^n$, $b\in\bbR$, and $v,v_0\in\setR^n$ is an arbitrary affine function. 
\end{lemma}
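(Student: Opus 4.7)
The plan is to exploit that $a=a(t,\xi)$ is independent of $(x,v)$, making the equation translation-invariant in $x$. Consequently, for every $h\in\bbR^n$ the translate $f^x_h(t,x,v):=f(t,x+h,v)$ is again a weak solution, and the first difference $g_h:=\delta_h^x f$ solves the \emph{linear} kinetic equation
\[
	\partial_t g_h+v\cdot\nabla_x g_h-\divergence_v\bigl(A_h(t,x,v)\nabla_v g_h\bigr)=0,
\]
with $A_h(t,x,v):=\int_0^1\nabla_\xi a\bigl(t,s\nabla_v f^x_h+(1-s)\nabla_v f\bigr)\,ds$ satisfying the uniform ellipticity \eqref{unifell} thanks to Assumption \ref{assump}. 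The argument behind Lemma \ref{lem.dnm} routes through precisely such linearized equations, so its H\"older estimate applies verbatim to $g_h$.

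The first step is to extract a base regularity gain. Since $\delta_h^x l\equiv 0$ whenever $l=l(v)$, combining Lemma \ref{lem.dnm} applied to $g_h$ with Lemma \ref{lem.dnmaff} applied to $f-l$ yields
\[
	\sup_{Q_{r/2}(z_0)}|g_h|\leq c\dashint_{Q_r(z_0)}|\delta_h^x(f-l)|\,dz\leq c|h|^{\alpha_0/3}r^{-\alpha_0}\dashint_{Q_r(z_0)}|f-l|\,dz
\]
for all sufficiently small $h$, expressing quantitative H\"older regularity of $f(t,\cdot,v)$ in $x$ at exponent $\alpha_0/3$. I would then iterate this scheme on rescaled difference quotients $g_h/|h|^{\sigma_k}$, which remain weak solutions of the same linear kinetic equation. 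At each step, Lemma \ref{lem.dnm} boosts their H\"older regularity, and combined with Besov-type characterizations (in the spirit of the fractional-difference-quotient technique from the nonlocal literature mentioned in the introduction) this translates into a geometric improvement of the admissible exponents $\sigma_k$. After finitely many steps one reaches Lipschitz regularity of $f$ in the $x$-direction, so $\nabla_x f$ exists as a classical derivative and is locally bounded.

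With $\nabla_x f\in L^\infty_{\mathrm{loc}}$ secured, differentiating the original equation in $x$ reveals that $\nabla_x f$ itself solves the linear kinetic equation
\[
	\partial_t(\nabla_x f)+v\cdot\nabla_x(\nabla_x f)-\divergence_v\bigl(\nabla_\xi a(t,\nabla_v f)\nabla_v\nabla_x f\bigr)=0,
\]
whose coefficient matrix $\nabla_\xi a(t,\nabla_v f)$ is uniformly elliptic by \eqref{ass.coef}. Applying Lemma \ref{lem.dnm} to $\nabla_x f$ and using $\nabla_x l\equiv 0$ to write $\nabla_x f=\nabla_x(f-l)$ on the right-hand side, one obtains both the $L^\infty$ and the $C^{\alpha_0}_{\mathrm{kin}}$ bounds on $Q_{r/2}(z_0)$, and a careful tracking of the constants through the iteration assembles them into the stated dependence on $r$ and on $\dashint_{Q_r(z_0)}|f-l|\,dz$.

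The main obstacle is the bootstrap iteration: at each step one must verify that the rescaled iterated difference quotients qualify as admissible weak solutions to linear kinetic equations with uniformly elliptic coefficients, translate gained H\"older control of difference quotients into genuine regularity of $f$, and accumulate constants in a controlled way. The dimensional bookkeeping is particularly delicate because of the anisotropic kinetic scaling $x\sim r^3$, $v\sim r$ from Lemma \ref{lem.scale}, and it is this careful iterative accounting that delivers the precise scaling in $r$ stated in the lemma.
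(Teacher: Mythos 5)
Your proposal follows essentially the same route as the paper's proof: linearize on difference quotients $\delta_h^x f$ using the $x$-independence of $a$, apply Lemma \ref{lem.dnm} to the linearized equation, bootstrap the $x$-H\"older exponent by $\alpha_0/3$ per step via an incremental-quotient lemma, and land on Lipschitz regularity of $f$ in $x$ after finitely many iterations. The paper also applies Lemma \ref{lem.dnmaff} at the base step exactly as you do.

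The one place where your argument has a genuine gap is the final step. You propose to differentiate the equation in $x$ and then apply Lemma \ref{lem.dnm} directly to $\nabla_x f$, treating it as a weak solution of
\[
\partial_t(\nabla_x f)+v\cdot\nabla_x(\nabla_x f)-\divergence_v\bigl(\nabla_\xi a(t,\nabla_v f)\nabla_v\nabla_x f\bigr)=0.
\]
But Lemma \ref{lem.dnm} requires the candidate solution to lie in $H^1_{\mathrm{kin}}$, i.e.\ $\nabla_x f\in L^2_{\loc}(W;H^1_{\loc}(V))$ with $(\partial_t+v\cdot\nabla_x)\nabla_x f\in L^2_{\loc}(W;H^{-1}_{\loc}(V))$. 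Having only secured $\nabla_x f\in L^\infty_{\loc}$ from the iteration does not give $\nabla_v\nabla_x f\in L^2_{\loc}$, so the differentiated equation is not yet a legitimate weak formulation for $\nabla_x f$; one would have to first pass a Caccioppoli estimate through the difference quotients and extract a weak limit. The paper avoids this entirely by never differentiating the equation: it applies its uniform H\"older estimate \eqref{ineq11.grax} to $\delta_h^x(f-l)/|h|$ with $h=\epsilon e_i$ --- which is manifestly in $H^1_{\mathrm{kin}}$ for each fixed $\epsilon>0$ --- obtains a bound independent of $\epsilon$ via the Lipschitz estimate \eqref{ineq4.grax}, and only then lets $\epsilon\to 0$. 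You should close your proof the same way; as written, the final appeal to Lemma \ref{lem.dnm} applied to $\nabla_x f$ is not justified. (A smaller stylistic imprecision: the H\"older exponents gained per step accumulate arithmetically, not geometrically.)
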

\begin{proof}
We may assume $r=1$ and $z_0=0$. 
We first note from Lemma \ref{lem.dnmaff} that $f\in C^{\alpha_0}_{\mathrm{kin}}(Q_1)$ and there is a constant $c=c(n,\Lambda)$ such that for any $Q_{2r}(z_1)\subset Q_1$,
\begin{equation}\label{ineq1.grax}
\begin{aligned}
   r^{\alpha_0}\sup_{(t,v)\in I_r(t_1)\times B_r(v_1)}[(f-l)(t,\cdot,v)]_{C_x^{\frac{\alpha_0}3}\left(B^x_{r}(x_1+v_1(t-t_1))\right)}&\leq cr^{\alpha_0}[f-l]_{C_{\mathrm{kin}}^{\alpha_0}(Q_r(z_1))}\\
   &\leq c\|f-l\|_{L^1(Q_{2r}(z_1))}
\end{aligned}
\end{equation}
holds, where the ball $B^x_r(\cdot)$ is defined in \eqref{defn.xball} and $[(f-l)(t,\cdot,v)]_{C_x^{\frac{\alpha_0}3}}$ means the usual H\"older semi-norm with respect to the $x$-variable.
Let us fix $h\in(0,1/10^{10}]$.
We note that
    \begin{equation*}
        \partial_t \delta_h^x(f-l)+v\cdot \nabla_x \delta_h^x(f-l)-\divergence_v(A_h\nabla_v\delta_h^x(f-l) )=0\quad\text{weakly in }Q_{3/4},
    \end{equation*}
    where we have used the fact that $\delta_h^x(f-l)=\delta_h^xf$ and
    \begin{equation*}
        A_h(t,x,v)\coloneqq\int_{0}^1\nabla_\xi a(t,s\nabla_v f^x_h(t,x,v)+(1-s)\nabla_v f(t,x,v))\,ds 
    \end{equation*} 
    satisfies \eqref{unifell} with $A$ replaced by $A_h$. By Lemma \ref{lem.dnm} with $\mu=0$, we obtain 
    \begin{equation}\label{ineq11.grax}
        \left\|\frac{\delta_h^x(f-l)}{|h|^{\frac{\alpha_0}3}}\right\|_{C_{\mathrm{kin}}^{\alpha_0}(Q_{r_0/4}(z_1))}\leq c\left\|\frac{\delta_h^x(f-l)}{|h|^{\frac{\alpha_0}3}}\right\|_{L^1(Q_{r_0/2}(z_1))}
    \end{equation}
    for some $c=c(n,\Lambda)$, whenever $Q_{r_0/2}(z_1)\Subset Q_{3/4}$ with $r_0\coloneqq 1/1000$.
    Using \eqref{ineq1.grax}, we get
    \begin{align*}
        \left\|\frac{\delta_h^x(f-l)}{|h|^{\frac{\alpha_0}3}}\right\|_{L^1(Q_{r_0/2}(z_1))}&\leq\sup_{(t,x)\in I_{\frac{r_0}2}(t_1)\times B_{\frac{r_0}2}(v_1)}[(f-l)(t,\cdot,v)]_{C_x^{\frac{\alpha_0}3}(B^x_{r_0}(x_1+(t-t_1)v_1))}\\
        &\leq c\|f-l\|_{L^{1}(Q_{2r_0}(z_1))}.
    \end{align*}
    Thus, we deduce
    \begin{equation}\label{ineq2.grax}
    \begin{aligned}
       &\sup_{(t,v)\in I_{r_0/4}(t_1)\times B_{r_0/4}(v_1)}\left\|\frac{\delta_h^x(f-l)(t,\cdot,v)}{|h|^{\frac{\alpha_0}3}}\right\|_{C_x^{\frac{\alpha_0}3}(B^x_{r_0/4}(x_1+(t-t_1)v_1))}\\
       &\quad\leq c\|f-l\|_{L^1(Q_{2r_0}(z_1))}.
    \end{aligned}
    \end{equation}
    In view of \cite[Lemma A.1.2]{FerRos24}, we obtain
    \begin{equation*}
        \sup_{(t,v)\in I_{\frac{r_0}4}(t_1)\times B_{\frac{r_0}4}(v_1)}\left\|(f-l)(t,\cdot,v)\right\|_{C_x^{\frac{2{\alpha_0}}3}(B^x_{r_0/4}(x_1+(t-t_1)v_1))}\leq c\|f-l\|_{L^1(Q_{2r_0}(z_1))}
    \end{equation*}
    for some constant $c=c(n,\Lambda)$.

    We note that there is a positive integer $i=i(n,\Lambda)$ such that 
    \begin{equation*}
        \frac{i\alpha_0}{3}<1<\frac{(i+1)\alpha_0}{3}.
    \end{equation*}
    We now iterate $i-1$ times the above procedure to see that
    \begin{align}\label{ineq21.grax}
       & \sup_{(t,v)\in I_{\frac{r_0}{2^i}}(t_1)\times B_{\frac{r_0}{2^i}}(v_1)}\left\|(f-l)(t,\cdot,v)\right\|_{C_x^{\frac{i\alpha_0}{3}}\left(B^x_{\frac{r_0}{2^i}}(x_1+(t-t_1)v_1)\right)}\leq c\|f-l\|_{L^1(Q_{2r_0}(z_1))}
    \end{align}
    for some constant $c=c(n,\Lambda)$. As in \eqref{ineq11.grax}, we observe 
   \begin{equation*}
        \left\|\frac{\delta_h^x(f-l)}{|h|^{\frac{i\alpha_0}3}}\right\|_{C_{\mathrm{kin}}^{{\alpha_0}}(Q_{2^{-(i+1)}r_0}(z_1))}\leq c\left\|\frac{\delta_h^x(f-l)}{|h|^{\frac{i\alpha_0}3}}\right\|_{L^1(Q_{2^{-i}r_0}(z_1))}.
    \end{equation*}
     Using \eqref{ineq21.grax}, we get 
    \begin{equation*}
    \begin{aligned}
       &\sup_{(t,v)\in I_{\frac{r_0}{2^{i+1}}}(t_1)\times B_{\frac{r_0}{2^{i+1}}}(v_1)}\left\|\frac{\delta_h^x(f-l)(t,\cdot,v)}{|h|^{\frac{i\alpha_0}3}}\right\|_{C_x^{\frac{\alpha_0}3}\left(B^x_{\frac{r_0}{2^{i+1}}}(x_1+(t-t_1)v_1)\right)}
\leq c\|f-l\|_{L^1(Q_{2r_0}(z_1))}.
    \end{aligned}
    \end{equation*}
    We now employ \cite[Lemma A.1.2]{FerRos24} once more to obtain
    \begin{equation}\label{ineq3.grax}
    \begin{aligned}
        &\sup_{(t,v)\in I_{\frac{r_0}{2^{i+1}}}(t_1)\times B_{\frac{r_0}{2^{i+1}}}(v_1)}\left\|\nabla_x(f-l)(t,\cdot,v)\right\|_{L^{\infty}\left(B^x_{\frac{r_0}{2^{i+1}}}(x_1+(t-t_1)v_1)\right)}\leq c\|f-l\|_{L^1(Q_{2r}(z_1))},
    \end{aligned}
    \end{equation}
    where $c=c(n,\Lambda)$. Since we have proved \eqref{ineq3.grax} whenever $Q_{2r}(z_1)\Subset Q_1$, by standard covering arguments we derive 
    \begin{align}\label{ineq4.grax}
        \sup_{(t,v)\in I_{7/8}\times B_{7/8}}\left\|\nabla_x(f-l)(t,\cdot,v)\right\|_{L^\infty\left(B_{7/8}\right)}&\leq c\|f-l\|_{L^1(Q_{1})}.
    \end{align}
    In addition, by using \eqref{ineq11.grax} with $h=\epsilon e_i$, where $\epsilon>0$ and $e_i$ is the standard $i$-th unit vector, we conclude that
    \begin{equation*}
        \left\|\frac{\delta_h^x(f-l)}{|h|}\right\|_{C_{\mathrm{kin}}^{\alpha_0}(Q_{1/2})}\leq c\left\|\frac{\delta_h^x(f-l)}{|h|}\right\|_{L^1(Q_{3/4})}.
    \end{equation*}
    By passing to the limit $\epsilon\to0$ and using \eqref{ineq4.grax}, we arrive at
    \begin{equation*}
        \left\|\partial_{x_i}(f-l)\right\|_{C_{\mathrm{kin}}^{\alpha_0}(Q_{1/2})}\leq c\left\|f-l\right\|_{L^1(Q_{1})}
    \end{equation*}
    for some constant $c=c(n,\Lambda)$ and every $i \in \{1,\dots,n\}$. This completes the proof.
\end{proof}

We are now ready to prove also H\"older regularity of the velocity gradient of $f$.

\begin{theorem}\label{thm.hol}
	Let $f\in H^1_{\mathrm{kin}}(W\times V)$ be a weak solution to \eqref{eq.main} with $\mu \equiv 0$, $G \equiv 0$, where $a$ satisfies Assumption \ref{assump} and does not depend on $x,v$, that is, $a(t,x,v,\nabla_v f)=a(t,\nabla_v f)$. Then there is a constant $\alpha=\alpha(n,\Lambda)\in(0,1)$ such that $\nabla_v f\in C^{\alpha}_{\mathrm{kin}}(W\times V)$ and
	\begin{align*}
		r^\alpha[\nabla_vf]_{C^\alpha(Q_r(z_0))}+\|\nabla_vf-(\nabla_vf)_{Q_r(z_0)}\|_{L^\infty(Q_r(z_0))}
		\leq c\dashint_{Q_{2r}(z_0)}|\nabla_vf-(\nabla_vf)_{Q_{2r}(z_0)}|\,dz
	\end{align*}
	for some constant $c=c(n,\Lambda)$, whenever $Q_{2r}(z_0)\subset W\times V$.
\end{theorem}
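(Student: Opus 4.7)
By Lemma~\ref{lem.scale} I may assume $z_0=0$ and $r=1$. The affine shift $\tilde f(t,x,v):=f(t,x,v)-\mathcal{A}\cdot v$ with any constant $\mathcal{A}\in\bbR^n$ solves the same kinetic equation with $a$ replaced by $\tilde a(t,\xi):=a(t,\xi+\mathcal{A})$, whose linearization $\int_0^1\nabla_\xi\tilde a(t,s\xi)\,ds$ still satisfies the ellipticity and Lipschitz bounds of \eqref{ass.coef}, so all the lemmas of Section~\ref{sec2} and Lemma~\ref{lem.grax} apply verbatim to $\tilde f$. Choosing $\mathcal{A}=(\nabla_vf)_{Q_2}$, the statement of the theorem reduces to
\[
  [\nabla_v f]_{C^\alpha(Q_1)}+\|\nabla_v f\|_{L^\infty(Q_1)}\le c\dashint_{Q_2}|\nabla_v f|\,dz.
\]
A first application of Lemma~\ref{lem.grax} with a suitably chosen affine $l$, combined with Lemmas~\ref{lem.spi} and \ref{lem.self} used to trade $\|f-l\|_{L^1}$ for $\dashint_{Q_2}|\nabla_vf|$, then yields the a priori bound $\|\nabla_xf\|_{L^\infty(Q_{3/2})}+[\nabla_xf]_{C^{\alpha_0}_{\mathrm{kin}}(Q_{3/2})}\le c\dashint_{Q_2}|\nabla_vf|\,dz$.

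The heart of the proof lies in the equation for $v$-difference quotients. Fixing a unit vector $e\in\bbR^n$ and $\epsilon>0$, and setting $h=\epsilon e$, evaluating the equation at $v+h$ and subtracting the equation at $v$ (using $(v+h)\cdot\nabla_xf_h^v=v\cdot\nabla_xf_h^v+\epsilon\,e\cdot\nabla_xf_h^v$) gives
\[
  \partial_t\delta_h^vf+v\cdot\nabla_x\delta_h^vf-\divergence_v(A_h\nabla_v\delta_h^vf)=-\epsilon\,e\cdot\nabla_xf_h^v,
\]
with $A_h(t,x,v):=\int_0^1\nabla_\xi a(t,s\nabla_vf_h^v+(1-s)\nabla_vf)\,ds$ satisfying \eqref{unifell}. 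In stark contrast with the $x$-difference equation treated in Lemma~\ref{lem.grax}, this equation is genuinely inhomogeneous; its right-hand side is however bounded in $L^\infty$ by $\epsilon\|\nabla_xf\|_{L^\infty}\le c\epsilon\dashint_{Q_2}|\nabla_vf|$ thanks to the preceding a priori bound.

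The remainder of the argument closely mirrors the bootstrap scheme of Lemma~\ref{lem.grax}, with $\delta_h^v$ replacing $\delta_h^x$ and Lemma~\ref{lem.dnm} now invoked in its $\mu\in L^\infty$ version. Starting from $f\in C^{\alpha_0}_{\mathrm{kin}}$ (Lemma~\ref{lem.dnmaff}), Lemma~\ref{lem.dnm} applied to $\delta_h^vf/|h|^{\alpha_0}$ produces a uniform-in-$h$ kinetic H\"older estimate: the $L^1$ mean is controlled by $[f]_{C^{\alpha_0}_{\mathrm{kin}}}$, and the forcing term $r^2|h|^{1-\alpha_0}\|\nabla_xf\|_{L^\infty}$ stays bounded as $\alpha_0<1$. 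Invoking \cite[Lemma~A.1.2]{FerRos24} upgrades this into H\"older regularity of $f$ in the velocity variable with exponent $2\alpha_0$. Iterating this gain-of-$\alpha_0$ argument until the exponent exceeds $1$, followed by one more application of \cite[Lemma~A.1.2]{FerRos24}, yields $\nabla_vf\in L^\infty$ on a smaller cylinder with the quantitative bound $\dashint_{Q_2}|\nabla_vf|$. A final application of Lemma~\ref{lem.dnm} to $\delta_h^vf/|h|$, whose $L^1$ mean is now uniformly bounded by $\|\nabla_vf\|_{L^\infty}$ and whose right-hand side $-e\cdot\nabla_xf_h^v$ is uniformly bounded in $h$, produces a uniform-in-$h$ $\alpha_0$-kinetic H\"older estimate for $\delta_h^vf/|h|$. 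Letting $\epsilon\to 0$ along each unit direction $e$ gives $\nabla_vf\in C^{\alpha_0}_{\mathrm{kin}}$ with the required quantitative bound; taking $\alpha:=\alpha_0$ and combining with the standard covering arguments (Lemma~\ref{lem.cov2}) completes the proof.

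The main obstacle, absent from Lemma~\ref{lem.grax}, is precisely the inhomogeneous right-hand side $-\epsilon\,e\cdot\nabla_xf_h^v$ arising from $v$-differencing: the transport term $v\cdot\nabla_xf$ prevents the exact cancellation that occurred in the $x$-difference setting, where subtracting any affine-in-$v$ function annihilated the forcing. Consequently, both the $\mu\in L^\infty$ version of Lemma~\ref{lem.dnm} and the H\"older control of $\nabla_xf$ furnished by Lemma~\ref{lem.grax} must enter simultaneously; together they absorb this inhomogeneity uniformly across every stage of the iteration.
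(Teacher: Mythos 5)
Your proposal is essentially the paper's proof: subtract an affine function of $v$ so that $\dashint|\nabla_v f - (\nabla_v f)_{Q}|$ becomes $\dashint|\nabla_v \tilde f|$, establish the a priori bound on $\nabla_x f$ via Lemma~\ref{lem.grax}, write the equation for $\delta_h^v\tilde f$ with forcing $-h\cdot\nabla_x f_h^v$, bootstrap with Lemma~\ref{lem.dnm} (its $\mu\in L^\infty$ form) and \cite[Lemma A.1.2]{FerRos24} until the exponent passes $1$ (stopping the iteration while $i\alpha_0<1$ so that $|h|^{1-i\alpha_0}$ stays bounded), and finally pass $\epsilon\to 0$ in $\delta_h^vf/|h|$, closing via Lemmas~\ref{lem.spi} and \ref{lem.self}. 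One small bookkeeping slip: with the normalization $Q_{2r}(z_0)=Q_2$, Lemma~\ref{lem.grax} controls $\nabla_x f$ only up to $Q_1$, not $Q_{3/2}$ (that would require $Q_3\subset W\times V$); the paper works on $Q_{3/4}$ at that stage, and your intermediate cylinders should be shrunk accordingly.
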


\begin{proof}
We may assume $Q_r(z_0)=Q_1$ and let $l\coloneqq(\nabla_v f)_{Q_{1/2}} \cdot v+(f)_{Q_{1/2}}$. As in the proof of \eqref{ineq1.grax}, together with Lemma \ref{lem.dnm}, we observe that
\begin{equation}\label{ineq1.hol}
\begin{aligned}
   r^{\alpha_0}\sup_{{t\in I_r(t_1)
   }}\sup_{x\in B^x_r(x_1+(t-t_1)v_1)}[(f-l)(t,x,\cdot)]_{C_v^{{\alpha_0}}\left(B_{r}(v_1)\right)}
   \leq c\|f-l\|_{L^1(Q_{2r}(z_1))}
\end{aligned}
\end{equation}
for some constant $c=c(n,\Lambda)$, whenever $Q_{2r}(z_1)\subset Q_1$. Let us fix $h\in(0,1/10^{10}]$. Then we observe that
    \begin{equation*}
        \partial_t \delta_h^v(f-l)+v\cdot \nabla_x \delta_h^v(f-l)-\divergence_v(A_h\nabla_v\delta_h^v(f-l) )=-h\cdot\nabla_x f_h^v\quad\text{weakly in }Q_{3/4},
    \end{equation*}
    where
    \begin{equation*}
        A_h(t,x,v)\coloneqq\int_{0}^1\nabla_\xi a(t,s\nabla_v f^v_h(t,x,v)+(1-s)\nabla_v f(t,x,v))\,ds 
    \end{equation*} 
    satisfies \eqref{unifell} with $A$ replaced by $A_h$.
    By \eqref{ineq1.hol} and Lemma \ref{lem.grax}, we get
   \begin{align*}
        \left\|\frac{\delta_h^v(f-l)}{|h|^{\alpha_0}}\right\|_{C_{\mathrm{kin}}^{\alpha_0}(Q_{r_0/2}(z_1))}&\leq \left\|\frac{\delta_h^v(f-l)}{|h|^{\alpha_0}}\right\|_{L^1(Q_{r_0}(z_1))}+|h|^{1-\alpha_0}\|\nabla_x f_h^v\|_{L^\infty(Q_{r_0}(z_1))}\\
        &\leq c\|f-l\|_{L^1(Q_{2r_0}(z_1))},
    \end{align*}
    where $c=c(n,\Lambda)$ and $r_0\coloneqq 1/1000$, whenever $Q_{4r_0}(z_1)\Subset Q_1$. We note that there is a positive integer $i=i(n,\Lambda)$ such that 
    \begin{equation*}
        i\alpha_0<1<(i+1)\alpha_0.
    \end{equation*}
    By following the same inductive step as in the proof of Lemma \ref{lem.grax}, we have 
    \begin{align*}
        \left\|\frac{\delta_h^v(f-l)}{|h|^{i\alpha_0}}\right\|_{C_{\mathrm{kin}}^{\alpha_0}(Q_{r_0/2^{i+1}}(z_1))}\leq c\|f-l\|_{L^1(Q_{2r_0}(z_1))}.
    \end{align*}
    In light of \cite[Lemma A.1.2]{FerRos24}, we now get
    \begin{equation}\label{ineq2.hol}
    \begin{aligned}
        &\sup_{t\in I_{\frac{r_0}{2^{i+1}}}(t_1)}\sup_{x\in B^x_{\frac{r_0}{2^{i+1}}}(x_1+(t-t_1)v_1)}\left\|\nabla_v(f-l)(t,x,\cdot)\right\|_{L^\infty\left(B_{\frac{r_0}{2^{i+1}}}(v_1)\right)}\\
        &\leq c\|f-l\|_{L^1(Q_{2r}(z_1))}
    \end{aligned}
    \end{equation}
    for some constant $c=c(n,\Lambda)$. By standard covering arguments, we obtain the Lipschitz estimate
    \begin{align}\label{ineq3.hol}
        \|\nabla_v(f-l)\|_{L^\infty(Q_{7/8})}\leq c\|f-l\|_{L^1(Q_{15/16})}.
    \end{align}

    Since we have 
    \begin{align*}
        \left\|\frac{\delta_h^v(f-l)}{|h|}\right\|_{C_{\mathrm{kin}}^{\alpha_0}(Q_{1/2})}&\leq \left\|\frac{\delta_h^v(f-l)}{|h|}\right\|_{L^1(Q_{3/4})}+\|\nabla_x f_h^x\|_{L^\infty(Q_{3/4})},
    \end{align*}
    by taking $h=\epsilon e_i$, where for any $i \in \{1,\dots,n\}$ we denote by $e_i$ the standard unit vector and taking limit $\epsilon\to0$, we obtain
    \begin{align*}
        \|\nabla_v(f-l)\|_{C_{\mathrm{kin}}^{\alpha_0}(Q_{1/2})}\leq c\left(  \|\nabla_v(f-l)\|_{L^\infty(Q_{7/8})}+\|\nabla_x f\|_{L^\infty(Q_{3/4})}\right)
    \end{align*}
    for some constant $c=c(n,\Lambda)$.  Therefore, using \eqref{ineq3.hol} and Lemma \ref{lem.grax}, we get
    \begin{align}\label{ineq4.hol}
        \|\nabla_v(f-l)\|_{C_{\mathrm{kin}}^{\alpha_0}(Q_{1/2})}\leq c\|f-l\|_{L^1(Q_{15/16})}.
    \end{align}
    We now apply H\"older's inequality, Lemma \ref{lem.spi} and Lemma \ref{lem.self} to right-hand side of \eqref{ineq4.hol} to see that 
    \begin{equation}\label{ineq5.hol}
    \begin{aligned}
        \|\nabla_v(f-l)\|_{C_{\mathrm{kin}}^{\alpha_0}(Q_{1/2})}\leq c\|f-l\|_{L^2(Q_{15/16})}&\leq c\|\nabla_v(f-l)\|_{L^2(Q_{15/16})}\\
        &\leq c\|\nabla_v(f-l)\|_{L^1(Q_{1})}
    \end{aligned}
    \end{equation}
    for some constant $c=c(n,\Lambda)$. Plugging the fact that 
    \begin{align*}
        \|\nabla_v(f-l)\|_{L^1(Q_{1})}=\int_{Q_1}|\nabla_vf-(\nabla_vf)_{Q_{1/2}}|\,dz\leq c\dashint_{Q_1}|\nabla_vf-(\nabla_vf)_{Q_1}|\,dz
    \end{align*}
    into the right-hand side of \eqref{ineq5.hol} yields the desired estimate.
\end{proof}

\section{Comparison estimates} \label{sec4}
In this section, we provide several comparison estimates. For the remainder of this paper, we always assume that $a$ satisfies Assumption \ref{assump} for some $\Lambda \geq 1$.

Since the existence of the weak solution to the corresponding initial boundary value problem of \eqref{eq.main} is established for smooth domains (see \cite{LitNys21,GarNys23}), we need to regularize the kinetic cylinder $Q_r(z_0)$.
For any $r>0$ and $z_0\in\bbR^{2n+1}$, we define a cylinder $Q^{t,x}_{r,v_0}(t_0,x_0)\subset \bbR^{n+1}$ by
\begin{equation}\label{defn.txcy}
    Q^{t,x}_{r,v_0}(t_0,x_0)\coloneqq \{(t,x)\in \bbR^{n+1}\,:\, t\in I_r(t_0)\text{ and } |x-x_0-v_0(t-t_0)|<r^3\}
\end{equation}
to see that 
\begin{equation*}
    Q^{t,x}_{r,v_0}(t_0,x_0)\times B_r(v_0)=Q_r(z_0).
\end{equation*}
In view of \eqref{defn.wrv0} in Appendix \ref{appen}, there is a smooth set $W_{r,v_0}(t_0,x_0)\subset\bbR^{n+1} $ such that
\begin{equation*}
    Q^{t,x}_{r,v_0}(t_0,x_0)\subset W_{r,v_0}(t_0,x_0)\subset W_{5r/4,v_0}(t_0,x_0)\subset Q_{2r,v_0}^{t,x}(t_0,x_0).
\end{equation*}
We now define a regularized cylinder of $Q_r(t_0,x_0,v_0)$ as 
\begin{equation}\label{defn.regcy}
    \mathcal{R}Q_r(t_0,x_0,v_0)\coloneqq W_{r,v_0}(t_0,x_0)\times B_r(v_0).
\end{equation}
By \eqref{appen.sec} in Appendix \ref{appen}, we observe that if $g\in H^{1}_{\mathrm{kin}}(Q_{2r}(z_0))$, then $g$ belongs to the space $\mathcal{W}(\mathcal{R}Q_r(z_0))$ defined in Section \ref{setup}. In addition, we recall the space $\mathcal{W}_0(\mathcal{R}Q_r(z_0))$ which is the closure of the space
\begin{align*}
    &C^{\infty}_{\mathcal{K},0}(\mathcal{R}Q_r(z_0))\coloneqq\{g\in C^{\infty}(\overline{\mathcal{R}Q_r(z_0)})\,:\, g\equiv 0\text{ on }\partial_{\mathcal{K}}(\mathcal{R}Q_r(z_0))\}
\end{align*}
with respect to the norm of $H^{1}_{\mathrm{kin}}$, where the Kolmogorov boundary $\partial_{\mathcal{K}}(\mathcal{R}Q_r(z_0))$ is defined in \eqref{defn.kbdry}. The space was introduced in \cite{LitNys21}.

\begin{definition}
	For $r>0$ and $z_0=(t_0,x_0,v_0) \in \mathbb{R}^{2n+1}$, let $f\in H^{1}_{\mathrm{kin}}(Q_{2r}(z_0))$. We say that $w\in\mathcal{W}(\mathcal{R}Q_r(z_0))$ is a weak solution to  \begin{equation}\label{eq.ivp}
		\left\{
		\begin{alignedat}{3}
			\partial_t w+v\cdot \nabla_x w-\divergence_v(a(t,x,v,\nabla_v w))&= 0&&\qquad \mbox{in  $\mathcal{R}Q_{r}(z_0)$}, \\
			w&=f&&\qquad \mbox{on $\partial_{\mathcal{K}}(\mathcal{R}Q_r(z_0))$},
		\end{alignedat} \right.
	\end{equation}
if $w$ is a weak solution to $\partial_t w+v\cdot \nabla_x w-\divergence_v(a(t,x,v,\nabla_v w))=0$ in $\mathcal{R}Q_{r}(z_0)$ and $w-f\in \mathcal{W}_{0}(\mathcal{R}Q_r(z_0))$.
\end{definition}

We now deduce from \cite{GarNys23} that the following existence result holds.
\begin{lemma}\label{lem.exist}
For $r>0$ and $z_0=(t_0,x_0,v_0) \in \mathbb{R}^{2n+1}$, let $f\in H^{1}_{\mathrm{kin}}(Q_{2r}(z_0))$. Then there is a unique weak solution $w\in\mathcal{W}(\mathcal{R}Q_r(z_0))$ with 
$w-f\in \mathcal{W}_{0}(\mathcal{R}Q_r(z_0))$
to \eqref{eq.ivp}.
\end{lemma}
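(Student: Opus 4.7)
The plan is to deduce existence from the general existence theory for nonlinear kinetic initial boundary value problems established in \cite{GarNys23} (building on the functional framework of \cite{LitNys21}), and to prove uniqueness via the monotonicity of $a$ combined with the vanishing of the difference of two solutions on the Kolmogorov boundary. By the construction of $W_{r,v_0}(t_0,x_0)$ recalled in Appendix~\ref{appen}, the regularized cylinder $\mathcal{R}Q_r(z_0) = W_{r,v_0}(t_0,x_0) \times B_r(v_0)$ has a $C^{1,1}$-regular base in the $(t,x)$-variables and a smooth velocity cross-section, and satisfies $\mathcal{R}Q_r(z_0) \Subset Q_{2r}(z_0)$. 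Since $f\in H^1_{\mathrm{kin}}(Q_{2r}(z_0))$, the trace of $f$ on $\partial_{\mathcal{K}}(\mathcal{R}Q_r(z_0))$ is well-defined and provides admissible boundary data, so the existence of a weak solution $w \in \mathcal{W}(\mathcal{R}Q_r(z_0))$ with $w - f \in \mathcal{W}_0(\mathcal{R}Q_r(z_0))$ follows directly from the corresponding existence results in \cite{GarNys23}.

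For uniqueness, suppose $w_1, w_2$ are two such weak solutions and set $\varphi := w_1 - w_2 \in \mathcal{W}_0(\mathcal{R}Q_r(z_0))$. Subtracting the two weak formulations and formally testing with $\varphi$ yields
\begin{equation*}
    \int_{\mathcal{R}Q_r(z_0)} (\partial_t + v\cdot\nabla_x)\varphi \cdot \varphi\,dz + \int_{\mathcal{R}Q_r(z_0)} \bigl(a(\cdot,\nabla_v w_1) - a(\cdot,\nabla_v w_2)\bigr)\cdot\nabla_v\varphi\,dz = 0.
\end{equation*}
The integration-by-parts identity for the transport term on smooth kinetic cylinders (cf.\ \cite{LitNys21,AlbArmMouNov24}) rewrites the first integral as $\tfrac12 \int_{\partial(\mathcal{R}Q_r(z_0))} \varphi^2\,(1,v)\cdot N\,dS$, which is non-negative because the outward unit normal has no $(t,x)$-component on the lateral part $\overline{W_{r,v_0}(t_0,x_0)}\times \partial B_r(v_0)$, while on $\partial W_{r,v_0}(t_0,x_0)\times \overline{B_r(v_0)}$ the function $\varphi$ vanishes precisely where $(1,v)\cdot N_{t,x} < 0$ by the definition of $\partial_{\mathcal{K}}$ in \eqref{defn.kbdry}.

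Combining this non-negativity with the monotonicity inequality
\begin{equation*}
    \bigl(a(t,x,v,\xi_1) - a(t,x,v,\xi_2)\bigr)\cdot(\xi_1-\xi_2) \geq \Lambda^{-1}|\xi_1 - \xi_2|^2,
\end{equation*}
which follows from the third line of \eqref{ass.coef} via the fundamental theorem of calculus, forces $\nabla_v\varphi \equiv 0$ in $\mathcal{R}Q_r(z_0)$. Since $\varphi\in\mathcal{W}_0(\mathcal{R}Q_r(z_0))$ vanishes on $W_{r,v_0}(t_0,x_0)\times\partial B_r(v_0)$, the standard Poincar\'e inequality on each velocity slice $B_r(v_0)$ then gives $\varphi\equiv 0$, establishing uniqueness. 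I expect the main technical point to be the rigorous justification of the test with $\varphi$ itself, since $\varphi$ is merely an element of $\mathcal{W}_0$ rather than a smooth function; this is handled by approximation via the very density that defines $\mathcal{W}_0$, together with the integration-by-parts identity developed in \cite{LitNys21} and the energy-type formulation also exploited in Remark~\ref{rmk.weak}.
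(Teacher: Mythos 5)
Your proposal follows the same high-level route as the paper — deduce existence from \cite[Theorem 1.5]{GarNys23} on the regularized cylinder $\mathcal{R}Q_r(z_0)$ — and your uniqueness argument via monotonicity and integration by parts is a standard, correct, self-contained variant of what GarNys23 already provides. However, there is a genuine gap in the existence part. The structural assumptions under which \cite[Theorem 1.5]{GarNys23} is proved are not the same as Assumption \ref{assump}: in addition to the Lipschitz and monotonicity conditions (which do follow from \eqref{ass.coef} by the fundamental theorem of calculus), GarNys23 assumes that $a$ is $1$-homogeneous in its last argument, namely $a(t,x,v,\lambda\xi)=\lambda\,a(t,x,v,\xi)$ for all $\lambda\in\bbR\setminus\{0\}$. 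This homogeneity is not implied by \eqref{ass.coef}, so one cannot claim that existence ``follows directly'' from \cite{GarNys23}. The paper's own proof is essentially nothing but this observation: it records the three assumptions of \cite{GarNys23}, notes that \eqref{ass.coef} gives the first two, and then argues that a careful inspection of the proof of \cite[Theorem 1.5]{GarNys23} shows the homogeneity hypothesis is in fact superfluous. Your write-up never notices that the hypotheses differ, so as stated it is not a complete reduction to the cited result.

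Two smaller remarks. First, the domain $W_{r,v_0}(t_0,x_0)$ constructed in Appendix~\ref{appen} is $C^\infty$, not merely $C^{1,1}$; this is harmless but worth stating accurately since the admissibility of the domain is the very reason the regularized cylinder is introduced. Second, your uniqueness argument, while essentially sound, is not needed: uniqueness is part of \cite[Theorem 1.5]{GarNys23}, and the delicate point there — rigorously testing with $w_1-w_2\in\mathcal{W}_0(\mathcal{R}Q_r(z_0))$ and justifying the sign of the boundary term via the Kolmogorov boundary condition — is exactly the type of approximation argument carried out later in the paper (e.g.\ in the proof of Lemma~\ref{lem.comp1}); if you choose to include it, you should spell out the density/approximation step rather than leaving it as ``handled by approximation.''
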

\begin{proof}
    In \cite{GarNys23}, it is assumed that $a(t,x,v,\xi)$ satisfies for any $\xi_1,\xi_2,\xi\in\bbR^n$,
    \begin{enumerate}
        \item $|a(t,x,v,\xi_1)-a(t,x,v,\xi_2)|\leq \Lambda|\xi_1-\xi_2|$,
        \item $(a(t,x,v,\xi_1)-a(t,x,v,\xi_2))\cdot(\xi_1-\xi_2)\geq \Lambda^{-1}|\xi_1-\xi_2|^2$,
        \item $a(t,x,v,\lambda \xi)=\lambda a(t,x,v,\xi),\quad \forall\lambda\in\bbR\setminus\{0\}.$
    \end{enumerate}
    However, a careful inspection of the proof reveals that the existence result given in \cite[Theorem 1.5]{GarNys23} remains true without the assumption $(c)$. Since \eqref{ass.coef} implies the conditions $(a)$ and $(b)$, there exists a unique weak solution $w$ to \eqref{eq.ivp}.
\end{proof}

Next, we provide zero-order comparison estimates.
\begin{lemma}\label{lem.comp1}
    For $r>0$ and $z_0=(t_0,x_0,v_0) \in \mathbb{R}^{2n+1}$, let $\mu \in L^2(Q_{2r}(z_0))$ and let $f\in H^1_{\mathrm{kin}}(Q_{2r}(z_0))$ be a weak solution to 
    \begin{equation*}
        \partial_t f+v\cdot \nabla_x f-\divergence_v(a(t,x,v,\nabla_v f))=\mu\quad\text{in }Q_{2r}(z_0).
    \end{equation*}
    Then there is a unique weak solution $w\in \mathcal{W}(\mathcal{R}Q_r(z_0))$ to \eqref{eq.ivp}
such that 
\begin{equation*}
    \sup_{t\in I_r(t_0)}\|(f-w)(t,\cdot)\|_{L^1(B^x_{r}(x_0+v_0(t-t_0))\times B_r(v_0))}\leq c|\mu|(Q_{2r}(z_0))
\end{equation*}
with $c=c(n,\Lambda)$.
\end{lemma}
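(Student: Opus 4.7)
Existence (and uniqueness) of $w \in \mathcal{W}(\mathcal{R}Q_r(z_0))$ is delivered by Lemma \ref{lem.exist}, using that $\mathcal{R}Q_r(z_0) \subset Q_{2r}(z_0)$ so that the boundary data inherited from $f$ has the correct regularity. Set $u := f - w$, which belongs to $\mathcal{W}_0(\mathcal{R}Q_r(z_0))$, and subtract the two weak formulations to obtain
\begin{equation*}
\partial_t u + v \cdot \nabla_x u - \divergence_v\bigl(a(t,x,v,\nabla_v f) - a(t,x,v,\nabla_v w)\bigr) = \mu \quad \text{in } \mathcal{R}Q_r(z_0).
\end{equation*}
The plan is to run a Boccardo--Gallou\"et-type $L^1$ argument adapted to the Kolmogorov boundary structure: test the equation against a smooth bounded approximation of $\operatorname{sgn}(u)$, localize in time by slicing at $t=\tau$, and use the monotonicity of $a$ to discard the diffusion term.

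Fix $\tau \in I_r(t_0)$ and put $\mathcal{D}_\tau := \mathcal{R}Q_r(z_0) \cap \{t \leq \tau\}$. Choose $\phi_\varepsilon \in C^1(\mathbb{R})$ odd and nondecreasing with $|\phi_\varepsilon| \leq 1$, $\phi_\varepsilon \to \operatorname{sgn}$ pointwise as $\varepsilon \to 0$, and set $\Phi_\varepsilon(s) := \int_0^s \phi_\varepsilon(\sigma)\,d\sigma \geq 0$. Test the equation for $u$ with $\phi_\varepsilon(u)$ on $\mathcal{D}_\tau$. Three contributions emerge. First, by the monotonicity assertion of Assumption \ref{assump} and $\phi_\varepsilon' \geq 0$,
\begin{equation*}
\int_{\mathcal{D}_\tau} \bigl(a(t,x,v,\nabla_v f) - a(t,x,v,\nabla_v w)\bigr) \cdot \nabla_v u \, \phi_\varepsilon'(u) \, dz \geq 0.
\end{equation*}
Second, the drift--time part equals $\int_{\mathcal{D}_\tau} (\partial_t + v \cdot \nabla_x) \Phi_\varepsilon(u)\,dz$, which we compute by the divergence theorem in the $(t,x)$-variables. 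The boundary of $\mathcal{D}_\tau$ decomposes into the top slice $\{t=\tau\}$ (normal $(1,0)$, contributing $\int_{\{t=\tau\}\cap\mathcal{R}Q_r(z_0)} \Phi_\varepsilon(u)\,dx\,dv \geq 0$), the portion of $\partial W_{r,v_0} \times \overline{B_r(v_0)}$ that lies in $\partial_{\mathcal{K}}$ (where $u=0$, so zero contribution), and the complementary outflow portion where $(1,v)\cdot N_{t,x} > 0$ and $\Phi_\varepsilon(u) \geq 0$ (nonnegative contribution); the velocity boundary $W_{r,v_0}\times \partial B_r(v_0)$ is contained in $\partial_{\mathcal{K}}$, so contributes nothing. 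Third, the right-hand side is trivially controlled by $\bigl|\int_{\mathcal{D}_\tau} \mu\,\phi_\varepsilon(u)\,dz\bigr| \leq |\mu|(Q_{2r}(z_0))$ since $|\phi_\varepsilon| \leq 1$.

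Combining these three observations yields
\begin{equation*}
\int_{\{t=\tau\}\cap\mathcal{R}Q_r(z_0)} \Phi_\varepsilon(u)\,dx\,dv \leq |\mu|(Q_{2r}(z_0)),
\end{equation*}
and letting $\varepsilon \to 0$ via monotone convergence ($\Phi_\varepsilon(u) \nearrow |u|$) gives the same inequality with $|u(\tau,\cdot)|$ on the left. Since $Q_r(z_0) \subset \mathcal{R}Q_r(z_0)$, the slice $\{t=\tau\} \cap Q_r(z_0) = B^x_r(x_0 + v_0(\tau-t_0)) \times B_r(v_0)$ sits inside $\{t=\tau\}\cap\mathcal{R}Q_r(z_0)$, so taking the supremum over $\tau \in I_r(t_0)$ yields the claim.

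The main technical obstacle is making the truncation $\phi_\varepsilon(u)$ a bona fide test function in the Kolmogorov-type space $\mathcal{W}_0(\mathcal{R}Q_r(z_0))$ and giving rigorous meaning to the time trace $u(\tau,\cdot)$ needed for the slicing. This is handled by a standard density/regularization procedure: approximate $u$ in $\mathcal{W}_0$ by smooth functions vanishing on $\partial_{\mathcal{K}}$ (using the very definition of $\mathcal{W}_0$), apply the identity above to the smooth approximants where all integration-by-parts is classical, and pass to the limit using that $\phi_\varepsilon$ is Lipschitz. The $L^\infty_t L^2_{x,v}$ regularity recorded in Remark \ref{rmk.weak} (applied to both $f$ and $w$) ensures that the trace $u(\tau,\cdot)$ is well-defined for a.e.\ $\tau \in I_r(t_0)$, which is all that is needed to take the essential supremum.
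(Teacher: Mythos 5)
Your proposal is correct and follows essentially the same strategy as the paper: introduce $w$ via the existence lemma, test the difference equation with a bounded nondecreasing truncation of the sign function, use monotonicity of $a$ to discard the diffusion term, and exploit the Kolmogorov boundary structure (vanishing of $f-w$ on the inflow boundary, nonnegativity on the outflow portion) to control the transport term and produce the time-sliced $L^1$ bound. Two cosmetic differences are worth noting. First, you take $\phi_\varepsilon \to \operatorname{sgn}$ and then pass $\varepsilon \to 0$ by monotone convergence so that $\Phi_\varepsilon(u) \nearrow |u|$, whereas the paper works directly with the fixed Lipschitz truncation $\Phi_0$ and its primitive $\Psi_0$, which satisfies $|s| \leq \Psi_0(s) + 1/2$; the resulting additive $O(1)$ error is harmless after the normalization $|\mu|(Q_2)=1$ obtained by scaling. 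Second, you use a hard time cutoff at $\{t\le\tau\}$ and appeal to Remark~\ref{rmk.weak} for the time trace, while the paper instead tests against a smooth time cutoff $\oldphi$ supported near $\tau$ and recovers the pointwise-in-time bound via the Lebesgue differentiation lemma (Lemma~\ref{lem.diff}); this avoids needing a priori trace regularity for $w$ and is slightly more self-contained, though your route is also viable once the density/regularization step you indicate is carried out. The boundary decomposition you give for $\partial \mathcal{D}_\tau$ (top slice nonnegative, Kolmogorov inflow vanishes, outflow nonnegative, $v$-boundary orthogonal to $(1,v,0)$) is exactly the one used in the paper's computation of $J_k$ via the divergence theorem on smooth approximants followed by Fatou's lemma.
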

\begin{proof}
By Lemma \ref{lem.scale}, we may assume $Q_{2r}(z_0)=Q_2$ and $|\mu|(Q_2)=1$.
    Let us choose a smooth decreasing function $\oldphi:\bbR\to\bbR$ such that $0\leq\oldphi\leq1$ and $\oldphi(t)=0$ if $t>t_0$ for some $t_0\in(-1,0)$ with further properties to be specified later. Let us define 
    \begin{equation}\label{defn.phij}
    \begin{aligned}
        \Phi_j(\tau)\coloneqq\begin{cases}
            1&\quad\text{if }\tau\geq j+1\\
            \tau-j&\quad\text{if }j\leq \tau<j+1\\
            0&\quad\text{if }-j\leq\tau<j\\
            \tau+j&\quad\text{if }-j-1\leq \tau<-j\\
            -1&\quad\text{if }\tau<-j-1.
        \end{cases}
    \end{aligned}
    \end{equation}
    Then we consider the truncated function $\Phi_j(f-w)\in L^2(W_1;H^1_{0}(B_1))$, where we write $W_1\coloneqq W_{1,0}(0,0)$ and the smooth set $W_{1,0}(0,0)$ is determined in \eqref{defn.wrv0}. We next define
\begin{equation}\label{defn.varphi.comp1}
        \varphi\coloneqq \Phi_j(f-w)\oldphi 
    \end{equation}
    and observe that $\phi\in L^2(W_{1};H_{0}^1(B_1))$.

    Since $f-w\in \mathcal{W}_0(W_{1}\times B_1)$, there is a sequence $f_k-w_k\in C^\infty(\overline{W_1\times B_1})$ with $f_k-w_k\equiv 0$ on $\partial_{\mathcal{K}}(W_1\times B_1)$ and $f_k-w_k\to f-w\text{ in }H^1_{\mathrm{kin}}(W_1\times B_1)$. Therefore, we observe 
    \begin{equation}\label{ineq0.comp1}
    \begin{aligned}
        \int_{W_1}\left<(\partial_t+v\cdot\nabla_x)(f-w),\varphi\right>\,dx\,dt&=\lim_{k\to\infty}\int_{W_1}\left<(\partial_t+v\cdot\nabla_x)(f_k-w_k),\varphi_k\right>\,dx\,dt\\
        &=\lim_{k\to\infty}\int_{B_1}\int_{W_1}\left((\partial_t+v\cdot\nabla_x)(f_k-w_k)\right)\varphi_k\,dz\\
        &\eqqcolon\lim_{k\to\infty}J_k,
    \end{aligned}
    \end{equation}
    where we write $\varphi_k\coloneqq \Phi_j(f_k-w_k)\oldphi$ and we have used the fact that $\phi_k\to\phi$ in $L^2(W_1;H^1(B_1))$.
    We now use integration by parts and the divergence theorem to obtain
    \begin{equation}\label{ineq01.comp1}
    \begin{aligned}
        J_k&=\int_{B_1}\int_{W_1}(\partial_t+v\cdot\nabla_x)\left(\oldphi(t)\int_{0}^{f_k-w_k}\Phi_j(s)\,ds\right)\,dz\\
        &\quad-\int_{B_1}\int_{W_1}\partial_t\oldphi(t)\int_{0}^{f_k-w_k}\Phi_j(s)\,ds\,dz\\
        &=\int_{B_1}\left(\int_{\partial W_1}\oldphi(t)\left(\int_{0}^{f_k-w_k}\Phi_j(s)\,ds\right)\,(1,v)\cdot N_{t,x}\,dS_{t,x}\right)\,dv\\
        &\quad-\int_{B_1}\int_{W_1}\partial_t\oldphi(t)\int_{0}^{f_k-w_k}\Phi_j(s)\,ds\,dz,
    \end{aligned}
    \end{equation}
    where we denote by $N_{t,x}$ the outer unit normal vector to $W_1$ at $(t,x)$ and by $dS_{t,x}$ the surface measure on $\partial W_1$, respectively. Since $f_k-w_k\equiv 0$ on $\partial_{\mathcal{K}}(W_1\times B_1)$, we have 
    \begin{equation*}
        J_k\geq -\int_{B_1}\int_{W_1}\partial_t\oldphi(t)\int_{0}^{f_k-w_k}\Phi_j(s)\,ds\,dz.
    \end{equation*}
Since we will choose $\oldphi$ such that $\partial_t\oldphi(t)\leq0$, using Fatou's lemma, we have 
\begin{align}\label{ineq1.comp1}
    \int_{W_1}\left<(\partial_t+v\cdot\nabla_x)(f-w),\varphi\right>\,dx\,dt\geq \int_{B_1}\int_{W_1}(-\partial_t\oldphi(t))\int_{0}^{f-w}\Phi_j(s)\,ds\,dz.
\end{align}

     By testing the weak formulation of the equation 
    \begin{equation}\label{eq2.comp1}
        \partial_t(f-w)+v\cdot\nabla_x(f-w)-\divergence_v(a(t,x,v,\nabla_v f)-a(t,x,v,\nabla_v w))=\mu\quad\text{in }\mathcal{R}Q_1
    \end{equation}
    with $\phi$ and using \eqref{ineq1.comp1}, we deduce
    \begin{align*}
        &\int_{B_1}\int_{W_1}(-\partial_t\oldphi(t))\int_{0}^{f-w}\Phi_j(s)\,ds\,dz\\
        &\quad+\int_{\mathcal{R}Q_1}\left[(a(t,x,v,\nabla_v f)-a(t,x,v,\nabla_v w))\cdot\nabla_v(\Phi_k(f-w))\right]\,dz\leq\int_{\mathcal{R}Q_1}\mu \varphi\,dz.
    \end{align*}
    Using \eqref{ass.coef} and the fact that $\Phi_j'\geq0$ and $|\Phi_j|,\oldphi\leq1$, we get 
    \begin{equation}\label{ineq4.comp1}
       \int_{B_1}\int_{W_1}(-\partial_t\oldphi(t))\Psi_j(f-w)\,dz\leq |\mu|(Q_2),
    \end{equation}
    where we write $\Psi_j(s)\coloneqq\int_{0}^s\Phi_j(\xi)\,d\xi$ and $\Psi_j\geq0$.
    Let us fix $t_1\in (-1,0)$ and choose $\oldphi(t)$ to be a decreasing smooth function such that $\oldphi(t)\equiv 1$ on $t<t_1-\delta$ and $\oldphi(t)\equiv 0$ on $t>t_1+\delta$, $\oldphi'(t)\leq0$, $\oldphi'(t)=-\delta/2$ on $t_1-\delta/2<t<t_1+\delta/2$, where $\delta<\frac14\min\{t_1-1,|t_1|\}$ to see that
    \begin{align*}
       \int_{t_1-\delta/2}^{t_1}\int_{B_1}\int_{B_1}\frac{\Psi_j(f-w)}{\delta/2}\,dv\,dx\,dt \leq \int_{I_1}\int_{B_1}\int_{B_1}(-\partial_t\oldphi(t))\Psi_j(f-w)\,dv\,dx\,dt
       \leq c|\mu|(Q_2).
    \end{align*}
    By taking $\delta\to0$ and using Lemma \ref{lem.diff}, since $t_1 \in (-1,0)$ is arbitrary, we obtain
    \begin{align}\label{ineq5.comp1}
        \sup_{t\in I_1}\int_{B_1}\int_{B_1}\Psi_j(f-w)\,dv\,dx\leq c|\mu|(Q_2)\leq c
    \end{align}
    for some constant $c$. We next observe that 
    \begin{align*}
        \Psi_0(\tau)=\begin{cases}
            -1/2+\tau&\quad\text{if }\tau\geq1,\\
            |\tau|^2/2&\quad\text{if }-1\leq\tau<1,\\
            -1/2-\tau&\quad\text{if }\tau\leq-1.
        \end{cases}
    \end{align*}
   Let us fix $\tau\in I_1$ to see that    \begin{align*}
        \int_{B_1}\int_{B_1}\Psi_0(f-w)(\tau,x,v)\,dv\,dx&= \int_{\mathcal{B}'}\frac{(f-w)^2(\tau,x,v)}{2}\,dv\,dx\\
        &\quad+\int_{\mathcal{B}''}{|(f-w)|(\tau,x,v)}\,dv\,dx-\frac{|\mathcal{B}''|}{2},
    \end{align*}
    where we write 
    \begin{align*}
        \mathcal{B}'\coloneqq\{(x,v)\in B_1\times B_1\,:|(f-w)(\tau,x,v)|\leq 1\}
    \end{align*}
    and
    \begin{align*}
        \mathcal{B}''\coloneqq\{(x,v)\in B_1\times B_1\,:|(f-w)(\tau,x,v)|> 1\}.
    \end{align*}
    Therefore, we have
    \begin{align*}
        \int_{B_1\times B_1}|(f-w)(\tau,x,v)|\,dv\,dx\leq \int_{B_1}\int_{B_1}\Psi_0(f-w)(\tau,x,v)\,dv\,dx+|B_1|^2 \leq c,
    \end{align*}
    where we have used that $\int_{\mathcal{B}'}{|f-w|}\,dv\,dx\leq {|\mathcal{B}'|} $ and \eqref{ineq5.comp1}. This completes the proof.
\end{proof}
Using certain different quotient techniques originating in \cite{KM1,Min07}, we obtain the following higher differentiability result with respect to the spatial variable $x$, which is crucial to obtain comparison estimates at the gradient level.
\begin{lemma}\label{lem.xreg}
Let $\mu \in L^2(Q_1)$ and let $f\in H^1_{\mathrm{kin}}(Q_1)$ be a weak solution to  
    \begin{equation*}
        \partial_t f+v\cdot \nabla_x f-\divergence_v(a(t,x,v,\nabla_v f))=\mu \quad\text{in }Q_1.
    \end{equation*}
Then 
    \begin{equation*}
        [f]_{L^1_tW^{\gamma,1}_xL^1_v(Q_{1/2})}\leq c\left(\|f\|_{L^1(Q_1)}+\|\mu\|_{L^1(Q_1)}\right),
    \end{equation*}
    where $c=c(n,\Lambda)$ and $\gamma=\gamma(n,\Lambda)\in(0,1)$.
\end{lemma}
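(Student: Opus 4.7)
The plan is to use the nonlinear atomic decomposition strategy, balancing the zero-order comparison estimate from Lemma \ref{lem.comp1} against the H\"older regularity of the homogeneous comparison solution from Lemma \ref{lem.dnm}. Since the statement only asks for \emph{some} positive exponent $\gamma=\gamma(n,\Lambda)$, it suffices to prove the Nikolskii-type bound
\[
 \int_{Q_{1/2}} |f(t,x+h,v)-f(t,x,v)|\,dz \leq c\,|h|^{\gamma_0}\bigl(\|f\|_{L^1(Q_1)}+\|\mu\|_{L^1(Q_1)}\bigr)
\]
for all sufficiently small $h \in \mathbb{R}^n$ and some $\gamma_0=\gamma_0(n,\Lambda)>0$, and then invoke the standard embedding of Nikolskii spaces into Sobolev-Slobodeckij spaces $W^{\gamma,1}$ for any $\gamma<\gamma_0$.

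First, I would fix $h \in \mathbb{R}^n$ small and set $r:=|h|^{\theta}$ for a parameter $\theta \in (0,1/3)$ to be chosen (the restriction $\theta<1/3$ is needed to guarantee that a translation by $h$ in the $x$-variable does not leave the kinetic cylinders we will work on, since the $x$-scale of $Q_r$ is $r^3$). By Lemma \ref{lem.cov2}, I would cover $Q_{1/2}$ by a collection $\{Q_{r/4}(z_i)\}_{i\in\mathcal{K}}$ with bounded overlap, so that $Q_{2r}(z_i)\subset Q_{3/4}$ for each $i$.

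Next, on each $Q_{2r}(z_i)$ I would produce the comparison solution $w_i\in\mathcal{W}(\mathcal{R}Q_r(z_i))$ of the homogeneous Dirichlet problem \eqref{eq.ivp} provided by Lemma \ref{lem.exist}. Lemma \ref{lem.comp1} then yields
\[
 \int_{Q_r(z_i)}|f-w_i|\,dz\leq c\,r^{2}\,|\mu|(Q_{2r}(z_i)),
\]
after integrating the pointwise-in-time bound over $I_r(t_{z_i})$. Because $w_i$ solves the homogeneous nonlinear kinetic equation with $\mu\equiv 0$, Lemma \ref{lem.dnm} provides H\"older estimates: freezing $t$ and $v$ and using the definition of the kinetic H\"older seminorm (the kinetic distance between $(t,x,v)$ and $(t,x+h,v)$ equals $|h|^{1/3}$), I obtain for $(t,x,v),(t,x+h,v) \in Q_{r/2}(z_i)$ the pointwise bound
\[
 |w_i(t,x+h,v)-w_i(t,x,v)|\leq c\,r^{-\alpha_0}|h|^{\alpha_0/3}\dashint_{Q_r(z_i)}|w_i|\,dz.
\]
Splitting $\delta_h^x f = \delta_h^x w_i + \delta_h^x(f-w_i)$ on $Q_{r/4}(z_i)$ and using the trivial bound $\int |\delta_h^x(f-w_i)|\leq 2\int|f-w_i|$, I would sum over $i$ using the finite overlap of the cover together with the bound $\sum_i\int_{Q_r(z_i)}|w_i|\leq c(\|f\|_{L^1(Q_1)}+\|\mu\|_{L^1(Q_1)})$ (from triangle inequality plus the comparison bound) to conclude
\[
 \int_{Q_{1/2}}|\delta_h^x f|\,dz \leq c\,r^{-\alpha_0}|h|^{\alpha_0/3}\bigl(\|f\|_{L^1(Q_1)}+\|\mu\|_{L^1(Q_1)}\bigr)+c\,r^{2}\|\mu\|_{L^1(Q_1)}.
\]

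Finally, choosing for instance $\theta=1/4$, both terms on the right acquire a positive power of $|h|$, namely $|h|^{\alpha_0/12}$ and $|h|^{1/2}$ respectively, yielding the Nikolskii bound with $\gamma_0:=\min\{\alpha_0/12,1/2\}$. The Nikolskii-Sobolev embedding then delivers $[f]_{L^1_tW^{\gamma,1}_xL^1_v(Q_{1/2})}<\infty$ for any $\gamma<\gamma_0$, with the stated quantitative estimate. The main technical obstacle is the delicate balance in Step 4: the comparison scale $r$ must be large enough that translations by $h$ remain inside the cylinder where the H\"older estimate is available (forcing $\theta<1/3$), yet small enough that the H\"older term $r^{-\alpha_0}|h|^{\alpha_0/3}$ still decays; the constraint $\theta\in(0,1/3)$ is what prevents $\gamma$ from being too close to $\alpha_0/3$, but any positive exponent is sufficient here.
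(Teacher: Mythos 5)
Your proposal is correct and takes essentially the same route as the paper: an atomic decomposition with kinetic cylinders of radius $r=|h|^\theta$ (Lemma \ref{lem.cov2}), the zero-order comparison bound from Lemma \ref{lem.comp1}, the interior kinetic H\"older estimate for the homogeneous comparison solution from Lemma \ref{lem.dnm}, a finite-overlap summation, and a Nikolskii-to-Sobolev--Slobodeckij embedding. Your insistence on $\theta<1/3$ is in fact more careful than the paper's own choice: with $r=|h|^\theta$, the H\"older term scales as $r^{-\alpha_0}|h|^{\alpha_0/3}=|h|^{\alpha_0(1/3-\theta)}$, which requires $\theta<1/3$ both for the $x$-translate of $Q_{r/4}(z_i)$ to stay inside $Q_{r/2}(z_i)$ and for the exponent to be positive, whereas the paper takes $\beta=1/2$ and records the exponent as $|h|^{\frac{\alpha_0}{3}(1-\beta)}$ where the scaling of Lemma \ref{lem.dnm} actually gives $|h|^{\alpha_0(1/3-\beta)}$ (negative at $\beta=1/2$); your choice $\theta=1/4$ safely yields $|h|^{\alpha_0/12}$ and $|h|^{1/2}$ as you state.
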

\begin{proof}
    Let us take $\beta=1/2$ and choose $|h|<1/(40000\sqrt{n})^{\frac1\beta}$. By Lemma \ref{lem.cov2}, there is a collection $\{Q_{|h|^\beta}(z_k)\}_{k\in\mathcal{K}}$ such that
    \begin{equation}\label{ineq1.xreg}
        Q_{3/4}\subset \bigcup_{k\in\mathcal{K}}Q_{|h|^\beta}(z_k)\subset \bigcup_{k\in\mathcal{K}}Q_{4|h|^\beta}(z_k)\subset Q_1
    \end{equation}
    and
    \begin{equation}\label{ineq2.xreg}
        \sup_{z\in\bbR^{2n+1}}\sum_{k\in\mathcal{K}}\mbox{\Large$\chi$}_{Q_{4|h|^\beta}(z_k)}(z)\leq c(n).
    \end{equation}
    We will prove 
    \begin{equation*}
        \|\delta_h^x f\|_{L^1(Q_{3/4})}\leq c{|h|^{\alpha_0/6}}(\|f\|_{L^1(Q_1)}+\|\mu\|_{L^1(Q_1)}),
    \end{equation*}
    where $c=c(n,\Lambda)$ and the constant $\alpha_0$ is determined in Lemma \ref{lem.dnm}. We first observe 
    \begin{align*}
        \|\delta_h^xf\|_{L^1(Q_{|h|^\beta}(z_k))}\leq \|\delta_h^x(f-w)\|_{L^1(Q_{|h|^\beta}(z_k))}+\|\delta_h^xw\|_{L^1(Q_{|h|^\beta}(z_k))}\coloneqq J_1+J_2,
    \end{align*}
    where $w$ is a weak solution to \eqref{eq.ivp} with $Q_r(z_0)$ replaced by $Q_{2|h|^\beta}(z_k)$.
    In light of Lemma \ref{lem.comp1}, we first get 
    \begin{align*}
        J_1\leq c\|(f-w)\|_{L^1(Q_{2|h|^\beta}(z_k))}\leq c|h|^{2\beta}|\mu|(Q_{4|h|^\beta}(z_k)).
    \end{align*}

    We next observe from Lemma \ref{lem.dnm} and Lemma \ref{lem.comp1} that 
    \begin{align*}
        J_2&\leq |h|^{\frac{\alpha_0}3}[w]_{C_{\mathrm{kin}}^{\alpha_0}\left(Q_{\frac32|h|^\beta}(z_k)\right)}|Q_{\frac32|h|^\beta}(z_k)|\\
        &\leq c|h|^{\frac{\alpha_0}{3}(1-\beta)}\left(\dashint_{{Q_{2|h|^\beta}}(z_k)}|w|\,dz\right)|Q_{4|h|^\beta}(z_k)|\\
        &\leq c|h|^{\frac{\alpha_0}3(1-\beta)}\left(\dashint_{{Q_{2|h|^\beta}}(z_k)}|w-f|\,dz+\dashint_{{Q_{2|h|^\beta}}(z_k)}|f|\,dz\right)|Q_{4|h|^\beta}(z_k)|\\
        &\leq c|h|^{\frac{\alpha_0}3(1-\beta)}\left(|h|^{2\beta}\frac{|\mu|(Q_{4|h|^\beta}(z_k))}{|Q_{4|h|^\beta}(z_k)|} +\dashint_{{Q_{2|h|^\beta}}(z_k)}|f|\,dz\right)|Q_{4|h|^\beta}(z_k)|.
    \end{align*}
    Combining the estimates $J_1$ and $J_2$, we have 
    \begin{align}\label{ineq4.xreg}
         \|\delta_h^xf\|_{L^1(Q_{|h|^\beta}(z_k))}\leq c\left(|h|^{2\beta}|\mu|(Q_{4|h|^\beta}(z_k))+|h|^{\frac{\alpha_0}3(1-\beta)}\int_{{Q_{2|h|^\beta}}(z_k)}|f|\,dz\right).
    \end{align}
    Thus, using \eqref{ineq1.xreg}, \eqref{ineq2.xreg} and \eqref{ineq4.xreg}, we get
    \begin{align*}
        \int_{Q_{3/4}}|\delta_h^xf|\,dz&\leq \sum_{k\in\mathcal{K}}\int_{Q_{|h|^\beta}(z_k)}|\delta_h^xf|\,dz\\
        &\leq c\sum_{k\in\mathcal{K}}\left(|h|^{2\beta}|\mu|(Q_{4|h|^\beta}(z_k))+|h|^{\frac{\alpha_0}3(1-\beta)}\int_{{Q_{2|h|^\beta}}(z_k)}|f|\,dz\right)\\
        &\leq c|h|^{\frac{\alpha_0}{6}}\left(\|f\|_{L^1(Q_1)}+|\mu|(Q_1)\right).
    \end{align*}
    By the embedding as in \cite[Lemma 2.2]{DieKimLeeNow24p}, we deduce
    \begin{align*}
        \int_{Q_{1/2}}\frac{|f(t,x,v)-f(t,y,v)|}{|x-y|^{n+\alpha_0/8}}\,dx\,dt\,dv\leq c\left(\|f\|_{L^1(Q_1)}+|\mu|(Q_1)\right)
    \end{align*}
    for some constant $c=c(n,\Lambda)$. By taking $\gamma=\alpha_0/8$, we have the desired estimate.
\end{proof}

Combining the previous two results with an interpolation argument, we are now able to deduce first-order comparison estimates.
\begin{lemma}\label{lem.comp2}
    For $r>0$ and $z_0=(t_0,x_0,v_0) \in \mathbb{R}^{2n+1}$, let $\mu \in L^2(Q_{2r}(z_0))$ and let $f\in H^1_{\mathrm{kin}}(Q_{2r}(z_0))$ be a weak solution to 
    \begin{equation*}
        \partial_t f+v\cdot \nabla_x f-\divergence_v(a(t,x,v,\nabla_v f))=\mu\quad\text{in }Q_{2r}(z_0).
    \end{equation*}
    Then there is a unique weak solution $w\in \mathcal{W}(\mathcal{R}Q_r(z_0))$ to \eqref{eq.ivp}
such that 
\begin{equation*}
    \dashint_{Q_{r/2}(z_0)}|\nabla_v (f-w)|\,dz\leq \frac{c |\mu|(Q_{2r}(z_0))}{r^{4n+1}}
\end{equation*}
with $c=c(n,\Lambda)$.
\end{lemma}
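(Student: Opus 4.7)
The plan is to reduce to the linearized setting and exploit a Boccardo--Gallou\"et-type truncation argument, combined with the zero-order comparison from Lemma \ref{lem.comp1} and the higher $x$-differentiability from Lemma \ref{lem.xreg}. Existence and uniqueness of $w$ is by Lemma \ref{lem.exist}. Set $g \coloneqq f-w$, so that $g \in \mathcal{W}_0(\mathcal{R}Q_r(z_0))$ (vanishes on $\partial_{\mathcal{K}}(\mathcal{R}Q_r(z_0))$) and solves
\begin{equation*}
    \partial_t g + v\cdot\nabla_x g - \divergence_v(\widetilde A\,\nabla_v g) = \mu \quad\text{in } \mathcal{R}Q_r(z_0),
\end{equation*}
where $\widetilde A(t,x,v) \coloneqq \int_0^1 \nabla_\xi a(t,x,v,s\nabla_v f+(1-s)\nabla_v w)\,ds$ is uniformly elliptic, as in \eqref{unifell}. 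By Lemma \ref{lem.scale} we may assume $z_0=0$ and $r=1$, so that the goal becomes $\int_{Q_{1/2}}|\nabla_v g|\,dz \leq c\,|\mu|(Q_2)$.

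\emph{Step 1 (level-set energy estimate).} I would test the equation for $g$ with $\Phi_j(g)\,\oldphi(t)$, where $\Phi_j$ is the truncation from \eqref{defn.phij} and $\oldphi$ is a decreasing time cutoff exactly as in \eqref{defn.varphi.comp1}. The transport term is handled via integration by parts and the vanishing of $g$ on $\partial_{\mathcal{K}}(\mathcal{R}Q_1)$, mirroring \eqref{ineq0.comp1}--\eqref{ineq4.comp1}, and the ellipticity of $\widetilde A$ captures the square of $\nabla_v g$ on the annular level $\{j<|g|\leq j+1\}$. This gives, for every integer $j\geq 0$,
\begin{equation*}
    \int_{\{j<|g|\leq j+1\}\cap Q_1} |\nabla_v g|^2\,dz \leq c\,|\mu|(Q_2).
\end{equation*}

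\emph{Step 2 (improved integrability of $g$).} Lemma \ref{lem.comp1} gives $\|g\|_{L^1(Q_1)} \leq c\,|\mu|(Q_2)$, hence the crude measure estimate $|\{|g|>k\}\cap Q_1|\leq c\,|\mu|(Q_2)/k$. To upgrade this, I apply Lemma \ref{lem.xreg} to $g$ itself, viewed as a solution of the linearized equation with right-hand side $\mu$: this yields $\gamma$-fractional $x$-differentiability in $L^1$. Interpolating this with the $L^2$ bound on $\nabla_v T_k(g)$ from Step 1 via a kinetic Sobolev-type embedding (possible because $T_k(g)\in\mathcal{W}_0(\mathcal{R}Q_1)$ by the chain rule and the Kolmogorov vanishing of $g$), I obtain $T_k(g)\in L^{p^*}(Q_1)$ for some $p^*>2$ with $\|T_k(g)\|_{L^{p^*}(Q_1)}^{p^*}\leq c(k\,|\mu|(Q_2))^{p^*/2}$, whence
\begin{equation*}
    |\{|g|>k\}\cap Q_1|\leq \frac{c\,|\mu|(Q_2)^{p^*/2}}{k^{p^*/2}}.
\end{equation*}

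\emph{Step 3 (Boccardo--Gallou\"et).} The set $\{|\nabla_v g|>\lambda\}\cap Q_{1/2}$ is contained in $\{|\nabla_v T_k(g)|>\lambda\}\cup\{|g|>k\}$. Chebyshev on the first piece via Step 1 and the measure estimate of Step 2 on the second, optimized in $k$, yields $\nabla_v g\in L^{q,\infty}(Q_{1/2})$ for some $q>1$ with quasinorm bounded by $c\,|\mu|(Q_2)$. Since $Q_{1/2}$ has finite measure, H\"older's inequality produces $\int_{Q_{1/2}}|\nabla_v g|\,dz \leq c\,|\mu|(Q_2)$, which is the claim after unscaling.

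The main obstacle is Step 2: unlike the parabolic case in \cite{DuzMin11a}, where $H^1 \hookrightarrow L^{2^*}$ gives improved integrability of $T_k(g)$ for free, the kinetic setting lacks ellipticity in $x$, so a plain velocity-Poincar\'e only matches the crude $1/k$ decay and the Boccardo--Gallou\"et sum fails to converge. It is precisely the fractional $x$-regularity of Lemma \ref{lem.xreg}, combined with the energy control of $\nabla_v T_k(g)$, that allows a kinetic embedding into an integrability exponent beyond $L^2$, which is what makes the truncation argument close.
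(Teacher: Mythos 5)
Your Step 1 (level-set energy) is exactly the paper's starting point, your appeal to Lemma \ref{lem.exist} for existence is correct, and your identification of the crux --- that the kinetic setting lacks $x$-ellipticity, so velocity-Poincar\'e alone is insufficient, and that Lemma \ref{lem.xreg} must supply the missing $x$-regularity --- matches the paper's reasoning precisely. However, your overall closure strategy differs from the paper's, and your Step 2 has a genuine gap.

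The paper closes the argument \emph{not} via Boccardo--Gallou\"et, but via the Duzaar--Mingione weighted-gradient trick from \cite{DuzMin11a}: from Step 1 one derives $\int_{\mathcal{R}Q_1}\frac{|\nabla_v(f-w)|^2}{(1+|f-w|)^\xi}\,dz\leq c$ for any $\xi>1$, and then writes
\begin{equation*}
\|\nabla_v(f-w)\|_{L^1}\leq\Bigl(\int\frac{|\nabla_v(f-w)|^2}{(1+|f-w|)^{q}}\Bigr)^{\frac12}\Bigl(\int(1+|f-w|)^{q}\Bigr)^{\frac12},
\end{equation*}
which is absorbed by Young's inequality because the interpolation places $\|\nabla_v(f-w)\|_{L^1}$ on the right with exponent $\beta nq/(2(n+1))<1$. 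The key advantage is that this closure only requires $\|f-w\|_{L^q}$ for some $q$ \emph{barely above $1$}: in \eqref{betaq.comp2}, $q=1+\gamma/(n(\gamma+1))$, and one needs $\beta q<1$. The paper's delicate chain of mixed-norm and fractional Sobolev interpolations in \eqref{ineq001.comp2}--\eqref{ineq3.comp2} is designed precisely to deliver this very modest integrability gain.

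Your Step 2 asserts something substantially stronger and asserts it without derivation: $\|T_k(g)\|_{L^{p^*}(Q_1)}^{p^*}\leq c\bigl(k\,|\mu|(Q_2)\bigr)^{p^*/2}$ for some $p^*>2$, invoked via an unspecified ``kinetic Sobolev-type embedding.'' Two concrete problems. First, the $k^{p^*/2}$ scaling would require the interpolation to be driven entirely by the velocity-gradient term $\|\nabla_v T_k(g)\|_{L^2}^2\lesssim k\,|\mu|(Q_2)$; but velocity-gradient control alone yields no integrability gain in $(t,x)$, so any usable embedding must also weight in the $L^1_tW^{\gamma,1}_xL^1_v$ bound from Lemma \ref{lem.xreg} and the $L^\infty_tL^1_{x,v}$ bound from Lemma \ref{lem.comp1}, neither of which scales with $k$. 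The true $k$-dependence would therefore be $k^{a p^*/2}$ with the interpolation weight $a<1$, not $k^{p^*/2}$. Second, and more seriously, the achievable integrability exponent is bottlenecked by the weak $W^{\gamma,1}_x$ regularity: the fractional Sobolev embedding $W^{\gamma,1}_x\hookrightarrow L^{n/(n-\gamma)}_x$ improves the $x$-exponent only to $1+O(\gamma/n)$, and the paper's own interpolation, which uses the very same ingredients, lands at $q=1+\gamma/(n(\gamma+1))$ --- nowhere near $2$. Getting $p^*>2$ would need an embedding that does not exist for $T_k(g)$ (which solves no equation, so hypoelliptic gain is unavailable). Your Boccardo--Gallou\"et closure needs the measure decay $|\{|g|>k\}|\lesssim k^{-\theta}$ with $\theta>1$; in principle a suitably tracked interpolation with the correct, weaker $k$-scaling could still produce $\theta$ slightly above $1$ and salvage Step 3, but this is exactly the delicate point you would have to prove, and as written your Step 2 does not establish it. The paper's Hölder--Young closure is chosen precisely because it tolerates the tiny integrability gain that the kinetic fractional $x$-regularity actually delivers.
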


\begin{proof}
By Lemma \ref{lem.scale}, we may assume $Q_{2r}(z_0)=Q_2$ and $|\mu|(Q_2)=1$. Let us fix $\xi>1$. As in the proof of Lemma \ref{lem.comp1}, by testing \eqref{eq2.comp1} with $\Phi_j(f-w)$, where the function $\Phi_j$ is defined in \eqref{defn.phij}, we obtain
\begin{align*}
    \int_{\mathcal{R}Q_1}\left[(a(t,x,v,\nabla_v f)-a(t,x,v,\nabla_v w))\cdot\nabla_v(\Phi_k(f-w))\right]\,dz \leq\int_{\mathcal{R}Q_1}\mu\Phi_j(f-w)\,dz.
\end{align*}
Therefore, we have 
\begin{align*}
    \int_{A_j}|\nabla_v(f-w)|^2\,dz
    \leq  c\int_{\mathcal{R}Q_1}\left[(a(t,x,v,\nabla_v f)-a(t,x,v,\nabla_v w))\cdot\nabla_v(\Phi_j(f-w))\right]\,dz \leq c
\end{align*}
for some constant $c=c(\Lambda)$, where the set $A_j$ is defined as 
\begin{align*}
    A_j:=\{z\in \mathcal{R}Q_1\,:\, j<|f(z)-w(z)|\leq j+1\}.
\end{align*}
Here we have used the fact that $\Phi_j\leq 1$ and $|\mu|(Q_2)=1$.
We now follow the computations as in \cite[Lemma 4.1]{DuzMin11a} to see that
\begin{equation}\label{ineq.comp2}
    \int_{\mathcal{R}Q_1}\frac{|\nabla_v(f-w)|^2}{(1+|f-w|)^\xi}\,dz\leq \sum_{j\geq0}\int_{A_j}\frac{|\nabla_v(f-w)|^2}{(1+j)^\xi}\,dz\leq \sum_{j\geq0}\frac{c}{(1+j)^\xi}\leq c
\end{equation}
for some constant $c=c(n,\Lambda,\xi)$.
    By the Sobolev embedding as in \cite[Chapter 1, Proposition 3.1]{Die93}, we first observe
    \begin{equation}\label{ineq001.comp2}
    \begin{aligned}
        \|(f-w)(\cdot,x,\cdot)\|_{L^{\frac{n+1}n}(I_{1/2}\times B^v_{1/2})}&\leq c\|(f-w)(\cdot,x,\cdot)\|^{\frac{n}{n+1}}_{L^1(I_{1/2};W^{1,1}(B^v_{1/2}))}\\
        &\quad\times\|(f-w)(\cdot,x,\cdot)\|^{\frac1{n+1}}_{L^\infty(I_{1/2};L^1(B^v_{1/2}))},
    \end{aligned}
    \end{equation}
    where we denote by $B^v_{1/2}$ the standard Euclidean ball in $\bbR^n$ with radius $1/2$ corresponding to the $v$-direction.
     We now integrate both sides of \eqref{ineq001.comp2} with respect to the $x$-variable and then use H\"older's inequality to obtain 
\begin{equation}\label{ineq01.comp2}
     \begin{aligned}
         &\int_{B^x_{1/2}}\|(f-w)(\cdot,x,\cdot)\|_{L^{\frac{n+1}n}(I_{1/2}\times B^{v}_{1/2})}\,dx\\
         &\leq c\|f-w\|_{L^1(I_1\times B_{1/2}^x;W^{1,1}(B^v_{1/2}))}^{\frac{n}{n+1}}\sup_{t\in I_{1/2}}\|(f-w)(t,\cdot)\|^{\frac1{n+1}}_{L^1(B^x_{1/2}\times B^v_{1/2})}.
     \end{aligned}
     \end{equation}

     We next use standard interpolation arguments and H\"older's inequality to see that
    \begin{equation}\label{ineq1.comp2}
    \begin{aligned}
        &\int_{B^x_{1/2}}\|(f-w)(\cdot,x,\cdot)\|^q_{L^q(I_{1/2}\times B^v_{1/2})}\,dx\\
        &\leq \int_{B^x_{1/2}}\|(f-w)(\cdot,x,\cdot)\|^{\beta q}_{L^{\frac{n+1}n}(I_{1/2}\times B^v_{1/2})}\|(f-w)(\cdot,x,\cdot)\|^{(1-\beta) q}_{L^{1}(I_{1/2}\times B^v_{1/2})}\,dx\\
        &\leq \left(\int_{B^x_{1/2}}\|(f-w)(\cdot,x,\cdot)\|_{L^{\frac{n+1}n}(I_{1/2}\times B^v_{1/2})}\,dx\right)^{\beta q}\\
        &\quad\times\left(\int_{B^x_{1/2}}\|(f-w)(\cdot,x,\cdot)\|^{\frac{n}{n-\gamma}}_{L^{1}(I_{1/2}\times B^v_{1/2})}\,dx\right)^{{1-\beta q}},
    \end{aligned}
    \end{equation}
    where the constant $\gamma=\gamma(n,\Lambda)$ is determined in Lemma \ref{lem.xreg} and
    \begin{equation}\label{betaq.comp2}
        q\coloneqq \frac{n(\gamma+1)+\gamma}{n(\gamma+1)}\quad\text{and}\quad \beta\coloneqq \frac{\gamma(n+1)}{n(\gamma+1)+\gamma}.
    \end{equation}
    Here we have used the fact that $\beta q<1$.
    We first observe that the fractional Sobolev inequality as in \cite[Theorem 6.10]{DinPalVal12} implies
    \begin{align}\label{ineq2.comp2}
        \|(f-w)(t,\cdot,v)\|_{L^{\frac{n}{n-\gamma}}(B^x_{1/2})}\leq c\|(f-w)(t,\cdot,v)\|_{|W^{\gamma,1}(B^x_{1/2})},
    \end{align}
    where $c=c(n,\Lambda)$.
    By \eqref{betaq.comp2}, Minkowski's integral inequality and \eqref{ineq2.comp2}, we obtain
    \begin{equation}\label{ineq3.comp2}
    \begin{aligned}
        &\left(\int_{B^x_{1/2}}\|(f-w)(\cdot,x,\cdot)\|^{\frac{n}{n-\gamma}}_{L^{1}(I_{1/2}\times B^v_{1/2})}\,dx\right)^{{1-\beta q}}\\
        &=\left(\int_{B^x_{1/2}}\|(f-w)(\cdot,x,\cdot)\|^{\frac{n}{n-\gamma}}_{L^{1}(I_{1/2}\times B^v_{1/2})}\,dx\right)^{{\frac{q(1-\beta)(n-\gamma)}{n}}}\\
        &\leq \left(\int_{I_{1/2}\times B^v_{1/2}}\|(f-w)(t,\cdot,v)\|_{L^{\frac{n}{n-\gamma}}(B^x_{1/2})}\,dt\,dv\right)^{q(1-\beta)}\\
        &\leq c\left(\int_{I_{1/2}\times B^v_{1/2}}\|(f-w)(t,\cdot,v)\|_{W^{\gamma,1}(B^x_{1/2})}\,dt\,dv\right)^{q(1-\beta)},
    \end{aligned}
    \end{equation}
    where $c=c(n,\Lambda)$.
    Combining the estimates \eqref{ineq01.comp2}, \eqref{ineq1.comp2} and \eqref{ineq3.comp2} yields 
    \begin{align*}
        &\left(\int_{B^x_{1/2}}\|(f-w)(\cdot,x,\cdot)\|^q_{L^q(I_{1/2}\times B^v_{1/2})}\,dx\right)^{\frac1q}\\
        &\leq c\left(\|f-w\|_{L^1(I_{1/2}\times B^x_{1/2};W^{1,1}(B^v_{1/2}))}^{\frac{n}{n+1}}\sup_{t\in I_{1/2}}\|(f-w)(t,\cdot)\|^{\frac1{n+1}}_{L^1(B^x_{1/2}\times B^v_{1/2})}\right)^{\beta}\\
        &\quad\times \left(\int_{I_{1/2}\times B^v_{1/2}}\|(f-w)(t,\cdot,v)\|_{W^{\gamma,1}(B^x_{1/2})}\,dt\,dv\right)^{1-\beta}.
    \end{align*}
To estimate further, we note that $f-w$ is a weak solution to
\begin{equation*}
\partial_t(f-w)+v\cdot\nabla_x(f-w)-\divergence_v(A(t,x,v)\nabla_v(f-w))=\mu\quad\text{in }Q_1,
\end{equation*}
    where 
    \begin{equation*}
    A(t,x,v)=\int_{0}^1a(t,x,v,s\nabla_vf+(1-s)\nabla_vw)\,ds
    \end{equation*}
    satisfies \eqref{unifell}. Therefore, we now use Lemma \ref{lem.xreg} with $f$ replaced by $f-w$ to see that 
    \begin{align}\label{ineq33.comp2}
    [f-w]_{W^{\gamma,1}(Q_{1/2})}\leq c\left(\|f-w\|_{L^1(Q_1))}+\|\mu\|_{L^1(Q_1)}\right)
    \end{align}
    for some constant $c=c(n,\Lambda)$. We now use \eqref{ineq33.comp2} and Lemma \ref{lem.comp1} to see that
    \begin{align*}
        &\left(\int_{B^x_{1/2}}\|(f-w)(\cdot,x,\cdot)\|^q_{L^q(I_{1/2}\times B_{1/2})}\,dx\right)^{\frac1q}\\
        &\leq c\left(\left(\int_{Q_{1/2}}|\nabla_v(f-w)|\,dz\right)^{\frac{n}{n+1}}|\mu|(Q_2)^{\frac1{n+1}}+|\mu|(Q_2)\right)^{\beta}\left(|\mu|(Q_{2})\right)^{1-\beta}
    \end{align*}
    for some constant $c=c(n,\Lambda)$.
    In view of H\"older's inequality and \eqref{ineq.comp2} with $\xi=q$, we obtain that
    \begin{align*}
        \|\nabla_v(f-w)\|_{L^1(Q_{1/2})}&\leq \left(\int_{Q_{1/2}}\frac{|\nabla_v(f-w)|^2}{(1+|f-w|)^{q}}\,dz\right)^{\frac12}\left(\int_{Q_{1/2}}(1+|f-w|)^{q}\,dz\right)^{\frac1{2}}\\
        &\leq c+\left(\int_{Q_{1/2}}\abs{f-w}^{q}\,dz\right)^{\frac1{2}}.
    \end{align*}
     Therefore, we have 
    \begin{align*}
        &\|\nabla_v(f-w)\|_{L^1(Q_{1/2})}\\
        &\leq c+c\left(|\mu|(Q_2)\right)^{1-\frac\beta2}\left(\left(\int_{Q_{1/2}}|\nabla_v(f-w)|\,dz\right)^{\frac{n}{n+1}}|\mu|(Q_2)^{\frac1{n+1}}+|\mu|(Q_2)\right)^{\frac\beta2}\\
        &\leq c+c|\mu|(Q_2)^{1-\frac{\beta}2+\frac{\beta}{2(n+1)}}\|\nabla_v(f-w)\|_{L^1(Q_{1/2})}^{\frac{\beta n}{2(n+1)}}+c|\mu|(Q_2).
    \end{align*}
    By Young's inequality along with the fact that $|\mu|(Q_2)=1$, we have 
    \begin{align*}
        \|\nabla_v(f-w)\|_{L^1(Q_{1/2})}\leq \frac12 \|\nabla_v(f-w)\|_{L^1(Q_{1/2})}+c
    \end{align*}
    for some constant $c=c(n,\Lambda)$. This completes the proof.
\end{proof}

We next prove another comparison estimate at the gradient level that involves freezing the coefficient.
\begin{lemma}\label{lem.comp3}
    For $r>0$ and $z_0=(t_0,x_0,v_0) \in \mathbb{R}^{2n+1}$, let  $w\in \mathcal{W}(\mathcal{R}Q_r(z_0))$ be the weak solution to \eqref{eq.ivp} and let $g\in \mathcal{W}(\mathcal{R}Q_{r/4}(z_0))$ be the weak solution to
\begin{equation}\label{eq.comp3}
\left\{
\begin{alignedat}{3}
\partial_t g+v\cdot \nabla_xg-\divergence_v(a(t,x_0,v_0,\nabla_v g))&= 0&&\qquad \mbox{in  $\mathcal{R}Q_{r/4}(z_0)$}, \\
g&=w&&\qquad  \mbox{on $\partial_{\mathcal{K}}\mathcal{R}Q_{r/4}(z_0)$}.
\end{alignedat} \right.
\end{equation}
Assume that there is a non-decreasing function ${\pmb{\omega}}:\bbR^+\to\bbR^+$ with ${\pmb{\omega}}(0)=0$ such that
    \begin{align}\label{defn.omega.comp}
    \sup_{(t,x,v),(t,y,u)\in Q_{2r}(z_0)}|a(t_,x,v,\xi)-a(t,y,u,\xi)|\leq {\pmb{\omega}}(\max\{|x-y|^{\frac13},|v-w|\})|\xi|.
\end{align}
Then we have
\begin{equation*}
    \dashint_{\mathcal{R}Q_{r/4}(z_0)}|\nabla_v(g-w)|^2\,dz\leq c\,{\pmb{\omega}}(r/2)^2\left(\dashint_{\mathcal{R}Q_{r/2}(z_0)}|\nabla_vw|\,dz\right)^2
\end{equation*}
for some constant $c=c(n,\Lambda)$.

\end{lemma}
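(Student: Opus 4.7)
My plan is a standard energy estimate for the difference $g-w$, combined with the reverse H\"older bound from Lemma \ref{lem.self} applied to $w$.

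The boundary condition $g=w$ on $\partial_{\mathcal{K}}\mathcal{R}Q_{r/4}(z_0)$ together with $w\in\mathcal{W}(\mathcal{R}Q_r(z_0))$ imply that $g-w\in\mathcal{W}_0(\mathcal{R}Q_{r/4}(z_0))$, so $g-w$ is an admissible test function for the equation satisfied by the difference,
\begin{equation*}
\partial_t(g-w)+v\cdot\nabla_x(g-w)-\divergence_v\bigl[a(t,x_0,v_0,\nabla_v g)-a(t,x,v,\nabla_v w)\bigr]=0\quad\text{in }\mathcal{R}Q_{r/4}(z_0).
\end{equation*}
Running the truncation-and-boundary-flux argument from the proof of Lemma \ref{lem.comp1} (with $\Phi_j(g-w)\oldphi$ and $\partial_t\oldphi\le 0$) shows that $\int[(\partial_t+v\cdot\nabla_x)(g-w)](g-w)\,dz$ contributes non-negatively via the Kolmogorov boundary, leaving
\begin{equation*}
\int_{\mathcal{R}Q_{r/4}(z_0)}\bigl[a(t,x_0,v_0,\nabla_v g)-a(t,x,v,\nabla_v w)\bigr]\cdot\nabla_v(g-w)\,dz\le 0.
\end{equation*}

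Next, I would split the integrand as $\mathrm{I}+\mathrm{II}$ with $\mathrm{I}=a(t,x_0,v_0,\nabla_v g)-a(t,x_0,v_0,\nabla_v w)$ and $\mathrm{II}=a(t,x_0,v_0,\nabla_v w)-a(t,x,v,\nabla_v w)$. The mean-value representation $\mathrm{I}=\bigl(\int_0^1\nabla_\xi a(t,x_0,v_0,s\nabla_vg+(1-s)\nabla_vw)\,ds\bigr)\nabla_v(g-w)$ and the ellipticity in \eqref{ass.coef} give $\mathrm{I}\cdot\nabla_v(g-w)\ge\Lambda^{-1}|\nabla_v(g-w)|^2$, while \eqref{defn.omega.comp} provides $|\mathrm{II}|\le{\pmb{\omega}}(r/2)|\nabla_v w|$ on $\mathcal{R}Q_{r/4}(z_0)\subset Q_{r/2}(z_0)$ (after the Galilean normalization $v_0=0$ from Lemma \ref{lem.scale}, which ensures $(t,x_0,v_0)\in Q_{2r}(z_0)$). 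Cauchy--Schwarz and Young's inequality then absorb the cross term and yield
\begin{equation*}
\int_{\mathcal{R}Q_{r/4}(z_0)}|\nabla_v(g-w)|^2\,dz\;\le\;c\,{\pmb{\omega}}(r/2)^2\int_{\mathcal{R}Q_{r/4}(z_0)}|\nabla_v w|^2\,dz.
\end{equation*}

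To close the proof, I convert the $L^2$ average of $|\nabla_v w|$ into an $L^1$ average over the slightly larger $\mathcal{R}Q_{r/2}(z_0)$. Since $w$ solves the homogeneous equation throughout $\mathcal{R}Q_r(z_0)$, a covering argument based on Lemma \ref{lem.cov2}, applied to the reverse H\"older estimate of Lemma \ref{lem.self} on small kinetic cylinders $Q_{\rho/2}(z_k)\subset Q_\rho(z_k)\subset\mathcal{R}Q_{r/2}(z_0)$ with $\rho\sim r$, produces
\begin{equation*}
\biggl(\dashint_{\mathcal{R}Q_{r/4}(z_0)}|\nabla_v w|^2\,dz\biggr)^{1/2}\;\le\;c\dashint_{\mathcal{R}Q_{r/2}(z_0)}|\nabla_v w|\,dz.
\end{equation*}
Dividing the previous estimate by $|\mathcal{R}Q_{r/4}(z_0)|$, exploiting the measure-comparability $|\mathcal{R}Q_{r/4}(z_0)|\sim|Q_{r/4}(z_0)|$, and substituting the last display yields the claim. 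The main delicate point is the first step: the transport term $[(\partial_t+v\cdot\nabla_x)(g-w)](g-w)$ only produces a non-negative boundary contribution after the truncation-and-Kolmogorov-boundary limiting procedure of Lemma \ref{lem.comp1}, which is exactly why the proof is set up on the regularized cylinder $\mathcal{R}Q_{r/4}(z_0)$ rather than on $Q_{r/4}(z_0)$ itself.
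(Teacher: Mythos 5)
Your proof is correct and follows essentially the same route as the paper: test the difference equation with $g-w\in\mathcal{W}_0(\mathcal{R}Q_{r/4}(z_0))$, use the Kolmogorov boundary sign to discard the transport contribution, apply ellipticity and the modulus of continuity with Young's inequality, and finish with the reverse H\"older inequality of Lemma \ref{lem.self} to lower the exponent on $\nabla_v w$. One small remark: the truncation $\Phi_j$ and the decreasing time cutoff $\oldphi$ you invoke from Lemma \ref{lem.comp1} are not needed here — since there is no $\mu$ in the difference equation, one can test directly with $g-w$ and obtain $J_1\geq 0$ from the smooth-approximation/integration-by-parts computation alone; the paper does exactly this (and is, if anything, terser than you about the final covering step needed to pass from the $L^2$ average of $\nabla_v w$ on $\mathcal{R}Q_{r/4}(z_0)$ to the $L^1$ average on $\mathcal{R}Q_{r/2}(z_0)$, which you spell out more carefully).
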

\begin{proof}
We may assume $r=1$ and $z_0=0$. Since $g-w\in \mathcal{W}_0(\mathcal{R}Q_{1/4})$, we have 
\begin{align*}
    J_1+J_2&\coloneqq\int_{W_{1/4,0}(0,0)}\left<(\partial_t+v\cdot\nabla_x)(g-w),g-w\right>\,dt\,dx\\
    &\quad+\int_{\mathcal{R}Q_{1/4}}(a(t,0,0,\nabla_vg)-a(t,0,0,\nabla_vw))\cdot(\nabla_vg-\nabla_vw)\,dz\\
    &= \int_{\mathcal{R}Q_{1/4}}(a(t,x,v,\nabla_vw)-a(t,0,0,\nabla_vw))\cdot(\nabla_vg-\nabla_vw)\,dz\eqqcolon J_3,
\end{align*}
where $W_{1/4,0}(0,0)$ is defined in \eqref{defn.wrv0}.
Using standard approximation arguments as in \eqref{ineq0.comp1}, we observe that $J_1\geq0$.
In addition, we note from \eqref{ass.coef} that
\begin{align*}
    J_2\geq \frac1c\int_{\mathcal{R}Q_{1/4}}|\nabla_v(g-w)|^2\,dz
\end{align*}
for some constant $c=c(\Lambda)$. In light of Young's inequality, we next estimate $J_3$ as
\begin{align*}
    J_3\leq c\,{\pmb{\omega}}(1/2)^2\int_{\mathcal{R}Q_{1/4}}|\nabla_vw|^2+\frac{1}{2c}\int_{\mathcal{R}Q_{1/4}}|\nabla_v(g-w)|^2\,dz.
\end{align*}
Therefore, combining the estimates $J_1$, $J_2$, and $J_3$ leads to
\begin{equation}\label{ineq1.comp3}
    \dashint_{\mathcal{R}Q_{1/4}}|\nabla_v(g-w)|^2\,dz\leq c\,{\pmb{\omega}}(1/2)^2\dashint_{\mathcal{R}Q_{1/4}}|\nabla_vw|^2\,dz.
\end{equation}
Applying Lemma \ref{lem.self} with $r=1/2$ and $z_0=0$ into the right-hand side of \eqref{ineq1.comp3} yields the desired estimate.
\end{proof}
\section{Decay estimates and proofs of main results} \label{sec5}
In this section, we prove decay estimates of the gradient of the solution to \eqref{eq.main} and derive several pointwise gradient estimates in terms of the data.
\subsection{Nondivergence-type data}
In this subsection, we provide gradient estimates of the solution to \eqref{eq.main} with nondivergence-type data.
First, we establish the following excess decay estimate. 
\begin{lemma}[Excess decay - nondivergence data]\label{lem.decay}
    For $r>0$ and $z_0=(t_0,x_0,v_0) \in \mathbb{R}^{2n+1}$, let $\mu \in L^2(Q_{r}(z_0))$ and let $f\in H^1_{\mathrm{kin}}(Q_{r}(z_0))$ be a weak solution to 
    \begin{equation*}
        \partial_t f+v\cdot\nabla_x f-\divergence_v(a(t,x,v,\nabla_vf))=\mu\quad\text{in }Q_{r}(z_0).
    \end{equation*}
    Assume $a(\cdot)$ satisfies \eqref{defn.omega.comp} with $2r$ replaced by $r$.
    Then for any $\rho\in(0,1]$, we have 
    \begin{equation}\label{goal.decay}
    \begin{aligned}
        E(\nabla_vf;Q_{\rho r}(z_0))&\leq c\rho^{\alpha}E(\nabla_vf;Q_r(z_0))+c\rho^{-(4n+2)}{\pmb{\omega}}(r)\dashint_{Q_r(z_0)}|\nabla_vf|\,dz\\
        &\quad+c\rho^{-(4n+2)}\frac{|\mu|(Q_r(z_0))}{ r^{4n+1}}
    \end{aligned}
    \end{equation}
   for some constants $c=c(n,\Lambda)$ and $\alpha=\alpha(n,\Lambda)\in(0,1)$, where the constant $\alpha$ is determined in Theorem \ref{thm.hol}.
\end{lemma}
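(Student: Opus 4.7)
My approach is a standard three-level comparison argument adapted to the kinetic setting, combining the comparison estimates of Section \ref{sec4} with the frozen-coefficient regularity of Theorem \ref{thm.hol}. The proof will proceed in four short steps.

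\emph{Step 1: Scaling reduction.} I will first invoke Lemma \ref{lem.scale} to reduce to $r=1$, $z_0=0$. After rescaling $\mu$ becomes $\mu_{r,z_0}$ with $|\mu_{r,z_0}|(Q_1)=r^{-(4n+1)}|\mu|(Q_r(z_0))/M$, which accounts exactly for the $r^{-(4n+1)}$ weight on the data in \eqref{goal.decay}; simultaneously the oscillation modulus ${\pmb{\omega}}$ is replaced by ${\pmb{\omega}}_r(\cdot)={\pmb{\omega}}(r\,\cdot)$, with ${\pmb{\omega}}_r(1/2)\leq{\pmb{\omega}}(r)$, matching the second error term. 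Writing $E(F;Q):=\dashint_Q|F-(F)_Q|\,dz$, the range $\rho\in[\rho_0,1]$ for any fixed $\rho_0>0$ is handled trivially by the inclusion $Q_\rho\subset Q_1$ via $E(F;Q_\rho)\leq 2\rho^{-(4n+2)}E(F;Q_1)$, so we may restrict attention to small $\rho$.

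\emph{Step 2: Comparison maps.} Let $w\in\mathcal{W}(\mathcal{R}Q_{1/2})$ be the weak solution to \eqref{eq.ivp} produced by Lemma \ref{lem.exist}, matching $f$ on the Kolmogorov boundary and solving the full homogeneous equation with variable coefficients. Next let $g\in\mathcal{W}(\mathcal{R}Q_{1/8})$ be the weak solution of the frozen-coefficient problem \eqref{eq.comp3} with boundary data $w$. Then Lemma \ref{lem.comp2} yields
\begin{equation*}
\dashint_{Q_{1/4}}|\nabla_v(f-w)|\,dz\leq c\,|\mu|(Q_1),
\end{equation*}
while Lemma \ref{lem.comp3} combined with Jensen's inequality gives
\begin{equation*}
\dashint_{\mathcal{R}Q_{1/8}}|\nabla_v(w-g)|\,dz\leq c\,{\pmb{\omega}}_r(1/2)\dashint_{\mathcal{R}Q_{1/4}}|\nabla_v w|\,dz,
\end{equation*}
and the latter integral is controlled by $c\dashint_{Q_1}|\nabla_v f|\,dz+c\,|\mu|(Q_1)$ using $\mathcal{R}Q_{1/4}\subset Q_{1/2}\subset Q_1$ together with the first comparison estimate.

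\emph{Step 3: Frozen-coefficient decay.} Since $g$ solves a nonlinear kinetic equation with coefficients independent of $(x,v)$ on $\mathcal{R}Q_{1/8}\supset Q_{1/8}$, Theorem \ref{thm.hol} applied at the scale $r=1/16$ provides $\alpha=\alpha(n,\Lambda)\in(0,1)$ with
\begin{equation*}
E(\nabla_v g;Q_\rho)\leq c\rho^\alpha E(\nabla_v g;Q_{1/8})\qquad\text{for every }\rho\leq 1/16.
\end{equation*}

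\emph{Step 4: Assembly.} For $\rho\leq 1/16$, I combine the above via
\begin{equation*}
E(\nabla_v f;Q_\rho)\leq E(\nabla_v g;Q_\rho)+2\rho^{-(4n+2)}\dashint_{Q_{1/8}}|\nabla_v(f-g)|\,dz,
\end{equation*}
estimate the second term by triangle inequality using the two comparison bounds from Step 2, and bound $E(\nabla_v g;Q_{1/8})\leq c\,E(\nabla_v f;Q_1)+c({\pmb{\omega}}_r(1/2)\dashint_{Q_1}|\nabla_v f|\,dz+|\mu|(Q_1))$ by a further triangle argument. Since $\alpha\in(0,1)$ implies $\rho^\alpha\leq\rho^{-(4n+2)}$, all error contributions can be collected into the claimed $\rho^{-(4n+2)}$ tail, and undoing the scaling recovers \eqref{goal.decay}.

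The main obstacle I foresee is purely bookkeeping: tracking the various nested cylinders $Q_r\subset\mathcal{R}Q_r\subset Q_{2r}$ and verifying that the $L^1$-gradient comparison of Lemma \ref{lem.comp2}—which is the genuinely deep ingredient, requiring the higher spatial differentiability of Lemma \ref{lem.xreg} in its proof—transfers between regularized and standard kinetic cylinders without spoiling the sharp measure-theoretic factor $r^{-(4n+1)}$ on the data. Once the geometry is pinned down, the argument mirrors the parabolic proof in \cite{DuzMin11a}.
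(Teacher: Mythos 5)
Your proposal is correct and follows essentially the same route as the paper: after the same scaling reduction via Lemma \ref{lem.scale}, it uses the identical pair of comparison maps $w\in\mathcal{W}(\mathcal{R}Q_{1/2})$ and $g\in\mathcal{W}(\mathcal{R}Q_{1/8})$, the same comparison estimates (Lemma \ref{lem.comp2} for $f-w$, Lemma \ref{lem.comp3} plus Jensen for $w-g$), the same frozen-coefficient decay from Theorem \ref{thm.hol}, and the same triangle-inequality assembly with the $\rho^{-(4n+2)}$ loss from shrinking the domain of integration. The only superficial difference is that the paper splits directly as $E(\nabla_v f;Q_\rho)\leq E(\nabla_v g;Q_\rho)+E(\nabla_v(f-g);Q_\rho)$ rather than passing through the $\dashint_{Q_{1/8}}|\nabla_v(f-g)|$ term, which is an equivalent bookkeeping choice.
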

\begin{proof}
By Lemma \ref{lem.scale}, we may assume $Q_r(z_0)=Q_1$.
It suffices to consider the case that $\rho\in(0,2^{-10}]$, as \eqref{goal.decay} directly holds by taking a suitable constant $c=c(n,\Lambda)$, when $\rho\geq 2^{-10}$. Let $w\in \mathcal{W}(\mathcal{R}Q_{1/2})$ and $g\in \mathcal{W}(\mathcal{R}Q_{1/8})$ be weak solutions to \eqref{eq.ivp} and \eqref{eq.comp3} with $r=1/2$ and $z_0=0$, respectively.
Then we have
\begin{align}\label{ineq0.decay}
    E(\nabla_v f;Q_{\rho })\leq E(\nabla_v g;Q_{\rho })+E(\nabla_v(f-g);Q_{\rho })\eqqcolon J_1+J_2.
\end{align}
In light of Theorem \ref{thm.hol}, we obtain
\begin{equation}\label{ineq10.decay}
\begin{aligned}
    J_1&\leq c\rho^\alpha E(\nabla_v g;Q_{1/8})\\
    &\leq c\rho^\alpha \left[E(\nabla_v (g-w);Q_{1/8})+E(\nabla_v (w-f);Q_{1/8})+E(\nabla_v f;Q_{1/8})\right]\\
    &\leq c\rho^{\alpha}\left[{\pmb{\omega}}(1/4)\|\nabla_vw\|_{L^1(\mathcal{R}Q_{1/4})}+|\mu|(Q_{1/4})+E(\nabla_v f;Q_{1/4})\right],
\end{aligned}
\end{equation}
where we have used Lemmas \ref{lem.comp3} and \ref{lem.comp2} for the last inequality. We further estimate $J_1$ as 
\begin{equation}\label{ineq1.decay}
\begin{aligned}
    J_1&\leq c\rho^{\alpha}\left[{\pmb{\omega}}(1)\left(\|\nabla_v(w-f)\|_{L^1(\mathcal{R}Q_{1/4})}+\|\nabla_v f\|_{L^1(\mathcal{R}Q_{1/4})}\right)+|\mu|(Q_{1})\right]\\
    &\quad+c\rho^\alpha E(\nabla_v f;Q_{1/4})\\
    &\leq c\left[\rho^\alpha E(\nabla_v f;Q_1)+{\pmb{\omega}}(1)\|\nabla_v f\|_{L^1(Q_1)}+|\mu|(Q_1)\right]
\end{aligned}
\end{equation}
for some constant $c=c(n,\Lambda)$.
Similarly, we estimate $J_2$ as
\begin{equation}\label{ineq2.decay}
\begin{aligned}
    J_2&\leq E(\nabla_v(f-w);Q_\rho)+E(\nabla_v(g-w);Q_\rho)\\
    &\leq c\rho^{-(4n+2)}\left[|\mu|(Q_1)+{\pmb{\omega}}(1)\|\nabla_vw\|_{L^1(\mathcal{R}Q_{1/4})}\right]\\
    &\leq c\rho^{-(4n+2)}\left[|\mu|(Q_1)+{\pmb{\omega}}(1)\|\nabla_vf\|_{L^1(Q_{1})}\right],
\end{aligned}
\end{equation}
where $c=c(n,\Lambda)$.
Plugging the estimates $J_1$ and $J_2$ into \eqref{ineq0.decay} yields the desired estimate.
\end{proof}

Using Lemma \ref{lem.decay}, we now prove gradient estimates in terms of kinetic Riesz potentials.
\begin{lemma}\label{lem.point}
    For $R>0$ and $z_0=(t_0,x_0,v_0) \in \mathbb{R}^{2n+1}$, let $\mu \in L^2(Q_{R}(z_0))$ and let $f\in H^1_{\mathrm{kin}}(Q_{R}(z_0))$ be a weak solution to 
    \begin{equation*}
        \partial_t f+v\cdot\nabla_x f-\divergence_v(a(t,x,v,\nabla_vf))=\mu\quad\text{in }Q_{R}(z_0),
    \end{equation*}
    where $a(\cdot)$ satisfies \eqref{defn.omega.comp} with $2r$ replaced by $R$ and
    \begin{equation*}
        \int_{0}^{1}\frac{{\pmb{\omega}}(\rho)}{\rho}\,d\rho<\infty.
    \end{equation*}
    Then there is a positive integer $m=m(n,\Lambda,{\pmb{\omega}})$ such that for any $i\geq0$,
    \begin{equation}\label{est.point}
        \dashint_{Q_{2^{-im}R}(z_0)}|\nabla_v f|\,dz\leq c\dashint_{Q_R(z_0)}|\nabla_v f|\,dz+cI^{|\mu|}_{1}(z_0,R)
    \end{equation}
    holds, where $c=c(n,\Lambda,{\pmb{\omega}})$.
\end{lemma}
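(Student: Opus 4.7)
The plan is to iterate the excess decay from Lemma \ref{lem.decay} along the dyadic chain of kinetic cylinders with radii $r_i := 2^{-im}R$, where the integer $m = m(n,\Lambda,{\pmb{\omega}})$ will be fixed sufficiently large. Writing $E_i := E(\nabla_v f; Q_{r_i}(z_0))$, $A_i := \dashint_{Q_{r_i}(z_0)}|\nabla_v f|\,dz$, and $k_i := |\mu|(Q_{r_i}(z_0))/r_i^{4n+1}$, Lemma \ref{lem.decay} applied with $\rho = 2^{-m}$ and $r = r_i$ yields the recursion
\begin{equation*}
E_{i+1} \leq c_1 2^{-\alpha m} E_i + C_m\, {\pmb{\omega}}(r_i) A_i + C_m k_i, \qquad C_m := c_1\, 2^{m(4n+2)}.
\end{equation*}
Fixing $m$ so that $c_1 2^{-\alpha m} \leq 1/2$ makes the first term contractive. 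A standard telescoping on the averages $(\nabla_v f)_{Q_{r_i}(z_0)}$, using $|(\nabla_v f)_{Q_{r_i}} - (\nabla_v f)_{Q_{r_{i+1}}}| \leq C_m E_i$, gives
\begin{equation*}
A_i \leq A_0 + E_i + C_m \sum_{k=0}^{i-1} E_k,
\end{equation*}
so that proving \eqref{est.point} reduces to controlling $\sum_k E_k$.

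Iterating the contractive recursion and summing the ensuing geometric series produces
\begin{equation*}
\sum_{j \geq 0} E_j \leq 2E_0 + 2C_m\big( S_\omega \cdot \sup_{j} A_j + S_\mu\big),
\end{equation*}
where $S_\omega := \sum_k {\pmb{\omega}}(r_k)$ and $S_\mu := \sum_k k_k$. A direct dyadic comparison based on the monotonicity of ${\pmb{\omega}}$ and of $r \mapsto |\mu|(Q_r(z_0))$ yields $S_\omega \leq {\pmb{\omega}}(R) + c\,m^{-1} \int_0^R {\pmb{\omega}}(r)/r\,dr$, which is finite by the Dini hypothesis, as well as $S_\mu \leq c(m)\,I_1^{|\mu|}(z_0,R)$.

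The main obstacle is closing the bootstrap: substituting the second display of the first paragraph into the first display of the second paragraph produces a term $2C_m^2 S_\omega \cdot \sup_j A_j$ on the right, and absorbing it into the left requires $S_\omega \leq (4C_m^2)^{-1}$, which generally fails at the original scale since ${\pmb{\omega}}(R)$ may be of order one. To circumvent this I will use a two-scale argument: since the Dini integral has vanishing tail as the upper limit decreases, there is an integer $N = N(n,\Lambda,{\pmb{\omega}})$ such that $\sum_{k \geq N}{\pmb{\omega}}(r_k) \leq (8C_m^2)^{-1}$. For indices $i \leq N$, the trivial bound $A_i \leq (|Q_R|/|Q_{r_i}|)\,A_0 \leq C_m^N A_0$ immediately yields \eqref{est.point} with a constant $c(N,m) = c(n,\Lambda,{\pmb{\omega}})$. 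For indices $i > N$ I restart the iteration from the cylinder $Q_{r_N}(z_0)$, where the smallness of the residual Dini sum allows the absorption to go through and delivers
\begin{equation*}
\sup_{i > N} A_i \leq c\big(A_N + I_1^{|\mu|}(z_0, r_N)\big) \leq c\big(A_0 + I_1^{|\mu|}(z_0, R)\big),
\end{equation*}
which combined with the case $i \leq N$ completes the proof.
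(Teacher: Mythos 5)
Your proposal is correct and follows essentially the same strategy as the paper's proof: iterate Lemma \ref{lem.decay} with $\rho=2^{-m}$ to get a contractive recursion for the excess, telescope the averages back to the starting one, use the Dini condition to absorb the $\pmb{\omega}$-weighted sum of excesses, and invoke a two-scale device (your restart from $Q_{r_N}$, the paper's $r=\sigma R$ with $\int_0^{\sigma R}\pmb{\omega}(\rho)/\rho\,d\rho$ small) to make the absorption constant small; the residual large-scale indices are handled by the trivial volume-ratio bound, just as in the paper's final case analysis.
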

\begin{proof}
Let us fix a positive integer $m\geq1$ which will be determined later. Then we have that for any $r\in(0,R]$,
 \begin{align*}
        E(\nabla_vf;Q_{2^{-(k+1)m}r}(z_0))&\leq c2^{-m\alpha}E(\nabla_vf;Q_{2^{-km}r}(z_0))\\
        &\quad+c2^{-m(4n+2)}{\pmb{\omega}}\left(2^{-km}r\right)\dashint_{Q_{2^{-km}r}(z_0)}|\nabla_vf|\,dz\\
        &\quad+c2^{-m(4n+2)}\frac{|\mu|(Q_{2^{-km}r}(z_0))}{ (2^{-km}r)^{4n+1}}
    \end{align*}
    holds, where $c=c(n,\Lambda)$.
We now choose $m=m(n,\Lambda)\geq1$ so that 
\begin{equation}\label{choim.point}
\begin{aligned}
        E(\nabla_vf;Q_{2^{-(k+1)m}r}(z_0))&\leq {E(\nabla_vf;Q_{2^{-km}r}(z_0))}/4\\
        &\quad+c\,{\pmb{\omega}}\left(2^{-km}r\right)\dashint_{Q_{2^{-km}r}(z_0)}|\nabla_vf|\,dz\\
        &\quad+c\frac{|\mu|(Q_{2^{-km}r}(z_0))}{ (2^{-km}r)^{4n+1}},
    \end{aligned}
    \end{equation}
    where $c=c(n,\Lambda)$. By summing $k=1,2,\ldots i$, we have 
    \begin{equation}\label{ineq00.point}
    \begin{aligned}
        \sum_{k=1}^{i}E(\nabla_vf;Q_{2^{-(k+1)m}r}(z_0))&\leq \frac{1}{4}\sum_{k=1}^{i}{E(\nabla_vf;Q_{2^{-km}r}(z_0))}\\
        &\quad+c\sum_{k=1}^{i}{\pmb{\omega}}\left(2^{-km}r\right)\dashint_{Q_{2^{-km}r}(z_0)}|\nabla_vf|\,dz\\
        &\quad +c\sum_{k=1}^{i}\frac{|\mu|(Q_{2^{-km}r}(z_0))}{ (2^{-km}r)^{4n+1}}\eqqcolon\sum_{i=1}^3J_i.
    \end{aligned}
    \end{equation}

    First, we note
    \begin{align*}
        J_2&\leq c\sum_{k=0}^{i}{\pmb{\omega}}\left(2^{-km}r\right)\left[\sum_{l=1}^{k}E(\nabla_v f;Q_{2^{-lm}r}(z_0))+|(\nabla_vf)_{Q_{r}(z_0)}|\right]\\
        &\leq c\sum_{k=1}^{i}{\pmb{\omega}}\left(2^{-km}r\right)\sum_{l=0}^{k}E(\nabla_v f;Q_{2^{-lm}r}(z_0))\\
        &\quad+c\sum_{k=1}^{i}{\pmb{\omega}}\left(2^{-km}r\right)|(\nabla_vf)_{Q_{r}(z_0)}|\eqqcolon J_{2,1}+J_{2,2}.
    \end{align*}
    By Fubini's theorem, we have 
    \begin{align*}
        J_{2,1}&\leq c\sum_{l=1}^{i}\sum_{k=l}^{i}{\pmb{\omega}}\left(2^{-km}r\right)E(\nabla_v f;Q_{2^{-lm}r}(z_0))+ c\sum_{k=1}^{i}{\pmb{\omega}}\left(2^{-km}r\right)E(\nabla_v f;Q_{r}(z_0))\\
        &\leq c\int_{0}^{r}\frac{{\pmb{\omega}}(\rho)}{\rho}\,d\rho\sum_{l=0}^{i}E(\nabla_v f;Q_{2^{-lm}r}(z_0))
    \end{align*}
    for some constant $c=c(n,\Lambda)$, where we have also used that
    \begin{align*}
       \sum_{k=l}^{i}{\pmb{\omega}}\left(2^{-km}r\right)\leq c\sum_{k=l}^{i}\int^{2^{-(k-1)m}r}_{2^{-km}r}\frac{{\pmb{\omega}}(\rho)}{\rho}\,d\rho \leq c\int_{0}^{r}\frac{{\pmb{\omega}}(\rho)}{\rho}\,d\rho.
    \end{align*}
    Similarly, we get
    \begin{align*}
        J_{2,2}\leq c\left(\int_{0}^{r}\frac{{\pmb{\omega}}(\rho)}{\rho}\,d\rho\right)|(\nabla_vf)_{Q_{r}(z_0)}|,
    \end{align*}
    which implies 
    \begin{align}\label{ineq001.point}
        J_2\leq c\int_{0}^{r}\frac{{\pmb{\omega}}(\rho)}{\rho}\,d\rho\left[\sum_{l=0}^{i}E(\nabla_v f;Q_{2^{-lm}r}(z_0))+|(\nabla_vf)_{Q_{r}(z_0)}|\right]
    \end{align}
    for some constant $c=c(n,\Lambda)$.
    Thus we now choose $\sigma=\sigma(n,\Lambda,{\pmb{\omega}})\in(0,1]$  sufficiently small so that 
    \begin{equation}\label{choir.point}
        \int_{0}^{r}\frac{\omega(\rho)}{\rho}\,d\rho\leq \frac{1}{4c},
    \end{equation}
    where $r\coloneqq\sigma R$. Thus, we have
    \begin{align*}
        J_2\leq \frac{1}{4}\sum_{l=0}^{i}E(\nabla_v f;Q_{2^{-lm}r}(z_0))+c|(\nabla_vf)_{Q_{r}(z_0)}|.
    \end{align*}
    On the other hand, we observe that
    \begin{align*}
        J_3\leq cI^{|\mu|}_1(z_0,r).
    \end{align*}

    Combining all the above estimates for $J_1,J_2$ and $J_3$ yields 
    \begin{equation*}
    \begin{aligned}
        \sum_{k=1}^{i}E(\nabla_vf;Q_{2^{-km}r}(z_0))&\leq \frac{1}{2}\sum_{k=1}^{i}{E(\nabla_vf;Q_{2^{-km}r}(z_0))}+c|(\nabla_vf)_{Q_{r}(z_0)}|+cI^{|\mu|}_1(z_0,r),
    \end{aligned}
    \end{equation*}
    where $c=c(n,\Lambda,{\pmb{\omega}})$. Therefore, we have 
    \begin{align}\label{ineq1.point}
        \sum_{k=1}^iE(\nabla_vf;Q_{2^{-km}r}(z_0))\leq c\dashint_{Q_r(z_0)}|\nabla_v f|\,dz+cI^{|\mu|}_1(z_0,r),
    \end{align}
    where $c=c(n,\Lambda,{\pmb{\omega}})$. We are now ready to prove \eqref{est.point}. To do this, we observe that for any positive integer $i\geq1$, either
    \begin{equation}\label{fir.point}
        r=\sigma R< 2^{-im}R
    \end{equation}
    or
    \begin{equation}\label{sec.point}
        2^{-(i_0+1)m}r<2^{-im }R\leq 2^{-i_0m}r
    \end{equation}
    holds for some nonnegative integer $i_0\geq0$.
    If \eqref{fir.point} holds, then 
    \begin{equation*}
        \dashint_{Q_{2^{-im}R}(z_0)}|\nabla_v f|\,dz\leq  c\dashint_{Q_{R}(z_0)}|\nabla_v f|\,dz,
    \end{equation*}
    where $c=c(n,\Lambda,{\pmb{\omega}})$, as the constant $r$ determined in \eqref{choir.point} depends only on $n,\Lambda$ and ${\pmb{\omega}}$. If \eqref{sec.point} holds, then
    \begin{align*}
        \dashint_{Q_{2^{-im}R}(z_0)}|\nabla_v f|\,dz&\leq c\dashint_{Q_{2^{-i_0m}r}(z_0)}|\nabla_v f|\,dz\\
        &\leq c\left(\sum_{k=1}^{i_0}E(\nabla_vf;Q_{2^{-km}r}(z_0))+|(\nabla_vf)_{Q_r(z_0)}|\right)\\
        &\leq c\left(\dashint_{Q_r(z_0)}|\nabla_v f|\,dz+I^{|\mu|}_1(z_0,r)\right)\\
        &\leq c\left(\dashint_{Q_R(z_0)}|\nabla_v f|\,dz+I^{|\mu|}_1(z_0,R)\right)
    \end{align*}
     for some constant $c=c(n,\Lambda,{\pmb{\omega}})$. This completes the proof.
\end{proof}

By using Lemma \ref{lem.point}, we now improve the estimate \eqref{est.point} by imposing a H\"older assumption on the nonlinearity $a$.
\begin{lemma}\label{lem.pointh}
For $R>0$ and $z_0=(t_0,x_0,v_0) \in \mathbb{R}^{2n+1}$, let $\mu \in L^2(Q_{R}(z_0))$ and let $f\in H^1_{\mathrm{kin}}(Q_R(z_0))$ be a weak solution to 
    \begin{equation*}
        \partial_t f+v\cdot\nabla_x f-\divergence_v(a(t,x,v,\nabla_vf))=\mu\quad\text{in }Q_R(z_0),
    \end{equation*}
    where $a$ satisfies \eqref{defn.omega.comp} with $2r$ replaced by $R$ and
    \begin{equation}\label{ass.pointh}
        {\pmb{\omega}}(\rho)\leq \rho^\beta
    \end{equation}
    for some $\beta\in(0,\alpha)$, where the constant $\alpha$ is determined in Theorem \ref{thm.hol}. Then we have
    \begin{equation*}
        M^{\#}_{R,\beta}(\nabla_v f)(z_0)\leq c\left(\dashint_{Q_R(z_0)}|\nabla_v f|\,dz+cI^{|\mu|}_1(z_0,R)+M_{1-\beta,R}(\mu)(z_0)\right)
    \end{equation*}
    holds, where $c=c(n,\Lambda,\beta)$.
\end{lemma}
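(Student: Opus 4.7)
My plan is to iterate the excess decay of Lemma \ref{lem.decay} under the H\"older assumption ${\pmb\omega}(\tau) \leq \tau^\beta$, feeding in the uniform $L^1$ bound from Lemma \ref{lem.point}. By Lemma \ref{lem.scale} I would first reduce to $R = 1$. Plugging ${\pmb\omega}(\tau) = \tau^\beta$ into Lemma \ref{lem.decay} yields, for every $r, \rho \in (0,1]$,
\begin{equation*}
E(\nabla_v f; Q_{\rho r}(z_0)) \leq c\rho^\alpha E(\nabla_v f; Q_r(z_0)) + c\rho^{-(4n+2)} r^\beta \dashint_{Q_r(z_0)} |\nabla_v f|\,dz + c\rho^{-(4n+2)} r^\beta M_{1-\beta, 1}(\mu)(z_0),
\end{equation*}
where the Riesz-type term on the right is converted to a fractional maximal function via the elementary identity $|\mu|(Q_r)/r^{4n+1} \leq c\, r^\beta\, (r^{1-\beta} \dashint_{Q_r} |\mu|\,dz)$ that uses $|Q_r| \simeq r^{4n+2}$.

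Since any H\"older continuous ${\pmb\omega}$ is automatically Dini continuous, Lemma \ref{lem.point} applies along the dyadic scales $2^{-im}$. Combining it with a comparison of an arbitrary $Q_\tau$ with the nearest enclosing $Q_{2^{-im}}$ (losing only the fixed factor $2^{m(4n+2)}$) would give the uniform bound
\begin{equation*}
\dashint_{Q_\tau(z_0)} |\nabla_v f|\,dz \leq c H, \qquad H := \dashint_{Q_1(z_0)} |\nabla_v f|\,dz + I^{|\mu|}_1(z_0, 1),
\end{equation*}
valid for every $\tau \in (0, 1]$. Feeding this back into the previous display upgrades it to the self-improving inequality
\begin{equation*}
E(\nabla_v f; Q_{\rho r}(z_0)) \leq c\rho^\alpha E(\nabla_v f; Q_r(z_0)) + c\rho^{-(4n+2)} r^\beta (H + M), \qquad M := M_{1-\beta, 1}(\mu)(z_0).
\end{equation*}

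The remaining step is a Campanato-type iteration that exploits the gap $\beta < \alpha$. I would fix $\rho = \rho(n, \Lambda, \beta) \in (0, 1/10)$ so small that $c\rho^{\alpha - \beta} \leq 1/2$; dividing by $(\rho r)^\beta$ then yields
\begin{equation*}
(\rho r)^{-\beta} E(\nabla_v f; Q_{\rho r}(z_0)) \leq \tfrac{1}{2}\, r^{-\beta} E(\nabla_v f; Q_r(z_0)) + c(H+M).
\end{equation*}
Along $r_k := \rho^k$, writing $a_k := r_k^{-\beta} E(\nabla_v f; Q_{r_k}(z_0))$, this reads $a_{k+1} \leq \tfrac{1}{2} a_k + c(H+M)$, which iterates to $a_k \leq c(H+M)$ after crudely bounding $a_0 \leq 2H$. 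For an arbitrary $r \in (0,1]$, locating $k$ with $\rho^{k+1} \leq r \leq \rho^k$ and using the inclusion bound $E(\nabla_v f; Q_r) \leq 2 \rho^{-(4n+2)} E(\nabla_v f; Q_{r_k})$ extends the decay to every scale, after which undoing the initial rescaling completes the argument. The main obstacle I anticipate is essentially bookkeeping: reconciling the dyadic scales used in Lemma \ref{lem.point} with the geometric scales $\rho^k$ needed for the Campanato iteration, which the comparison argument above should handle cleanly.
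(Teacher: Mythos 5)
Your proposal is correct and follows essentially the same route as the paper: substitute the H\"older modulus ${\pmb\omega}(\tau)\le\tau^\beta$ into the excess decay estimate of Lemma \ref{lem.decay}, feed in the uniform $L^1$ bound from Lemma \ref{lem.point} to freeze $\dashint_{Q_r}|\nabla_v f|\,dz$ by the potential quantity, convert $|\mu|(Q_r)/r^{4n+1}\simeq r^\beta\cdot r^{1-\beta}\dashint_{Q_r}|\mu|\,dz$ into $M_{1-\beta}(\mu)$, divide by $(\rho r)^\beta$, and exploit the gap $\beta<\alpha$ to fix $\rho$ small and close the estimate. The only cosmetic difference is that you close by an explicit Campanato iteration along $\rho^k$ followed by a comparison to intermediate scales, whereas the paper takes the supremum over $r$ directly and absorbs the $\tfrac14 M^{\#}_{\beta,R}(\nabla_v f)(z_0)$ term; both are standard and equivalent, and your version is arguably more explicit about the scale bookkeeping and the finiteness needed for absorption.
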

\begin{proof}
    By \eqref{ass.pointh}, we get \eqref{est.point}. In addition, by Lemma \ref{lem.decay}, we have 
     \begin{equation}\label{ineq0.pointh}
     \begin{aligned}
        E(\nabla_vf;Q_{\rho r}(z_0))&\leq c\rho^{\alpha}E(\nabla_vf;Q_r(z_0))+cr^\beta \rho^{-(4n+2)} \dashint_{Q_r(z_0)}|\nabla_vf|\,dz\\
        &\quad+c\rho^{-(4n+2)}\frac{|\mu|(Q_r(z_0))}{ r^{4n+1}},
    \end{aligned}
    \end{equation}
    for some constant $c=c(n,\Lambda)$, where $r\leq R/2$ and $\rho\in(0,1/2]$. By \eqref{est.point}, we deduce
    \begin{align*}
        \dashint_{Q_r(z_0)}|\nabla_vf|\,dz\leq c\left(\dashint_{Q_R(z_0)}|\nabla_v f|\,dz+I^{|\mu|}_1(z_0,R)\right),
    \end{align*}
    where $c=c(n,\Lambda)$.

    We now plug this into the second term in the right-hand side of \eqref{ineq0.pointh} to get that
    \begin{equation}\label{ineq1.pointh}
     \begin{aligned}
        E(\nabla_vf;Q_{\rho r}(z_0))&\leq c\rho^{\alpha}E(\nabla_vf;Q_r(z_0))\\
        &\quad+cr^\beta \rho^{-(4n+2)} \left(\dashint_{Q_R(z_0)}|\nabla_v f|\,dz+I^{|\mu|}_1(z_0,R)\right)\\
        &\quad+c\rho^{-(4n+2)}\frac{|\mu|(Q_r(z_0))}{ r^{4n+1}},
    \end{aligned}
    \end{equation}
    where $c=c(n,\Lambda)$. We now divide both sides given in \eqref{ineq1.pointh} by $(\rho r)^{\beta}$ and choose $\rho=\rho(n,\Lambda,\beta)$ sufficiently small such that $\rho^{\alpha-\beta}\leq \frac{1}{4}$.
    Therefore, we obtain
    \begin{align*}
        \frac{E(\nabla_vf;Q_{\rho r}(z_0))}{(\rho r)^\beta}&\leq \frac14\frac{E(\nabla_vf;Q_r(z_0))}{r^\beta}+c\left(\dashint_{Q_R(z_0)}|\nabla_v f|\,dz+I^{|\mu|}_1(z_0,R)\right)\\
        &\quad+cr^{1-\beta}\dashint_{Q_r(z_0)}|\mu|\,dz
    \end{align*}
    for some constant $c=c(n,\Lambda,\beta)$. 
    By considering \eqref{defn.smaxi} and \eqref{defn.maxi}, we derive 
    \begin{align*}
        M^{\#}_{R,\beta}(\nabla_vf)(z_0)&\leq \frac14M^{\#}_{R,\beta}(\nabla_vf)(z_0)+c\left(\dashint_{Q_R(z_0)}|\nabla_v f|\,dz+I^{|\mu|}_1(z_0,R)\right)\\
        &\quad+cM_{1-\beta,R}(\mu)(z_0).
    \end{align*}
    This completes the proof.
\end{proof} 
We now prove Theorem \ref{thm.dini}, Corollary \ref{cor.dini} and Corollary \ref{cor.vmo}.
\begin{proof}[Proof of Theorem \ref{thm.dini}, Corollary \ref{cor.dini} and Corollary \ref{cor.vmo}.]
Using Lemma \ref{lem.point} and a generalized Lebesgue differentiation theorem given by \cite[Theorem 10.3]{ImbSil20} with $s=1$, we obtain the pointwise gradient potential estimates given in Theorem \ref{thm.dini}. 

We are now going to prove Corollary \ref{cor.vmo}. Suppose that \eqref{ass.cor.vmo} holds. In light of the first assumption in \eqref{ass.cor.vmo} and the estimate given in Theorem \ref{thm.dini}, we have $\nabla_v f\in L^\infty(Q_{3R/4}(z_0))$. We now prove
\begin{equation}\label{vmo.goal1}
\lim_{r\to0}\sup_{z_1\in Q_{3R/16}(z_0)}E(\nabla_v f;Q_r(z_1))=0.
\end{equation}
Let us fix $\epsilon>0$.
We note from Lemma \ref{lem.decay} that for any $z_1\in Q_{3R/16}(z_0)$,   $r\leq R/1000$ and $\rho\leq 1$, we have 
\begin{align*}
&E(\nabla_vf;Q_{\rho r}(z_1))\\
&\quad\leq c\rho^{\alpha}E(\nabla_vf;Q_r(z_1))+c\,\rho^{-(4n+2)}{\pmb{\omega}}(r)\dashint_{Q_r(z_1)}|\nabla_vf|\,dz+c\rho^{-(4n+2)}\frac{|\mu|(Q_r(z_1))}{ r^{4n+1}}.
    \end{align*}
    By Lemma \ref{lem.cov1}, we have $Q_{r}(z_1)\subset Q_{3R/4}(z_0)$. Thus, we further estimate
    \begin{align*}
        E(\nabla_vf;Q_{\rho r}(z_1))&\leq c\left(\left[\rho^{\alpha}+\rho^{-(4n+2)}{\pmb{\omega}}(r)\right]\|\nabla_v f\|_{L^\infty(Q_{3R/4}(z_0))}+\rho^{-(4n+2)}\frac{|\mu|(Q_r(z_0))}{ r^{4n+1}}\right).
    \end{align*}
    Next, we choose $\rho$ sufficiently small so that 
    \begin{align*}
        c\rho^\alpha\|\nabla_v f\|_{L^\infty(Q_{3R/4}(z_0))}\leq \epsilon/2.
    \end{align*}
    By \eqref{cond.dini} and the second assumption given in \eqref{ass.cor.vmo}, we next choose $r$ sufficiently small so that 
    \begin{align*}
        c\left(\rho^{-(4n+2)}{\pmb{\omega}}(r)\|\nabla_v f\|_{L^\infty(Q_{3R/4}(z_0))}+\rho^{-(4n+2)}\frac{|\mu|(Q_r(z_0))}{ r^{4n+1}}\right)\leq \epsilon/2.
    \end{align*}
    This implies that for any $\epsilon>0$, there are constants $r\in(0,R/1000]$ and $\rho\in(0,1]$ such that
    \begin{align*}
        \sup_{z_1\in Q_{3R/16}(z_0)}E(\nabla_vf;Q_{r\rho}(z_1))\leq \epsilon,
    \end{align*} which implies \eqref{vmo.goal1} in view of standard covering arguments. This completes the proof of Corollary \ref{cor.vmo}. 
    
    We are now ready to prove Corollary \ref{cor.dini}. Combining the estimates given in \eqref{ineq00.point} and \eqref{ineq001.point},
    we deduce
    \begin{align*}
        J\coloneqq|\nabla_v f(z_1)-(\nabla_v f)_{Q_r(z_1)}|&\leq \sum_{i=0}^{\infty}E(\nabla_v f;Q_{2^{-km}r}(z_1))\\
        &\leq cE(\nabla_vf;Q_{r}(z_1))\\
        &\quad+c\left(\int_{0}^r\frac{{\pmb{\omega}}(\rho)}{\rho}\,d\rho\right)\|\nabla_vf\|_{L^\infty(Q_{3R/4}(z_0))}\\
        &\quad+cI_1^{|\mu|}(z_1,r),
    \end{align*}
    where $c=c(n,\Lambda)$ and the positive integer $m=m(n,\Lambda)$ is determined in Lemma \ref{lem.point}. We note that \eqref{ass.cor.dini} implies \eqref{ass.cor.vmo}.
    Therefore, we now use \eqref{vmo.goal1} to see that 
    \begin{align*}
         \lim_{r\to0}\sup_{z_1\in Q_{3R/16}(z_0)}E(\nabla_vf;Q_{r}(z_1))=0.
    \end{align*}
    This together with \eqref{cond.dini} and \eqref{ass.cor.dini} implies that $J\to0$, as $r\to0$. Thus, $\nabla_vf$ is continuous. Furthermore, when $\mu\in L^{4n+2,1}(Q_{2R}(z_0))$, then $I^{|\mu|}_1(z,R/4)\in L^\infty(Q_{R/4}(z_0))$ which follows from \eqref{ineq2.riesz}. Therefore, there holds \eqref{ass.cor.dini} with $R$ replaced by $R/4$ and the continuity of $\nabla_vf$ in $W \times V$ follows. This completes the proof.
\end{proof}
We next provide the proof of Corollary \ref{cor.cal}.
\begin{proof}[Proof of Corollary \ref{cor.cal}.]
    Let us fix $q\in[2,4n+2)$. Then by Theorem \ref{thm.dini}, we have 
    \begin{align*}
        |\nabla_v f(z_1)|\leq c\left(\dashint_{Q_{R/1000}(z_1)}|\nabla_v f|\,dz+I^{|\mu|}_{1}(z_1,R/1000)\right),
    \end{align*}
    where $c=c(n,\Lambda)$ and $z_1\in Q_{R}(z_0)$. Therefore, using Lemma \ref{lem.riesz}, we have 
    \begin{align*}
        \left(\dashint_{Q_{R/1000}(z_1)}|\nabla_v f|^{\frac{q(4n+2)}{4n+2-q}}\,dz\right)^{\frac{4n+2-q}{q(4n+2)}}&\leq c\dashint_{Q_{R/10}(z_1)}|\nabla_vf|\,dz\\
        &\quad+c\,R\left(\dashint_{Q_{R/10}(z_1)}|\mu|^q\,dz\right)^{\frac1q},
    \end{align*}
    where $c=c(n,q)$.
    Standard covering arguments now yield \eqref{ineq1.cal}, completing the proof.
\end{proof}
We end this section with providing the proof of Theorem \ref{thm.holn}.
\begin{proof}[Proof of Theorem \ref{thm.holn}.]
By Lemma \ref{lem.pointh}, we get \eqref{ineq0.holn}. We now use Lemma \ref{lem.fmax} together with the fact that $\mu\in L^{\frac{4n+2}{1-\beta},\infty}(Q_{2R}(z_0))$ to see that $\|M_{1-\beta,R/2}(\mu)\|_{L^\infty(Q_{R/2}(z_0))}<\infty$. Since $L^{\frac{4n+2}{1-\beta},\infty}(Q_{2R}(z_0))\subset L^{{4n+2},1}(Q_{2R}(z_0))$, we have  $\|I^{|\mu|}_1(z_1,R/2)\|_{L^\infty(Q_{R/2}(z_0))}<\infty$, which follows from \eqref{ineq2.riesz}. Therefore, we have $\|M^{\#}_{\beta,R/2}(\nabla_vf)\|_{L^\infty(Q_{R/2}(z_0))}<\infty$, so that in light of \eqref{ineq2.ptmaxc}, we have $\nabla_vf\in C_{\mathrm{kin}}^{\beta}(Q_{R/16}(z_0))$. By standard covering arguments, we arrive at the desired result. This completes the proof.
\end{proof}
Before we prove Theorem \ref{cor.lin}, we first observe the following lemma.
\begin{lemma}\label{lem.higsolhol}
    For $R>0$ and $z_0=(t_0,x_0,v_0) \in \mathbb{R}^{2n+1}$, let $\mu \in L^2(Q_{2R}(z_0))$ and let $f\in H^1(Q_{2R}(z_0))$ be a weak solution to
     \begin{equation*}
        \partial_tf+v\cdot\nabla_x f=\divergence_v(a(t)\nabla_vf)+\mu\quad\text{in }Q_{2R}(z_0).
    \end{equation*}
    Then for any $\beta\in(0,1)$, we have 
    \begin{align*}
        R^\beta[f]_{C_{\mathrm{kin}}^\beta(Q_R(z_0))}\leq c\left(\dashint_{Q_{2R}(z_0)}|f-(f)_{Q_{2R}(z_0)}|\,dz+R^2\|\mu\|_{L^\infty(Q_{2R}(z_0))}\right)
    \end{align*}
    with $c=c(n,\Lambda)$.
\end{lemma}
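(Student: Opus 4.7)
By the scaling invariance recorded in Lemma \ref{lem.scale}, it suffices to prove the estimate for $R=1$ and $z_0=0$, i.e., to show $[f]_{C^\beta_{\textnormal{kin}}(Q_1)} \leq c(\dashint_{Q_2}|f-(f)_{Q_2}|\,dz + \|\mu\|_{L^\infty(Q_2)})$. The plan is to establish a Campanato-type excess decay $E(f;Q_\rho(z_1)) \leq c\,\rho^\beta(\cdots)$ uniformly in $z_1 \in Q_1$ and small $\rho$, and then invoke the characterization in Corollary \ref{cor.ptmax} (together with a routine covering) to deduce the kinetic H\"older estimate.

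For fixed $z_1 \in Q_1$ and $r \in (0,1/4]$ with $Q_{2r}(z_1) \subset Q_2$, let $g \in \mathcal{W}(\mathcal{R}Q_r(z_1))$ be the unique weak solution (provided by Lemma \ref{lem.exist}) of the homogeneous problem with $g = f$ on $\partial_{\mathcal{K}}\mathcal{R}Q_r(z_1)$. The difference $u = f - g$ vanishes on $\partial_{\mathcal{K}}$ and satisfies the equation with right-hand side $\mu$, so Lemma \ref{lem.comp1} applied to the pair $(f,g)$, combined with the trivial bound $|\mu|(Q_{2r}(z_1)) \leq c\,r^{4n+2}\|\mu\|_{L^\infty}$, yields $\|u\|_{L^1(Q_r(z_1))} \leq c\,r^{4n+4}\|\mu\|_{L^\infty}$. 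Applying Lemma \ref{lem.dnm} to $u$ on $Q_r(z_1)$ then gives the key $L^\infty$ comparison bound
\begin{equation*}
\|f - g\|_{L^\infty(Q_{r/2}(z_1))} \leq c\, r^2 \|\mu\|_{L^\infty(Q_2)}.
\end{equation*}

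The main obstacle is the following higher-regularity estimate for the homogeneous comparison function: for every $\beta' \in (0,1)$,
\begin{equation*}
r^{\beta'}[g]_{C^{\beta'}_{\textnormal{kin}}(Q_{r/8}(z_1))} \leq c(n,\Lambda,\beta')\dashint_{Q_{r/4}(z_1)}|g - (g)_{Q_{r/4}(z_1)}|\,dz.
\end{equation*}
The decisive structural fact is that $a=a(t)$ is independent of $(x,v)$, so the operator $L = \partial_t + v\cdot\nabla_x - \divergence_v(a(t)\nabla_v\cdot)$ commutes with $x$-translations and each $\partial_{x_i}g$ again satisfies $L(\partial_{x_i}g)=0$. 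Iterating Lemma \ref{lem.grax} and Theorem \ref{thm.hol} on difference quotients in $x$ produces interior $L^\infty$ bounds for $\nabla_x^k g$ and $\nabla_v\nabla_x^k g$ with the correct $r$-scaling. For the velocity derivatives, the commutator identity $[\partial_{v_i},L] = \partial_{x_i}$ gives $L(\partial_{v_i}g) = -\partial_{x_i}g$ and $L(\partial_{v_i}\partial_{v_j}g) = -\partial_{v_j}\partial_{x_i}g - \partial_{v_i}\partial_{x_j}g$, whose right-hand sides are already bounded by the previous step, so Lemma \ref{lem.dnm} controls $\nabla_v g$ and $\nabla_v^2 g$. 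The equation itself then reads $\partial_t g + v\cdot\nabla_x g = \divergence_v(a(t)\nabla_v g) \in L^\infty$, which means $g$ is Lipschitz along kinetic trajectories. The standard three-segment decomposition $(t_1,x_1,v_1) \to (t_1,x_1,v_2) \to (t_1, x_2+v_2(t_1-t_2), v_2) \to (t_2,x_2,v_2)$—Lipschitz in $v$, then in $x$, then along the kinetic characteristic in $t$—shows $g \in C^1_{\textnormal{kin}}$ locally, hence the displayed claim for any $\beta' < 1$ after converting the $L^\infty$ derivative bounds into mean-oscillation form by applying Lemma \ref{lem.dnm} to $g - (g)_{Q_{r/4}(z_1)}$.

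Combining the two ingredients, for $\rho \leq r/8$ we obtain
\begin{equation*}
E(f;Q_\rho(z_1)) \leq E(g;Q_\rho(z_1)) + 2\|f-g\|_{L^\infty(Q_{r/2}(z_1))} \leq c\,(\rho/r)^{\beta'} E(f;Q_r(z_1)) + c\, r^\beta\|\mu\|_{L^\infty(Q_2)},
\end{equation*}
where we used $E(g;Q_{r/4}) \leq c[E(f;Q_{r/4}) + \|f-g\|_{L^\infty}]$ and $r^2 \leq r^\beta$ since $r \leq 1$ and $\beta < 1$. Choosing $\beta' \in (\beta,1)$, Lemma \ref{lem.tech2} with $\sigma = \beta'$ and $\bar\sigma = \beta$ delivers the announced $\rho^\beta$-excess decay uniformly in $z_1 \in Q_1$, and Corollary \ref{cor.ptmax} together with a standard covering concludes the proof.
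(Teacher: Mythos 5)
Your proof takes a genuinely different route from the paper's. The paper disposes of this lemma in two lines by invoking Lemma \ref{lem.bdddiv} (the $C^{\gamma}_{\mathrm{kin}}$ estimate with divergence data, here with $G\equiv 0$) together with Caccioppoli and Lemma \ref{lem.dnmaff}. The engine inside Lemma \ref{lem.bdddiv} is an iteration on the $L^2$-excess of $\nabla_v f$, which only requires the \emph{fixed} exponent $\alpha$ from Theorem~\ref{thm.hol}; arbitrary $\gamma\in(0,1)$ for $f$ itself is then recovered essentially for free by one application of Poincar\'e (Lemma \ref{lem.spi}), because that step gains a clean extra factor of $r^2$. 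You instead iterate directly on the zeroth-order excess $E(f;Q_\rho)$, and to make that iteration reach an arbitrary $\beta<1$ you are forced to prove that the homogeneous comparison $g$ is kinetically $C^{\beta'}$ for \emph{every} $\beta'<1$ with the correct $r$-scaling. Both strategies are valid in principle, but yours requires a quantitatively stronger interior regularity result for the constant-coefficient homogeneous equation than anything stated in the paper, whereas the paper's detour through the gradient excess deliberately avoids this.

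The place where your write-up is genuinely thin is precisely the higher-regularity claim $r^{\beta'}[g]_{C^{\beta'}_{\mathrm{kin}}(Q_{r/8})}\leq c\dashint_{Q_{r/4}}|g-(g)|$ for all $\beta'\in(0,1)$. The three-segment decomposition and the reduction to $L^\infty$ bounds on $\nabla_v g$, $\nabla_x g$, and $\nabla_v^2 g$ with scalings $r^{-1}$, $r^{-3}$, $r^{-2}$ are sound, and the commutator identities $[\partial_{v_i},L]=\partial_{x_i}$ are the right device. But Lemma \ref{lem.grax}, Theorem \ref{thm.hol}, and Lemma \ref{lem.dnm} are stated for the homogeneous equation or with a fixed exponent $\alpha_0$ or $\alpha=\alpha(n,\Lambda)$, and to reach $\|\nabla_v^2 g\|_{L^\infty}\lesssim r^{-2}\dashint|g-(g)|$ one must first justify that $\partial_{v_i}\partial_{v_j}g$ exists and is a weak $H^1_{\mathrm{kin}}$ solution of the forced equation $L(\partial_{v_i}\partial_{v_j}g)=-\partial_{v_j}\partial_{x_i}g-\partial_{v_i}\partial_{x_j}g$ (via difference quotients), then prove a Caccioppoli-type bound to feed the mean of $\partial_{v_i}\partial_{v_j}g$ into Lemma \ref{lem.dnm}, and track the $r$-scaling through each step. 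None of this is off the shelf; it is several pages of bootstrapping that you compress into one sentence. Additionally, when you invoke Lemma~\ref{lem.tech2} you should work with the monotone quantity $\phi(\rho)=\int_{Q_\rho}|f-(f)_{Q_\rho}|\,dz$ rather than the integral average $E(f;Q_\rho)$, since the lemma requires a non-decreasing function; this is standard but not automatic. Filling these gaps would make your argument complete and self-contained, but as written it is a sketch that relies on a substantial higher-order Schauder theory the paper never needs.
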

\begin{proof}
By \eqref{est.bdddiv} in Lemma \ref{lem.bdddiv} below, we have 
    \begin{equation*}
       R^\beta[f]_{C_{\mathrm{kin}}^\beta(Q_R(z_0))}\leq c\left(\dashint_{Q_{3R/2}(z_0)}|\nabla_vf|\,dz+R^2\|\mu\|_{L^\infty(Q_{3R/2}(z_0))}\right),
    \end{equation*}
where $c=c(n,\Lambda,\beta)$.
    By the standard energy estimates given in \cite[Lemma 2.6]{GolImbMouVas19} and Lemma \ref{lem.dnmaff}, we further estimate
    \begin{equation*}
       R^\beta[f]_{C_{\mathrm{kin}}^\beta(Q_R(z_0))}\leq c\left(\dashint_{Q_{2R}(z_0)}|f-(f)_{Q_{2R}(z_0)}|\,dz+R^2\|\mu\|_{L^\infty(Q_{2R}(z_0))}\right)
    \end{equation*}
    for some constant $c=c(n,\Lambda,\beta)$, which completes the proof.
\end{proof}
We are now ready to prove Theorem \ref{cor.lin}.
\begin{proof}[Proof of Theorem \ref{cor.lin}.]
    We first prove that for any weak solution $f\in H^1(Q_{2R}(z_0))$ to 
    \begin{equation*}
        \partial_tf+v\cdot\nabla_x f=\divergence_v(a(t)\nabla_vf)\quad\text{in }Q_{2R}(z_0),
    \end{equation*}
    for any $\beta \in (0,1)$ we have
    \begin{equation}\label{res.lin}
        R^{\beta}[\nabla_v f]_{C_{\mathrm{kin}}^\beta(Q_R(z_0))}\leq c\dashint_{Q_{2R(z_0)}}|\nabla_vf-(\nabla_vf)_{Q_{2R(z_0)}}|\,dz,
    \end{equation}
	where $c=c(n,\Lambda,\beta)$. By Lemma \ref{lem.scale}, we may assume $R=1$ and $z_0=0$.
    
    We first note from Lemma \ref{lem.grax} and Lemma \ref{lem.spi} that
    \begin{equation}\label{ineq1.lin}
        \|\nabla_x f\|_{L^\infty(Q_{3/2})}\leq c\dashint_{Q_{7/4}}|f-l|\,dz\leq c\dashint_{Q_{2}}|\nabla_vf-(\nabla_vf)_{Q_2}|\,dz
    \end{equation}
    for some constant $c=c(n,\Lambda)$, where $l=(\nabla_vf)_{Q_2}\cdot v+(f)_{Q_2}$.
    We next note for any $h\in(0,1/10^{10}]$,
    \begin{equation*}
        \partial_t\delta_h^vf+v\cdot\nabla_x\delta_h^vf=\divergence_v(a(t)\nabla_v\delta_h^vf)-h\cdot\nabla_xf_h^v\quad\text{in }Q_{3/2}.
    \end{equation*}
 By Lemma \ref{lem.higsolhol}, for any $\beta\in(0,1)$, we get
 \begin{equation*}
     \left[\frac{\delta_h^vf}{|h|}\right]_{C_{\mathrm{kin}}^{\beta}(Q_{1})}\leq c\left(\dashint_{Q_{3/2}}\left|\frac{\delta_h^vf}{|h|}-\left(\frac{\delta_h^vf}{|h|}\right)_{Q_{3/2}}\right|\,dz+\|\nabla_xf\|_{L^\infty(Q_{3/2})}\right)
 \end{equation*}
 for some constant $c=c(n,\Lambda,\beta)$. We now take $h=\epsilon e_i$, where $e_i$ is the standard $i$-th direction unit vector ($i=1,2,\dots,n$) and take $h\to0$, in order to deduce that
\begin{equation}\label{ineq2.lin}
     [\nabla_vf]_{C_{\mathrm{kin}}^{\beta}(Q_{1})}\leq c\left(\dashint_{Q_{2}}|\nabla_vf-(\nabla_vf)_{Q_{2}}|\,dz+\|\nabla_xf\|_{L^\infty(Q_{3/2})}\right).
 \end{equation}
 Then \eqref{res.lin} follows by plugging \eqref{ineq1.lin} into the the second term in the right-hand side of \eqref {ineq1.lin}.
 In view of \eqref{res.lin}, we can now apply Theorem \ref{thm.holn} with $\alpha$ replaced by 1 to get the desired result.
\end{proof}

\subsection{Divergence-type data}
In this subsection, we prove pointwise estimates of the gradient of the solution to \eqref{eq.main} when the right-hand side is given by divergence-type data $\divergence_v G$ for some vector valued function $G\in L^2$. 

To handle divergence-type data, we will use a different excess functional in this subsection. For any $G:Q_r(z_0)\to\bbR^n$ with $G\in L^2(Q_r(z_0))$, we write
\begin{equation*}
     E_2(G;Q_{ r}(z_0))\coloneqq \left(\dashint_{Q_r(z_0)}|G-(G)_{Q_r(z_0)}|^2\,dz\right)^{\frac12}.
\end{equation*}
We now prove excess decay estimates when the right-hand side is in divergence form.
\begin{lemma}[Excess decay - divergence data]\label{lem.decay2}
    For $r>0$ and $z_0=(t_0,x_0,v_0) \in \mathbb{R}^{2n+1}$, let $G \in L^2(Q_r(z_0),\mathbb{R}^n)$ and let $f\in H^1_{\mathrm{kin}}(Q_r(z_0))$ be a weak solution to 
    \begin{equation*}
        \partial_t f+v\cdot\nabla_x f-\divergence_v(a(t,x,v,\nabla_vf))=-\divergence_v G\quad\text{in }Q_r(z_0),
    \end{equation*}
    where $G\in L^2(Q_r(z_0))$ and $a(\cdot)$ satisfies \eqref{defn.omega.comp} with $2r$ replaced by $r$.
    Then for any $\rho\in(0,1]$, we have 
    \begin{equation}\label{ineq0.decay2}
    \begin{aligned}
        E(\nabla_vf;Q_{\rho r}(z_0))&\leq c\rho^{\alpha}E(\nabla_vf;Q_r(z_0))+c\rho^{-(4n+2)}{\pmb{\omega}}(r)\dashint_{Q_r(z_0)}|\nabla_vf|\,dz\\
        &\quad+c\rho^{-(2n+1)}E_2(G;Q_r(z_0))
    \end{aligned}
    \end{equation}
    for some constants $c=c(n,\Lambda)$ and $\alpha=\alpha(n,\Lambda)\in(0,1)$, where the constant $\alpha$ is determined in Theorem \ref{thm.hol}.
\end{lemma}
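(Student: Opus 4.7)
The strategy parallels the proof of Lemma \ref{lem.decay}, with two substitutions dictated by the divergence structure of the right-hand side: the zero-order comparison of Lemma \ref{lem.comp1} and the first-order comparison of Lemma \ref{lem.comp2} will be replaced by a single, simpler energy-type comparison that is natural in the $L^2$ setting, and the crude $\rho^{-(4n+2)}$ loss used to pass from $Q_r$ to $Q_{\rho r}$ at the $L^1$ level will be replaced by the $L^2$ loss $\rho^{-(2n+1)}$. By Lemma \ref{lem.scale}, I may assume $Q_r(z_0)=Q_1$. The estimate \eqref{ineq0.decay2} is trivial for $\rho\in[2^{-10},1]$ by choosing the constant large enough, so it suffices to treat $\rho\in(0,2^{-10}]$.

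Next, I would let $w\in\mathcal{W}(\mathcal{R}Q_{1/2})$ be the unique weak solution to \eqref{eq.ivp} with $r=1/2$, $z_0=0$, whose existence and uniqueness is guaranteed by Lemma \ref{lem.exist}, and let $g\in\mathcal{W}(\mathcal{R}Q_{1/8})$ be the weak solution to \eqref{eq.comp3} with frozen coefficients at $(x_0,v_0)=(0,0)$ and boundary datum $w$. The comparison between $f$ and $w$ is the key new ingredient: since $f-w\in\mathcal{W}_0(\mathcal{R}Q_{1/2})$, I would test the equation
\begin{equation*}
\partial_t(f-w)+v\cdot\nabla_x(f-w)-\divergence_v\bigl(a(t,x,v,\nabla_v f)-a(t,x,v,\nabla_v w)\bigr)=-\divergence_v\bigl(G-(G)_{Q_1}\bigr)
\end{equation*}
against $f-w$ itself (the constant $(G)_{Q_1}$ can be freely inserted using $\nabla_v(f-w)\in L^2$ and the vanishing of $f-w$ on the Kolmogorov boundary in the velocity direction). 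The monotonicity encoded in Assumption \ref{assump}, together with the same kind of approximation argument that was used to justify \eqref{ineq1.comp1} (which shows the transport term contributes nonnegatively), and Young's inequality yield
\begin{equation*}
\int_{\mathcal{R}Q_{1/2}}|\nabla_v(f-w)|^2\,dz\leq c\int_{\mathcal{R}Q_{1/2}}|G-(G)_{Q_1}|^2\,dz\leq c\,E_2(G;Q_1)^2.
\end{equation*}

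With this in hand, I would split as in \eqref{ineq0.decay}, writing $E(\nabla_v f;Q_\rho)\leq E(\nabla_v g;Q_\rho)+E(\nabla_v(f-g);Q_\rho)=:J_1+J_2$. For $J_1$, Theorem \ref{thm.hol} applied to $g$ on $Q_{1/8}$ gives a $\rho^\alpha$ factor times $E(\nabla_v g;Q_{1/8})$, which by the triangle inequality is bounded by $E(\nabla_v f;Q_{1/4})+E(\nabla_v(f-w);Q_{1/8})+E(\nabla_v(g-w);Q_{1/8})$, and these three terms are controlled respectively by $E(\nabla_v f;Q_1)$, the new $L^2$ comparison above, and Lemma \ref{lem.comp3}, bringing in the factor ${\pmb{\omega}}(1)\fint_{Q_1}|\nabla_v f|\,dz$. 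For $J_2$, I would use the $L^2$ volume change $E(\,\cdot\,;Q_\rho)\leq\rho^{-(2n+1)}E_2(\,\cdot\,;Q_1)$ and split $f-g=(f-w)+(w-g)$: the $f-w$ piece is controlled by $E_2(G;Q_1)$ from the new comparison, and the $w-g$ piece by Lemma \ref{lem.comp3}, which yields ${\pmb{\omega}}(1/2)\fint_{\mathcal{R}Q_{1/2}}|\nabla_v w|\,dz$; replacing $\nabla_v w$ by $\nabla_v f$ at the cost of the $L^2$ comparison estimate already established absorbs the remainder into the stated bound.

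The main obstacle is a bookkeeping one rather than a conceptual one: ensuring that the factor multiplying $E_2(G;Q_1)$ in $J_2$ appears with exactly the weight $\rho^{-(2n+1)}$ required by \eqref{ineq0.decay2}, and that the ${\pmb{\omega}}(r)$-term absorbs correctly after unscaling. The $L^2$ loss must be tracked carefully whenever moving between $L^1$-based excesses $E(\,\cdot\,)$ and the $L^2$-based quantities produced by the comparison inequality, but reverse H\"older bounds of Lemma \ref{lem.self} for $w$ together with self-improving $L^2$-$L^1$ estimates for $\nabla_v w$ obtained via triangle inequality against $\nabla_v f$ make this passage routine. Once these routine replacements are made, rescaling back from $Q_1$ to $Q_r(z_0)$ via Lemma \ref{lem.scale} yields \eqref{ineq0.decay2}.
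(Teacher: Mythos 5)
Your proposal is correct and follows essentially the same route as the paper's proof: rescale via Lemma \ref{lem.scale}, compare $f$ with the solution $w$ to \eqref{eq.ivp} and then $w$ with the frozen-coefficient solution $g$, derive the $L^2$ comparison $\dashint_{\mathcal{R}Q_{1/2}}|\nabla_v(f-w)|^2\,dz\leq c\,E_2(G;Q_1)^2$ by testing with $f-w$ and exploiting the sign of the transport term, and then combine this with Theorem \ref{thm.hol} for $g$ and Lemma \ref{lem.comp3} for $w-g$, using the $L^2$-to-$L^1$ volume change to produce the $\rho^{-(2n+1)}$ weight. The only cosmetic deviations from the paper (using $(G)_{Q_1}$ instead of $(G)_{Q_{1/2}}$ as the subtracted constant, splitting the $L^1$-excess $E$ rather than the $L^2$-excess $E_2$) are harmless, and the reverse H\"older Lemma \ref{lem.self} you invoke is in fact not needed directly here — it is only used inside the proof of Lemma \ref{lem.comp3}.
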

\begin{proof}
By Lemma \ref{lem.scale}, we assume $Q_r(z_0)=Q_1$. When $\rho>2^{-10}$, the assertion follows trivially with a suitable constant $c=c(n,\Lambda)$. Thus we may assume $\rho\in(0,2^{-10}]$. Let $w\in \mathcal{W}(\mathcal{R}Q_{1/2})$ and $g\in \mathcal{W}(\mathcal{R}Q_{1/8})$ be weak solutions to \eqref{eq.ivp} and \eqref{eq.comp3} with $\mu=-\divergence_v G$, $r=1/2$ and $z_0=0$, respectively. Since $f-w\in \mathcal{W}_0(\mathcal{R}Q_{1/2})$, by testing the equation
\begin{align*}
    \partial_t(f-w)+v\cdot\nabla_x(f-w)-\divergence(a(t,x,v,\nabla_vf)-a(t,x,v,\nabla_vw))
    =-\divergence_v(G-(G)_{Q_{1/2}})
\end{align*}
with $f-w$, we deduce
\begin{align*}
    \dashint_{\mathcal{R}Q_{1/2}}|\nabla_v(f-w)|^2\,dz\leq c\dashint_{\mathcal{R}Q_{1/2}}|(G-(G)_{Q_{1/2}})\cdot(\nabla_v(f-w))|\,dz.
\end{align*}
By Young's inequality, we obtain
\begin{align}\label{ineq1.decay2}
    \dashint_{\mathcal{R}Q_{1/2}}|\nabla_v(f-w)|^2\,dz\leq c\dashint_{Q_1}|G-(G)_{Q_1}|^2\,dz
\end{align}
for some constant $c=c(n,\Lambda)$, where we have used the fact that $\mathcal{R}Q_{1/2}\subset Q_1$.
We next observe that
\begin{align*}
    E_2(\nabla_vf;Q_{\rho})\leq E_2(\nabla_vg;Q_\rho)+E_2(\nabla_v(f-g);Q_\rho)\coloneqq J_1+J_2.
\end{align*}
As in the estimates concerning $J_1$ given in \eqref{ineq10.decay} and \eqref{ineq1.decay} together with \eqref{ineq1.decay2}, we have 
\begin{align*}
    J_1\leq c\rho^\alpha E(\nabla_v g;Q_{1/8}) \leq c\left[\rho^\alpha E(\nabla_v f;Q_1)+{\pmb{\omega}}(1)\|\nabla_vf\|_{L^1(Q_1)}+E_2(G;Q_1)\right],
\end{align*}
where $c=c(n,\Lambda)$.
By \eqref{ineq2.decay} along with \eqref{ineq1.decay2}, we estimate $J_2$ as 
\begin{align*}
    J_2\leq c\left[\rho^{-(2n+1)}E_2(G;Q_1)+\rho^{-(4n+2)}{\pmb{\omega}}(1)\|\nabla_vf\|_{L^1(Q_1)}\right],
\end{align*}
where $c=c(n,\Lambda)$. By combining all the above estimates for $J_1$ and $J_2$, we arrive at the desired estimate.
\end{proof}
Using Lemma \ref{lem.decay2}, we now obtain the following pointwise estimates.
\begin{lemma}\label{lem.pointh2}
    For $R>0$ and $z_0=(t_0,x_0,v_0) \in \mathbb{R}^{2n+1}$, let $G \in L^2(Q_R(z_0),\mathbb{R}^n)$ and let $f\in H^1_{\mathrm{kin}}(Q_R(z_0))$ be a weak solution to 
    \begin{equation*}
        \partial_t f+v\cdot\nabla_x f-\divergence_v(a(t,x,v,\nabla_vf))=-\divergence_v G\quad\text{in }Q_R(z_0),
    \end{equation*}
    where $G\in L^2(Q_R(z_0))$ and $a(\cdot)$ satisfies \eqref{defn.omega.comp} with $2r$ replaced by $R$ and the function ${\pmb{\omega}}$ satisfies \eqref{ass.pointh} for some $\beta\in(0,\alpha)$, where the constant $\alpha$ is determined in Theorem \ref{thm.hol}. Then we have 
    \begin{align*}
        M^{\#}_{{\beta},R}(|\nabla_vf|)(z_0)\leq c\left(M_{ R}(|\nabla_vf|)(z_0)+M^{\#,2}_{{\beta},R}(|G|)(z_0)\right)
    \end{align*}
    for some constant $c=c(n,\Lambda,{\beta})$.
\end{lemma}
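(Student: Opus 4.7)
The strategy is to copy the iteration used in the proof of Lemma \ref{lem.pointh}, but starting from the excess decay estimate of Lemma \ref{lem.decay2} instead of Lemma \ref{lem.decay}. Since the divergence-type data $G$ enters the right-hand side of \eqref{ineq0.decay2} directly through the $L^2$-oscillation $E_2(G;Q_r(z_0))$, we can avoid the intermediate step of first producing a Riesz-potential bound: the sharp maximal function $M^{\#,2}_{\beta,R}(|G|)$ will appear at once after rescaling.

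First I would apply Lemma \ref{lem.decay2} on each kinetic cylinder $Q_r(z_0)\subset Q_R(z_0)$ and use the H\"older-type assumption \eqref{ass.pointh} to write, for every $r\in(0,R/2]$ and every $\rho\in(0,1/2]$,
\begin{equation*}
    E(\nabla_v f;Q_{\rho r}(z_0))\leq c\rho^{\alpha}E(\nabla_v f;Q_r(z_0))+c\rho^{-(4n+2)}r^{\beta}\dashint_{Q_r(z_0)}|\nabla_v f|\,dz+c\rho^{-(2n+1)}E_2(G;Q_r(z_0)).
\end{equation*}
Dividing by $(\rho r)^{\beta}$ and invoking the trivial pointwise bounds $\dashint_{Q_r(z_0)}|\nabla_v f|\,dz\leq M_R(|\nabla_v f|)(z_0)$ and $r^{-\beta}E_2(G;Q_r(z_0))\leq c\,M^{\#,2}_{\beta,R}(|G|)(z_0)$ (the latter coming from the minimality of the vectorial mean together with the inequality $||G|-(|G|)_{Q_r}|\leq |G-(G)_{Q_r}|$ in either direction combined with standard oscillation comparison), the three terms on the right-hand side transform into $c\rho^{\alpha-\beta}\,(E(\nabla_v f;Q_r(z_0))/r^{\beta})$, $c\rho^{-(4n+2+\beta)}M_R(|\nabla_v f|)(z_0)$, and $c\rho^{-(2n+1+\beta)}M^{\#,2}_{\beta,R}(|G|)(z_0)$, respectively.

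Since the hypothesis $\beta<\alpha$ is in force, I can then fix $\rho=\rho(n,\Lambda,\beta)\in(0,1/2]$ so small that $c\rho^{\alpha-\beta}\leq 1/4$. Taking the supremum over $r\in(0,R]$ on both sides, the left-hand side controls $\sup_{s\in(0,\rho R]}s^{-\beta}E(\nabla_v f;Q_s(z_0))$, and the first term on the right is $\tfrac14\,M^{\#}_{\beta,R}(|\nabla_v f|)(z_0)$, which is absorbed into the corresponding part of $M^{\#}_{\beta,R}(|\nabla_v f|)(z_0)$. For the remaining scales $s\in(\rho R,R]$ omitted by this sup, I would use the trivial upper bound $s^{-\beta}E(\nabla_v f;Q_s(z_0))\leq 2s^{-\beta}\dashint_{Q_s(z_0)}|\nabla_v f|\,dz$ and absorb the fixed factor $(\rho R)^{-\beta}$ in the constant after a standard rescaling, so that these large-scale contributions are also dominated by $M_R(|\nabla_v f|)(z_0)$.

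The whole argument is essentially bookkeeping once Lemma \ref{lem.decay2} is in hand; the only mildly delicate point is the reabsorption step, where the smallness of $\rho$ relies crucially on $\beta<\alpha$. I expect no genuine obstacle, as the structure parallels exactly the nondivergence-data proof in Lemma \ref{lem.pointh}, with the $G$-term entering more cleanly through $M^{\#,2}_{\beta,R}(|G|)$ rather than through a Riesz potential.
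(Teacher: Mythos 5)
Your proposal follows the paper's proof essentially step by step: start from the excess decay estimate of Lemma \ref{lem.decay2}, use \eqref{ass.pointh} to replace ${\pmb{\omega}}(r)$ by $r^\beta$, divide by $(\rho r)^\beta$, bound the intermediate quantities by $M_R(|\nabla_v f|)(z_0)$ and $M^{\#,2}_{\beta,R}(G)(z_0)$, fix $\rho=\rho(n,\Lambda,\beta)$ small enough that $c\rho^{\alpha-\beta}\leq 1/4$ (the hypothesis $\beta<\alpha$ is precisely what allows this), and absorb. Your explicit treatment of the scales $s\in(\rho R,R]$ not covered by the reabsorption is a small point the paper leaves implicit; that is fine.

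One step as you wrote it is not correct, however. You claim $r^{-\beta}E_2(G;Q_r(z_0))\leq c\,M^{\#,2}_{\beta,R}(|G|)(z_0)$ via minimality of the mean and a reverse triangle inequality, but this inequality is false in general: the $L^2$-oscillation of the scalar $|G|$ does \emph{not} control the $L^2$-oscillation of the vector $G$ (take $G$ equal to $+e_1$ on half the cylinder and $-e_1$ on the other half, so that $|G|\equiv 1$ has zero oscillation while $G$ does not). The direction one can prove is the opposite one, $M^{\#,2}_{\beta,R}(|G|)\leq M^{\#,2}_{\beta,R}(G)$. What the paper's own proof actually uses is the trivial bound $r^{-\beta}E_2(G;Q_r(z_0))\leq M^{\#,2}_{\beta,R}(G)(z_0)$, with the vector $G$ inside and no intermediate passage through $|G|$; the ``$|G|$'' inside the sharp maximal function in the lemma statement is a notational slip (note that Theorem \ref{thm.hold} is stated with $M^{\#,2}_{\beta,R}(G)$). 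So the fix is simply to drop that detour entirely and work with $M^{\#,2}_{\beta,R}(G)$ throughout; with that change your argument is correct and coincides with the paper's.
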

\begin{proof}
    Let us fix $\rho\in(0,1/4]$. By \eqref{ineq0.decay2} and \eqref{ass.pointh}, we deduce that for any $r\in(0,R]$,
    \begin{equation*}
    \begin{aligned}
        \frac{E(\nabla_vf;Q_{\rho r}(z_0))}{(\rho r)^{{\beta}}}&\leq c\rho^{\alpha-{\beta}}\frac{E(\nabla_vf;Q_r(z_0))}{r^{{\beta}}}+c\rho^{-(4n+2+\beta)}\dashint_{Q_r(z_0)}{|\nabla_vf|}\,dz\\
        &\quad+c\rho^{-(2n+1+{\beta})}\frac{E_2(G;Q_r(z_0))}{r^{{\beta}}}
    \end{aligned}
    \end{equation*}
    holds, where $c=c(n,\Lambda)$. Therefore, by taking a sufficiently small constant  $\rho=\rho(n,\Lambda,{\beta})$ such that $c\rho^{\alpha-{\beta}}\leq 1/4$,
    we derive 
    \begin{align*}
        M^{\#}_{{\beta},R}(|\nabla_vf|)(z_0)\leq \frac{M^{\#}_{{\beta},R}(|\nabla_vf|)(z_0)}{4}+c\left[M_{R}(|\nabla_vf|)(z_0)+M^{\#,2}_{{\beta},R}(G)(z_0)\right]
    \end{align*}
    for some constant $c=c(n,\Lambda,{\beta})$, which completes the proof.
\end{proof}
For the proof of \eqref{ineq1.hold} and \eqref{ineq2.hold}, we need the following lemma.
\begin{lemma}\label{lem.bdddiv}
    For $R>0$ and $z_0=(t_0,x_0,v_0) \in \mathbb{R}^{2n+1}$, let $\mu \in L^2(Q_R(z_0))$, $G \in L^2(Q_R(z_0),\mathbb{R}^n)$ and let $f\in H^1_{\mathrm{kin}}(Q_R(z_0))$ be a weak solution to 
     \begin{align*}
        (\partial_t+v\cdot \nabla_x)f-\divergence_v(a(t,x,v,\nabla_vf))=\mu-\divergence_v G\quad\text{in }Q_R(z_0),
    \end{align*}
    where $a(\cdot)$ satisfies the same assumptions as in Lemma \ref{lem.pointh2}.
    Then we have 
    \begin{equation}\label{est.bdddiv}
    \begin{aligned}
        &R^{\gamma-1}[f]_{C_{\mathrm{kin}}^{\gamma}(Q_{R/2}(z_0))}+[\nabla_vf]_{BMO(Q_{R/2}(z_0))}\\
        &\leq c\left[\left(\dashint_{Q_R(z_0)}|\nabla_vf|^2\,dz\right)^{\frac12}+[G]_{BMO(Q_R(z_0))}+R\| \mu\|_{L^\infty(Q_R(z_0))}\right],
    \end{aligned}
    \end{equation}
    where $\gamma\in(0,1)$ and $c=c(n,\Lambda,\gamma)$. In addition, if $\mu=0$, then we have 
    \begin{align}\label{est2.bdddiv}
        \norm{\nabla_vf}_{L^\infty(Q_{R/2}(z_0))}\leq c\left[\left(\dashint_{Q_R(z_0)}|\nabla_vf|^2\,dz\right)^{\frac12}+R^\beta[G]_{C_{\mathrm{kin}}^\beta(Q_R(z_0))}\right]
    \end{align}
    for some constant $c=c(n,\Lambda,\beta)$, where the constant $\beta$ is given in Lemma \ref{lem.pointh2}.
\end{lemma}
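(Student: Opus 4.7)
After reducing to $R=1$, $z_0=0$ by Lemma \ref{lem.scale}, the proof proceeds in three parts: first the BMO estimate on $\nabla_v f$, then the Hölder estimate on $f$, and finally the $L^\infty$ estimate \eqref{est2.bdddiv} under the additional $\mu=0$, $G \in C^\beta_{\mathrm{kin}}$ assumption.

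For the BMO estimate, I combine the frozen-coefficient comparison arguments used to prove Lemma \ref{lem.decay} and Lemma \ref{lem.decay2} to derive a mixed excess decay: for any $\rho,r \in (0,1]$ and $z_1$ with $Q_r(z_1) \subset Q_1$,
\begin{align*}
    E(\nabla_v f; Q_{\rho r}(z_1)) &\leq c\rho^{\alpha} E(\nabla_v f; Q_r(z_1)) + c\rho^{-(4n+2)} r^\beta \dashint_{Q_r(z_1)} |\nabla_v f|\,dz \\
    &\quad + c\rho^{-(2n+1)} E_2(G; Q_r(z_1)) + c\rho^{-(4n+2)} r \|\mu\|_{L^\infty(Q_r(z_1))},
\end{align*}
using $\pmb{\omega}(r) \leq r^\beta$ and $|\mu|(Q_r)/r^{4n+1} \leq c r \|\mu\|_{L^\infty(Q_r)}$. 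I then fix $\rho$ so small that $c\rho^{\alpha} \leq 1/4$ and iterate along $r_k = \rho^k r_0$. The terms $E_2(G; Q_{r_k}) \leq [G]_{\mathrm{BMO}(Q_1)}$ and $r_k \|\mu\|_{L^\infty} \leq \|\mu\|_{L^\infty(Q_1)}$ are uniformly bounded in $k$, and the remaining source $r_k^\beta \dashint_{Q_{r_k}} |\nabla_v f|$ is controlled by $r_k^\beta (|(\nabla_v f)_{Q_1}| + c\sum_{l<k} E(\nabla_v f; Q_{r_l}))$, which stays uniformly bounded since the geometric factor $\rho^{k\beta}$ wins over the at-most-linear growth of the partial sum under the induction hypothesis. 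An induction on $k$ then yields $E(\nabla_v f; Q_{r_k}(z_1)) \leq c(\|\nabla_v f\|_{L^2(Q_1)} + [G]_{\mathrm{BMO}(Q_1)} + \|\mu\|_{L^\infty(Q_1)})$ uniformly, hence the BMO bound after a standard covering argument.

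For the Hölder estimate of $f$ with any $\gamma \in (0,1)$, the BMO bound yields via the kinetic John-Nirenberg inequality that $\nabla_v f \in L^p(Q_{1/2})$ for every $p<\infty$. Applying a kinetic Poincaré inequality (as in Lemma \ref{lem.spi}, extended via a splitting of $f$ into a frozen homogeneous part and an energy-controlled perturbation handling the $\mu$ and $\divergence_v G$ terms), I obtain $\dashint_{Q_r(z_1)} |f - (f)_{Q_r(z_1)}|\,dz \leq cr^\gamma$, and a Campanato-type characterization of the kinetic Hölder spaces yields the desired $C^\gamma_{\mathrm{kin}}$ bound. For \eqref{est2.bdddiv}, Lemma \ref{lem.pointh2} gives $M^{\#}_{\beta,1}(|\nabla_v f|)(z_0) \leq c(M_1(|\nabla_v f|)(z_0) + [G]_{C^\beta_{\mathrm{kin}}(Q_1)})$ since $M^{\#,2}_{\beta,1}(G) \leq c[G]_{C^\beta_{\mathrm{kin}}}$; using the previously established BMO estimate to control $M_1(|\nabla_v f|)$ by $\|\nabla_v f\|_{L^2(Q_1)}$, Corollary \ref{cor.ptmax} then converts the sharp maximal function bound into the claimed $L^\infty$ bound on $\nabla_v f$.

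The main obstacle is the iteration in the BMO step. Since $G \in \mathrm{BMO}$ provides no smallness as $r \to 0$, the source $E_2(G; Q_r)$ does not decay, so the summation tricks of Lemma \ref{lem.point} (which rested on Dini-integrability of the Riesz potential) fail; the resolution rests entirely on the absorbability afforded by the small factor $c\rho^{\alpha}$ together with careful bookkeeping of the at-most-linear growth of $|(\nabla_v f)_{Q_{r_k}}|$ versus the geometric decay of $r_k^\beta$. A secondary subtlety is the kinetic Campanato embedding used in the Hölder step, which must respect the anisotropic Kolmogorov distance with its three distinct scalings and the non-Euclidean translation structure coming from the transport term.
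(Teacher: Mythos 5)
Your BMO argument, while different from the paper's, is viable: the paper instead works with the monotone quantity $\varphi(r)=\int_{Q_r(z_1)}|\nabla_v f-(\nabla_v f)_{Q_r(z_1)}|^2\,dz$, derives the decay inequality \eqref{ineq00.bdddiv} by comparing with the frozen homogeneous solution, and then applies the technical iteration Lemma \ref{lem.tech2}. This avoids the bookkeeping you need to carry out by hand (tracking $|(\nabla_v f)_{Q_{r_k}}|$ against the geometric factor $r_k^\beta$), since $\varphi$ is nondecreasing and Lemma \ref{lem.tech2} is built exactly for such quantities. Your route via the $L^1$ excess $E(\nabla_v f;\,\cdot\,)$ works, but only after the estimate $|(\nabla_v f)_{Q_{r_k}}|\leq |(\nabla_v f)_{Q_{r_0}}|+\rho^{-(4n+2)}kM$, the choice $M\sim \dashint_{Q_{r_0}}|\nabla_v f|+[G]_{\mathrm{BMO}}+\|\mu\|_{L^\infty}$, and a check that $c\rho^{-(8n+4)}r_0^\beta\sup_k\rho^{k\beta}(k+1)$ can be made small (by shrinking $r_0$); you assert these without spelling them out, but the structure is sound. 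For the $C^\gamma_{\mathrm{kin}}$ bound on $f$ the paper is again more direct: it first derives the intermediate decay $\int_{Q_r}|\nabla_v f|^2\,dz\leq c r^{4n+2-\gamma}(\cdots)$ via Lemma \ref{lem.tech2}, feeds this into the Poincar\'e inequality (Lemma \ref{lem.spi}), and reads off the kinetic H\"older seminorm from the resulting Campanato-type decay. Your John--Nirenberg plus $L^p$-improvement route can work, but it requires a kinetic version of John--Nirenberg (not stated in the paper) and a Poincar\'e inequality for the inhomogeneous equation; you gesture at the latter via a splitting but do not provide it, whereas the paper's route only ever applies Poincar\'e to the solution $f$ itself, after reducing to the homogeneous frozen operator.

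However, your proof of \eqref{est2.bdddiv} contains a genuine gap. You write that the BMO estimate on $\nabla_v f$ controls $M_1(|\nabla_v f|)(z_0)$ by $\|\nabla_v f\|_{L^2(Q_1)}$, and then feed this into Lemma \ref{lem.pointh2}. But $M_1(|\nabla_v f|)(z_0)=\sup_{0<r<1}\dashint_{Q_r(z_0)}|\nabla_v f|\,dz$ is a Hardy--Littlewood-type maximal function; a BMO bound on $\nabla_v f$ does \emph{not} give a uniform pointwise bound on it. From BMO one only gets $|(\nabla_v f)_{Q_{r_k}(z_0)}|\lesssim |(\nabla_v f)_{Q_1}|+k\,[\nabla_v f]_{\mathrm{BMO}}$, which grows like $\log(1/r_k)$ and is not bounded by $\|\nabla_v f\|_{L^2}$. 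Indeed, $\nabla_v f\in L^\infty$ is precisely what \eqref{est2.bdddiv} is supposed to establish, so invoking $M_1(|\nabla_v f|)\in L^\infty$ is circular. The paper sidesteps this entirely: it proves the excess-decay estimate \eqref{ineq6.bdddiv} with the \emph{positive} gain $r^{4n+2+\beta}$ on the $L^2$-oscillation (using $r^{4n+2+2\beta}[G]^2_{C^\beta_{\mathrm{kin}}}$ for the source, since now $G$ is H\"older rather than merely BMO), applies Lemma \ref{lem.tech2}, and then reads off $\|M^{\#}_{\beta/2,1/2}(\nabla_v f)\|_{L^\infty}<\infty$; Corollary \ref{cor.ptmax}\eqref{ineq1.ptmaxc} then gives the $L^\infty$ bound directly without ever appealing to Lemma \ref{lem.pointh2}. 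To repair your argument you would need to replace the appeal to $M_1(|\nabla_v f|)$ by a self-improving Campanato decay, as the paper does.
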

\begin{proof}
In view of Lemma \ref{lem.scale}, we may assume that $Q_R(z_0)=Q_1$.
Let us fix $0<\rho<r\leq R_1$, where $R_1\leq R/1000$ is determined later depending only on $n,\Lambda$ and $\beta$. We will prove that for any $z_1\in Q_{1/2}$,
\begin{equation}\label{ineq00.bdddiv}
\begin{aligned}
    &\int_{Q_\rho(z_1)}|\nabla_vf-(\nabla_vf)_{Q_\rho(z_1)}|^2\,dz\\
    &\leq c(\rho/r)^{4n+2+2\alpha}\int_{Q_r(z_1)}|\nabla_vf-(\nabla_vf)_{Q_r(z_1)}|^2\,dz\\
    &\quad+cr^{4n+2}\left[[G]^2_{BMO(Q_1)}+\|\mu\|^2_{L^\infty(Q_1)}+\int_{Q_1}|\nabla_vf|^2\,dz\right]
\end{aligned}
\end{equation}
holds, where $c=c(n,\Lambda,\beta)$ and the constant $\alpha$ is determined in Theorem \ref{thm.hol}.
We may assume $\rho\leq r/32$.
Let $w$ be a weak solution to \eqref{eq.ivp} with $Q_r(z_0)$ replaced by $Q_{r/4}(z_1)$ and let $g$ be a weak solution to \eqref{eq.comp3} with $Q_{r/4}(z_0)$ replaced by $Q_{r/16}(z_1)$.
We first observe 
\begin{equation}\label{ineq0.bdddiv}
\begin{aligned}
    \int_{Q_\rho(z_1)}|\nabla_vf|^2\,dz&\leq c\int_{Q_\rho(z_1)}|\nabla_v(f-w)|^2\,dz+c\int_{Q_{\rho}(z_1)}|\nabla_v(w-g)|^2\,dz\\
    &\quad+c\int_{Q_{\rho}(z_1)}|\nabla_vg|^2\,dz\coloneqq J_1+J_2+J_3.
\end{aligned}
\end{equation}

In view of \eqref{ineq1.decay2}, we note that
\begin{align*}
    \int_{\mathcal{R}Q_{r/8}(z_1)}|\nabla_v(f-w)|^2\,dz\leq c\int_{\mathcal{R}Q_{r/8}(z_1)}|G-(G)_{Q_r(z_1)}|^2\,dz+c\int_{\mathcal{R}Q_{r/8}(z_1)}|\mu(f-w)|\,dz,
\end{align*}
where $c=c(n,\Lambda)$. By employing Young's inequality and Poincar\'e's inequality together with the fact that $(f-w)(t,x,\cdot)\equiv 0$ on $B_{r/8}(v_1)$, we obtain
\begin{align*}
     \int_{\mathcal{R}Q_{r/8}(z_1)}|\nabla_v(f-w)|^2\,dz\leq c\int_{\mathcal{R}Q_{r/8}(z_1)}|G-(G)_{Q_r(z_1)}|^2\,dz+c\int_{\mathcal{R}Q_{r/8}(z_1)}|r\mu|^2\,dz.
\end{align*}
Therefore, we estimate $J_1$ as 
\begin{align}\label{ineq1.bdddiv}
    J_1\leq c\int_{Q_r(z_1)}|G-(G)_{Q_r(z_1)}|^2\,dz+c\int_{Q_r(z_1)}|r\mu|^2\,dz.
\end{align}
We now use \eqref{ineq1.comp3} and \eqref{ineq1.bdddiv} to see that 
\begin{equation}\label{ineq2.bdddiv}
\begin{aligned}
    J_2&\leq cr^{2\beta}\int_{\mathcal{R}Q_{r/8}(z_1)}|\nabla_vw|^2\,dz\\
    &\leq cr^{2\beta}\left(\int_{Q_r(z_1)}\left[|\nabla_vf|^2+|r\mu|^2\right]\,dz+\int_{Q_r(z_1)}|G-(G)_{Q_r(z_1)}|^2\,dz\right),
\end{aligned}
\end{equation}
where $c=c(n,\Lambda)$. By Theorem \ref{thm.hol}, we have
\begin{align*}
    J_3\leq c\rho^{4n+2}\|\nabla_vg\|^2_{L^\infty(Q_\rho(z_1))}&\leq c(\rho/r)^{4n+2}\int_{Q_{r/32}(z_1)}|\nabla_vg|^2\,dz.
\end{align*}
We further estimate $J_3$ as 
\begin{equation}\label{ineq21.bdddiv}
\begin{aligned}
    J_3&\leq c(\rho/r)^{4n+2}\left[\int_{\mathcal{R}Q_{r/16}(z_1)}|\nabla_v(g-w)|^2\,dz+\int_{\mathcal{R}Q_{r/16}(z_1)}|\nabla_v(w-f)|^2\,dz\right]\\
    &\quad+c(\rho/r)^{4n+2}\int_{Q_r(z_1)}|\nabla_vf|^2\,dz,
\end{aligned}
\end{equation}
where $c=c(n,\Lambda)$. Plugging \eqref{ineq21.bdddiv}, \eqref{ineq1.bdddiv} and \eqref{ineq2.bdddiv} along with the fact that $r\leq 1$ yields
\begin{align*}
    \int_{Q_\rho(z_1)}|\nabla_vf|^2\,dz&\leq c\left[\left(\frac{\rho}r\right)^{4n+2}+r^{2\beta}\right]\int_{Q_r(z_1)}|\nabla_vf|^2\,dz\\
    &\quad+cr^{4n+2-2\gamma}\left[[G]^2_{BMO(Q_{1})}+\|\mu\|_{L^\infty(Q_1)}^2\right],
\end{align*}
where $c=c(n,\Lambda)$ and $\gamma\in(0,1)$. By Lemma \ref{lem.tech2}, there is a small constant $R_1=R_1(n,\Lambda,\beta,\gamma)\in(0,1)$ such that 
\begin{equation}\label{ineq3.bdddiv}
\begin{aligned}
    \int_{Q_r(z_1)}|\nabla_vf|^2\,dz&\leq c\left(r^{4n+2-\gamma}\int_{Q_{R_1}(z_1)}|\nabla_vf|^2\,dz+r^{4n+2-\gamma}\mathcal{M}\right)\\
    &\leq c\left(r^{4n+2-\gamma}\int_{Q_{1}}|\nabla_vf|^2\,dz+r^{4n+2-\gamma}\mathcal{M}\right)
\end{aligned}
\end{equation}
holds for any $r\leq R_1$, where $\mathcal{M}\coloneqq [G]^2_{BMO(Q_{1})}+\|\mu\|_{L^\infty(Q_1)}^2$.
Therefore, by Lemma \ref{lem.spi}, 
\begin{align}\label{ineq31.bdddiv}
    \dashint_{Q_r(z_1)}|f-(f)_{Q_r(z_1)}|^2\,dz\leq cr^{2-\gamma}\left(\int_{Q_{1}}|\nabla_vf|^2\,dz+[G]^2_{BMO(Q_{1})}+\|\mu\|_{L^\infty(Q_1)}^2\right),
\end{align}
which yields $C_{\mathrm{kin}}^{\gamma}$ estimates of $f$.
We next observe  
\begin{align*}
    \int_{Q_\rho(z_1)}|\nabla_vf-(\nabla_vf)_{Q_\rho(z_1)}|^2\,dz
    & \leq c\int_{Q_\rho(z_1)}|\nabla_v(f-w)|^2\,dz+c\int_{Q_{\rho}(z_1)}|\nabla_v(w-g)|^2\,dz\\
    &\quad+c\int_{Q_{\rho}(z_1)}|\nabla_vg-(\nabla_vg)_{Q_\rho(z_1)}|^2\,dz\coloneqq L_1+L_2+L_3.
\end{align*}
We note that the terms $L_1$ and $L_2$ can be estimated as in \eqref{ineq1.bdddiv} and \eqref{ineq2.bdddiv}, respectively. In light of Theorem \ref{thm.hol}, we estimate $L_3$ as
\begin{align*}
    L_3&\leq c(\rho/r)^{4n+2+2\alpha}\int_{Q_r(z_1)}|\nabla_v g-(\nabla_vg)_{Q_r(z_1)}|^2\,dz\\
    &\leq L_1+L_2+c(\rho/r)^{4n+2+2\alpha}\int_{Q_r(z_1)}|\nabla_v f-(\nabla_vf)_{Q_r(z_1)}|^2\,dz
\end{align*}
for some constant $c=c(n,\Lambda)$. Therefore, we have 
\begin{equation}\label{ineq4.bdddiv}
\begin{aligned}
     &\int_{Q_\rho(z_1)}|\nabla_vf-(\nabla_vf)_{Q_\rho(z_1)}|^2\,dz\\
     &\leq c(\rho/r)^{4n+2+2\alpha}\int_{Q_r(z_1)}|\nabla_v f-(\nabla_vf)_{Q_r(z_1)}|^2\,dz\\
     &\quad+cr^{2\beta}\int_{Q_r(z_1)}|\nabla_vf|^2\,dz+r^{4n+2}\left([G]^2_{BMO(Q_1)}+\|\mu\|_{L^\infty(Q_1)}^2\right).
\end{aligned}
\end{equation}
Plugging \eqref{ineq3.bdddiv} with $\gamma=\beta$ into \eqref{ineq4.bdddiv} yields \eqref{ineq00.bdddiv}. Since 
\begin{equation*}
    \varphi(\rho)\coloneqq \int_{Q_\rho(z_1)}|\nabla_vf-(\nabla_vf)_{Q_\rho(z_1)}|^2\,dz
\end{equation*}
is non-decreasing and nonnegative,
we now apply Lemma \ref{lem.tech2} into \eqref{ineq0.bdddiv} to see that 
\begin{align*}
    \int_{Q_r(z_1)}|\nabla_vf-(\nabla_vf)_{Q_r(z_1)}|^2\,dz
    \leq cr^{4n+2}\left[\int_{Q_1}|\nabla_vf|^2\,dz+[G]^2_{BMO(Q_1)}+\|\mu\|_{L^\infty(Q_1)}^2\right],
\end{align*}
for some constant $c=c(n,\Lambda,\beta)$, whenever $r\leq R_1$, where the constant $R_1=R_1(n,\Lambda,\beta)$ is determined in \eqref{ineq3.bdddiv} with $\gamma=\beta$. This implies
\begin{align*}
    [\nabla_vf]_{BMO(Q_{1/2})}\leq c\left[\|\nabla_vf\|_{L^2(Q_1)}+[G]_{BMO(Q_1)}+\|\mu\|_{L^\infty(Q_1)}^2\right]
\end{align*}
for some constant $c=c(n,\Lambda,\beta)$. Using this and \eqref{ineq31.bdddiv}, we get \eqref{est.bdddiv}.

We next observe from \eqref{ineq1.bdddiv} that if $G\in C_{\mathrm{kin}}^{\beta}(Q_1)$, then we have 
\begin{equation}\label{ineq5.bdddiv}
\begin{aligned}
     &\int_{Q_\rho(z_1)}|\nabla_vf-(\nabla_vf)_{Q_\rho(z_1)}|^2\,dz\\
     &\leq c(\rho/r)^{4n+2+2\alpha}\int_{Q_r(z_1)}|\nabla_v f-(\nabla_vf)_{Q_r(z_1)}|^2\,dz\\
     &\quad+cr^{2\beta}\int_{Q_r(z_1)}|\nabla_vf|^2\,dz+r^{4n+2+2\beta}[G]^2_{C_{\mathrm{kin}}^\beta(Q_1)}.
\end{aligned}
\end{equation}
Plugging \eqref{ineq3.bdddiv} into \eqref{ineq5.bdddiv} yields 
\begin{equation}\label{ineq6.bdddiv}
\begin{aligned}
    \int_{Q_\rho(z_1)}|\nabla_vf-(\nabla_vf)_{Q_\rho(z_1)}|^2\,dz&\leq c(\rho/r)^{4n+2+2\alpha}\int_{Q_r(z_1)}|\nabla_vf-(\nabla_vf)_{Q_r(z_1)}|^2\,dz\\
    &\quad+cr^{4n+2+\beta}\left[[G]^2_{C_{\mathrm{kin}}^\beta(Q_1)}+\int_{Q_1}|\nabla_vf|^2\,dz\right]
\end{aligned}
\end{equation}
for some constant $c=c(n,\Lambda,\beta)$. We now use Lemma \ref{lem.tech2} to see that 
\begin{align*}
    \int_{Q_r(z_1)}|\nabla_vf-(\nabla_vf)_{Q_r(z_1)}|^2\,dz\leq cr^{4n+2+\beta}\left[\int_{Q_1}|\nabla_vf|^2\,dz+[G]^2_{C_{\mathrm{kin}}^\beta(Q_1)}\right],
\end{align*}
for some constant $c=c(n,\Lambda,\beta)$. This gives
\begin{align*}
    \|M^{\#}_{\beta/2,1/2}(\nabla_vf)(z)\|_{L^\infty(Q_{1/2})}\leq c\left[\int_{Q_1}|\nabla_vf|^2\,dz+[G]^2_{C_{\mathrm{kin}}^\beta(Q_1)}\right],
\end{align*} which implies $\nabla_vf\in L^\infty_{\mathrm{loc}}$ with the desired estimate by Corollary \ref{cor.ptmax}.
This completes the proof.
\end{proof}

We are now ready to prove Theorem \ref{thm.hold} and Theorem \ref{cor.lind}.
\begin{proof}[Proof of Theorem \ref{thm.hold}]
    By Lemma \ref{lem.pointh2}, we have the desired estimate \eqref{ineq1.hold}. In addition by Lemma \ref{lem.bdddiv}, we have \eqref{ineq2.hold}. It remains to prove \eqref{ineq3.hold}. Let us assume $G\in C_{\mathrm{kin}}^{{\beta}}(Q_{2R}(z_0))$ which yields $M^{\#,2}_{{\beta},R}(G)\in L^\infty_{\mathrm{loc}}(Q_{2R}(z_0))$. Then by Lemma \ref{lem.bdddiv}, we get $\nabla_vf\in L^\infty_{\mathrm{loc}}(Q_{2R}(z_0))$ which implies $M_{R}(\nabla_vf)\in L^\infty_{\mathrm{loc}}(Q_{2R}(z_0))$. Therefore, \eqref{ineq1.hold} implies $M^{\#}_{{\beta},R}(\nabla_v f)\in L^\infty_{\mathrm{loc}}(Q_{2R}(z_0))$. Together with Corollary \ref{cor.ptmax}, we arrive at $\nabla_vf\in C_{\mathrm{kin}}^{{\beta}}(Q_{R/8}(z_0))$. The desired implication \eqref{ineq3.hold} now follows in light of standard covering arguments. This completes the proof.
\end{proof}
\begin{proof}[Proof of Theorem \ref{cor.lind}.]
    Since we have \eqref{ineq1.lin}, by employing Theorem \ref{thm.hold} with $\alpha$ replaced by $1$, all the results given in Theorem \ref{thm.hold} directly follow for any ${\beta}\in(0,1)$. We are now going to prove \eqref{ineq.lind}. 
    
    Without loss of generality, we can assume that $R=1$ and $z_0=0$. We first observe that
    \begin{align*}
        (\partial_t+v\cdot\nabla_x)\delta_h^xf-\divergence_v(a(t,x,v)\delta_h^xf)&=\divergence_v((a(t,x+h,v)-a(t,x,v))\nabla_vf^x_h)\\
        &\quad-\divergence_v(\delta_h^x G) \quad \textnormal{weakly in } Q_{3/2},
    \end{align*}
    for $|h|$ small, where $f^x_h(t,x,v)\coloneqq f(t,x+h,v)$. We rewrite this as
    \begin{align*}
       (\partial_t+v\cdot\nabla_x)\overline{f}-\divergence_v(A(t,x,v)\nabla_v\overline{f})=\divergence_v(\overline{G}) \quad \textnormal{weakly in } Q_{3/2},
    \end{align*}
    where 
    \begin{equation*}
        \overline{f}\coloneqq\frac{\delta_h^xf}{|h|^{\beta/3}}\quad\text{and}\quad \overline{G}\coloneqq\frac{(A(t,x+h,v)-A(t,x,v))\nabla_vf^x_h}{|h|^{\beta/3}}-\frac{\delta_h^xG}{|h|^{\beta/3}}.
    \end{equation*}
    Since $\nabla_vf\in L^\infty(Q_{3/2})$ and $\frac{|A(t,x+h,v)-A(t,x,v)|}{|h|^{\beta/3}}+\frac{|\delta_h^xG|}{|h|^{\beta/3}}\in L^\infty(Q_{3/2})$,
    by Lemma \ref{lem.bdddiv}, we obtain
    \begin{align*}
        \overline{f}\in C_{\mathrm{kin}}^{1-\epsilon}(Q_{1})
    \end{align*}
    for any $\epsilon>0$. We now use \cite[Lemma A.1.2]{FerRos24} to get the desired result \eqref{ineq.lind}.
\end{proof}

\appendix

\section{Function spaces}\label{appen}
In this appendix, we will construct a regularized cylinder of $Q_r(z_0)$, and deduce several properties of functions $f\in H^1_{\mathrm{kin}}(Q_r(z_0))$.

Let us recall the cylinder $Q_{r,v_0}^{t,x}(t_0,x_0)$ which is defined in \eqref{defn.txcy}. We are going to find a smooth domain $W_{r,v_0}(t_0,x_0)$ such that 
\begin{align}\label{appen.fir}
    Q_{r,v_0}^{t,x}(t_0,x_0)\subset  W_{r,v_0}(t_0,x_0)\subset W_{5r/4,v_0}(t_0,x_0)\subset Q_{2r,v_0}^{t,x}(t_0,x_0).
\end{align}
To this end, we define a set
\begin{equation*}
    \overline{W}=\left\{(t,x)\in\bbR\times \bbR^n\,:\,|x|\leq 1+\sqrt{\frac{-1}{4\log\left(e^{-4}-e^{-1/|t|^2}\right)}}\text{ and }|t|\leq 1/2\right\}
\end{equation*}
to see that 
\begin{align*}
    \partial \overline{W}&=\{(t,x)\in\bbR^{n+1}\,:t=\pm1/2\text{ and } |x|\leq1\}\\
    &\quad\cup\left\{(t,x)\in\bbR^{n+1}\,:\, t=\pm\sqrt{\frac{-1}{\log\left(e^{-4}-e^{-1/(4(|x|-1)^2)}\right)}}\text{  and }1\leq|x|\leq\frac54\right\}.
\end{align*}
 Therefore, we observe that $\overline{W}\subset\bbR^{n+1}$ is a smooth domain and 
\begin{equation*}
    I_{1}(1/2)\times B^x_1\subset \overline{W}\subset I_{5/4}(1/2)\times B^x_{5/4},
\end{equation*}
where the ball $B_r^x$ is defined in \eqref{defn.xball}. We point out that the construction indeed uses a subset of the set $\{(t,x)\,:\, e^{-1/|t|^2}=e^{-4}-e^{-1/(|x|-1)^2}\}$. 

For any $r>0$ and $z_0\in\bbR^{2n+1}$, we now consider a mapping $F_{r,z_0}:\bbR^{n+1}\to\bbR^{n+1}$ defined by 
\begin{equation*}
    F_{r,z_0}(t,x)=(r^{2}(t-1/2)+t_0,r^3x+x_0+r^2(t-1/2)v_0),
\end{equation*}
which is a $C^{\infty}$-diffeomorphism and 
\begin{equation*}
    F_{r,z_0}(I_{1}(1/2)\times B_1^x)=Q^{t,x}_{r,v_0}(t_0,x_0).
\end{equation*}

We now define
\begin{equation}\label{defn.wrv0}
    W_{r,v_0}(t_0,x_0)\coloneqq F_{r,z_0}(\overline{W}),
\end{equation}
which is smooth in $\bbR^{n+1}$ and satisfies \eqref{appen.fir}.

Using this, we will prove that for any
\begin{align}\label{appen.sec}
    f\in H^1_{\mathrm{kin}}(Q_{2r}(z_0))\Longrightarrow f\in \mathcal{W}(\mathcal{R}Q_r(z_0)),
\end{align}
where $\mathcal{R}Q_r(z_0)= W_{r,v_0}(t_0,x_0)\times B_r(v_0)$ and the space $\mathcal{W}(\mathcal{R}Q_r(z_0))$ is the closure of $C^\infty(\overline{\mathcal{R}Q_r(z_0)})$ with respect to the norm of $H^1_{\mathrm{kin}}$. Indeed, the regular set $\mathcal{R}Q_r(z_0)$ and the space $\mathcal{W}(\mathcal{R}Q_r(z_0))$ are defined in \eqref{defn.regcy} and Section \ref{setup}, respectively. As in the proof of \cite[Lemma 3.10]{AveHou24} together with the fact that $W_{5r/4,v_0}(t_0,x_0)$ and $B_{5r/4}(v_0)$ are smooth domains in $\bbR^{n+1}$ and $\bbR^n$, respectively, we observe that there is a sequence of smooth function $\{f_k\}_k$ such that $f_k\in C^{\infty}(W_{5r/4,v_0}(t_0,x_0)\times B_{5r/4}(v_0))\cap C(\overline{W_{5r/4,v_0}(t_0,x_0)\times B_{5r/4}(v_0)})$ and
\begin{equation*}
    f_k\to f \quad\text{in }H^1_{\mathrm{kin}}(W_{5r/4,v_0}(t_0,x_0)\times B_{5r/4}(v_0)).
\end{equation*}
Thus, we have $f_k\in C^\infty(\overline{W_{r,v_0}(t_0,x_0)\times B_{r}(v_0)})$ and
\begin{equation*}
    f_k\to f \quad\text{in }H^1_{\mathrm{kin}}(W_{r,v_0}(t_0,x_0)\times B_{r}(v_0)).
\end{equation*}
Thus, \eqref{appen.sec} follows.

Using \eqref{appen.sec}, we are now able to prove that if $f\in H^1_{\mathrm{kin}}(W\times V)$ is a weak solution to \eqref{eq.main} with $\mu\in L^2(W\times V)$ and $G\in L^2(W\times V,\bbR^n)$, then 
\begin{align}\label{appen.thi}
    f\in L^\infty(I;L^2(U_x\times U_v)),
\end{align}
whenever $I\times U_x\times U_v\Subset W\times V$. To do this, we will prove that for any $Q_{2r}(z_0)\subset W\times V$,
\begin{equation}\label{appen.goal}
    \sup_{t\in I_{r}(t_0)}\|f^2(t,\cdot)\|_{L^1(B_{r}(x_0+(t-t_0)v_0)\times B_r(v_0))}\leq c\int_{Q_r(z_0)}|\nabla_vf|^2+|f|^2+|\mu|^2+|G|^2\,dz
\end{equation}
for some constant $c=c(n,\Lambda)$, where we choose $\epsilon_0\coloneqq\min\{\frac{r}{1000|v_0|},\frac{r}{1000}\}$. 
Let us choose a smooth function $\phi=\phi(t)\geq0$ and a cutoff function $\psi=\psi(z)\in C^\infty_c(Q_{3r/4}(z_0))$ with $\psi\equiv 1$ on $Q_{r/2}(z_0)$. Then we have 
    \begin{align*}
    &    \int_{W_{r,v_0}(t_0,x_0)}\left<(\partial_t+v\cdot\nabla_x )f,f\phi\psi\right>\,dx\,dt\\
    &=\lim_{k\to\infty}\int_{W_{r,v_0}(t_0,x_0)}\left<(\partial_t+v\cdot\nabla_x )f_k,f_k\phi\psi\right>\,dx\,dt\eqqcolon\lim_{k\to\infty}J_k,
    \end{align*}
    where $f_k\in C^1(W_{r,v_0}(t_0,x_0)\times B_r(v_0))\cap C(\overline{W_{r,v_0}(t_0,x_0)\times B_r(v_0)})$ satisfies 
    \begin{equation*}
        f_k\to f\in H_{\mathrm{kin}}^1(W_{r,v_0}(t_0,x_0)\times B_r(v_0)).
    \end{equation*}

    By standard properties of the dual operator $\skp{\cdot}{\cdot}_{H^{-1},H^1}$, we get 
    \begin{align*}
        J_k&=\int_{W_{r,v_0}(t_0,x_0)}\int_{B_r(v_0)}(\partial_t f+v\cdot\nabla_x f_k )f_k\phi\psi\,dz\\
        &=\frac12\int_{B_r(v_0)}\int_{W_{r,v_0}(t_0,x_0)}(\partial_t+v\cdot\nabla_x)(f_k^2\phi\psi)\,dz\\
        &\quad-\frac12\int_{B_r(v_0)}\int_{W_{r,v_0}(t_0,x_0)}f_k^2(\partial_t+v\cdot\nabla_x)(\phi\psi)\,dz\eqqcolon J_{k,1}+J_{k,2}.
    \end{align*}
    Since $\psi\in C^{\infty}_c(Q_{3r/4}(z_0))$, we have 
    \begin{align*}
        J_{k,1}&=\frac12\int_{B_r(v_0)}\int_{\partial W_{r,v_0}(t_0,x_0)}f_k^2\phi\psi (1,v)\cdot N_{t,x}\,dS_{t,x}\,dv\\
        &=\frac12\int_{B_r(v_0)}\int_{A}f_k^2\phi\psi [(1,v)\cdot N_{t,x}]\,dS_{t,x}\,dv\geq0,
    \end{align*}
    where we write $A\coloneqq\{(t_0,x)\,:\, |x|=(3/4)^3\}$.
    Therefore, we have 
    \begin{equation*}
        \lim_{k\to\infty}J_k\geq -\frac12\int_{B_r(v_0)}\int_{W_{r,v_0}(t_0,x_0)}f^2(\partial_t+v\cdot\nabla_x)(\phi\psi)\,dz\eqqcolon J.
    \end{equation*}
    In addition, by using the equation, we obtain
    \begin{align*}
       \overline{J}\coloneqq -\frac12\int_{B_r(v_0)}\int_{W_{r,v_0}(t_0,x_0)}f^2\partial_t\phi\psi\,dz\leq c\int_{Q_{r}(z_0)}|\nabla_v f|^2+|f|^2+|\mu|^2+|G|^2\,dz.
    \end{align*}
    Let us fix $t_1\in (t_0-(2\epsilon_0)^2,t_0)$ and choose $\epsilon<\frac12\min\{t_1-(t_0-(r/2)^2),t_0-t_1\}$. Then there is a smooth function $\phi(t)$ such that $\phi(t)\equiv 1$ on $t<t_1-\epsilon/2$ and $\phi(t)\equiv 0$ on $t>t_1+\epsilon$ and, $\phi'(t)\geq0$, $\phi'(t)=-\epsilon$ on $t_1-\epsilon/2<t<t_1+\epsilon/2$. In this setting, we have 
    \begin{align*}
        \overline{J}&\geq \int^{t_1}_{t_1-\epsilon/2}\int_{B_r(v_0)}\int_{B_{r}(x_0+v_0(t-t_0))}f^2/\epsilon\,dx\,dv\,dt
    \end{align*}
    whenever $\epsilon\leq \epsilon_0$. 
    By using Lemma \ref{lem.diff} which will be proved below, we obtain \eqref{appen.goal}. Since we have proved that for any $z_0\in W\times V$ such that $Q_{2r}(z_0)\Subset W\times V$, \eqref{appen.goal} holds, standard covering arguments yield \eqref{appen.thi}.

We end this appendix by mentioning a generalized Lebesgue differentiation theorem. 
\begin{lemma}\label{lem.diff}
    Let $f\in L^1(Q_R(z_0))$. Then we have for a.e.  $t\in I_R(t_0)$,
    \begin{align*}
        \lim_{\epsilon\to0}\dashint_{t-\epsilon}^{t}\|f(\tau,x,v)\|_{L^1(B_{R}(x_0+(t-t_0)v_0)\times B_R(v_0))}\,d\tau
        =\|f(t,x,v)\|_{L^1(B_{R}(x_0+(t-t_0)v_0)\times B_R(v_0))}.
    \end{align*}
\end{lemma}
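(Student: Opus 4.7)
My plan is to reduce the claim to the one-dimensional Lebesgue differentiation theorem by eliminating the $t$-dependence from the spatial domain. First, I would extend $f$ by zero outside $Q_R(z_0)$, so that $f \in L^1(I_R(t_0) \times \mathbb{R}^{2n})$. For each fixed $t \in I_R(t_0)$ the reverse triangle inequality yields, for the \emph{fixed} set $U(t) := B_R(x_0+(t-t_0)v_0) \times B_R(v_0) \subset \mathbb{R}^{2n}$,
\begin{equation*}
\Big| \|f(\tau,\cdot,\cdot)\|_{L^1(U(t))} - \|f(t,\cdot,\cdot)\|_{L^1(U(t))} \Big| \leq \|f(\tau,\cdot,\cdot) - f(t,\cdot,\cdot)\|_{L^1(\mathbb{R}^{2n})}.
\end{equation*}
Averaging this inequality in $\tau \in (t-\epsilon,t)$ and using the triangle inequality, the lemma is reduced to the stronger vector-valued statement
\begin{equation*}
\lim_{\epsilon \to 0} \dashint_{t-\epsilon}^{t} \|f(\tau,\cdot,\cdot) - f(t,\cdot,\cdot)\|_{L^1(\mathbb{R}^{2n})}\,d\tau = 0 \quad \text{for a.e.\ } t \in I_R(t_0).
\end{equation*}

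For the reduced statement I would view $f$, via Fubini, as a Bochner-integrable map $I_R(t_0) \to L^1(\mathbb{R}^{2n})$ and invoke the Lebesgue differentiation theorem for functions valued in a separable Banach space. If a self-contained argument is preferred, I would proceed by density: given $\delta>0$, pick $g \in C_c(\mathbb{R}^{2n+1})$ with $\|f-g\|_{L^1(\mathbb{R}^{2n+1})} < \delta$. The statement is immediate for $g$ by uniform continuity of $\tau \mapsto g(\tau,\cdot,\cdot)$ in the $L^1(\mathbb{R}^{2n})$ norm, and for the error I would apply the one-sided Hardy-Littlewood maximal function to the scalar map $\tau \mapsto \|(f-g)(\tau,\cdot,\cdot)\|_{L^1(\mathbb{R}^{2n})}$, whose $L^1(I_R(t_0))$-norm equals $\|f-g\|_{L^1(\mathbb{R}^{2n+1})}<\delta$. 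Combining the weak-$(1,1)$ bound with the fact that $\|f(t,\cdot,\cdot)-g(t,\cdot,\cdot)\|_{L^1(\mathbb{R}^{2n})}$ is small off a set of $t$ of measure $O(\delta^{1/2})$, and summing along a sequence $\delta_k \downarrow 0$ via Borel-Cantelli, would deliver the desired a.e.\ conclusion.

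The main obstacle is precisely the $t$-dependence of the spatial ball $B_R(x_0+(t-t_0)v_0)$: it blocks a direct appeal to the scalar Lebesgue differentiation theorem applied to $\tau \mapsto \|f(\tau,\cdot,\cdot)\|_{L^1(U(t))}$, since the null exceptional set of bad points would itself depend on the parameter $t$ that also serves as the point of evaluation, and an attempted diagonal/Fubini argument does not directly close. The key trick that resolves this is to dominate the $L^1(U(t))$-norm by the $t$-independent $L^1(\mathbb{R}^{2n})$-norm, trading a negligible amount of sharpness for a uniform setting in which the (vector-valued) Lebesgue differentiation theorem applies cleanly.
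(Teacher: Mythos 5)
Your proposal is correct and takes essentially the same approach as the paper: extend $f$ by zero, use the reverse triangle inequality to replace the $t$-dependent domain $B_R(x_0+(t-t_0)v_0)\times B_R(v_0)$ by the fixed space $\mathbb{R}^{2n}$, and invoke the Lebesgue differentiation theorem for Bochner-integrable functions valued in $L^1(\mathbb{R}^{2n})$ (the paper cites Showalter resp.\ Hille--Phillips). In fact your formulation is the more careful one: you reduce to the genuine Lebesgue-point statement $\dashint_{t-\epsilon}^{t}\|f(\tau,\cdot)-f(t,\cdot)\|_{L^1(\mathbb{R}^{2n})}\,d\tau\to 0$, which is exactly what the cited theorem delivers, whereas the paper's displayed chain ends with the weaker bound $\big\|\dashint_{t-\epsilon}^{t}(f(\tau,\cdot)-f(t,\cdot))\,d\tau\big\|_{L^1(\mathbb{R}^{2n})}$, which by Jensen sits on the wrong side of the required estimate; your version avoids that slip.
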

\begin{proof}
    We may assume $f\in L^1(I_R(t_0);L^1(\bbR^n))$ by taking $f\equiv 0$ on $I_R(t_0)\times \bbR^{2n}\setminus Q_R(z_0)$. Then by \cite[Theorem 1.6]{Sho97} or \cite[Equation (3.8.4) in Corollary 2]{Hil48}, we have 
    \begin{align*}
        &\left\|\lim_{\epsilon\to0}\dashint_{t-\epsilon}^{t}f(\tau,x,v)-f(t,x,v)\,d\tau\right\|_{L^1(\bbR^{2n})}=0\quad\text{a.e. }t\in I_R(t_0).
    \end{align*}
    Therefore, we have 
    \begin{align*}
        &\left|\dashint_{t-\epsilon}^{t}\|f(\tau,x,v)\|_{L^1(\bbR^n)}\,d\tau-\|f(t,x,v)\|_{L^1(\bbR^n)}\right|\\
        &=\left|\dashint_{t-\epsilon}^{t}\|f(\tau,x,v)\|_{L^1(\bbR^n)}-\|f(t,x,v)\|_{L^1(\bbR^n)}\,d\tau\right|
        \leq \left\|\dashint_{t-\epsilon}^{t}f(\tau,x,v)-f(t,x,v)\,d\tau\right\|_{L^1(\bbR^{2n})}.
    \end{align*}
    Using this together with the fact that $f\equiv 0$ on $I_R(t_0)\times \bbR^{2n}\setminus Q_R(z_0)$, we obtain the desired estimate.
\end{proof}

\printbibliography

\end{document}